\newcommand{\HF}{\mathcal{H}}
\newcommand{\SO}{\mathcal{O}}
\newcommand{\NN}{\mathbb{N}}
\newcommand{\PP}{\mathbb{P}}
\newcommand{\PS}{\mathfrak{S}}
\newcommand{\ZZ}{\mathbb{Z}}
\DeclareMathOperator{\CEKZ}{CEKZ}
\DeclareMathOperator{\charf}{char}
\DeclareMathOperator{\Mac}{Mac}
\DeclareMathOperator{\per}{perm}
\DeclareMathOperator{\reg}{reg}
\DeclareMathOperator{\sgn}{sgn}
\DeclareMathOperator{\syz}{syz}
\newcommand{\st}{\; | \;}
\newcommand{\DeltaPlus}{\Delta^{\hspace{-0.333em} +}\hspace{-0.167em}}
\def\urltilda{\kern -.15em\lower .7ex\hbox{\~{}}\kern .04em}
\newcommand{\flfr}[2]{\left\lfloor\frac{#1}{#2}\right\rfloor}
\newcommand{\clfr}[2]{\left\lceil\frac{#1}{#2}\right\rceil}
\numberwithin{figure}{section}
\numberwithin{equation}{section}
\newtheorem{theorem}{Theorem}[section]
\newtheorem{lemma}[theorem]{Lemma}
\newtheorem{proposition}[theorem]{Proposition}
\newtheorem{corollary}[theorem]{Corollary}
\newtheorem{conjecture}[theorem]{Conjecture}
\theoremstyle{definition}
\newtheorem{definition}[theorem]{Definition}
\newtheorem{remark}[theorem]{Remark}
\newtheorem{example}[theorem]{Example}
\newtheorem{question}[theorem]{Question}
\newtheorem*{acknowledgement}{Acknowledgement}
\newcommand{\twodigit}[1]{\INTVAL=#1\relax\ifnum\INTVAL<10 0\fi\the\INTVAL}
\newcommand\rightnow{
           \twodigit{\the\HOUR}:\twodigit{\the\MINUTE},
           \twodigit{\number\day}.\space
           \ifcase\month\or January\or February\or March\or April\or May\or June\or July\or August\or September\or October\or November\or December\fi
           \space\number\year}
\begin{document}
\noindent {\footnotesize {\em Draft:}\rightnow}

% -- Title Block and Abstract
\title[Enumerations deciding the WLP]{Enumerations deciding the weak Lefschetz property}
\author[D.\ Cook II, U.\ Nagel]{David Cook II${}^{\star}$, Uwe Nagel}
\address{Department of Mathematics, University of Kentucky, 715 Patterson Office Tower, Lexington, KY 40506-0027, USA}
\email{\href{mailto:dcook@ms.uky.edu}{dcook@ms.uky.edu}, \href{mailto:uwe.nagel@uky.edu}{uwe.nagel@uky.edu}}
\thanks{Part of the work for this paper was done while the authors were partially supported by the National Security Agency under Grant Number H98230-09-1-0032.\\
        \indent ${}^{\star}$ Corresponding author.}
\subjclass[2010]{05A15, 05B45, 13E10, 13C40}
\keywords{Monomial ideals, weak Lefschetz property, determinants, lozenge tilings, non-intersecting lattice paths, perfect matchings}

\begin{abstract}
    We introduce a natural correspondence between artinian monomial almost complete intersections in three variables and punctured hexagonal
    regions.  We use this correspondence to investigate the algebras for the presence of the weak Lefschetz property.  In particular, we
    relate the field characteristics in which such an algebra fails to have the weak Lefschetz property to the prime divisors of 
    the enumeration of signed lozenge tilings of the associated punctured hexagonal region.  On the one side this allows us to establish
    the weak Lefschetz property in many new cases.  On the other side we can determine some of the prime divisors of the enumerations by
    means of an algebraic argument.

    For numerous classes of punctured hexagonal regions we find closed formulae for the enumerations of signed lozenge tilings, and thus
    the field characteristics in which the associated algebras fail to the have the weak Lefschetz property.  Further, we offer a 
    conjecture for a closed formula for the enumerations of signed lozenge tilings of symmetric punctured hexagonal regions.  These formulae 
    are exploited to lend further evidence to a conjecture by Migliore, Mir\'o-Roig, and the second author that classifies the {\em level}
    artinian monomial almost complete intersections in three variables that have the weak Lefschetz property in characteristic zero.  Moreover,
    the formulae are used to generate families of algebras which never, or always, have the weak Lefschetz property, regardless of field
    characteristic.  Finally, we determine (in one case, depending on the presence of the weak Lefschetz property) the splitting type of the 
    syzygy bundle of an artinian monomial almost complete intersection in three variables, when the characteristic of the base field is zero.  

    Our results convey an intriguing interplay between problems in algebra, combinatorics, and algebraic geometry, which raises new questions 
    and deserves further investigation.
\end{abstract}

\maketitle

\setcounter{tocdepth}{1}
\tableofcontents

% -- Section
\section{Introduction} \label{sec:intro}

The starting point of this paper has been an intriguing conjecture in~\cite{MMN} on the weak Lefschetz property of certain algebras.
Though the presence of this property implies considerable restrictions on invariants of the algebra, many algebras are expected to have
the weak Lefschetz property.  However, establishing this property is often rather difficult.  In this paper we make progress on 
the above conjecture and illustrate the depth of the problem by considering a larger class of algebras and relating the problem
to {\em a priori} seemingly unrelated questions in combinatorics and algebraic geometry.  This builds on the work of many authors (e.g., 
\cite{BK}, \cite{CGJL}, \cite{CN}, \cite{LZ}, and~\cite{MMN}).

Throughout this work we consider in particular the question of how the weak Lefschetz property of a certain $K$-algebra $A$ depends on 
the characteristic of the field $K$.  We begin by relating the algebra $A$ to two square integer matrices, $N$ and $Z$, where the entries
of $N$ are binomial coefficients and $Z$ is a zero-one matrix.  We show that $A$ has the weak Lefschetz property if and only if the determinant
of either of these matrices does not vanish modulo the characteristic of $K$.  Next, we establish that the determinant of $N$ enumerates signed 
lozenge tilings of a punctured hexagonal region and that the determinant of $Z$ enumerates signed perfect matchings of a bipartite graph 
associated to the same punctured hexagonal region.  The relation to the weak Lefschetz property implies that both determinants have the same 
prime divisors; in fact, we show that their absolute values are the same by using combinatorial arguments.  Finally, we show that in certain
cases deciding the presence of the weak Lefschetz property is equivalent to determining the splitting type of some semistable rank three vector
bundles on the projective plane.

We now describe the contents of this paper in more detail.
Let $R = K[x_1, \ldots, x_n]$ be the standard graded $n$-variate polynomial ring over the infinite field $K$, and let $A$ be a standard
graded $K$-algebra over $R$.  We say $A$ is {\em artinian} if $A$ is finite dimensional as a vector space over $K$.  Further, an artinian
$K$-algebra $A$ is said to have the {\em weak Lefschetz property} if there exists a linear form $\ell \in [A]_1$ such that, for all 
integers $d$, the multiplication map $\times \ell: [A]_d \rightarrow [A]_{d+1}$ has maximal rank, that is, the map is injective or
surjective.  Such a linear form is called a {\em Lefschetz element} of $A$.

The weak Lefschetz property has been studied extensively for many reasons, especially for the relation to the Hilbert function (see, 
e.g., \cite{BMMNZ}, \cite{HMNW}, \cite{MZ}, and~\cite{ZZ}).   A convenient way to encode the Hilbert function of an artinian $K$-algebra
$A$ is the {\em $h$-vector}, a finite sequence $h(A) = (h_0, \ldots, h_e)$ of positive integers $h_i = \dim_K{[A]_i}$.  Using this
notation, one immediate consequence (\cite[Remark~3.3]{HMNW}) of $A$ having the weak Lefschetz property is that the $h$-vector of $A$ is
{\em strictly unimodal}.  Further, the positive part of the first difference of $h(A)$ is $h(A/\ell A)$, where $\ell$ is any Lefschetz 
element of $A$.

The weak Lefschetz property is known to be subtle to both deformations (see, e.g., \cite{CN-2010}, \cite{Mi}, and~\cite{MMN}) but also to field 
characteristic.  The latter, considering the weak Lefschetz property in positive characteristic, is an exciting and active direction of research.
Migliore, Mir\'o-Roig, and the second author~\cite{MMN}, as well as Zanello and Zylinski~\cite{ZZ}, began explorations into the connection
between the weak Lefschetz property and positive characteristic, and also posed several interesting questions.  

In~\cite{CN}, the authors found a connection between certain families of level artinian monomial almost complete intersections and lozenge 
tilings of hexagons; independently, Li and Zanello~\cite{LZ} found a similar connection for artinian monomial complete intersections (see also
Corollary~\ref{cor:ci}).  However, both were without combinatorial bijection until one was found by Chen, Guo, Jin, and Liu~\cite{CGJL}; Boyle, 
Migliore, and Zanello~\cite{BMZ} have pushed this connection further.  Brenner and Kaid~\cite{BK-2010} also consider artinian
monomial complete intersections in three variables with generators all of the same degree.  We also note that in their study of pure $O$-sequences 
Boij, Migliore, Mir\'o-Roig, the second author, and Zanello~\cite{BMMNZ} have explored the relation between the weak Lefschetz property and pure
$O$-sequences.

In this paper we extend the connection found by Chen, Guo, Jin, and Liu to a connection between artinian monomial almost complete intersections in
three variables and lozenge tilings of more general regions that we call punctured hexagons.  In Section~\ref{sec:toolchain} we gather a few 
useful tools for dealing with the weak Lefschetz property.  In Section~\ref{sec:aci} we introduce the algebras we are interested in:  artinian 
monomial almost complete intersections in three variables.  If the syzygy bundle is not semistable, then the algebra has the weak Lefschetz property
in characteristic zero (\cite{BK}).  Thus we focus on the algebras that have semistable syzygy bundles, which we classify numerically
(Proposition~\ref{pro:semistable}).  Then we prove that such an algebra has the weak Lefschetz property if and only if a particular map between
the peak homogeneous components of the algebra is a bijection (Corollary~\ref{cor:one-map}).  Using this, we show that to each of the studied 
algebras $A$ we can associate a zero-one matrix $Z_A$ such that $A$ has the weak Lefschetz property in positive characteristic $p$ if and only 
if $p$ is not a prime divisor of the determinant of $Z_A$ (Proposition~\ref{pro:wlp-zero-one}).  We also describe a matrix $N_A$ with binomial 
entries that also has the analogous property (Proposition~\ref{pro:wlp-binom}).  Moreover, we demonstrate that a rather simple algebraic argument
can be used to determine some of the prime divisors of the determinants for both $Z_A$ and $N_A$ (Proposition~\ref{pro:wlp-p}).

In Section~\ref{sec:ph} we organise the monomials generating the peak homogeneous components of such an algebra in a plane.  It turns out that the 
monomials fill a punctured hexagon (Theorem~\ref{thm:interlace-amaci}).  Using the well-known bijection between lozenge tilings and non-intersecting
lattice paths, and the Lindstr\"om-Gessel-Viennot theorem (\cite{GV}, \cite{GV-1989}, \cite{Li}) on non-intersecting lattice paths, we show
that the determinant of the binomial matrix $N_A$ is the enumeration of the signed lozenge tilings of the punctured hexagon, up to sign (Theorem~\ref{thm:nilp-matrix}).
Furthermore, using another well-known bijection between lozenge tilings and perfect matchings (see, e.g., \cite{Ku}), we argue that the determinant of
the zero-one matrix $Z_A$ is an enumeration of the signed perfect matchings of the associated bipartite graph, up to sign (Theorem~\ref{thm:bip-matrix}).

In Section~\ref{sec:signs} we use the aforementioned connections to show that the determinant of the zero-one matrix $Z_A$ and the binomial matrix 
$N_A$ are the same, up to sign (Theorem~\ref{thm:det-Z-N}).  Moreover, in a special case of Kasteleyn's theorem~\cite{Ka} about enumerating perfect
matchings,  when the puncture has an even side-length, then the determinant and the permanent of $Z_A$ are also the same, up to sign (Corollary~\ref{cor:det-Z-per-Z}).

In Section~\ref{sec:det} we prove first that the determinant of $N_A$ is non-zero when the puncture is of even side-length (Theorem~\ref{thm:M-even}),
thus establishing the weak Lefschetz property in many new cases.  We then find closed formulae for the determinants when the puncture is trivial
(Proposition~\ref{pro:M-zero}), when any one side of the hexagonal region has length zero (Proposition~\ref{pro:C-zero}), when a vertex of the puncture
touches one of the sides of the region (Proposition~\ref{pro:C-maximal}), and when a side of the puncture touches one of the sides of the region
(Proposition~\ref{pro:gamma-zero}).  We close with a complete description of when the region is symmetric.  In particular, we show that when certain
parity conditions hold the determinant is zero (Proposition~\ref{pro:symmetry-zero}) and we provide a conjecture for a closed formula of the determinant
when the same parity conditions fail (Conjecture~\ref{con:symmetry}).

In Section~\ref{sec:central} we explore two different ways to centralise the puncture.  We call the puncture {\em axis-central} when it is
central along each of the three axes, independently.  Using very involved computations, Ciucu, Eisenk\"olbl, Krattenthaler, and Zare~\cite{CEKZ} found
closed formulae for the enumerations and signed enumerations of regions with an axis-central puncture; therein axis-central is called simply ``central''.
We use these closed formulae to describe the permanents of the zero-one matrices $Z_A$ (Corollary~\ref{cor:axis-central-perm-Z}) and the determinants 
of both matrices (Corollary~\ref{cor:axis-central-det-Z}), $Z_A$ and $N_A$, when the puncture is axis-central.  We call the puncture 
{\em gravity-central} when its vertices are equidistant from the sides of the containing hexagon; this condition is equivalent to the associated algebra
being level, that is, its socle is concentrated in one degree.  Using this observation we provide further evidence for a conjecture by Migliore,
Mir\'o-Roig, and the second author~\cite{MMN} about the presence of the weak Lefschetz property for level artinian monomial almost complete intersections
in characteristic zero (Proposition~\ref{pro:level-wlp}).

In Section~\ref{sec:interesting} we describe a method, for any positive integer $n$, to generate a subfamily of algebras whose associated matrices have 
determinant $n$ (Proposition~\ref{pro:det-n}).  From this we generate a subfamily of algebras which {\em always} have the weak Lefschetz property, regardless
of the field characteristic (Corollary~\ref{cor:C-zero-unique});  we also describe a different subfamily of algebras which always have the weak Lefschetz property
(Proposition~\ref{pro:gamma-zero-unique}).  Moreover, we describe the unique algebras which retain certain properties yet have minimal multiplicity
(Example~\ref{exa:minimal-multiplicity}).

In Section~\ref{sec:splitting-type} we explicitly determine (in one case, depending on the presence of the weak Lefschetz property) the splitting type of all 
artinian monomial almost complete intersections.  In particular, we consider separately the cases when the syzygy bundle is non-semistable
(Proposition~\ref{pro:st-nss}) and semistable (Propositions~\ref{pro:st-nmod3} and~\ref{pro:st-mod3}).  Moreover, in the case of ideals associated to punctured
hexagons, we relate the weak Lefschetz property to a number of other problems in algebra, combinatorics, and algebraic geometry (Theorem~\ref{thm:equiv}).

Finally, in Appendix~\ref{sec:hyper-calculus} we provide a technique, a ``picture-calculus'', for working with hyperfactorials, a basic unit
for the aforementioned closed formulae.  We demonstrate that several nice polynomials can be written as ratios of products of hyperfactorials
(Proposition~\ref{pro:hyper-f} and Corollary~\ref{cor:hyper-eo-f}).  Further, this shows that MacMahon's formula for the number
of lozenge tilings of a (non-punctured) hexagon is a polynomial in one of the side-lengths when the other two are fixed (Corollary~\ref{cor:mac-poly}).

% -- Section
\section{Compiling the tool-chain} \label{sec:toolchain}

Let $R = K[x_1, \ldots, x_n]$ be the standard graded $n$-variate polynomial ring over the infinite field $K$, and let $A$ be
an artinian standard graded $K$-algebra over $R$.  Then the minimal free resolution of $A$ ends with the free module 
$\bigoplus_{i=1}^{m} R(-t_i)^{r_i},$ where $0 < t_1 < \cdots < t_m$ and $0 < r_i$ for all $i$.  In this case, $A$ is 
called {\em level} if $m = 1$, the {\em socle degrees} of $A$ are $t_i - n$, for all $i$, and the {\em socle type} of 
$A$ is the sum $\sum_{i=1}^{m} t_i$.

We recall that once multiplication by a general linear form is surjective, then it remains surjective.
\begin{proposition}{\cite[Proposition~2.1(a)]{MMN}} \label{pro:surj}
    Let $A = R/I$ be an artinian standard graded $K$-algebra, and let $\ell$ be a general linear form.  If the map 
    $\times \ell: [A]_{d} \rightarrow [A]_{d+1}$ is surjective, then $\times \ell: [A]_{d+1} \rightarrow [A]_{d+2}$ is surjective.
\end{proposition}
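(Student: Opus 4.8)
The plan is to reduce the statement to a surjectivity statement for the Artinian algebra $A$ itself, viewed as a quotient, and then invoke the standard fact that a graded module that is generated in degrees $\le d$ has all its quotients by a general form generated in degrees $\le d$ as well. First I would note that the hypothesis that $\times\ell\colon[A]_d\to[A]_{d+1}$ is surjective says exactly that $[A/\ell A]_{d+1}=0$. Since $A/\ell A$ is a standard graded (hence generated in degree one) Artinian algebra, once one of its graded pieces vanishes all higher pieces vanish: $[A/\ell A]_{d+1}=0$ forces $[A/\ell A]_e=0$ for every $e\ge d+1$, because $[A/\ell A]_{d+2}=[A]_1\cdot[A/\ell A]_{d+1}=0$, and one iterates. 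In particular $[A/\ell A]_{d+2}=0$, which is precisely the statement that $\times\ell\colon[A]_{d+1}\to[A]_{d+2}$ is surjective. This is really the whole argument; the key point is simply that the cokernel of $\times\ell$ in consecutive degrees is the graded algebra $A/\ell A$, and such an algebra cannot ``revive'' in a higher degree once it has died.

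The only subtlety worth spelling out is the identification of the cokernels. From the exact sequence $[A]_d \xrightarrow{\times\ell}[A]_{d+1}\to[A/\ell A]_{d+1}\to 0$ we read off $\operatorname{coker}(\times\ell\text{ in degree }d)=[A/\ell A]_{d+1}$, and similarly in degree $d+1$. Then the multiplicative structure of $A/\ell A$ gives the surjection $[A]_1\otimes[A/\ell A]_{d+1}\twoheadrightarrow[A/\ell A]_{d+2}$, so the vanishing propagates upward. I would state this for a general linear form $\ell$ since that is the hypothesis in the proposition, though in fact the argument works verbatim for any fixed linear form; generality of $\ell$ plays no role here.

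There is essentially no obstacle: the statement is a formal consequence of $A$ being standard graded and Artinian, and the ``hard part'' — if one insists on naming one — is merely being careful that $A/\ell A$ is again standard graded over $K$ (it is, being a quotient of $R/(\ell)\cong K[x_1,\dots,x_{n-1}]$ by a homogeneous ideal) so that Nakayama-type propagation of vanishing applies. I would keep the write-up to two or three sentences, citing the exact sequence defining $A/\ell A$ and the observation that $[A/\ell A]_{d+1}=0$ implies $[A/\ell A]_j=0$ for all $j\ge d+1$.
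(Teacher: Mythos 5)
Your proof is correct and is essentially the argument the paper gives for the module generalization in Lemma~\ref{lem:mod-surj}: identify the cokernel via the exact sequence $[A]_d \stackrel{\times\ell}{\longrightarrow} [A]_{d+1} \rightarrow [A/\ell A]_{d+1} \rightarrow 0$ and propagate the vanishing of $[A/\ell A]$ upward using that it is standard graded. Your side remark that generality of $\ell$ plays no role in this particular step is also correct.
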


This generalises to modules generated in degrees that are sufficiently small.
\begin{lemma} \label{lem:mod-surj}
    Let $M$ be an $R$-module generated in degrees bounded by $e$, and let $\ell$ be a general linear form.  If the map 
    $\times\ell: [M]_d \rightarrow [M]_{d+1}$ is surjective, and $d \geq e$, then the map $\times\ell: [M]_{d+1} \rightarrow [M]_{d+2}$ is surjective.
\end{lemma}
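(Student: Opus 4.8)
The plan is to run the same argument as in Proposition~\ref{pro:surj}, but to keep careful track of the degrees in which $M$ is generated, since now $M$ is an arbitrary module rather than a cyclic algebra. The one extra ingredient is the elementary fact that a graded $R$-module $M$ generated in degrees at most $e$ satisfies $[M]_k = R_1 \cdot [M]_{k-1}$ for every $k > e$. To see this, fix homogeneous generators $m_i$ of degree $a_i \le e$, so that $[M]_k = \sum_i R_{k - a_i}\, m_i$; for $k > e$ each exponent $k - a_i \ge 1$, hence $R_{k-a_i} = R_1 \cdot R_{k - a_i - 1}$, and therefore $[M]_k = R_1 \cdot \sum_i R_{k-1-a_i}\, m_i = R_1 \cdot [M]_{k-1}$.

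First I would restate the hypothesis: $\times\ell\colon [M]_d \to [M]_{d+1}$ being surjective means precisely $\ell \cdot [M]_d = [M]_{d+1}$. Since $d \ge e$ gives $d + 2 > e$, the observation above applies with $k = d+2$, yielding
$$
[M]_{d+2} = R_1 \cdot [M]_{d+1} = R_1 \cdot \ell \cdot [M]_d = \ell \cdot R_1 \cdot [M]_d \subseteq \ell \cdot [M]_{d+1}.
$$
As the inclusion $\ell \cdot [M]_{d+1} \subseteq [M]_{d+2}$ is automatic, we get $\ell \cdot [M]_{d+1} = [M]_{d+2}$, that is, $\times\ell\colon [M]_{d+1} \to [M]_{d+2}$ is surjective, which is the claim.

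There is no real obstacle here beyond this bookkeeping; in particular, genericity of $\ell$ is never used and is retained only for consistency with Proposition~\ref{pro:surj}. If a cleaner packaging is wanted, one can argue module-theoretically instead: the cokernel $M/\ell M$ is again graded and generated in degrees at most $e$, surjectivity of $\times\ell$ in degree $d$ is equivalent to $[M/\ell M]_{d+1} = 0$, and applying the same observation to $M/\ell M$ propagates this vanishing to $[M/\ell M]_{d+2} = 0$, which is exactly surjectivity of $\times\ell$ in degree $d+1$.
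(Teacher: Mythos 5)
Your proof is correct, and your second (module-theoretic) formulation is exactly the paper's proof: the paper passes to the cokernel $M/\ell M$, notes it is generated in degrees $\le e$, and propagates the vanishing $[M/\ell M]_{d+1}=0$ to $[M/\ell M]_{d+2}=0$. Your first argument is the same underlying fact ($[N]_k = R_1\cdot[N]_{k-1}$ for $k>e$ when $N$ is generated in degrees $\le e$) applied to $M$ directly rather than to $M/\ell M$; this is only a cosmetic repackaging, and your observation that genericity of $\ell$ plays no role here is accurate.
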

\begin{proof}
    Consider the sequence
    \[
        [M]_d \stackrel{\times\ell}{\longrightarrow} [M]_{d+1} \rightarrow [M/\ell M]_{d+1} \rightarrow 0.
    \]
    Notice the first map is surjective if and only if $[M/\ell M]_{d+1} = 0.$  By assumption the map is surjective, so
    $[M/\ell M]_{d+1} = 0$.  Hence $[M/\ell M]_{d+2}$ is zero unless there is a generator of $M$ with degree beyond $d$.  However,
    the assumption is that no generators exist with degree beyond $d.$
\end{proof}

From this we get a result analogous to~\cite[Proposition~2.1(b)]{MMN} for non-level algebras.
\begin{proposition} \label{pro:inj}
    Let $A = R/I$ be an artinian standard graded $K$-algebra, and let $\ell$ be a general linear form.  If the map 
    $\times \ell: [A]_{d-1} \rightarrow [A]_{d}$ is injective, and $d$ is no greater than the smallest socle degree of $A$, then
    $\times \ell: [A]_{d-2} \rightarrow [A]_{d-1}$ is injective.
\end{proposition}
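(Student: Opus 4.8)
The plan is to deduce this from Lemma~\ref{lem:mod-surj} by passing to the graded $K$-dual of $A$. Write $A^{\vee} := \operatorname{Hom}_K(A, K)$, graded so that $[A^{\vee}]_j = \operatorname{Hom}_K([A]_{-j}, K)$, and endow it with its natural structure as a finitely generated graded $R$-module, where $x \in R_1$ acts by $(x \cdot \varphi)(a) := \varphi(x a)$. For every $d$, the map $\times \ell \colon [A^{\vee}]_{-d} \to [A^{\vee}]_{-d+1}$ is the transpose of $\times \ell \colon [A]_{d-1} \to [A]_d$; hence the latter is injective if and only if the former is surjective.

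First I would pin down in which degrees $A^{\vee}$ is generated. A direct computation with the above module structure shows that $(A^{\vee} / \mathfrak{m} A^{\vee})_j$ is $K$-dual to the degree $-j$ component of the socle of $A$ (here $\mathfrak{m} = R_+$). Therefore $A^{\vee}$ is generated in degrees $-s$ with $s$ running over the socle degrees of $A$, and in particular it is generated in degrees bounded by $e := -c$, where $c$ denotes the smallest socle degree of $A$.

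Now apply Lemma~\ref{lem:mod-surj} to the module $M := A^{\vee}$ and the form $\ell$. By hypothesis $\times \ell \colon [A]_{d-1} \to [A]_d$ is injective, so by the duality above $\times \ell \colon [M]_{-d} \to [M]_{-d+1}$ is surjective. Since $d$ is no greater than $c$, we have $-d \ge -c = e$, so Lemma~\ref{lem:mod-surj} yields that $\times \ell \colon [M]_{-d+1} \to [M]_{-d+2}$ is surjective. Dualizing back, $\times \ell \colon [A]_{d-2} \to [A]_{d-1}$ is injective, which is the claim.

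I do not expect a real obstacle here; the only points needing care are the bookkeeping of the grading shift in the duality $A \leftrightarrow A^{\vee}$ and checking that the generation bound for $A^{\vee}$ is exactly the numerical condition ``$d$ no greater than the smallest socle degree of $A$.'' One could equivalently phrase the whole argument in terms of the canonical module $\omega_A$, which differs from $A^{\vee}$ only by an overall degree shift and so changes nothing essential.
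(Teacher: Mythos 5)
Your proof is correct and takes essentially the same route as the paper's: pass to the graded $K$-dual, observe that it is generated in degrees given (up to sign) by the socle degrees of $A$, apply Lemma~\ref{lem:mod-surj}, and dualize back. The paper's version is terser, glossing over the grading bookkeeping that you spell out, but the underlying argument is identical.
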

\begin{proof}
    The $K$-dual of $A$, $M$, is a shift of the canonical module of $A$ and is generated in degrees that are a linear
    shift of the socle degrees of $A$.  Consider now the map $\times \ell: [M]_i \rightarrow [M]_{i+1}$.  Using Lemma~\ref{lem:mod-surj}
    we see that once $i$ is at least as large as the largest degree in which $M$ is generated, and the map is surjective, then the map
    is surjective thereafter.  The result then follows by duality.
\end{proof}

Further recall that a monomial algebra has the weak Lefschetz property exactly when the sum of the variables is a Lefschetz element.
\begin{proposition}{\cite[Proposition~2.2]{MMN}} \label{pro:mono}
    Let $A = R/I$ be an artinian standard graded $K$-algebra with $I$ generated by monomials.  Then $A$ has the weak 
    Lefschetz property if and only if $x_1 + \cdots + x_n$ is a Lefschetz element of $A$.
\end{proposition}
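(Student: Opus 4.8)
The implication ``if'' is immediate from the definition of the weak Lefschetz property, so the content lies in the converse. The plan is to exploit the torus of diagonal changes of variables: these are graded automorphisms of $R$ \emph{precisely because} $I$ is a monomial ideal, and so they act on the set of Lefschetz elements of $A$, allowing us to move an arbitrary one to the specific linear form $x_1 + \cdots + x_n$.

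First I would promote the hypothesis to the assertion that a \emph{general} linear form of $R$ is a Lefschetz element. For each integer $d$ the multiplication map $\times\ell \colon [A]_d \to [A]_{d+1}$ depends $K$-linearly on $\ell$ in the space $R_1 \cong K^n$ of linear forms, so the subset $W_d \subseteq R_1$ of those $\ell$ for which this map has maximal rank is Zariski-open, being cut out by the non-vanishing of a suitable maximal minor of a matrix with entries linear in $\ell$. Since $A$ is artinian, $[A]_d = 0$ for all large $d$, hence all but finitely many of the $W_d$ equal $R_1$; thus $W := \bigcap_d W_d$ is Zariski-open, and it is nonempty exactly when $A$ has the weak Lefschetz property. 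The set $V \subseteq R_1$ of linear forms all of whose coefficients are nonzero is also nonempty and Zariski-open. Because $K$ is infinite, the set $W \cap V$ — the intersection of two nonempty open subsets of the irreducible space $R_1$ — is nonempty with a $K$-rational point; I fix such an $\ell = \sum_{i=1}^{n} a_i x_i$.

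Next I would introduce the automorphism $\varphi \colon R \to R$ given by $x_i \mapsto a_i^{-1} x_i$, which is well defined and graded since every $a_i$ is nonzero. As $I$ is generated by monomials, $\varphi$ sends each such generator to a nonzero scalar multiple of itself, so $\varphi(I) = I$ and $\varphi$ descends to a graded $K$-algebra automorphism $\overline{\varphi}$ of $A$ with $\overline{\varphi}(\ell) = x_1 + \cdots + x_n$. For any graded automorphism of $A$ and any linear form $\ell'$, the square relating $\times\ell'$ and $\times\overline{\varphi}(\ell')$ on the components $[A]_d$ and $[A]_{d+1}$ commutes through the induced isomorphisms, so these two maps have the same rank in every degree. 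Since $\ell \in W$, every $\times\ell \colon [A]_d \to [A]_{d+1}$ has maximal rank, hence so does every $\times(x_1 + \cdots + x_n)$; that is, $x_1 + \cdots + x_n$ is a Lefschetz element of $A$.

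I do not anticipate a genuine obstacle here. The one essential point is that rescaling the variables yields an automorphism of $A$ — this is exactly where the monomial hypothesis enters, and where the argument would fail over a finite field — while the remainder is a routine combination of the semicontinuity of matrix rank with the infiniteness of $K$. If anything, the only care needed is in verifying that the maximal-rank loci $W_d$ are open and that $W \cap V$ is still nonempty.
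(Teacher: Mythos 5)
Your proof is correct. Note that the paper itself gives no proof here — it cites this as \cite[Proposition~2.2]{MMN} — and your argument (openness of the maximal-rank loci in each degree, intersecting with the torus, and rescaling variables by a diagonal automorphism that fixes the monomial ideal) is essentially the one given in that reference.
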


Hence, the weak Lefschetz property can be decided for monomial ideals, in a small number of cases, by simple invariants.  The following lemma
is a generalisation of~\cite[Proposition~3.7]{LZ}.
\begin{lemma} \label{lem:wlp-p}
    Let $A = R/I$ be an artinian standard graded $K$-algebra with $I$ generated by monomials.  Suppose that $a$ is the least positive integer
    such that $x_i^a \in I$, for $1 \leq i \leq n$, and suppose that the Hilbert function of $R/I$ weakly increases to degree $s+1$.
    Then, for any positive prime $p$ such that $a \leq p^m \leq s+1$ for some positive integer $m$, $A$ fails to have the weak Lefschetz property
    in characteristic $p$.
\end{lemma}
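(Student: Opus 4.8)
The plan is to show that, when $K$ has characteristic $p$, the linear form $\ell = x_1 + \cdots + x_n$ is not a Lefschetz element of $A$; by Proposition~\ref{pro:mono} this is exactly the failure of the weak Lefschetz property for the monomial algebra $A$. The argument pits two observations against each other: if $\ell$ were a Lefschetz element, the weakly increasing Hilbert function would force $\ell^{s+1}$ to act injectively on $[A]_0$, whereas in characteristic $p$ the Frobenius map forces $\ell^{s+1}$ to vanish identically on $A$.

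First I would carry out the characteristic-$p$ computation. Since raising to the $p^m$-th power is additive in characteristic $p$,
\[
    \ell^{p^m} = (x_1 + \cdots + x_n)^{p^m} = x_1^{p^m} + \cdots + x_n^{p^m}.
\]
Since $a \le p^m$ and $x_i^a \in I$ for every $i$, we have $x_i^{p^m} = x_i^{p^m - a}\, x_i^a \in I$, hence $\ell^{p^m} \in I$, that is, $\ell^{p^m} = 0$ in $A$. Because $p^m \le s+1$, it follows that $\ell^{s+1} = \ell^{p^m}\cdot \ell^{\,s+1-p^m} = 0$ in $A$ as well.

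Next I would show that, were $\ell$ a Lefschetz element, this could not happen. Write $h_d = \dim_K [A]_d$. By hypothesis $h_0 \le h_1 \le \cdots \le h_{s+1}$, so for each $d$ with $0 \le d \le s$ the multiplication map $\times\ell\colon [A]_d \to [A]_{d+1}$ maps a space of dimension $h_d$ into one of dimension $h_{d+1} \ge h_d$; having maximal rank therefore means this map is injective. Composing these $s+1$ injective maps shows that $\times \ell^{s+1}\colon [A]_0 \to [A]_{s+1}$ is injective. Since $[A]_0 = K \neq 0$, this forces $\ell^{s+1}\cdot 1 \neq 0$ in $A$, contradicting the previous step. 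Hence no Lefschetz element exists and $A$ fails the weak Lefschetz property in characteristic $p$.

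I do not expect a genuine obstacle; the argument is short and self-contained once Proposition~\ref{pro:mono} is available. The only points that need a little care are (i) confirming that the hypothesis really yields $h_0 \le h_1 \le \cdots \le h_{s+1}$, so that each relevant map is constrained to be injective rather than surjective, and (ii) checking that the composite of the chain of multiplication maps is literally multiplication by $\ell^{s+1}$ into degree $s+1$. The boundary case $p^m = s+1$ then goes through verbatim, and since $h_0 = 1$ the monotonicity guarantees $[A]_{s+1}\neq 0$, so there is no degenerate situation to rule out.
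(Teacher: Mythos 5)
Your proof is correct and follows essentially the same route as the paper's: the Frobenius trick $\ell^{p^m}=x_1^{p^m}+\cdots+x_n^{p^m}=0$ in $A$, combined with the weakly increasing Hilbert function forcing injectivity of the relevant multiplications, is exactly the paper's argument. The only cosmetic difference is that the paper exhibits the nonzero kernel element $\ell\in[A]_1$ for $\times\ell^{p^m-1}\colon[A]_1\to[A]_{p^m}$, while you exhibit $1\in[A]_0$ for $\times\ell^{s+1}\colon[A]_0\to[A]_{s+1}$; you also spell out more explicitly why monotonicity of the Hilbert function forces the individual maps to be injective, a step the paper leaves implicit.
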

\begin{proof}
    By Proposition~\ref{pro:mono}, we need only consider $\ell = x_1 + \cdots + x_n$.  Suppose the characteristic of $K$ is $p$, then by the
    Frobenius endomorphism $\ell \cdot \ell^{p^m-1} = \ell^{p^m} = x_1^{p^m} + \cdots + x_n^{p^m}$.  Moreover, as $a \leq p^m$, then $\ell^{p^m} = 0$
    in $A$ while $\ell \neq 0$ in $A$.  Hence $\times\ell^{p^m-1}: [A]_{1} \rightarrow [A]_{p^m}$ is not injective and thus $A$ does not
    have the weak Lefschetz property.
\end{proof}

Further, for monomial ideals, if the weak Lefschetz property holds in characteristic zero, then it holds for almost every characteristic.
\begin{lemma} \label{lem:wlp-ae}
    Let $I$ be an artinian monomial ideal in $R$.  If $R/I$ has the weak Lefschetz property when $\charf{K} = 0$, 
    then $R/I$ has the weak Lefschetz property for $\charf{K}$ sufficiently large.
\end{lemma}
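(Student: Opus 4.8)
The plan is to route through Proposition~\ref{pro:mono}, so that having the weak Lefschetz property becomes the concrete condition that $\ell = x_1 + \cdots + x_n$ is a Lefschetz element, and then to observe that for this particular $\ell$ and a monomial ideal the relevant multiplication maps are recorded by integer matrices that do not depend on $\charf K$. Concretely, fix for each degree $d$ the $K$-basis $M_d$ of $[A]_d$, where $A = R/I$, consisting of the monomials of degree $d$ not lying in $I$. Then $\dim_K [A]_d = |M_d|$ is independent of $\charf K$, and, since $A$ is artinian, $M_d = \emptyset$ for all $d$ beyond some bound. With respect to these bases the map $\times\ell\colon [A]_d \to [A]_{d+1}$ is given by a matrix $\Phi_d$ whose $(m', m)$-entry, for $m' \in M_{d+1}$ and $m \in M_d$, equals $1$ if $m' = x_i m$ for some $i$ and equals $0$ otherwise; thus $\Phi_d$ is a $0$-$1$ matrix that depends only on the combinatorial data of $I$, not on the field.

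Assume now that $R/I$ has the weak Lefschetz property over a field of characteristic zero. By Proposition~\ref{pro:mono} the map $\times\ell$ then has maximal rank $\rho_d := \min\{|M_d|,|M_{d+1}|\}$ in every degree, so each $\Phi_d$ contains a $\rho_d \times \rho_d$ submatrix whose determinant $c_d$ is a nonzero integer. Put $C := \prod_d c_d$, a finite product of nonzero integers, hence a nonzero integer. If $K$ is any infinite field whose characteristic $p$ does not divide $C$ --- in particular, any $p > |C|$ --- then reducing the chosen $\rho_d \times \rho_d$ minor modulo $p$ yields $c_d \bmod p \neq 0$, so over $K$ the matrix $\Phi_d$ has rank at least, hence exactly, $\rho_d$. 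Therefore $\times\ell$ has maximal rank in every degree, and by Proposition~\ref{pro:mono} again $R/I$ has the weak Lefschetz property over $K$. Since all but finitely many primes avoid $C$, this is exactly the assertion.

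There is no real obstacle beyond this bookkeeping; the one point that requires care is that the matrices $\Phi_d$ be genuinely characteristic-free. This is precisely why the argument must pass through Proposition~\ref{pro:mono} and use the specific linear form $x_1 + \cdots + x_n$ together with the monomial basis: for an arbitrary general linear form the entries of the multiplication matrix would depend on a choice of coordinates, which could in principle interact with the characteristic of $K$.
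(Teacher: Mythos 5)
Your proof is correct and follows essentially the same route as the paper's: reduce to $\ell = x_1 + \cdots + x_n$ via Proposition~\ref{pro:mono}, observe that the multiplication matrices in the monomial basis are characteristic-free zero-one matrices, extract the finitely many maximal nonvanishing minors over $\mathbb{Q}$, and conclude for any prime avoiding their product. You are slightly more explicit than the paper on the point that the matrices $\Phi_d$ need not be square, so that one must pass to a $\rho_d \times \rho_d$ minor rather than a determinant of the whole matrix, which is a small but welcome clarification of the paper's phrasing.
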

\begin{proof}
    By Proposition~\ref{pro:mono}, we need only consider $\ell = x_1 + \cdots + x_n$.  As $R/I$ is artinian, then there are finitely many
    maps that need to be checked for the maximal rank property, and this in turn implies finitely many determinants that need to be computed.
    Further, because of the form of $\ell$, the matrices in question are all zero-one matrices.  Thus, the 
    determinants to be checked are integers.  Simply let $p$ be the smallest prime larger than all prime divisors of the determinants, 
    then the determinants are all non-zero modulo $p$ and so $R/I$ has the weak Lefschetz property if $\charf{K} \geq p$.
\end{proof}

And (pseudo-)conversely, again for monomial ideals, if the weak Lefschetz property holds in some positive characteristic, then it holds
for characteristic zero.
\begin{lemma} \label{lem:wlp-p-zero}
    Let $I$ be an artinian monomial ideal in $R$.  If $R/I$ has the weak Lefschetz property when $\charf{K} = p > 0$,
    then $R/I$ has the weak Lefschetz property for $\charf{K} = 0$.
\end{lemma}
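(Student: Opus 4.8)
The plan is to argue exactly as in the proof of Lemma~\ref{lem:wlp-ae}, via the behaviour of integer matrices under reduction modulo a prime. First I would use Proposition~\ref{pro:mono} to reduce to the single linear form $\ell = x_1 + \cdots + x_n$, so that $R/I$ has the weak Lefschetz property in a given characteristic if and only if, in that characteristic, every multiplication map $\times\ell \colon [R/I]_d \rightarrow [R/I]_{d+1}$ has maximal rank. Writing each graded piece in the basis given by the residue classes of the monomials not in $I$, each such map is represented by a matrix $M_d$ whose entries do not depend on $K$: since $\ell m = \sum_{i} x_i m$ and the monomials $x_1 m, \ldots, x_n m$ are pairwise distinct, each column of $M_d$ has entries in $\{0,1\}$, so $M_d$ is a zero-one integer matrix independent of the field.

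Next I would invoke the elementary fact that reduction modulo $p$ cannot increase the rank of an integer matrix: if $r$ is the rank of $M_d$ over $\mathbb{Q}$, then every $(r+1)\times(r+1)$ minor of $M_d$ vanishes as an integer, hence vanishes modulo $p$, so the rank of $M_d$ over a field of characteristic $p$ is at most $r$. Consequently, if $M_d$ attains the maximal possible rank $\min(\dim_K [R/I]_d, \dim_K [R/I]_{d+1})$ when $\charf{K} = p$, then $M_d$ already has this maximal rank over $\mathbb{Q}$, and therefore over any field of characteristic zero, since matrix rank is unchanged under field extension.

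Combining these observations finishes the proof: if $R/I$ has the weak Lefschetz property when $\charf{K} = p > 0$, then for every $d$ the matrix $M_d$ has maximal rank over the prime field of characteristic $p$, hence over $\mathbb{Q}$, hence over any characteristic-zero field $K$; so $\times\ell$ has maximal rank in every degree and $R/I$ has the weak Lefschetz property when $\charf{K} = 0$. I do not expect a genuine obstacle here; the only points requiring care are that the relevant matrices truly have entries independent of the characteristic --- which is precisely where monomiality of $I$ together with the special shape of $\ell$ enters --- and that ``maximal rank'' is the same numerical target in both characteristics, which holds because the dimensions $\dim_K [R/I]_d$ depend only on the combinatorics of the monomial ideal, not on $K$.
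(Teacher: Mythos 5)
Your proposal is correct and follows essentially the same route as the paper's proof: reduce to $\ell = x_1 + \cdots + x_n$ via Proposition~\ref{pro:mono}, observe that each multiplication map is a zero-one integer matrix independent of $K$, and use the fact that a nonvanishing integer mod $p$ is a nonzero integer. The paper phrases the argument loosely in terms of ``determinants'' (by referring back to Lemma~\ref{lem:wlp-ae}); you state the rank argument via minors, which handles the rectangular maps a bit more carefully, but this is a difference of exposition rather than of method.
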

\begin{proof}
    The proof is the same as that of Lemma~\ref{lem:wlp-ae} except we notice that if an integer $d$ is non-zero modulo a prime $p$, then $d$
    is not zero.
\end{proof}

Last, we note that any artinian ideal in two variables has the weak Lefschetz property.  This was proven for characteristic zero in~\cite[Proposition~4.4]{HMNW}
and then for arbitrary characteristic in~\cite[Corollary~7]{MZ}, though it was not specifically stated therein, as noted in~\cite[Remark~2.6]{LZ}.  We provide
a brief, direct proof of this fact to illustrate the weak Lefschetz property.  Unfortunately, the simplicity of this proof fails in three variables, even
for monomial ideals.
\begin{proposition} \label{pro:2-wlp}
    Let $R = K[x,y]$, where $K$ is an infinite field with {\em arbitrary} characteristic.  Every artinian algebra in $R$ has the weak Lefschetz property.
\end{proposition}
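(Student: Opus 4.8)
The plan is to produce a general linear form $\ell \in [R]_1$ and to check directly that it is a Lefschetz element, organising the verification of maximal rank of $\times\ell\colon [A]_d \to [A]_{d+1}$ around the initial degree $\delta$ of $I$, i.e.\ the least $d$ with $[I]_d \neq 0$. Since $A$ is a nonzero artinian algebra we have $0 \neq I \subsetneq R$, so $\delta \geq 1$ exists.

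First I would dispense with the range $d \leq \delta - 2$, which needs no genericity: there $[A]_d = [R]_d$ and $[A]_{d+1} = [R]_{d+1}$, so the map is just multiplication $\times\ell\colon [R]_d \to [R]_{d+1}$, which is injective for any nonzero $\ell$ because $R$ is a domain, hence of maximal rank. The key step is degree $d = \delta - 1$, and this is the one place where the hypothesis that $K$ is infinite genuinely enters. Fix a nonzero form $g \in [I]_\delta$. Only finitely many linear forms up to scalar divide $g$ in $R$, so a general $\ell$ does not divide $g$; equivalently $g \notin \ell\cdot[R]_{\delta-1}$. Now $\ell\cdot[R]_{\delta-1}$ is the image of the injective map $[R]_{\delta-1}\hookrightarrow[R]_\delta$, hence a hyperplane in $[R]_\delta$, and it consists exactly of the forms divisible by $\ell$; since $g \in [I]_\delta$ lies outside it, $[I]_\delta + \ell\cdot[R]_{\delta-1} = [R]_\delta$. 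Therefore the composite $[R]_{\delta-1}\xrightarrow{\times\ell}[R]_\delta \to [R]_\delta/[I]_\delta = [A]_\delta$ is surjective, i.e.\ $\times\ell\colon[A]_{\delta-1}\to[A]_\delta$ is surjective.

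Finally I would invoke Proposition~\ref{pro:surj} to propagate surjectivity: once $\times\ell$ is surjective in degree $\delta-1$, it is surjective in every degree $d \geq \delta - 1$. Combining the two ranges, the chosen $\ell$ makes $\times\ell\colon[A]_d\to[A]_{d+1}$ injective for $d \leq \delta-2$ and surjective for $d \geq \delta - 1$, so $\ell$ is a Lefschetz element and $A$ has the weak Lefschetz property. The only non-formal input is the surjectivity at the initial degree, so that is the heart of the argument; the degenerate situations $\delta = 1$ (the first range is empty) and $[A]_\delta = 0$ (surjectivity in degree $\delta-1$ is then automatic) need only a glance. One could alternatively route the middle steps through the observation that $A/\ell A$ is a cyclic artinian module over $R/(\ell)\cong K[z]$, hence isomorphic to $K[z]/(z^c)$ with Hilbert function a string of $1$'s, but the computation above is self-contained.
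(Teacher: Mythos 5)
Your proof is correct. It hinges on the same pivotal observation as the paper's proof — that a general linear form $\ell$ does not divide a minimal-degree generator $g$ of $I$, because only finitely many $\ell$ (up to scalar) do, and $K$ is infinite — but you package the argument differently. You verify injectivity below the initial degree $\delta$ and surjectivity in degree $\delta-1$ directly, then propagate surjectivity upward via Proposition~\ref{pro:surj}. The paper instead phrases everything in terms of Hilbert functions: it notes that $R/(I,\ell)\cong K[x]/(x^s)$ with $h$-vector consisting of $s$ ones (where $s$ is the minimal generator degree) and that this agrees with $\DeltaPlus h(R/I)$, which is equivalent to the WLP. This is precisely the alternative you sketch in your closing sentence, so you were aware of both routes and chose the more explicit one. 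The two arguments buy roughly the same thing; yours is more self-contained and makes the role of Proposition~\ref{pro:surj} explicit, while the paper's is more compressed and leans implicitly on the standard fact that the inequality $\Delta h_{d+1}(R/I)\leq h_{d+1}(R/(I,\ell))$ forces the Hilbert function of $R/I$ to be non-increasing beyond degree $s-1$. One minor point worth noting in your write-up: the claim that $\ell\cdot[R]_{\delta-1}$ consists exactly of the degree-$\delta$ forms divisible by $\ell$ uses that $R$ has only two variables (so every nonzero linear form generates a prime of height one), which is fine here but is a place where the two-variable hypothesis enters and deserves a word.
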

\begin{proof}
    Assume $I = (g_1, \ldots, g_t)$ and the given generators are minimal.  Let $s = \min\{\deg{g_i} \st 1 \leq i \leq t\}$.  
    Then $h(R/I)$, the $h$-vector of $R/I$, strictly increases by one from $h_0$ to $h_{s-1}$ and $h_{s-1} \geq h_s$, thus the positive part of
    the first difference of $h(R/I)$, $\DeltaPlus h(R/I)$, is $s$ ones.  Moreover, for a general linear form $\ell \in R$, $R/(I,\ell) \cong K[x]/J$ 
    where $J = (x^s)$ so $h(R/(I, \ell))$ is $s$ ones, that is, $\DeltaPlus h(R/I) = h(R/(I,\ell))$.  Hence $R/I$ has the weak Lefschetz property 
    with Lefschetz element $\ell$.
\end{proof}

% -- Section
\section{Almost complete intersections} \label{sec:aci}

Here we restrict to artinian monomial almost complete intersections in three variables.  These are the ideals discussed in \cite[Corollary~7.3]{BK}
and~\cite[Section~6]{MMN}.  

Let $K$ be an infinite field, and consider the ideal
\[
    I_{a,b,c,\alpha,\beta,\gamma} = (x^a, y^b, z^c, x^\alpha y^\beta z^\gamma)
\]
in $R = K[x,y,z]$, where $0 \leq \alpha < a, 0 \leq \beta < b,$ and $0 \leq \gamma < c$.  If $\alpha = \beta = \gamma = 0$, then we define
$I_{a,b,c,0,0,0}$ to be $(x^a, y^b, z^c)$ which is a complete intersection and is studied extensively in~\cite{LZ} and \cite{CGJL}.  Assume at
most one of $\alpha, \beta,$ and $\gamma$ is zero.
\begin{proposition}{\cite[Proposition~6.1]{MMN}} \label{pro:amaci-props}
    Let $I = I_{a,b,c,\alpha,\beta,\gamma}$ be defined as above.  Assume, without loss of generality, that $0 \leq \alpha \leq \beta \leq \gamma$.
    \begin{enumerate}
        \item If $\alpha = 0$, then $R/I$ has socle type $2$ with socle degrees $a+\beta+c-3$ and $a+b+\gamma -3$; thus $R/I$ is level if
            and only if $b - \beta = c - \gamma$.
        \item If $\alpha > 0$, then $R/I$ has socle type $3$ with socle degrees $\alpha + b + c -3$, $a+\beta+c-3$, and $a+b+\gamma -3$; thus
            $R/I$ is level if and only if $a - \alpha = b - \beta = c - \gamma$.
        \item Moreover, the minimal free resolution of $R/I$ has the form
            \begin{equation} \label{eqn:resolution}
                0
                \rightarrow
                \begin{array}{c} R(-a-b-\gamma) \\ \oplus \\ R(-a-\beta-c) \\ \oplus \\ R^n(-\alpha-b-c) \end{array}
                \rightarrow
                \begin{array}{c} R(-a-\beta-\gamma) \\ \oplus \\ R(-\alpha-b-\gamma) \\ \oplus \\  R(-\alpha-\beta-c) \\ \oplus \\
                                 R(-a-b) \\ \oplus \\ R(-a-c) \\ \oplus \\ R^n(-b-c) \end{array}
                \rightarrow
                \begin{array}{c} R(-\alpha-\beta-\gamma) \\ \oplus \\ R(-a) \\ \oplus \\ R(-b) \\ \oplus \\ R(-c) \end{array}
                \rightarrow
                R
                \rightarrow
                R/I
                \rightarrow
                0
            \end{equation}
            where $n = 1$ if $\alpha > 0$ and $n = 0$ if $\alpha = 0$.
    \end{enumerate}
\end{proposition}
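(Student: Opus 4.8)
The plan is to realise $R/I$ by linkage to a complete intersection and then to minimise a mapping cone. Write $\mathfrak{c} = (x^a, y^b, z^c) \subseteq I$, set $m = x^\alpha y^\beta z^\gamma$ and $d = \alpha + \beta + \gamma = \deg m$, so that $I = \mathfrak{c} + (m)$. A routine computation with monomial ideals gives the colon ideal
\[
    \mathfrak{c} : m = (x^{a-\alpha},\, y^{b-\beta},\, z^{c-\gamma}) =: J,
\]
which, since $0 \leq \alpha < a$, $0 \leq \beta < b$, and $0 \leq \gamma < c$, is again a complete intersection; likewise $(m) \cap \mathfrak{c} = mJ$. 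Hence $I/\mathfrak{c} \cong (m)/((m) \cap \mathfrak{c}) \cong (R/J)(-d)$, and the inclusion $I/\mathfrak{c} \hookrightarrow R/\mathfrak{c}$ is the map $(R/J)(-d) \to R/\mathfrak{c}$ carrying $\bar{1}$ to $\bar{m}$. This yields the short exact sequence
\[
    0 \longrightarrow (R/J)(-d) \longrightarrow R/\mathfrak{c} \longrightarrow R/I \longrightarrow 0.
\]

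Next, resolve the two end terms by Koszul complexes: let $K_\bullet$ be the Koszul complex on $x^a, y^b, z^c$ (resolving $R/\mathfrak{c}$) and $G_\bullet$ the Koszul complex on $x^{a-\alpha}, y^{b-\beta}, z^{c-\gamma}$ (resolving $R/J$). Lifting the map $(R/J)(-d) \to R/\mathfrak{c}$ to a chain map $v \colon G_\bullet(-d) \to K_\bullet$, the mapping cone of $v$ is a graded free resolution of $R/I$. Tracking twists, this cone has length four, with $R(-a-b-c)$ in homological degree $4$ and the free modules $F_3 \oplus R(-a-b-c)$, $F_2$, $F_1$ in homological degrees $3$, $2$, $1$, where $F_1, F_2, F_3$ denote the free modules appearing in~\eqref{eqn:resolution} in the case $n = 1$. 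As $R/I$ is artinian it has projective dimension $3$, so this resolution must collapse.

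The crux is the explicit comparison map $v$, obtained by solving the lifting equations against the Koszul differentials degree by degree: in homological degree $0$, $v$ is multiplication by $m$; in degree $1$ it carries the generators attached to $x^{a-\alpha}, y^{b-\beta}, z^{c-\gamma}$ to $y^\beta z^\gamma, x^\alpha z^\gamma, x^\alpha y^\beta$ times the generators attached to $x^a, y^b, z^c$; in degree $2$ it carries the three generators of $G_2(-d)$ to $z^\gamma, y^\beta, x^\alpha$ times the corresponding generators of $K_2$; and in degree $3$ it is the identity of $R(-a-b-c)$. Since the degree-$3$ component is a unit, the cancellation (Gaussian elimination) lemma deletes the two copies of $R(-a-b-c)$ and leaves a length-three resolution with free modules $F_1, F_2, F_3$ as in~\eqref{eqn:resolution} with $n = 1$, the only correction term appearing at the top. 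If moreover $\alpha = 0$, the entry $x^\alpha = 1$ occurring in the degree-$2$ component of $v$ on the generator attached to $y^{b-\beta}, z^{c-\gamma}$ survives this first cancellation and is a second unit, joining a copy of $R(-b-c)$ in the last free module to one in $F_2$; cancelling it yields exactly~\eqref{eqn:resolution} with $n = 0$. Finally one checks that every remaining entry of the differentials lies in $(x,y,z)$ --- here the hypothesis that at most one of $\alpha, \beta, \gamma$ is zero is used, so that (with $\alpha \leq \beta \leq \gamma$) both $\beta$ and $\gamma$ are positive and $d > 0$ --- whence the resolution obtained is minimal and equals~\eqref{eqn:resolution}. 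This proves~(iii).

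Parts~(i) and~(ii) are then read off from the last free module of~\eqref{eqn:resolution}. When $\alpha = 0$ it equals $R(-a-b-\gamma) \oplus R(-a-\beta-c)$, so $R/I$ has socle type $2$ with socle degrees $a+b+\gamma-3$ and $a+\beta+c-3$; when $\alpha > 0$ the additional summand $R(-\alpha-b-c)$ contributes the socle degree $\alpha+b+c-3$ and the socle type is $3$. In either case $R/I$ is level precisely when this module has a single twist, i.e.\ when the listed socle degrees all coincide, which is equivalent to $b - \beta = c - \gamma$ when $\alpha = 0$ and to $a - \alpha = b - \beta = c - \gamma$ when $\alpha > 0$. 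The step I expect to be the main obstacle is the explicit determination of the comparison chain map $v$ together with the twist bookkeeping that isolates the single unit entry responsible for the $n = 0$ versus $n = 1$ dichotomy and confirms that no further cancellation is possible.
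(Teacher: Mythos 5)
The paper states this result as a quotation from \cite[Proposition~6.1]{MMN} and does not supply a proof, so there is no argument here for me to compare yours against directly. Your proof stands on its own and is correct: the colon and intersection computations $\mathfrak{c}:m = (x^{a-\alpha},y^{b-\beta},z^{c-\gamma})$ and $(m)\cap\mathfrak{c} = mJ$ are right, the resulting short exact sequence is exact, your explicit chain map $v_\bullet$ between the two Koszul complexes does commute with the differentials (I checked: $v_1\partial_2^G = \partial_2^K v_2$ and $v_2\partial_3^G = \partial_3^K v_3$), and the mapping cone argument with Gaussian elimination of first the unit $v_3$ and then (when $\alpha=0$) the unit $x^\alpha$ entry of $v_2$ gives exactly the resolution~\eqref{eqn:resolution}. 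Your minimality check is also complete: the entries of the remaining differentials are $\pm x^{a-\alpha},\pm y^{b-\beta},\pm z^{c-\gamma}$ from $\partial^G_\bullet$, the $v_1$ entries $y^\beta z^\gamma,x^\alpha z^\gamma,x^\alpha y^\beta$, the $v_2$ entries $z^\gamma,y^\beta,x^\alpha$, the $v_0$ entry $x^\alpha y^\beta z^\gamma$, and $\pm x^a,\pm y^b,\pm z^c$ from $\partial^K_\bullet$ --- and you correctly use the hypothesis that at most one of $\alpha,\beta,\gamma$ vanishes (together with $\alpha\le\beta\le\gamma$, so $\beta,\gamma>0$) to rule out any further unit. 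Parts (i) and (ii) then read off directly. One very small presentational point: after the second Gaussian elimination in the $\alpha=0$ case, the differentials are modified by Schur complements, and it is worth a sentence to note that since the off-diagonal entries feeding into the correction already lie in $(x,y,z)$, the correction terms lie in $(x,y,z)^2$ and so cannot introduce new units --- you gesture at this but it is the one place a reader might want the reasoning spelled out.
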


Moreover, we see that in characteristic zero the weak Lefschetz property follows for certain choices of the parameters.
\begin{proposition}{\cite[Theorem~6.2]{MMN}} \label{pro:amaci-not-3}
    Let $K$ be an algebraically closed field of characteristic zero.  Then $R/I_{a,b,c,\alpha,\beta,\gamma}$ has the weak Lefschetz
    property if $a + b + c + \alpha + \beta + \gamma \not\equiv 0 \pmod{3}$.
\end{proposition}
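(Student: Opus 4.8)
The plan is to pass to the syzygy bundle of $I$ on $\PP^2$ and read the weak Lefschetz property off its cohomology. Write $I = I_{a,b,c,\alpha,\beta,\gamma}$ and $T = a + b + c + \alpha + \beta + \gamma$, and let $E$ be the kernel of the evaluation map $\SO(-a) \oplus \SO(-b) \oplus \SO(-c) \oplus \SO(-\alpha-\beta-\gamma) \twoheadrightarrow \SO_{\PP^2}$, which is surjective as a map of sheaves because $I$ is $\mathfrak{m}$-primary. Then $E$ is a rank-three bundle with $c_1(E) = -T$, and the long exact cohomology sequence gives a natural identification $[R/I]_j \cong H^1(E(j))$ for all $j$, under which multiplication by a general linear form $\ell$, with $L = V(\ell) \subset \PP^2$ the corresponding line, becomes the map $H^1(E(j)) \to H^1(E(j+1))$ induced by $\ell: E(j) \to E(j+1)$ in the cohomology sequence of $0 \to E(j) \xrightarrow{\ell} E(j+1) \to E|_L(j+1) \to 0$. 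Thus $R/I$ has the weak Lefschetz property if and only if, for a general $\ell$ and every $j$, the kernel of this map (the cokernel of $H^0(E(j+1)) \to H^0(E|_L(j+1))$) or its cokernel (the kernel of $H^1(E|_L(j+1)) \to H^2(E(j))$) vanishes.

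If $E$ is not semistable, then $R/I$ already has the weak Lefschetz property in characteristic zero by~\cite{BK} (this is the situation of~\cite[Corollary~7.3]{BK} recalled in the introduction), so we may assume $E$ is semistable. The Grauert--M\"ulich theorem then forces the generic splitting type $E|_L \cong \SO_L(b_1) \oplus \SO_L(b_2) \oplus \SO_L(b_3)$ to be balanced, $|b_i - b_j| \le 1$; since $b_1 + b_2 + b_3 = -T$ and $3 \nmid T$, this pins the type down uniquely, with $\max_i b_i = -\flfr{T}{3}$ and $\min_i b_i = -\flfr{T}{3} - 1$.

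Next I would record two vanishing statements. A nonzero section of $E(k)$ yields a line subbundle of $E$ of slope at least $-k$, so semistability forces $-k \le \mu(E) = -T/3$; since $3 \nmid T$ this means $H^0(E(k)) = 0$ for all $k \le \flfr{T}{3}$. Applying the same reasoning to $E^\vee$ and using Serre duality, $H^2(E(j)) = H^0(E^\vee(-j-3))^\vee = 0$ for all $j \ge \flfr{T}{3} - 2$. Feeding these into the cohomology sequence: for $j \le \flfr{T}{3} - 1$ the kernel of $\times\ell$ in degree $j$ is $H^0(E|_L(j+1)) = \bigoplus_i H^0(\SO_L(b_i + j + 1))$, which vanishes precisely when $j \le \flfr{T}{3} - 2$; and for $j \ge \flfr{T}{3} - 2$ the cokernel of $\times\ell$ in degree $j$ is $H^1(E|_L(j+1)) = \bigoplus_i H^1(\SO_L(b_i + j + 1))$, which vanishes precisely when $j \ge \flfr{T}{3} - 1$. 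Hence $\times\ell$ is injective for $j \le \flfr{T}{3} - 2$ and surjective for $j \ge \flfr{T}{3} - 1$, so it has maximal rank in every degree, and $R/I$ has the weak Lefschetz property.

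The delicate point is the bookkeeping in the last paragraph: the injectivity range $j \le \flfr{T}{3} - 2$ and the surjectivity range $j \ge \flfr{T}{3} - 1$ together exhaust $\ZZ$, and this dovetailing is exactly where the hypothesis $3 \nmid T$ is used — it forces the balanced splitting type of $E|_L$ to have the two distinct slopes $-\flfr{T}{3}$ and $-\flfr{T}{3}-1$, so the relevant cohomology of $E|_L$ switches off at consecutive integers. When $3 \mid T$ the splitting type is $(-T/3, -T/3, -T/3)$, the two ranges shrink to $j \le T/3 - 2$ and $j \ge T/3$, and the middle degree $j = T/3 - 1$ is left uncontrolled — exactly the degree in which the weak Lefschetz property can, and typically does, fail. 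One should also be slightly careful with strict versus non-strict slope inequalities, but since $T/3 \notin \ZZ$ the bounds are unambiguous and a strictly semistable $E$ needs no extra treatment. For non-semistable $E$ we simply invoke~\cite{BK}; a self-contained argument there would observe that the excess sections of $E|_L$ beyond the balanced range are accounted for by $H^0(E(j+1))$ arising from a destabilizing subsheaf.
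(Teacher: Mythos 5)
The paper does not prove this statement itself; it is quoted verbatim from \cite[Theorem~6.2]{MMN}, so there is no proof here to compare against directly. That said, your argument is a correct, self-contained proof, and it follows the Brenner--Kaid framework (syzygy bundle, restriction to a general line, Grauert--M\"ulich, cohomology of the twisted sequence) that the present paper itself develops in Section~\ref{sec:splitting-type} --- compare Theorem~\ref{thm:wlp-semistable-splitting-type} and Propositions~\ref{pro:st-nmod3}--\ref{pro:st-mod3}, which work with exactly the identification $[R/I]_j \cong H^1(E(j))$ and the generic splitting type $(p,q,r)$. This is also almost certainly the route taken in \cite{MMN}, which invokes \cite{BK} in the same way you do (non-semistable case dispatched by \cite[Corollary~7.3]{BK}; semistable case handled by Grauert--M\"ulich plus the observation that $3 \nmid T$ forces the balanced splitting). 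In effect you have re-derived \cite[Theorem~2.2]{BK} from scratch rather than citing it; the bookkeeping with the vanishing ranges of $H^0(E(j+1))$, $H^0(E|_L(j+1))$, $H^1(E|_L(j+1))$, $H^2(E(j))$ is all correct, and the dovetailing of the injectivity window $j \le \flfr{T}{3}-2$ and the surjectivity window $j \ge \flfr{T}{3}-1$ is precisely where $3 \nmid T$ enters, as you observe.

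One small imprecision in your closing remark: when $3 \mid T$, Grauert--M\"ulich does \emph{not} force the splitting type to be $(-T/3,-T/3,-T/3)$. Since the theorem only controls consecutive gaps $b_i - b_{i+1} \le 1$, the type $(-T/3+1,\,-T/3,\,-T/3-1)$ is also allowed, and that is exactly the ``jumping'' case in which the weak Lefschetz property fails (cf.\ Theorem~\ref{thm:wlp-semistable-splitting-type}). This does not affect the proof itself, which only concerns $3 \nmid T$, but the statement as phrased is misleading about the obstructed case. Similarly, "$|b_i - b_j| \le 1$" is what you \emph{conclude} for $3 \nmid T$, not what Grauert--M\"ulich directly asserts for rank three; it would be cleaner to state the theorem's conclusion as $b_1 \ge b_2 \ge b_3$ with $b_i - b_{i+1} \le 1$, and then observe that $b_1+b_2+b_3 = -T$ with $3 \nmid T$ excludes both the all-equal and the spread-two patterns.
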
~

\subsection{Semi-stability}~

The syzygy module $\syz{I}$ of $I = I_{a,b,c,\alpha,\beta,\gamma}$ fits into the exact sequence
\begin{equation*}
        0
    \longrightarrow
        \syz{I}
    \longrightarrow 
        R(-\alpha-\beta-\gamma) \oplus R(-a) \oplus R(-b) \oplus R(-c)
    \longrightarrow 
        I_{a,b,c,\alpha,\beta,\gamma}
    \longrightarrow 
        0.
\end{equation*}
The sheafification $\widetilde{\syz{I}}$ is a rank 3 bundle on $\PP^2$, and it is called the {\em syzygy bundle} of $I$.  Recall that a vector bundle $E$ 
on projective space is said to be {\em semistable} if, for every coherent subsheaf $F \subset E$, the following inequality holds:
\[
    \frac{c_1(F)}{rk(F)} \leq \frac{c_1(E)}{rk(E)}.
\]

We analyse when $I_{a,b,c,\alpha,\beta,\gamma}$ has a semistable syzygy bundle.  (Note, the slightly awkward definition of $s$ in the following
is kept for consistency with~\cite[Section~7]{MMN}, the starting point of this work.)
\begin{proposition} \label{pro:semistable}
    Let $K$ be an algebraically closed field of characteristic zero.  Further, let $I = I_{a,b,c,\alpha,\beta,\gamma}$, and define
    the following rational numbers
    \begin{equation*}
        \begin{split}
            s :=& \frac{1}{3}(a + b + c + \alpha + \beta + \gamma)-2, \\
            A :=& s+2-a, \\
            B :=& s+2-b, \\ 
            C :=& s+2-c, \mbox{ and}\\
            M :=& s + 2 - (\alpha + \beta + \gamma).
        \end{split}
    \end{equation*}
    Then $I$ has a semistable syzygy bundle if and only if the following conditions all hold:
    \begin{enumerate}
        \item $0 \leq M$,
        \item $0 \leq A \leq \beta  + \gamma$, 
        \item $0 \leq B \leq \alpha + \gamma$, and
        \item $0 \leq C \leq \alpha + \beta$.
    \end{enumerate}
\end{proposition}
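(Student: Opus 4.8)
The plan is to work directly with the syzygy bundle $E := \widetilde{\syz{I}}$ on $\PP^2$. Sheafifying the defining presentation of $\syz{I}$ and using that $\widetilde{I} = \SO_{\PP^2}$ (valid since $I$ has finite colength), we realise $E$ as the kernel of the surjection $\SO(-\alpha-\beta-\gamma)\oplus\SO(-a)\oplus\SO(-b)\oplus\SO(-c)\twoheadrightarrow\SO_{\PP^2}$ determined by the four generators of $I$; hence $E$ is a rank-three bundle with $c_1(E) = -(a+b+c+\alpha+\beta+\gamma) = -3(s+2)$ and slope $\mu(E) = -(s+2)$. On the smooth surface $\PP^2$ every saturated rank-one subsheaf of a bundle is a line bundle, and a saturated rank-two subsheaf $F \subset E$ dualises to a line subbundle $(E/F)^\vee$ of $E^\vee$ satisfying $\mu(F) > \mu(E)$ precisely when $\mu\bigl((E/F)^\vee\bigr) > \mu(E^\vee)$. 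Thus $E$ is semistable if and only if (a) every nonzero map $\SO(-d) \to E$ has $d \ge s+2$, and (b) every nonzero map $\SO(-d) \to E^\vee$ has $d \ge -(s+2)$. Since multiplication by a general linear form is injective on the sections of a torsion-free sheaf, $H^0(E(d)) \ne 0$ forces $H^0(E(d+1)) \ne 0$, so (a) amounts to the single vanishing $H^0(E(d)) = 0$ with $d$ the largest integer less than $s+2$, and similarly for (b).

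For (a): as $\syz{I}$ is a second syzygy module over the three-dimensional ring $R$ it has depth at least two, so $\bigoplus_d H^0(E(d)) = \syz{I}$. By Proposition~\ref{pro:amaci-props}(iii), $\syz{I}$ is minimally generated in degrees $a+\beta+\gamma$, $\alpha+b+\gamma$, $\alpha+\beta+c$, $a+b$, $a+c$, and, when $\alpha > 0$, also $b+c$; hence (a) holds exactly when the minimum of these degrees is at least $s+2$. Rewriting: $a+\beta+\gamma \ge s+2$, $\alpha+b+\gamma \ge s+2$, $\alpha+\beta+c \ge s+2$ become $A \le \beta+\gamma$, $B \le \alpha+\gamma$, $C \le \alpha+\beta$; while, using $A+B+C+M = s+2$ (which follows at once from $a+b+c+\alpha+\beta+\gamma = 3(s+2)$), the inequalities $a+b \ge s+2$, $a+c \ge s+2$, $b+c \ge s+2$ become $C+M \ge 0$, $B+M \ge 0$, $A+M \ge 0$. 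When $\alpha = 0$ the last of these three is absent, but it will follow from the conditions established next, so no real case distinction survives.

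For (b): write $F_1 := R(-\alpha-\beta-\gamma)\oplus R(-a)\oplus R(-b)\oplus R(-c)$, so that $0 \to \syz{I} \to F_1 \to I \to 0$. Since $R/I$ is Cohen--Macaulay of codimension three, $\operatorname{Hom}_R(I,R) = R$ and $\operatorname{Ext}^1_R(I,R) = 0$; dualising the sequence therefore presents $\syz{I}^\vee$ as the cokernel of the map $R \to F_1^\vee = R(\alpha+\beta+\gamma)\oplus R(a)\oplus R(b)\oplus R(c)$ given by the column $(x^\alpha y^\beta z^\gamma,\, x^a,\, y^b,\, z^c)^{\mathsf{T}}$, which is homogeneous of degree $0$. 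Since $\bigoplus_d H^0(E^\vee(d)) = \syz{I}^\vee$ and this single relation lies in degree $0$, while $F_1^\vee$ already has generators in the negative degrees $-(\alpha+\beta+\gamma), -a, -b, -c$, the lowest nonvanishing degree of $\syz{I}^\vee$ equals $-\max(\alpha+\beta+\gamma, a, b, c)$. Hence (b) is equivalent to $-\max(\alpha+\beta+\gamma, a, b, c) \ge -(s+2)$, i.e.\ to $M \ge 0$ and $A, B, C \ge 0$.

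Combining (a) and (b), $E$ is semistable if and only if $A, B, C, M \ge 0$ together with $A \le \beta+\gamma$, $B \le \alpha+\gamma$, $C \le \alpha+\beta$: the auxiliary inequalities $A+M, B+M, C+M \ge 0$ from (a) are subsumed, since $M \ge 0$ and $A \ge 0$ already give $A+M \ge 0$, and symmetrically. This is exactly the list (i)--(iv). The step I expect to be the main obstacle is the reduction in the first paragraph: arguing cleanly that semistability of this rank-three bundle is detected by line subbundles of $E$ and of $E^\vee$ alone, that each such test collapses to the vanishing of one graded piece, and that the resulting numerical criterion is correctly insensitive both to whether $\alpha = 0$ and to whether $s$ is an integer, so that the weak inequalities involving $s+2$ are read with the correct rounding. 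The two cohomology computations and the identities among $A, B, C, M$, and $s$ are then routine.
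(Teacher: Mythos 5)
Your proof is correct, and it takes a genuinely different route from the paper's. The paper's argument is essentially one citation plus arithmetic: it invokes Brenner's numerical criterion \cite[Corollary~7.3]{Br} for semistability of syzygy bundles of monomial ideals, which delivers the three blocks $\max\{a,b,c,\alpha+\beta+\gamma\}\leq s+2$, $\min\{\alpha+\beta+c,\,\alpha+b+\gamma,\,a+\beta+\gamma\}\geq s+2$, and $\min\{a+b,\,a+c,\,b+c\}\geq s+2$ as a black box, and then translates into $A,B,C,M$ and observes that the third block follows from the first. You instead re-derive that criterion for this family from scratch: you reduce semistability of the rank-three bundle $E$ to the vanishing of $H^0(E(d))$ for $d<s+2$ and of $H^0(E^\vee(d))$ for $d<-(s+2)$, and then read those graded pieces off the minimal free resolution of $R/I$ from Proposition~\ref{pro:amaci-props}(iii) and off the dual presentation $0\to R\to F_1^\vee\to\syz{I}^\vee\to 0$. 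Both routes land on exactly the paper's inequalities, and the closing arithmetic (including the observation that $A+M,B+M,C+M\geq 0$ are subsumed) is the same. What your approach buys is a self-contained argument that makes visible which syzygy degree or which shift of $F_1^\vee$ enforces each inequality; what it costs is more machinery. The delicate step you flag does go through, but two points deserve to be written out: (a) saturated rank-two subsheaves of $E$ correspond, via kernels of surjections onto reflexive hulls of rank-one quotients, precisely to saturated rank-one subsheaves of $E^\vee$, so that testing line subbundles of $E$ and of $E^\vee$ really exhausts Mumford--Takemoto semistability in rank three; and (b) both $\syz{I}$ and $\syz{I}^\vee$ have depth at least two --- for $\syz I$ because it is a second syzygy of $R/I$ over the three-dimensional regular ring, and for $\syz{I}^\vee$ because your dual sequence exhibits it with projective dimension one --- which is what justifies identifying $H^0$ of the twists with the graded pieces of the modules. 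With those two remarks inserted, the argument is complete.
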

\begin{proof}
    Using~\cite[Corollary~7.3]{Br} we have that $I$ has a semistable syzygy bundle if and only if
    \begin{enumerate}
        \item[(a)] $\max\{a, b, c, \alpha + \beta + \gamma\} \leq s+2$,
        \item[(b)] $\min\{\alpha + \beta + c, \alpha + b + \gamma, a + \beta + \gamma\} \geq s+2$, and
        \item[(c)] $\min\{a+b, a+c, b+c\} \geq s+2$.
    \end{enumerate}

    Notice that condition (a) is equivalent to $A, B, C,$ and $M$ being non-negative.  Moreover, condition
    (b) is equivalent to the upper bounds on $A, B,$ and $C$.  We claim that condition (c) follows directly
    from condition (a).

    Indeed, by condition (a) we have that $C+M \geq 0$ and so $A+B+C+M = s+2 \geq A+B = 2(s+2) - a - b$, thus $a + b \geq s+2$.
    Similarly, we have $a + c \geq s+2$ and $b+c \geq s+2$.  Thus condition (c) holds if condition (a) holds.
\end{proof}

This gives further conditions on the parameters that force the weak Lefschetz property in characteristic zero (see~\cite[Theorem~3.3]{BK}).  
This extends~\cite[Lemma~6.7]{MMN}.
\begin{corollary} \label{cor:wlp-ss}
    Let $K$ be an algebraically closed field of characteristic zero, and let $I = I_{a,b,c,\alpha,\beta,\gamma}$.  If any of the conditions
    (i)-(iv) in Proposition~\ref{pro:semistable} fail, then $R/I$ has the weak Lefschetz property.
\end{corollary}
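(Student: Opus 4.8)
The plan is to deduce Corollary~\ref{cor:wlp-ss} from Proposition~\ref{pro:semistable} together with the result of Brenner--Kaid cited as~\cite[Theorem~3.3]{BK} and recalled in the discussion preceding this corollary: if the syzygy bundle of $I$ is \emph{not} semistable, then $R/I$ has the weak Lefschetz property in characteristic zero. The logical skeleton is a contrapositive chain. First I would observe that Proposition~\ref{pro:semistable} gives an \emph{if and only if} characterisation of when $I = I_{a,b,c,\alpha,\beta,\gamma}$ has a semistable syzygy bundle, namely the simultaneous validity of conditions (i)--(iv). Consequently, if at least one of (i)--(iv) fails, then the biconditional forces the syzygy bundle of $I$ to be non-semistable.

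Next I would invoke~\cite[Theorem~3.3]{BK}: over an algebraically closed field of characteristic zero, an artinian ideal in three variables whose syzygy bundle is non-semistable yields a quotient ring with the weak Lefschetz property. (Here one should note that $I_{a,b,c,\alpha,\beta,\gamma}$ is indeed artinian, since $x^a, y^b, z^c \in I$, so that the hypotheses of the Brenner--Kaid result apply; this is the only ``checking'' step and it is immediate.) Combining the two previous steps, the failure of any of conditions (i)--(iv) implies that the syzygy bundle is non-semistable, which in turn implies that $R/I$ has the weak Lefschetz property, completing the argument.

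I do not anticipate a genuine obstacle here: the corollary is a direct packaging of Proposition~\ref{pro:semistable} and the quoted theorem of Brenner--Kaid, and the proof is essentially one line of implication-chasing. The only point requiring a modicum of care is making sure the hypotheses match—specifically that $K$ is algebraically closed of characteristic zero (which is assumed in both the corollary and Proposition~\ref{pro:semistable}) and that $I$ is artinian (which follows from the presence of the pure powers $x^a$, $y^b$, $z^c$ among the generators). So the ``hard part,'' such as it is, was already done in proving Proposition~\ref{pro:semistable}; the corollary itself is a formal consequence.
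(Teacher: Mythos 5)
Your proposal is correct and follows exactly the paper's intended route: Proposition~\ref{pro:semistable} gives an if-and-only-if characterisation of semistability, so the failure of any of (i)--(iv) forces non-semistability, and then \cite[Theorem~3.3]{BK} gives the weak Lefschetz property in characteristic zero. The paper presents this corollary without proof as an immediate consequence, and your reconstruction, including the small observation that $I$ is artinian because it contains the pure powers, is precisely the intended one-line implication chain.
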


The above definitions of $s, A, B, C,$ and $M$ are not without purpose.  Before going further, we make a few comments
about the given parameters.
\begin{remark} \label{rem:sabcm}
    Suppose $s, A, B, C,$ and $M$ are defined as in Proposition~\ref{pro:semistable}.  Then clearly $s$ is an integer if
    and only if $a + b + c + \alpha + \beta + \gamma \equiv 0 \pmod{3}$; if $s$ is an integer, then so are $A, B, C,$ and $M$.
    Further, $A+B+C+M = s+2$ and $A+B+C=\alpha + \beta + \gamma$.
\end{remark}~

\subsection{Associated matrices}~
Given the minimal free resolution of $R/I$ (see (\ref{eqn:resolution})), we can easily compute the $h$-vector of $R/I$ as a weighted sum 
of binomial coefficients dependent only on the parameters $a,b,c,\alpha, \beta,$ and $\gamma$.  

We say $h(A)$ has {\em twin peaks} if there exists an integer $s$ such that $h_s = h_{s+1}$.  When $I_{a,b,c,\alpha,\beta,\gamma}$ has parameters
as in Proposition~\ref{pro:semistable} and $s$ is an integer, then the algebras $R/I_{a,b,c,\alpha,\beta,\gamma}$ always have twin peaks and
the peaks are bounded by the socle degrees.  This extends the results in~\cite[Lemma~7.1]{MMN} wherein the level algebras
$R/I_{a,b,c,\alpha,\beta,\gamma}$ with twin peaks are identified.
\begin{lemma} \label{lem:twin-peaks}
    Assume the parameters of $I = I_{a,b,c,\alpha,\beta,\gamma}$ satisfy the conditions in Proposition~\ref{pro:semistable} and suppose 
    $a + b + c + \alpha + \beta + \gamma \equiv 0 \pmod{3}$.  Then $R/I$ has twin peaks in degrees $s$ and $s+1$.  
    Moreover, $s+1$ is bounded above by the socle degrees of $R/I$.
\end{lemma}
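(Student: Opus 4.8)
The plan is to compute the $h$-vector of $R/I$ explicitly from the minimal free resolution~(\ref{eqn:resolution}) and then verify directly that $h_s = h_{s+1}$ under the stated hypotheses, while also checking that $s+1$ does not exceed the socle degrees. First I would write the Hilbert series of $R/I$ as the alternating sum of the Hilbert series of the free modules in~(\ref{eqn:resolution}), divided by $(1-t)^3$; equivalently, I would express $h_d = \dim_K[R/I]_d$ as a signed sum of binomial coefficients $\binom{d-m_i+2}{2}$ over the twists $m_i$ appearing in the resolution. Concretely, with the convention $\binom{k}{2} = 0$ for $k < 2$, one gets
\[
    h_d = \binom{d+2}{2} - \binom{d-a+2}{2} - \binom{d-b+2}{2} - \binom{d-c+2}{2} - \binom{d-\alpha-\beta-\gamma+2}{2} + (\text{degree-two correction terms}),
\]
where the correction terms come from the second and third free modules. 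Using Remark~\ref{rem:sabcm}, namely $A+B+C+M = s+2$ and $A+B+C = \alpha+\beta+\gamma$, I would rewrite each twist in terms of $A,B,C,M$; for instance $d-a+2 = d - (s+2) + A + \dots$, so that when $d = s$ the arguments of the binomials become small and controlled by the inequalities $0 \le A,B,C,M$ and the upper bounds $A \le \beta+\gamma$, etc., from Proposition~\ref{pro:semistable}.

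The key step is then the computation $h_{s+1} - h_s$. I would set $d = s$ and $d = s+1$ in the binomial expression, subtract, and use the identity $\binom{k+1}{2} - \binom{k}{2} = k$ (for $k \ge 1$, and $=0$ for $k \le 0$) to reduce everything to a \emph{linear} expression in $A,B,C,M$ and the parameters. The semistability inequalities guarantee which of the binomials are genuinely nonzero at degrees $s$ and $s+1$ (this is exactly what conditions (i)--(iv) control), so the first difference collapses to a sum that I expect to vanish identically once the relations $A+B+C+M = s+2$ and $A+B+C = \alpha+\beta+\gamma$ are substituted. This forces $h_s = h_{s+1}$, i.e.\ twin peaks in degrees $s$ and $s+1$.

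For the last sentence — that $s+1$ is at most the socle degrees — I would compare $s+1$ against each socle degree listed in Proposition~\ref{pro:amaci-props}. Assuming $0 \le \alpha \le \beta \le \gamma$, the smallest socle degree is $\alpha + b + c - 3$ (when $\alpha > 0$) or $a + \beta + c - 3$ (when $\alpha = 0$); in either case I would show $s + 1 \le (\text{that socle degree})$ is equivalent, after clearing the $\tfrac13$ in $s$, to one of the upper bounds in Proposition~\ref{pro:semistable} — indeed $s+1 \le \alpha + b + c - 3$ rearranges to $a + \beta + \gamma \ge s+2$, which is condition (b) of the proof of Proposition~\ref{pro:semistable}, i.e.\ $A \le \beta+\gamma$. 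The main obstacle is purely bookkeeping: correctly tracking the degree-two correction terms from the middle free module (which includes the rank-$n$ summand $R^n(-b-c)$ with $n \in \{0,1\}$) and the vanishing ranges of all the binomials, so that the cancellation in $h_{s+1}-h_s$ is airtight in every sub-case of the inequalities. I do not expect any genuinely hard idea beyond this careful case analysis.
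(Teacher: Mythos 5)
Your approach to the first claim (that $h_s = h_{s+1}$) is essentially the same as the paper's: read off the $h$-vector from the resolution, observe that the upper bounds $A \le \beta+\gamma$, $B\le\alpha+\gamma$, $C\le\alpha+\beta$ are exactly what force the penultimate (and ultimate) free modules not to contribute to $h_d$ for $d\le s+1$, and then use $\binom{n+1}{2}-\binom{n}{2}=n$ together with $A+B+C+M=s+2$ to get $h_{s+1}-h_s = (s+2)-(A+B+C+M)=0$. That part is fine, if a little vaguer than the paper about which correction terms survive.

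The second claim — that $s+1$ is at most each socle degree — is where your proposal has a genuine error. You assert that $s+1 \le \alpha+b+c-3$ \emph{rearranges to} $a+\beta+\gamma\ge s+2$ (i.e.\ the semistability condition $A\le\beta+\gamma$). These are not equivalent. Using $3(s+2)=a+b+c+\alpha+\beta+\gamma$, the condition $s+1\le\alpha+b+c-3$ becomes $\alpha+b+c\ge s+4$, while $a+\beta+\gamma\ge s+2$ becomes $\alpha+b+c\le 2(s+2)$; one is a lower bound and the other an upper bound on the same quantity. In the paper's notation, $\alpha+b+c-3 = 2A+B+C+2M+\alpha-3$, so the socle inequality is equivalent to $A+M+\alpha\ge 2$, and the semistability conditions alone give no lower bound on $A+M+\alpha$. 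The actual argument (which your proposal misses) needs the further observation: if $A+M\ge 1$ you are done, but if $A=M=0$ then $b+c=\alpha+\beta+\gamma$, and the strict inequalities $b>\beta$, $c>\gamma$ from the definition of $I$ then force $\alpha\ge 2$, so $A+M+\alpha\ge 2$ still holds. Without this extra case analysis — which relies on the generator condition $\beta<b$, $\gamma<c$, not on semistability — the claim about the socle degrees is unproven.
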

\begin{proof}
    The upper bounds on $A, B,$ and $C$ are exactly those required to force the ultimate and penultimate terms in the minimal free resolution
    of $R/I$, given in Proposition~\ref{pro:amaci-props}(iii), to not contribute to the computation of the $h$-vector for degrees up to $s+1$.
    Moreover, as $A,B,C,$ and $M$ are non-negative, and using $\binom{n+1}{2} - \binom{n}{2} = n$ for $n \geq 0$, then
    \begin{equation*}
        \begin{split}
            h_{s+1} - h_s = & \left(\binom{s+3}{2} - \binom{A+1}{2} - \binom{B+1}{2} - \binom{C+1}{2} - \binom{M+1}{2}\right) \\
                            &  - \left(\binom{s+2}{2} - \binom{A}{2} - \binom{B}{2} - \binom{C}{2} - \binom{M}{2}\right) \\
                          = & s+2 - (A + B + C + M) \\
                          = & 0.
        \end{split}
    \end{equation*}

    Suppose, without loss of generality, that $\alpha \leq \beta \leq \gamma$.  The socle degrees of $R/I$ are $\alpha + b + c -3$, $a+\beta+c-3$,
    and $a+b+\gamma -3$, with the first removed if $\alpha = 0$.  The following argument shows that $\alpha + b + c - 3$ is at least $s+1$, however,
    with a simple changing of names it can be used to show that each of the socle degrees is at least $s+1$.

    As we are considering the socle degree $\alpha + b + c - 3$, we may assume $\alpha \geq 1$.  Notice that $\alpha + b + c - 3 = 2A + B + C + 2M + \alpha - 3$,
    which is at least $s+1 = A+B+C+M-1$ exactly when $A+M+\alpha \geq 2$.  If $A+M\geq 1$, then we are done.  Suppose $A+M = 0$, then $A = M = 0$ and 
    $b+c = \alpha+\beta+\gamma$.  Moreover, since $b > \beta$ and $c > \gamma$, then $\alpha+\beta+\gamma = b+c \geq \beta+\gamma + 2$.  Thus $\alpha \geq 2$.
\end{proof}

An immediate consequence of the previous lemma is that exactly one map need be considered for each algebra in order to determine the presence 
of the weak Lefschetz property.
\begin{corollary} \label{cor:one-map}
    Assume the parameters of $I = I_{a,b,c,\alpha,\beta,\gamma}$ satisfy the conditions in Proposition~\ref{pro:semistable} and suppose 
    $a + b + c + \alpha + \beta + \gamma \equiv 0 \pmod{3}$.  Then $R/I$ has the weak Lefschetz property if and only if the map 
    $\times(x+y+z): [R/I]_s \rightarrow [R/I]_{s+1}$ is injective (or surjective).
\end{corollary}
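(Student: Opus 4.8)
The plan is to reduce the statement, via the structural results of this section, to the behaviour of the multiplication maps in the single degree $s$. By Proposition~\ref{pro:mono}, since $I$ is a monomial ideal, $R/I$ has the weak Lefschetz property if and only if $\ell := x+y+z$ is a Lefschetz element; that is, if and only if each map $\times\ell\colon [R/I]_d \to [R/I]_{d+1}$ has maximal rank. By Lemma~\ref{lem:twin-peaks} we have $h_s = h_{s+1}$, so in degree $s$ ``maximal rank'' is the same as being bijective, which is in turn equivalent to being injective and to being surjective. This immediately gives one implication: if $R/I$ has the weak Lefschetz property, then the degree-$s$ map has maximal rank, hence is bijective, hence injective (equivalently surjective).

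For the converse, assume $\times\ell\colon [R/I]_s \to [R/I]_{s+1}$ is bijective, and treat the ranges $d \ge s$ and $d \le s$ separately. Since the degree-$s$ map is surjective and $R/I$ is cyclic, i.e. generated in degree $0$ as an $R$-module (and $s \ge 0$ under the standing hypotheses), Lemma~\ref{lem:mod-surj} applied to $M = R/I$ shows by induction on $d$ that $\times\ell$ is surjective, hence of maximal rank, in every degree $d \ge s$. Since the degree-$s$ map is injective and, by Lemma~\ref{lem:twin-peaks}, $s+1$ does not exceed the smallest socle degree of $R/I$, Proposition~\ref{pro:inj} applies with index $d = s+1$, and iterating it downward shows that $\times\ell$ is injective, hence of maximal rank, in every degree $d \le s$. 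As every integer $d$ satisfies $d \le s$ or $d \ge s$, all the maps $\times\ell$ have maximal rank, so $\ell$ is a Lefschetz element and $R/I$ has the weak Lefschetz property. Note that injectivity of a linear map forces maximal rank with no prior information about the Hilbert function, and likewise for surjectivity, so no separate unimodality input is needed.

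The only point requiring a little care is that Propositions~\ref{pro:surj} and~\ref{pro:inj} are stated for a general linear form, whereas here we must work with the specific form $\ell = x+y+z$. This is legitimate: passing to $\ell = x+y+z$ is exactly what Proposition~\ref{pro:mono} permits for monomial ideals, and the two propagation statements invoked are valid for this $\ell$ as well, since Lemma~\ref{lem:mod-surj} is proved with no genericity or characteristic hypothesis and Proposition~\ref{pro:inj} is deduced from it by $K$-duality, which is insensitive to the choice of $\ell$. I expect this bookkeeping, together with correctly aligning ``the map in degree $s$'' with the degree index appearing in Proposition~\ref{pro:inj} so that the socle-degree bound of Lemma~\ref{lem:twin-peaks} is the one actually applied, to be the only subtlety; everything else is a direct invocation of the already-established propagation lemmas.
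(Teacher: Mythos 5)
Your proof is correct and follows the same route as the paper, which simply cites Lemma~\ref{lem:twin-peaks} together with Propositions~\ref{pro:surj}--\ref{pro:mono}; you have merely unpacked the one-line citation, including the genuine (if minor) subtlety that the propagation results are stated for a general linear form but their proofs carry over verbatim to $\ell = x+y+z$.
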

\begin{proof}
    This follows immediately from Lemma~\ref{lem:twin-peaks} by using Propositions~\ref{pro:surj}--\ref{pro:mono}.
\end{proof}

This leads to the definition of two matrices with determinants that determine the weak Lefschetz property.  The first is a zero-one matrix and the
second is a matrix of binomial coefficients.  
\begin{proposition} \label{pro:wlp-zero-one}
    Assume the parameters of $I = I_{a,b,c,\alpha,\beta,\gamma}$ satisfy the conditions in Proposition~\ref{pro:semistable} and suppose 
    $a + b + c + \alpha + \beta + \gamma \equiv 0 \pmod{3}$.  
    
    Then there exists a matrix $Z = Z_{a,b,c,\alpha,\beta,\gamma}$ such that
    \begin{enumerate}
        \item $Z$ is a square integer matrix of size $h_s$,
        \item $R/I$ has the weak Lefschetz property if and only if $\det{Z} \not\equiv 0 \pmod{\charf{K}}$, and
        \item the entries of $Z$ are given by
            \[
                (Z)_{i,j} = \left\{
                    \begin{array}{ll}
                        1 & n_j \mbox{ is a multiple of } m_i, \\[0.3em]
                        0 & \mbox{otherwise},
                    \end{array}
                \right.
            \]
            where $\{m_1, \ldots, m_{h_s}\}$ and $\{n_1, \ldots, n_{h_s}\}$ are the monomial bases of $[R/I]_s$ and $[R/I]_{s+1}$, respectively,
            and are given in lexicographic order.
    \end{enumerate}
\end{proposition}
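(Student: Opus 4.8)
The plan is to exhibit $Z$ explicitly as the matrix of the multiplication map $\times(x+y+z)\colon [R/I]_s \to [R/I]_{s+1}$ with respect to the monomial bases, and then to invoke Corollary~\ref{cor:one-map}. First I would recall from Lemma~\ref{lem:twin-peaks} that, under the stated hypotheses, $h_s = h_{s+1}$, so the map $\times(x+y+z)$ goes between two $K$-vector spaces of the same finite dimension $h_s$; hence its matrix in the chosen bases is square of size $h_s$, giving part~(i). For part~(iii), I would compute, for each basis monomial $m_i$ of $[R/I]_s$, the image $(x+y+z)m_i = xm_i + ym_i + zm_i$ in $[R/I]_{s+1}$. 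Each of the three products $xm_i, ym_i, zm_i$ is either zero in $R/I$ (if it lies in $I$) or equals exactly one basis monomial $n_j$ of $[R/I]_{s+1}$; moreover a basis monomial $n_j$ appears in this sum (with coefficient $1$) precisely when $n_j = m_i \cdot v$ for some variable $v \in \{x,y,z\}$, which is exactly the condition that $n_j$ is a multiple of $m_i$ (the quotient being a single variable, automatically of degree one since $\deg n_j = \deg m_i + 1$). Because distinct variables give distinct monomial multiples, no cancellation or doubling occurs, and the coefficient of $n_j$ in $(x+y+z)m_i$ is $1$ if $m_i \mid n_j$ and $0$ otherwise; this is precisely the entry formula for $(Z)_{i,j}$ after ordering both bases lexicographically. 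In particular the entries are $0$ or $1$, so $Z$ is an integer matrix, completing~(i) and~(iii).

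For part~(ii), I would note that by Proposition~\ref{pro:mono} the algebra $R/I$ has the weak Lefschetz property if and only if $x+y+z$ is a Lefschetz element, and by Corollary~\ref{cor:one-map} this holds if and only if the single map $\times(x+y+z)\colon [R/I]_s \to [R/I]_{s+1}$ is injective (equivalently surjective, since source and target have equal dimension). A linear map between vector spaces of the same finite dimension over $K$ is an isomorphism if and only if the determinant of its matrix is nonzero in $K$; since the matrix is the integer matrix $Z$ and the entries are reduced into $K$, this determinant is nonzero in $K$ exactly when $\det Z \not\equiv 0 \pmod{\charf K}$ (interpreting this as $\det Z \neq 0$ when $\charf K = 0$). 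Chaining these equivalences yields~(ii).

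The only genuinely delicate point is the claim that there is no cancellation in the sum $(x+y+z)m_i$ when reduced modulo $I$: a priori one might worry that $xm_i$ and $ym_i$ reduce to the same basis monomial, or that a nonzero $xm_i$ modulo $I$ fails to be a single basis monomial. Both worries are dispelled because $I$ is a \emph{monomial} ideal, so $[R/I]_{s+1}$ has a monomial $K$-basis and any monomial of degree $s+1$ is either in $I$ (hence zero in $R/I$) or is itself a basis element; and $xm_i, ym_i, zm_i$ are pairwise distinct monomials whenever they are nonzero. This is exactly where the monomial hypothesis — and with it the reduction to the combinatorial matrix $Z$ — is used, and it is worth stating carefully in the write-up.
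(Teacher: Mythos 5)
Your proposal is correct and follows the paper's proof essentially verbatim: reduce to the single map $\times(x+y+z)\colon [R/I]_s\to[R/I]_{s+1}$ via Corollary~\ref{cor:one-map}, note the matrix is square of size $h_s$ by Lemma~\ref{lem:twin-peaks}, and identify invertibility with nonvanishing of $\det Z$ in $K$. The paper dismisses part~(iii) with ``follows immediately from the construction of the map,'' whereas you spell out why the entries are $0$ or $1$ with no cancellation (a genuine consequence of $I$ being monomial, so that $[R/I]_{s+1}$ has a monomial basis and $xm_i, ym_i, zm_i$ are distinct monomials); this is a worthwhile elaboration, not a different approach.
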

\begin{proof}
    We notice that the map $\times(x+y+z): [R/I]_s \rightarrow [R/I]_{s+1}$ can be represented as a matrix $Z$ with rows and columns indexed
    by fixed monomial bases of $[R/I]_s$ and $[R/I]_{s+1}$, respectively.  This follows immediately from viewing $[R/I]_d$ as a vector space over $K$.

    Claim (i) follows from Lemma~\ref{lem:twin-peaks} wherein it is shown that $h_s = h_{s+1}$.  Since $Z$ is square, then the injectivity of 
    $\times(x+y+z): [R/I]_s \rightarrow [R/I]_{s+1}$ is equivalent to $Z$ being invertible, that is, equivalent to $\det{Z}$ being non-zero
    in $K$.  Thus, claim (ii) follows from Corollary~\ref{cor:one-map} wherein it is shown that the injectivity of the map 
    $\times(x+y+z): [R/I]_s \rightarrow [R/I]_{s+1}$ exactly determines the presence of the weak Lefschetz property for $R/I$.  
    Claim (iii) follows immediately from the construction of the map.
\end{proof}

The following generalises the results in~\cite[Theorem~7.2 and Corollary~7.3]{MMN}.
\begin{proposition} \label{pro:wlp-binom}
    Assume the parameters of $I = I_{a,b,c,\alpha,\beta,\gamma}$ satisfy the conditions in Proposition~\ref{pro:semistable}, and suppose 
    $a + b + c + \alpha + \beta + \gamma \equiv 0 \pmod{3}$.  
    
    Then there exists a matrix $N = N_{a,b,c,\alpha,\beta,\gamma}$ such that
    \begin{enumerate}
        \item $N$ is a square integer matrix of size $C+M$,
        \item $R/I$ has the weak Lefschetz property if and only if $\det{N} \not\equiv 0 \pmod{\charf{K}}$, and
        \item the entries of $N$ are given by 
            \[
                (N)_{i,j} = \left\{
                    \begin{array}{ll}
                        \displaystyle \binom{c}{A + j - i} & \mbox{if } 1 \leq i \leq C, \\[0.8em]
                        \displaystyle \binom{\gamma}{A+C-\beta + j - i} & \mbox{if } C + 1 \leq i \leq C+M.
                     \end{array}
                \right.
            \]
    \end{enumerate}
\end{proposition}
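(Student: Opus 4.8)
The plan is to carry out the usual Artinian reduction modulo $\ell = x+y+z$ and then exhibit the relevant multiplication map as a concrete map between spaces of binary forms. By Corollary~\ref{cor:one-map} together with the twin-peaks equality $h_s = h_{s+1}$ of Lemma~\ref{lem:twin-peaks}, $R/I$ has the weak Lefschetz property if and only if $\times \ell \colon [R/I]_s \to [R/I]_{s+1}$ is surjective, equivalently if and only if its cokernel $[R/(I + \ell R)]_{s+1}$ vanishes. The first step is to identify $R/(I+\ell R)$: substituting $z = -(x+y)$ yields a graded isomorphism $R/(I+\ell R) \cong K[x,y]/\bar I$, where $\bar I = (x^a,\, y^b,\, (x+y)^c,\, x^\alpha y^\beta (x+y)^\gamma)$, the factors $(-1)^c$ and $(-1)^\gamma$ being units in $K$.

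The second step is to present $[K[x,y]/\bar I]_{s+1}$ as a cokernel. Writing $\bar I = (x^a, y^b) + \bigl((x+y)^c,\, x^\alpha y^\beta (x+y)^\gamma\bigr)$, we have $[K[x,y]/\bar I]_{s+1} = \operatorname{coker}(\psi)$, where
\[
    \psi\colon K[x,y]_{s+1-c} \oplus K[x,y]_{s+1-\alpha-\beta-\gamma} \longrightarrow \bigl[K[x,y]/(x^a,y^b)\bigr]_{s+1}
\]
is given by $\psi(g,h) = (x+y)^c g + x^\alpha y^\beta (x+y)^\gamma h$. The source has dimension $(s+2-c)+(s+2-\alpha-\beta-\gamma) = C+M$. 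For the target, the monomials $x^i y^{s+1-i}$ with $B \le i \le a-1$ form a $K$-basis of $\bigl[K[x,y]/(x^a,y^b)\bigr]_{s+1}$; here the semistability bounds $A \ge 0$ and $B \ge 0$ from Proposition~\ref{pro:semistable} are precisely what guarantee that the index range has this clean form, whence the target has dimension $a+b-s-2$, which equals $C+M$ by Remark~\ref{rem:sabcm}. Therefore $\psi$ is a $K$-linear map between two spaces of dimension $C+M$; it is surjective if and only if it is bijective if and only if its matrix $N$ with respect to fixed monomial bases has $\det N \neq 0$ in $K$, i.e.\ $\det N \not\equiv 0 \pmod{\charf K}$. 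Since $\operatorname{coker}(\psi) = [R/(I+\ell R)]_{s+1} = \operatorname{coker}(\times\ell)$, this establishes parts (i) and (ii).

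The third step is to compute the entries of $N$. Order the rows by the monomial bases $x^{C-1}, x^{C-2}y, \dots, y^{C-1}$ of $K[x,y]_{s+1-c}$ followed by $x^{M-1}, \dots, y^{M-1}$ of $K[x,y]_{s+1-\alpha-\beta-\gamma}$, and order the columns by the target basis $x^{a-1}y^{s+2-a}, x^{a-2}y^{s+3-a}, \dots$ Expanding $(x+y)^c$ and $(x+y)^\gamma$ by the binomial theorem and comparing exponents, each monomial that appears has degree $s+1$, hence is either $0$ in $K[x,y]/(x^a,y^b)$ or is one of the chosen column monomials; reading off the relevant coefficient and simplifying the exponent with the identities $A+B+C+M = s+2$ and $A+B+C = \alpha+\beta+\gamma$ of Remark~\ref{rem:sabcm} produces $\binom{c}{A+j-i}$ in the first $C$ rows and $\binom{\gamma}{A+C-\beta+j-i}$ in the remaining $M$ rows; the standard convention that a binomial coefficient vanishes when its lower argument is negative or exceeds the top argument matches the absence of those terms in the expansion. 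As binomial coefficients are integers, $N$ is an integer matrix, giving (iii).

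The main obstacle is the dimension bookkeeping in the second step: one must check carefully, using the semistability inequalities, that the stated monomials genuinely form a basis of $\bigl[K[x,y]/(x^a,y^b)\bigr]_{s+1}$ and that their number collapses to exactly $C+M$, since it is this numerical coincidence — ultimately the identity $A+B+C+M = s+2$ — that makes $N$ a square matrix whose vanishing modulo $p$ is equivalent to the failure of the weak Lefschetz property. The exponent arithmetic in the third step is then routine.
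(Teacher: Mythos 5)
Your proposal is correct and takes essentially the same route as the paper: reduce modulo $\ell=x+y+z$ to $S/J$ with $J=(x^a,y^b,(x+y)^c,x^\alpha y^\beta(x+y)^\gamma)$, identify $[S/J]_{s+1}$ as the cokernel of a $(C+M)\times(C+M)$ map whose matrix with respect to the monomial bases is $N$, and read off the entries by expanding $(x+y)^c$ and $(x+y)^\gamma$; the only difference is that you present the system of equations as an explicit map $\psi$ and verify the target dimension via the basis $\{x^iy^{s+1-i}: B\le i\le a-1\}$ of $[S/(x^a,y^b)]_{s+1}$, which is a slightly more careful packaging of the same argument.
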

\begin{proof}
    Notice that $R/(I,x+y+z) \cong S/J$, where $S = K[x,y]$ and 
    \[
        J = (x^a, y^b, (x+y)^c, x^\alpha y^\beta (x+y)^\gamma).
    \]
    Thus the sequence
    \[
        [R/I]_d \xrightarrow{\times(x+y+z)} [R/I]_{d+1} \rightarrow [R/(I, x+y+z)]_{d+1} \rightarrow 0
    \]
    implies that $\times(x+y+z): [R/I]_s \rightarrow [R/I]_{s+1}$ is injective exactly when $[S/J]_{s+1} = 0$.
    Hence it suffices to show that all $s+2$ monomials of the form $x^iy^j$ where $i+j=s+1$ are in $J$.

    Clearly if $i \geq a$ or $j \geq b$, then $x^iy^j$ is in $J$.  This leaves $s+2 - (s+2-a) - (s+2-b) = s+2 - A - B = C+M$ monomials
    that are not trivially in $J$.  Thus there are $C+M$ equations and unknowns, all of which only involve the non-monomial
    terms (after reduction by the monomial terms).  Associated to this system of equations is a square integer matrix
    of size $C+M$, call it $N$.  Then $N$ is invertible if and only if $\det{N}$ is non-zero in $K$.  Thus, claims (i) and (ii) hold.

    There are $s+2-c = C$ ways to scale $(x+y)^c$ and $s+2-(\alpha+\beta+\gamma) = M$ ways to scale $x^\alpha y^\beta (x+y)^\gamma$
    to be degree $s+1$.  In both cases consider the binomial coefficient indexed by the degree of $y$.  Then $(N)_{i,j}$ is the
    coefficient on $x^{a-j}y^{A+j-1}$ in the scaling $x^{C-i}y^{i-1}(x+y)^c$ for $1 \leq i \leq C$, i.e., $\binom{c}{A+j-i}$, and in the scaling
    $x^{C+M-i}y^{i-C-1}x^\alpha y^\beta (x+y)^\gamma$ for $C+1 \leq i \leq C+M$, i.e., $\binom{\gamma}{A+C-\beta+j-i}$.  Thus claim (iii) holds.
\end{proof}

Clearly $\det{Z_{a,b,c,\alpha,\beta,\gamma}}$ and $\det{N_{a,b,c,\alpha,\beta,\gamma}}$ must both be either zero or have the same set of prime
divisors.  We can determine a few of the prime divisors from the known failure of the weak Lefschetz property.
\begin{proposition} \label{pro:wlp-p}
    Assume the parameters of $I = I_{a,b,c,\alpha,\beta,\gamma}$ satisfy the conditions in Proposition~\ref{pro:semistable}, and suppose 
    $a + b + c + \alpha + \beta + \gamma \equiv 0 \pmod{3}$.  If $K$ has positive characteristic $p$ and their exists
    a positive integer $m$ such that 
    \[ 
        \max\{a,b,c\} \leq p^m \leq s+1 = \frac{1}{3}(a + b + c + \alpha + \beta + \gamma)-1,
    \]
    then 
    \begin{enumerate}
        \item $R/I$ fails to have the weak Lefschetz property,
        \item $p$ is a prime divisor of the determinant of $Z_{a,b,c,\alpha,\beta,\gamma}$, and
        \item $p$ is a prime divisor of the determinant of $N_{a,b,c,\alpha,\beta,\gamma}$.
    \end{enumerate}
\end{proposition}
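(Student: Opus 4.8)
The three claims are closely intertwined, and the natural strategy is to deduce (i) from the earlier machinery, and then obtain (ii) and (iii) as immediate consequences via Propositions~\ref{pro:wlp-zero-one} and~\ref{pro:wlp-binom}. For (i), the key observation is that the hypothesis $\max\{a,b,c\} \leq p^m \leq s+1$ is precisely the hypothesis needed to invoke Lemma~\ref{lem:wlp-p}. Indeed, by Remark~\ref{rem:sabcm} and Lemma~\ref{lem:twin-peaks}, the Hilbert function of $R/I$ weakly increases to degree $s+1$ (the twin peaks sit in degrees $s$ and $s+1$, so in particular $h_s = h_{s+1}$ and the function is nondecreasing up through degree $s+1$). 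Setting $a' := \max\{a,b,c\}$ to be the least positive integer with $x^{a'}, y^{a'}, z^{a'}$ all in $I$ — wait, one must be slightly careful: the least $a'$ with $x_i^{a'} \in I$ for all $i$ is exactly $\max\{a,b,c\}$, since $x^a, y^b, z^c$ are among the generators. Then Lemma~\ref{lem:wlp-p}, applied with this value of $a'$ and with $s$ playing the role of $s$ in that lemma, yields that $R/I$ fails the weak Lefschetz property in characteristic $p$, which is claim (i). Concretely, $\ell^{p^m} = x^{p^m} + y^{p^m} + z^{p^m} = 0$ in $R/I$ because $p^m \geq \max\{a,b,c\}$, while $\ell^{p^m-1}$ is nonzero on $[R/I]_1$ because $p^m \leq s+1$ keeps everything in a range where the relevant component does not vanish.

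\medskip

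\textbf{From (i) to (ii) and (iii).} Once (i) is established, claim (ii) follows directly: by Proposition~\ref{pro:wlp-zero-one}(ii), $R/I$ has the weak Lefschetz property if and only if $\det Z_{a,b,c,\alpha,\beta,\gamma} \not\equiv 0 \pmod p$; since the weak Lefschetz property fails in characteristic $p$ by (i), we must have $\det Z \equiv 0 \pmod p$, i.e., $p \mid \det Z$. To conclude $p$ is a genuine prime \emph{divisor}, one needs $\det Z \neq 0$ as an integer; but this holds because, by Lemma~\ref{lem:wlp-p-zero}, if $\det Z$ were zero then the weak Lefschetz property would fail in every characteristic, in particular characteristic zero, and then — here one can invoke that $\det Z$ is an honest integer and $p \mid \det Z$ with $\det Z = 0$ is vacuous, so more precisely: $p \mid \det Z$ is the assertion, and this is already the content of (ii) as stated (the phrase ``prime divisor of the determinant'' is consistent with $\det Z = 0$ only if we agree every prime divides $0$; but in fact $\det Z \ne 0$ need not be claimed — see the remark just before the proposition that both determinants ``must both be either zero or have the same set of prime divisors''). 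So (ii) is simply: $p \equiv 0$ kills the weak Lefschetz property, hence by the if-and-only-if of Proposition~\ref{pro:wlp-zero-one}, $p \mid \det Z$. Claim (iii) is identical, using Proposition~\ref{pro:wlp-binom}(ii) in place of Proposition~\ref{pro:wlp-zero-one}(ii).

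\medskip

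\textbf{The main obstacle.} The only genuine content is verifying the hypotheses of Lemma~\ref{lem:wlp-p} in our situation — specifically, confirming that the Hilbert function of $R/I$ weakly increases to degree $s+1$, which is where Lemma~\ref{lem:twin-peaks} (and its proof, showing $h_{s+1} = h_s$ and that the peaks are the maximum) does the work, together with the elementary identification of $\max\{a,b,c\}$ as the least exponent $a'$ with all pure powers $x_i^{a'} \in I$. Everything else is bookkeeping: translating ``fails the weak Lefschetz property in characteristic $p$'' through the two biconditionals already proved. I expect no real difficulty; the proof should be two or three sentences invoking Lemma~\ref{lem:wlp-p}, Lemma~\ref{lem:twin-peaks}, and Propositions~\ref{pro:wlp-zero-one} and~\ref{pro:wlp-binom} in turn.
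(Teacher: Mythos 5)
Your proof follows the paper's argument exactly: Lemma~\ref{lem:twin-peaks} supplies the ``Hilbert function weakly increases to degree $s+1$'' hypothesis required by Lemma~\ref{lem:wlp-p} (with $a' = \max\{a,b,c\}$ as the least exponent making all pure powers lie in $I$), giving (i), and then (ii) and (iii) are read off from the biconditionals in Propositions~\ref{pro:wlp-zero-one} and~\ref{pro:wlp-binom}. One small slip in the ``Concretely'' aside: $\times\ell^{p^m-1}$ is \emph{not} nonzero on $[R/I]_1$ --- on the contrary, it annihilates $\ell$ --- and the role of $p^m \le s+1$ is that the increasing Hilbert function together with WLP would force $\times\ell$ to be injective in each degree up to $p^m$, hence the composite $\times\ell^{p^m-1}$ would be injective, contradiction; since you correctly invoke Lemma~\ref{lem:wlp-p} itself, this misstatement does not affect the validity of the proof.
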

\begin{proof}
    By Lemma~\ref{lem:twin-peaks}, the Hilbert function of $R/I$ weakly increases to degree $s+1$, hence part~(i) follows by Lemma~\ref{lem:wlp-p}.
    Parts~(ii) and~(iii) then follow from Propositions~\ref{pro:wlp-zero-one} and~\ref{pro:wlp-binom}, respectively.
\end{proof}

In the next section we will see a nice combinatorial interpretation for both matrices as well as the defined values $s, A, B, C,$ and $M$.

% -- Section
\section{Punctured hexagons and friends} \label{sec:ph}

Recall the definition of $s, A, B, C,$ and $M$, and the conditions thereon, from Proposition~\ref{pro:semistable}.  In this section
we assume, without exception, that $I = I_{a,b,c,\alpha,\beta,\gamma}$ has parameters matching these conditions and further
that $a + b + c + \alpha + \beta + \gamma \equiv 0 \pmod{3}$. 

~\subsection{Punctured hexagons}~

Notice that every monomial in $[R]_d$ is of the form $x^i y^j z^k$ where $i,j,$ and $k$ are non-negative integers such that $i+j+k=d$.  Hence 
we can organise the monomials in $[R]_d$ into a triangle of side-length $d+1$ with $x^d$ at the lower-center, $y^d$ at the upper-right, and
$z^d$ at the upper-left.  (See Figure~\ref{fig:mono-tri}.)

\begin{figure}[!ht]
    \includegraphics[scale=1.5]{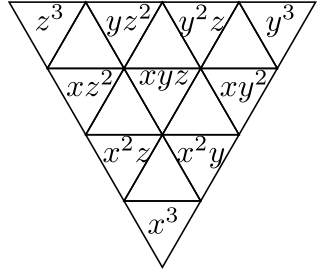}
    \caption{The monomial triangle for $[R]_3$}
    \label{fig:mono-tri}
\end{figure}

Notice that we can interlace the monomials of $[R]_{d-1}$ within the monomials of $[R]_{d}$.  If we stay consistent with our orientation (i.e.,
largest power of $x$ at the lower-center, largest power of $y$ at the upper-right, and largest power of $z$ at the upper-left), then two monomials
are adjacent if and only if one divides the other.  (See Figure~\ref{fig:interlace-tri}.)  We call such a figure the {\em interlaced basis
region of $[R]_{d-1}$ and $[R]_{d}$}.

\begin{figure}[!ht]
    \includegraphics[scale=1.5]{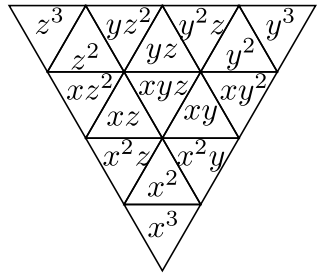}
    \caption{The interlaced basis region of $[R]_2$ and $[R]_3$}
    \label{fig:interlace-tri}
\end{figure}

If we compute the interlaced basis region of $[R/I_{a,b,c,\alpha,\beta,\gamma}]_{s}$ and $[R/I_{a,b,c,\alpha,\beta,\gamma}]_{s+1}$, then
we get a punctured hexagonal region.

\begin{theorem} \label{thm:interlace-amaci}
    Let $I = I_{a,b,c,\alpha,\beta,\gamma}$ satisfy the conditions in Proposition~\ref{pro:semistable}, and suppose 
    $a + b + c + \alpha + \beta + \gamma \equiv 0 \pmod{3}$.  Then the interlaced basis region $H_{a,b,c,\alpha,\beta,\gamma}$ 
    of $[R/I]_{s}$ and $[R/I]_{s+1}$ is in the shape of a hexagon with side-lengths (in clockwise cyclic order, starting at the bottom)
    \[
        (A, B+M, C, A+M, B, C+M)
    \]
    and with a puncture in the shape of an equilateral triangle of side-length $M$.  The puncture has sides parallel to the sides of the hexagon 
    of lengths $A+M, B+M,$ and $C+M$.  Moreover, the puncture is located $\alpha, \beta,$ and $\gamma$ units from the sides of length
    $A+M, B+M,$ and $C+M$, respectively.  (See Figure~\ref{fig:punctured-hexagon}.)
    \begin{figure}[!ht]
        \includegraphics[scale=0.667]{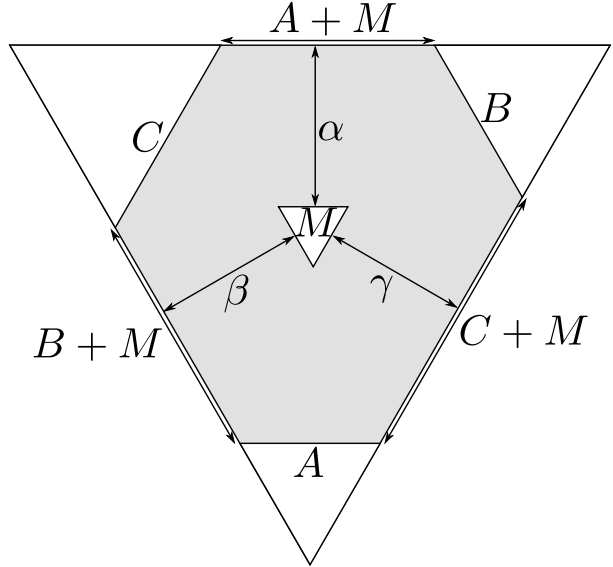}
        \caption{$H_{a,b,c,\alpha,\beta,\gamma}$, the interlaced basis region of $[R/I]_s$ and $[R/I]_{s+1}$}
        \label{fig:punctured-hexagon}
    \end{figure}
\end{theorem}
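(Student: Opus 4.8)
The approach is to work with the monomials directly. First I would make explicit the dictionary behind Figures~\ref{fig:mono-tri} and~\ref{fig:interlace-tri}: for the full polynomial ring $R$, the interlaced basis region of $[R]_{s}$ and $[R]_{s+1}$ is an equilateral triangle $T$ of side-length $s+2$, in which each monomial $x^i y^j z^k$ of degree $s$ or $s+1$ occupies a unit triangle, two cells share an edge exactly when one of the monomials divides the other, and the exponents $i$, $j$, $k$ measure (affinely) the distance from the side of $T$ opposite the $x$-, $y$-, and $z$-corner respectively, the corners of $T$ being the cells of $x^{s+1}$, $y^{s+1}$, $z^{s+1}$. Since $H_{a,b,c,\alpha,\beta,\gamma}$ is obtained from $T$ by deleting the cells of the monomials lying in $I$, it remains to identify the deleted cells.

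Next I would account for the three pure-power generators. A monomial is divisible by $x^a$ exactly when $i \ge a$; among degrees $s$ and $s+1$ such monomials number $\binom{A+1}{2}+\binom{A}{2}=A^2$ and form a corner sub-triangle of $T$ of side-length $s+2-a=A$ at the $x$-corner. Symmetrically, $y^b$ deletes a corner sub-triangle of side $B$ at the $y$-corner and $z^c$ one of side $C$ at the $z$-corner. As $A+B=(s+2)-(C+M)\le s+2$ and cyclically, these three corners are pairwise disjoint, so removing them truncates $T$ into a hexagon: the three new truncation edges have lengths $A$, $B$, $C$, while the three surviving portions of the sides of $T$ have lengths $(s+2)-A-B=C+M$, $(s+2)-B-C=A+M$ and $(s+2)-C-A=B+M$; traversing the boundary in the orientation of Figure~\ref{fig:punctured-hexagon} these appear in the claimed clockwise order $(A,B+M,C,A+M,B,C+M)$.

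Then I would treat the monomial generator $x^\alpha y^\beta z^\gamma$. A monomial is divisible by it exactly when $i\ge\alpha$, $j\ge\beta$, $k\ge\gamma$; writing $i=\alpha+i'$, $j=\beta+j'$, $k=\gamma+k'$ and using $\alpha+\beta+\gamma=A+B+C=(s+2)-M$, these monomials of degrees $s$ and $s+1$ correspond, after translation, to the interlaced basis region of $[R]_{M-2}$ and $[R]_{M-1}$, hence to a triangular puncture $P$ of side-length $M$ with the same orientation as $T$ (read via the degenerate conventions when $M\le 1$). Its three sides are parallel to the hexagon sides of lengths $A+M$, $B+M$, $C+M$, and, because the smallest $x$-, $y$-, $z$-exponents occurring in $P$ are $\alpha$, $\beta$, $\gamma$, it sits at distances $\alpha$, $\beta$, $\gamma$ from those sides respectively. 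The one place the hypotheses enter is in checking that $P$ is a genuine interior puncture, that is, disjoint from the three deleted corners: for the $x$-corner this says the largest $x$-exponent appearing in $P$, namely $s+1-\beta-\gamma$, is smaller than $a$, which rearranges to $A\le\beta+\gamma$ --- exactly condition (ii) of Proposition~\ref{pro:semistable}; conditions (iii) and (iv) handle the other two corners, while $M\ge 0$ and $A,B,C\ge 0$ merely keep $P$ and $T$ from degenerating. Assembling the pieces, $H_{a,b,c,\alpha,\beta,\gamma}$ is precisely the truncated hexagon just described with the triangular puncture $P$ removed, which is the assertion.

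The bulk of the argument is bookkeeping rather than insight: pinning down the coordinate dictionary so that the six side-lengths emerge in the stated clockwise order, and keeping the two interlaced layers (degrees $s$ and $s+1$) aligned so that the corner sizes, the puncture size, and --- most delicately --- the ``no poke-out'' inequalities are free of off-by-one errors. The only conceptual point is the recognition that the semistability inequalities (ii)--(iv) of Proposition~\ref{pro:semistable} are exactly what keep the triangular puncture strictly inside the truncated corners; everything else is addition.
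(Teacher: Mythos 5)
Your proof follows essentially the same approach as the paper's: read off from each monomial generator of $I$ the exponent inequality it imposes, translate that into a deleted sub-region of the size-$(s+2)$ triangle (three corner triangles of sides $A,B,C$ from the pure powers, a central triangle of side $M$ from the mixed term), and observe that conditions (i)--(iv) of Proposition~\ref{pro:semistable} are precisely what make the side-lengths non-negative and the four deleted regions pairwise disjoint. Your write-up is more detailed than the paper's --- in particular the translation $i=\alpha+i'$, etc., identifying the puncture with the interlaced region for $[R]_{M-2},[R]_{M-1}$, and the explicit rearrangement of the ``no poke-out'' inequalities into conditions (ii)--(iv) are worked out where the paper merely asserts them --- but the decomposition and the role of the hypotheses are the same.
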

\begin{proof}
    The interlaced basis region of $[R/I]_s$ and $[R/I]_{s+1}$ corresponds to a spatial placement of the monomials of the associated 
    components of $R/I$.  As $I$ is a monomial ideal, we can easily get restrictions on the monomials $x^iy^jz^k$ in the region:
    \begin{enumerate}   
        \item The generator $x^a$ forces $0 \leq i < a$; this corresponds to the lower-center missing triangle which has side-length $s+2 - a = A$.
        \item The generator $y^b$ forces $0 \leq j < b$; this corresponds to the upper-right missing triangle which has side-length $s+2 - b = B$.
        \item The generator $z^c$ forces $0 \leq k < c$; this corresponds to the upper-left missing triangle which has side-length $s+2 - c = C$.
        \item The generator $x^\alpha y^\beta z^\gamma$ forces one of $i < \alpha, j < \beta$, or $k < \gamma$ to also hold;  this corresponds
            to the center missing triangle, which has side-length $s+2 - \alpha - \beta - \gamma = M$.  This further forces the particular
            placement of the puncture.
    \end{enumerate}
    Moreover, the conditions in Proposition~\ref{pro:semistable} force the regions to have non-negative side-lengths and to not overlap.
\end{proof}

\begin{remark} \label{rem:bijection}
    The ideals $I = I_{a,b,c,\alpha,\beta,\gamma}$ in Theorem~\ref{thm:interlace-amaci} are in bijection with their hexagonal regions 
    (assuming a fixed orientation and assuming a puncture of side-length zero is still considered to be in a particular position).  
    Suppose we have a punctured hexagonal region, as in Figure~\ref{fig:punctured-hexagon}, with parameters $A,B,C,M,\alpha,$ and 
    $\beta$.  Then $a = B+C+M$, $b = A+C+M$, $c = A+B+M$, and $\gamma = A+B+C-(\alpha + \beta)$.

    Moreover, we notice that, in characteristic zero, these ideals are exactly the artinian monomial almost complete intersections which
    do not immediately have the weak Lefschetz property from Proposition~\ref{pro:amaci-not-3} or Proposition~\ref{pro:semistable}.
\end{remark}

Notice that by Lemma~\ref{lem:twin-peaks} we have $h_s = h_{s+1}$, so the region $H_{a,b,c,\alpha,\beta,\gamma}$ has the same number of upward
pointing triangles as it has downward pointing triangles.  In particular, it may then be possible to tile the region by lozenges (i.e., rhombi 
with unit side-lengths and angles of $60^\circ$ and $120^\circ$; we also note a pair of alternate names used in the literature: calissons and
diamonds). 

~\subsection{Non-intersecting lattice paths}~

We follow~\cite[Section~5]{CEKZ} (similarly,~\cite[Section~2]{Fi}) to translate lozenge tilings of $H_{a,b,c,\alpha,\beta,\gamma}$ to 
families of non-intersecting lattice paths.  An example of a lozenge tiling and its associated family of non-intersecting lattice paths
is given in Figure~\ref{fig:hex-nilp}.

\begin{figure}[!ht]
    \begin{minipage}[b]{0.49\linewidth}
        \centering
        \includegraphics[scale=0.45]{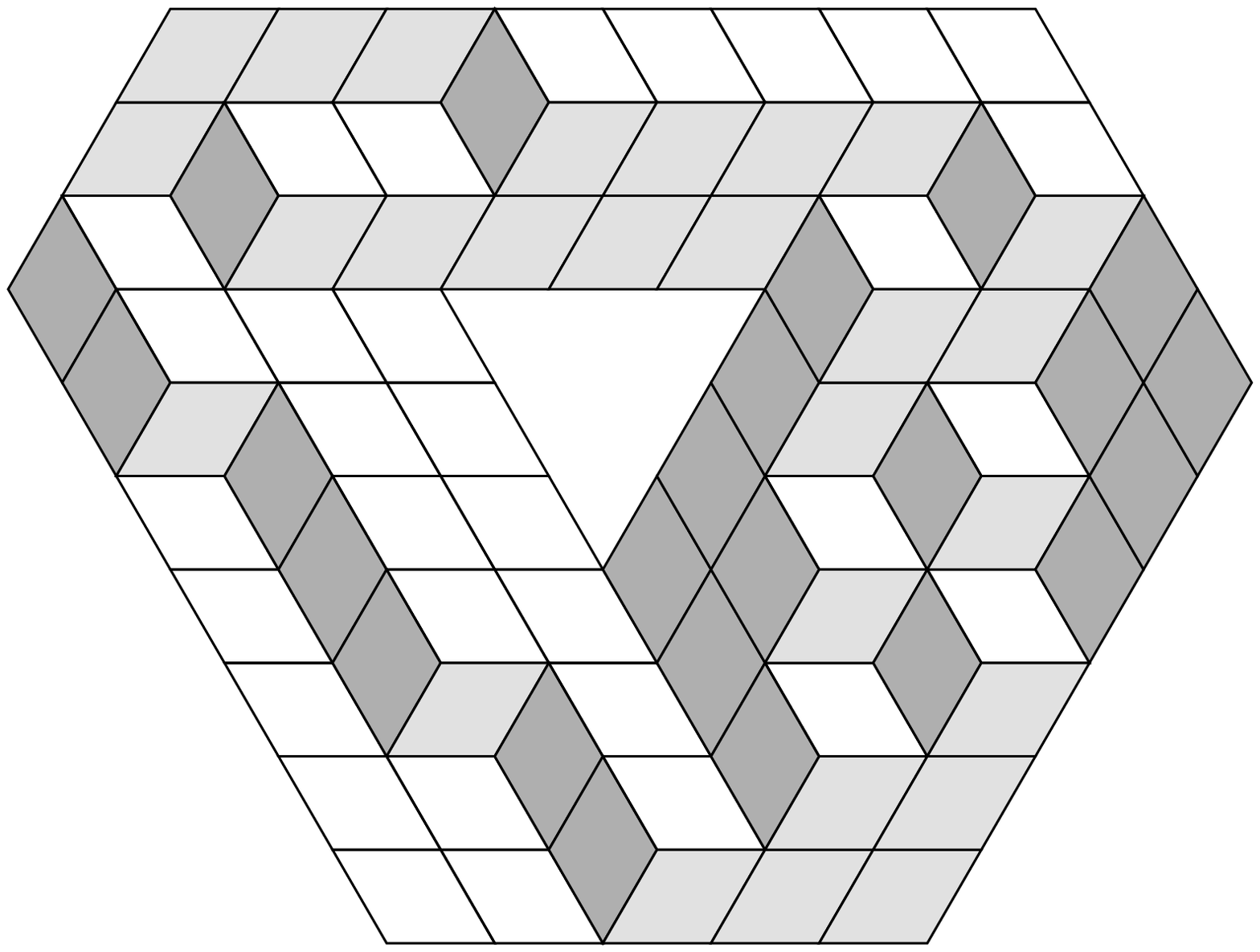}\\
        {\em Hexagon tiling by lozenges}
    \end{minipage}
    \begin{minipage}[b]{0.49\linewidth}
        \centering
        \includegraphics[scale=0.45]{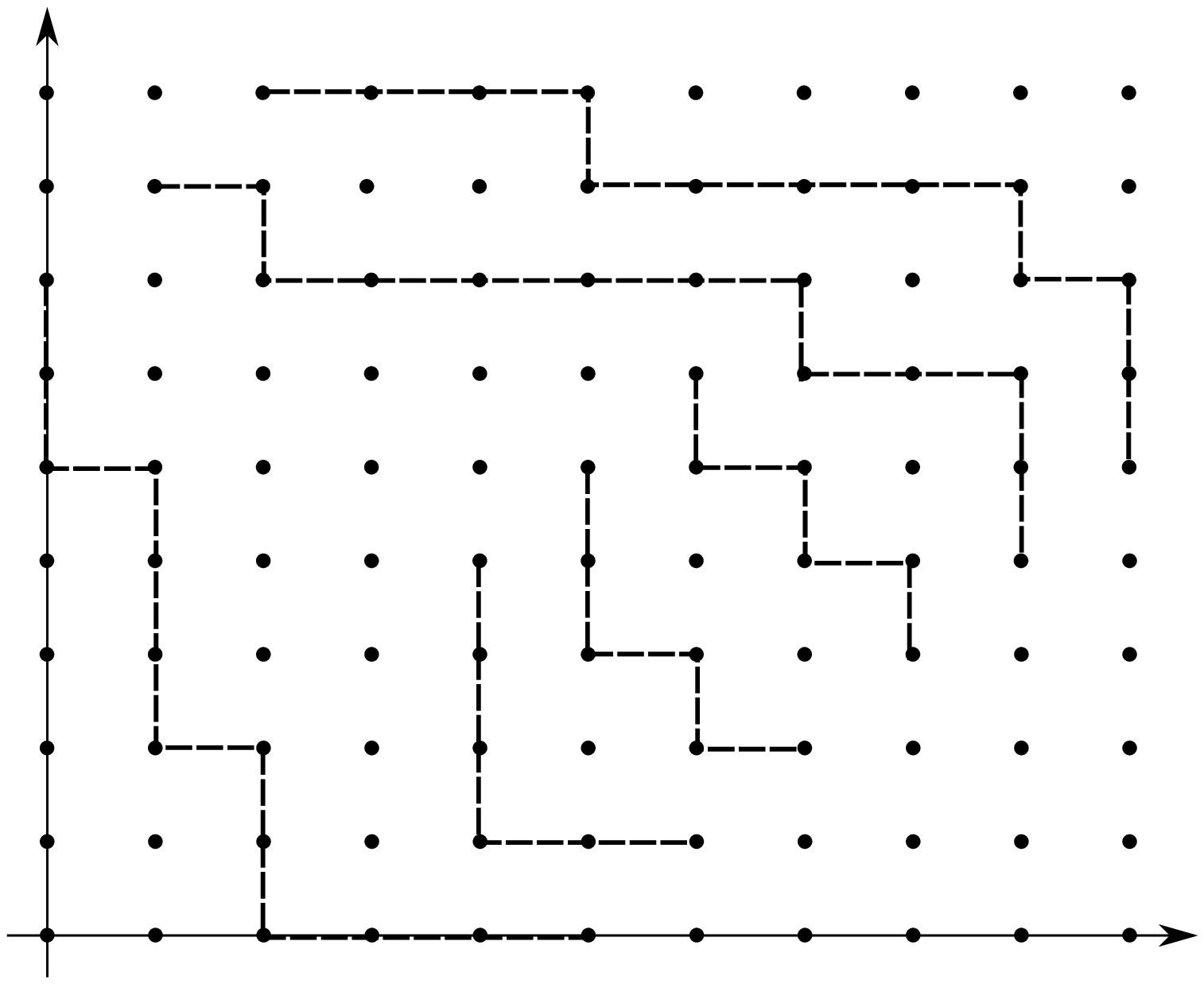}\\
        {\em Family of non-intersecting lattice paths}
    \end{minipage}
    \caption{Example of a lozenge tiling and its associated family of non-intersecting lattice paths}
    \label{fig:hex-nilp}
\end{figure}

In order to transform a lozenge tiling of a punctured hexagon $H_{a,b,c,\alpha,\beta,\gamma}$ into a family of non-intersecting
lattice paths, we follow three simple steps (see Figure~\ref{fig:hex-to-nilp}):
\begin{enumerate}   
    \item Mark the midpoints of the triangle edges parallel to the sides of length $C$ and $C+M$ with vertices.  Further,
        label the midpoints, always moving lower-left to upper-right,
        \begin{enumerate}
            \item along the hexagon side of length $C$ as $A_1, \ldots, A_C$,
            \item along the puncture as $A_{C+1}, \ldots, A_{C+M}$, and
            \item along the hexagon side of length $C+M$ as $E_1, \ldots, E_{C+M}$.
        \end{enumerate}
    \item Using the lozenges as a guide, we connect any pair of vertices that occur on a single lozenge.
    \item Thinking of motion parallel to the side of length $A$ as horizontal and motion parallel to the side of length $B$
        as vertical, we orthogonalise the lattice (and paths) and consider the lower-left vertex as the origin.
\end{enumerate}

\begin{figure}[!ht]
    \begin{minipage}[b]{0.49\linewidth}
        \centering
        \includegraphics[scale=0.45]{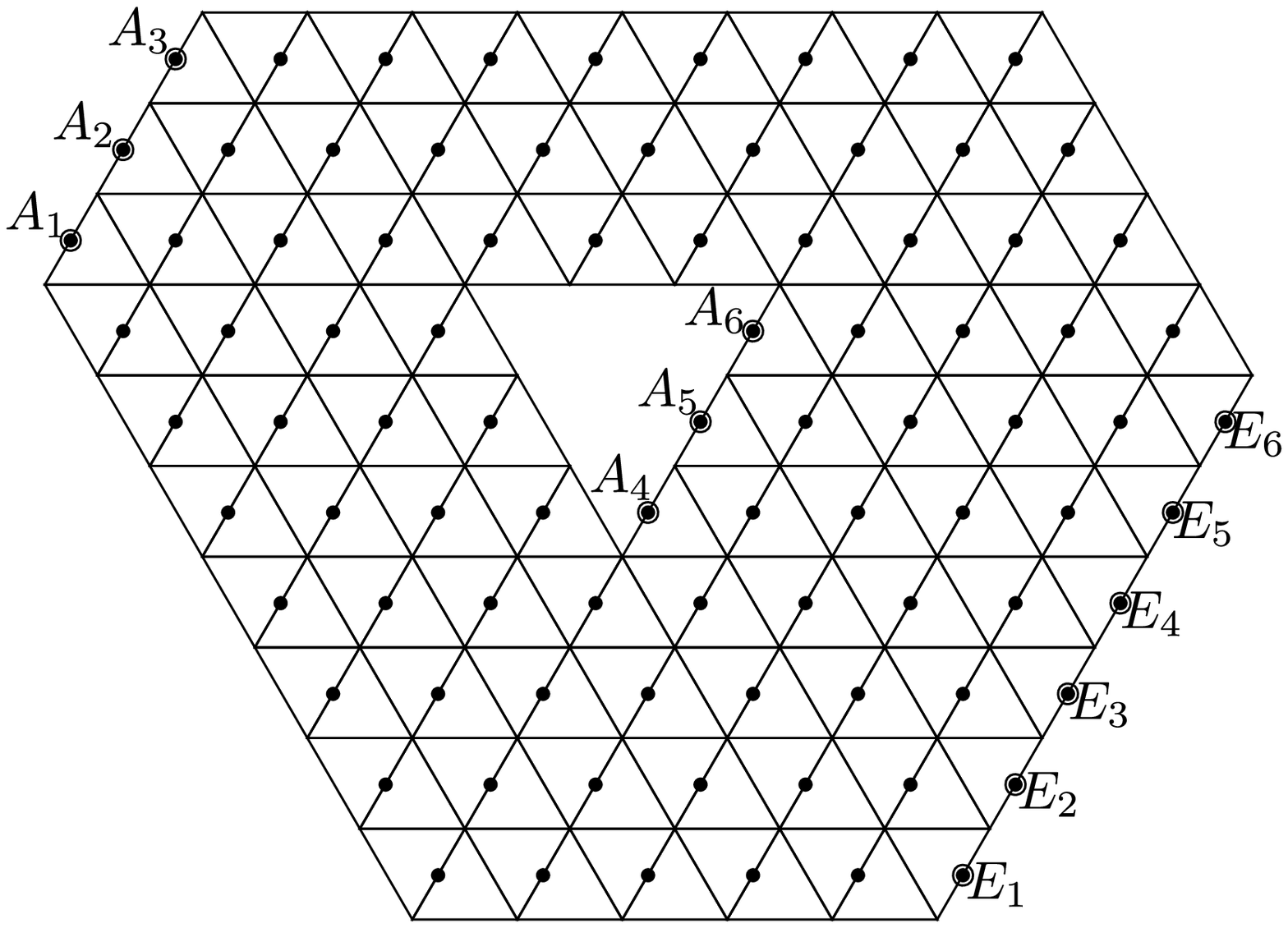}\\
        {\em (i) Mark midpoints with vertices and label particular vertices}
    \end{minipage}
    \begin{minipage}[b]{0.49\linewidth}
        \centering
        \includegraphics[scale=0.45]{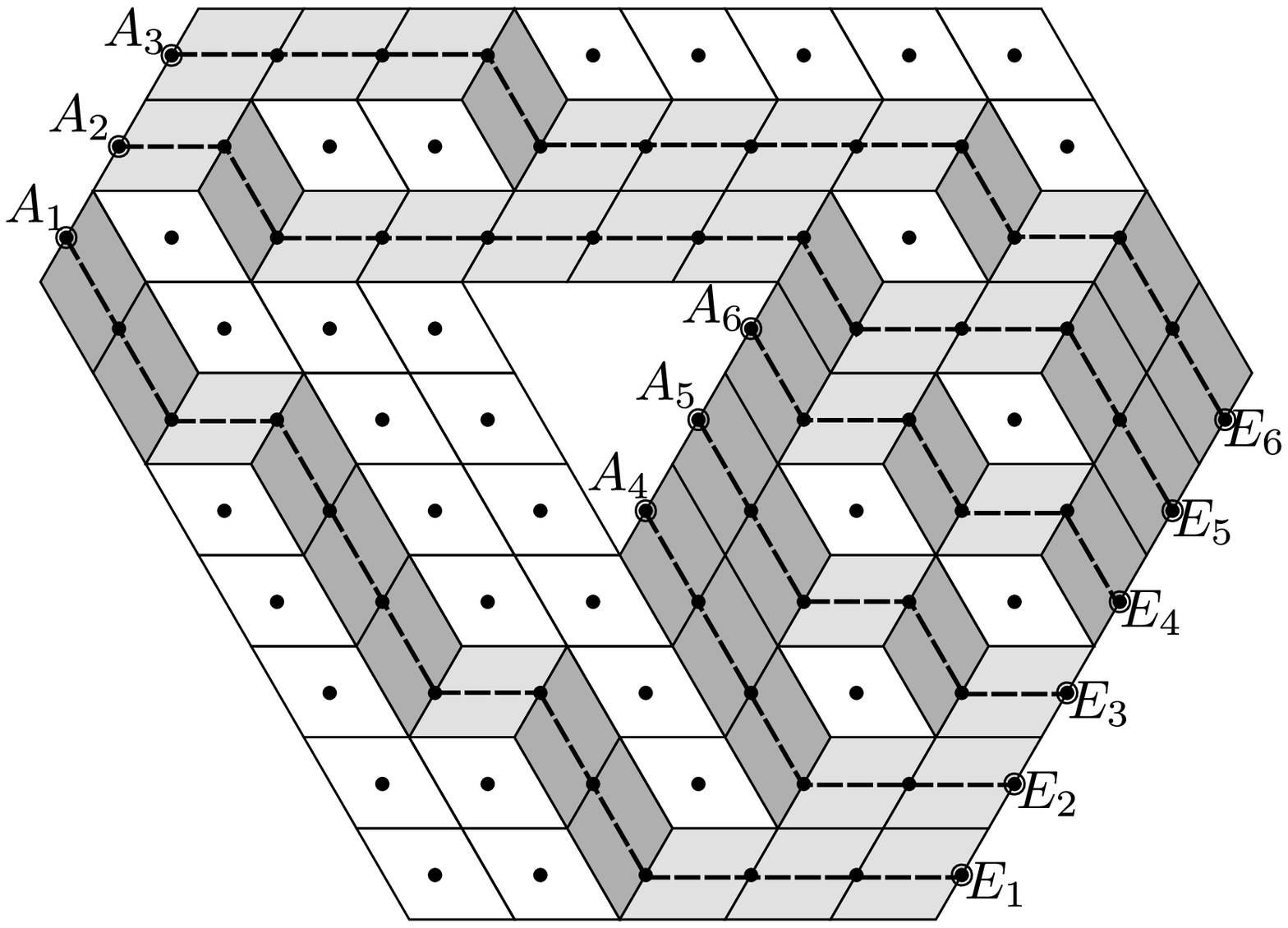}\\
        {\em (ii) Connect vertices using the tiling}
    \end{minipage} \\[0.5em]

    \begin{minipage}[b]{0.49\linewidth}
        \centering
        \includegraphics[scale=0.45]{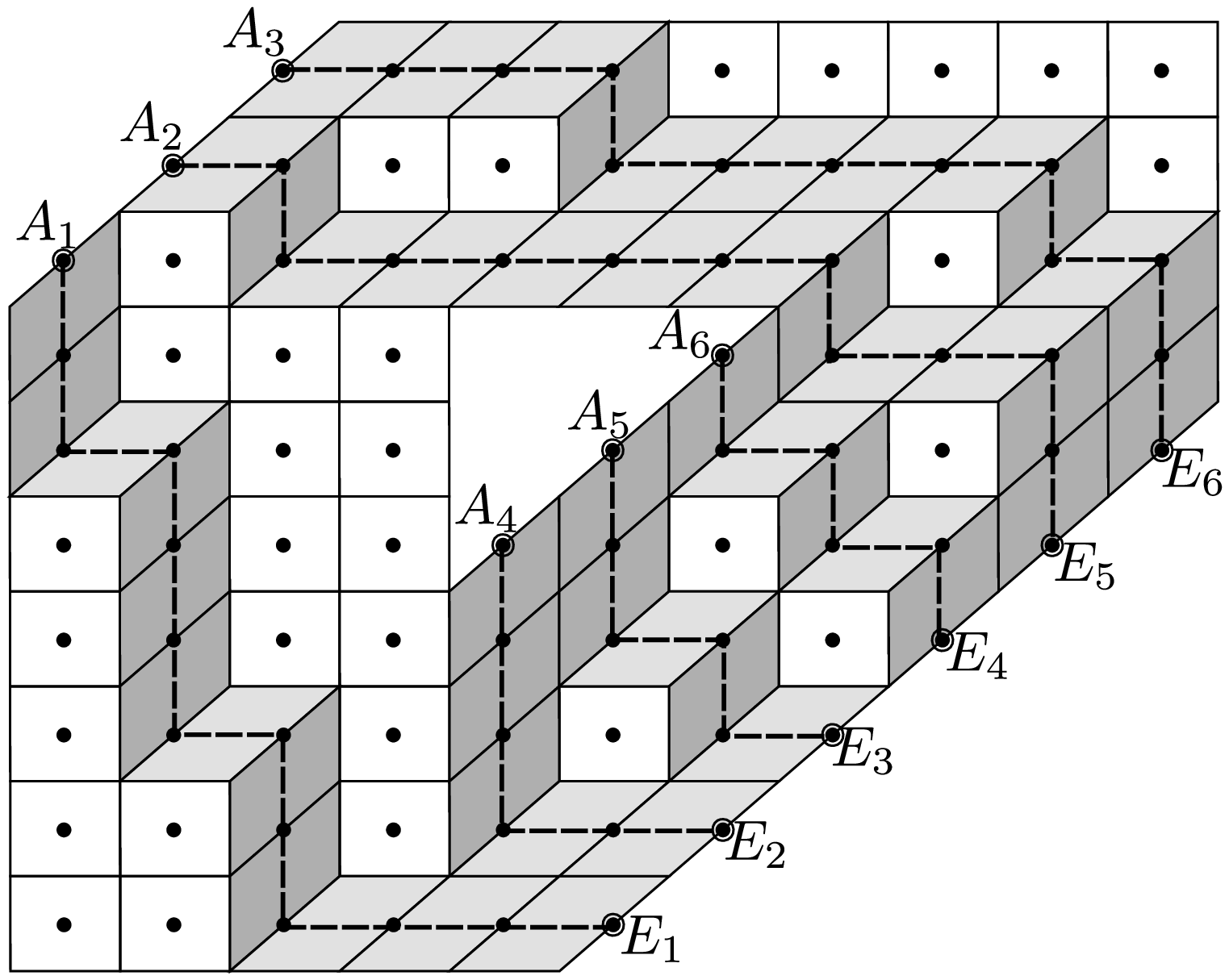}\\
        {\em (iii) Orthogonalise the path family}
    \end{minipage}
    \begin{minipage}[b]{0.49\linewidth}
        \centering
        \includegraphics[scale=0.45]{nilp-fin}\\
        {\em The family by itself}
    \end{minipage}
    \caption{Example of converting lozenge tilings to families of non-intersecting lattice paths}
    \label{fig:hex-to-nilp}
\end{figure}

Given the above transformation of $H_{a,b,c,\alpha,\beta,\gamma}$ to the integer lattice, we see that $A_i$ and $E_j$ have easy to compute
coordinates:
\[
    A_i = \left\{ 
            \begin{array}{ll}
                (i-1, B+M+i-1) & \mbox{if } 1 \leq i \leq C, \\[0.3em]
                (\beta + i - C - 1, B - \alpha + i - 1) & \mbox{if } C+1 \leq i \leq C+M,
            \end{array}
          \right.
\]
and
\[
    E_j = (A + j - 1, j - 1) \mbox{ for } 1 \leq j \leq C+M.
\]

Now we associate to each family of non-intersecting lattices paths a permutation and use it to assign a sign to the family of paths.
\begin{definition} \label{def:nilp-sign}
    Let $L$ be a family of non-intersecting lattice paths as above, and let $\lambda \in \PS_{C+M}$ be
    the permutation so that $A_i$ is connected to $E_{\lambda(i)}$.  We define the {\em sign} of $L$ to be the signature
    (or sign) of the permutation $\lambda$.  That is, $\sgn{L} := \sgn{\lambda}$.
\end{definition}

Now we are ready to use a beautiful theorem relating (signed) enumerations of families of non-intersecting lattice paths with 
determinants.  In particular, we use a theorem first given by Lindstr\"om in~\cite[Lemma~1]{Li} and stated independently in~\cite[Theorem~1]{GV-1989}
by Gessel and Viennot.  Stanley gives a very nice exposition of the topic in~\cite[Section~2.7]{St-EC}.

Here we give a specialisation of the theorem to the case when all edges have the same weight---one.  In particular, this result is
given in~\cite[Lemma~14]{CEKZ}.

\begin{theorem} \label{thm:lgv}
    Let $A_1, \ldots, A_n, E_1, \ldots, E_n$ 
    be distinct lattice points on $\NN_0^2$ where each $A_i$ is above and to the left of every $E_j$.  Then
    \[
        \det_{1\leq i, j \leq n} (P(A_i \rightarrow E_j)) = \sum_{\lambda \in \PS_n} \sgn(\lambda) P^+_\lambda(A\rightarrow E),
    \]
    where $P(A_i \rightarrow E_j)$ is the number of lattice paths from $A_i$ to $E_j$ and, for each permutation $\lambda \in \PS_n$,
    $P^+_\lambda(A \rightarrow E)$ is the number of families of non-intersecting lattice paths with paths going from $A_i$ to $E_{\lambda(i)}$.
\end{theorem}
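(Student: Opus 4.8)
The plan is to prove Theorem~\ref{thm:lgv} by the classical sign-reversing involution of Lindstr\"om and Gessel--Viennot on systems of lattice paths. First I would expand the determinant over permutations:
\[
    \det_{1\le i,j\le n}\bigl(P(A_i \rightarrow E_j)\bigr) = \sum_{\lambda \in \PS_n} \sgn(\lambda) \prod_{i=1}^{n} P(A_i \rightarrow E_{\lambda(i)}).
\]
Since $P(A_i \rightarrow E_{\lambda(i)})$ counts the lattice paths from $A_i$ to $E_{\lambda(i)}$, the product $\prod_{i} P(A_i \rightarrow E_{\lambda(i)})$ counts the ordered tuples $L = (L_1, \ldots, L_n)$ of lattice paths with $L_i$ running from $A_i$ to $E_{\lambda(i)}$. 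Thus the right-hand side of the displayed identity is the signed count of the set $\mathcal{P}$ of all pairs $(\lambda, L)$ where $L$ is such a path system realizing $\lambda$, the contribution of $(\lambda, L)$ being $\sgn(\lambda)$. (The hypothesis that each $A_i$ lies above and to the left of every $E_j$ guarantees that all these paths move monotonically in the two allowed step directions, so that the notion of ``the first point along $L_i$'' and the tail-swapping operation below are well-defined.)

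Next I would split $\mathcal{P}$ into the \emph{non-intersecting} path systems---those in which no lattice point lies on two of the paths---and the \emph{intersecting} ones. The non-intersecting systems contribute precisely $\sum_{\lambda \in \PS_n} \sgn(\lambda) P^+_\lambda(A \rightarrow E)$, which is exactly the claimed right-hand side; so it remains to show that the intersecting systems contribute zero.

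The heart of the argument is to construct a fixed-point-free, sign-reversing involution $\varphi$ on the set of intersecting pairs $(\lambda, L)$. Given such a pair, let $i$ be the smallest index such that $L_i$ shares a lattice point with some other path; let $v$ be the first such point encountered along $L_i$ starting from $A_i$; and let $j$ be the smallest index with $j \ne i$ and $v \in L_j$. Define $\varphi(\lambda, L) = (\lambda', L')$ by exchanging the portions of $L_i$ and $L_j$ beyond $v$: $L_i'$ follows $L_i$ up to $v$ and then follows $L_j$ from $v$ to $E_{\lambda(j)}$, while $L_j'$ follows $L_j$ up to $v$ and then follows $L_i$ from $v$ to $E_{\lambda(i)}$; all other paths are left unchanged. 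Then $L_i'$ ends at $E_{\lambda(j)}$ and $L_j'$ at $E_{\lambda(i)}$, so $\lambda' = \lambda \circ (i\,j)$ and hence $\sgn(\lambda') = -\sgn(\lambda)$.

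Finally I would verify that $\varphi$ is a well-defined involution. The crucial observation is that $\varphi$ does not change the multiset of lattice points swept out by the path system, nor the segments of any path up to $v$; consequently, when the selection rule is applied to $\varphi(\lambda, L)$ it produces the very same triple $(i, v, j)$, and exchanging the tails at $v$ a second time restores $(\lambda, L)$. Since $\varphi$ has no fixed points on the intersecting systems and reverses sign, the intersecting contributions cancel in pairs, which completes the proof. The step I expect to be the main obstacle is exactly this last verification: one must argue carefully that the selection of $(i, v, j)$ is canonical and stable under $\varphi$---in particular that swapping tails at $v$ neither destroys the intersection at $v$ nor introduces any intersection earlier in the chosen order---which rests on the fact that everything strictly before $v$ is untouched and the underlying point set is preserved.
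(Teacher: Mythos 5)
The paper does not prove this theorem; it is quoted as a classical result, with citations to Lindstr\"om, to Gessel--Viennot, and to the exact weighted-unit specialisation in \cite[Lemma~14]{CEKZ}. Your proposal supplies the standard sign-reversing involution proof of the Lindstr\"om--Gessel--Viennot lemma, and it is correct. The permutation expansion of the determinant, the split of $\mathcal{P}$ into non-intersecting and intersecting systems, and the tail-swapping involution at the canonically chosen $(i,v,j)$ are exactly the classical argument, and you handle the one place where careless writeups usually fail---the verification that $\varphi$ is a genuine involution---properly: since all paths of index less than $i$, all other paths outside $\{i,j\}$, and the initial segments of $L_i$ and $L_j$ up to $v$ are untouched while the union of points of $L_i\cup L_j$ is preserved, the selection rule re-selects the same $(i,v,j)$, so $\varphi^2=\mathrm{id}$ and $\sgn(\lambda\circ(i\,j))=-\sgn(\lambda)$ gives the cancellation. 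One minor inaccuracy, in a parenthetical and not essential to the argument: the hypothesis that each $A_i$ lies above and to the left of every $E_j$ is not what makes the paths monotone---monotonicity comes from the allowed step set of a lattice path; what that hypothesis actually buys, in the paper's application, is that every $P(A_i\rightarrow E_j)$ is nonzero and admits the binomial closed form used in the proof of Theorem~\ref{thm:nilp-matrix}. The involution argument itself works on any locally finite acyclic digraph irrespective of that positioning. This does not affect the validity of your proof.
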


Thus, we have an enumeration of the signed lozenge tilings of a punctured hexagon with signs given by the non-intersecting lattice paths.

\begin{theorem} \label{thm:nilp-matrix}
    The enumeration of signed lozenge tilings of $H_{a,b,c,\alpha,\beta,\gamma}$, with signs given by the signs of the associated
    families of non-intersecting lattice paths (Definition~\ref{def:nilp-sign}), is given by $\det N_{a,b,c,\alpha,\beta,\gamma}$,
    where the matrix $N_{a,b,c,\alpha,\beta,\gamma}$ is defined in Proposition~\ref{pro:wlp-binom}.
\end{theorem}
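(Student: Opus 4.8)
The plan is to combine the standard bijection between lozenge tilings of $H_{a,b,c,\alpha,\beta,\gamma}$ and families of non-intersecting lattice paths with the Lindström--Gessel--Viennot theorem (Theorem~\ref{thm:lgv}), and then to identify the resulting path-counting matrix with $N_{a,b,c,\alpha,\beta,\gamma}$ from Proposition~\ref{pro:wlp-binom}. First I would invoke the bijection described in the preceding paragraphs: a lozenge tiling of the punctured hexagon corresponds to a family of lattice paths whose starting points are the $A_i$ and whose ending points are the $E_j$, with the sign of the tiling defined (Definition~\ref{def:nilp-sign}) to be $\sgn(\lambda)$ where $\lambda$ is the permutation matching $A_i$ to $E_{\lambda(i)}$. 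Hence the signed enumeration of tilings equals $\sum_{\lambda \in \PS_{C+M}} \sgn(\lambda)\, P^+_\lambda(A \rightarrow E)$, which by Theorem~\ref{thm:lgv} equals $\det_{1 \leq i,j \leq C+M}\bigl(P(A_i \rightarrow E_j)\bigr)$, provided each $A_i$ lies above and to the left of every $E_j$ in the orthogonalised lattice. This last geometric fact I would check from the explicit coordinates given for $A_i$ and $E_j$, using the inequalities $0 \leq A \leq \beta+\gamma$, $0 \leq B \leq \alpha+\gamma$, $0 \leq C \leq \alpha+\beta$, and $0 \leq M$ from Proposition~\ref{pro:semistable}, which guarantee the puncture lies strictly inside the hexagon and does not obstruct the monotonicity.

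The core computation is then to show that $P(A_i \rightarrow E_j)$ equals the $(i,j)$ entry of $N_{a,b,c,\alpha,\beta,\gamma}$. Since a monotone lattice path from a point $(p,q)$ to a point $(p',q')$ with $p' \geq p$ and $q' \geq q$ is counted by $\binom{(p'-p)+(q'-q)}{p'-p}$, I would substitute the coordinates. For $1 \leq i \leq C$ we have $A_i = (i-1,\, B+M+i-1)$ and $E_j = (A+j-1,\, j-1)$; the horizontal displacement is $A+j-i$ and the total displacement is $(A+j-i) + (j-1) - (B+M+i-1) = \ldots$, which after using $A+B+C+M = s+2$ and $a = B+C+M$ (Remark~\ref{rem:bijection}) collapses to $c$, so $P(A_i \rightarrow E_j) = \binom{c}{A+j-i}$, matching the first branch of the formula in Proposition~\ref{pro:wlp-binom}(iii). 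For $C+1 \leq i \leq C+M$ we have $A_i = (\beta+i-C-1,\, B-\alpha+i-1)$, and the analogous substitution, using $\gamma = A+B+C-\alpha-\beta$, should give total displacement $\gamma$ and horizontal displacement $A+C-\beta+j-i$, yielding $\binom{\gamma}{A+C-\beta+j-i}$, matching the second branch.

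Putting these together gives $\det N_{a,b,c,\alpha,\beta,\gamma}$ as the signed enumeration, up to the ordering convention; I would note that the rows/columns of $N$ as defined in Proposition~\ref{pro:wlp-binom} are indexed in the same order as the $A_i$ and $E_j$, so no additional sign correction is needed (or, if the two conventions differ by a common reindexing, it changes $\det N$ only by an overall sign, which is why the statement is phrased as an equality of signed enumerations with $\det N$ and the earlier results speak of agreement "up to sign"). The main obstacle I anticipate is purely bookkeeping: carefully matching the three coordinate systems (the hexagon, the orthogonalised lattice, and the index ranges of $N$) and verifying that the binomial coefficients collapse exactly as claimed after repeatedly applying the identities $A+B+C+M=s+2$, $A+B+C=\alpha+\beta+\gamma$, $a=B+C+M$, $b=A+C+M$, $c=A+B+M$, and $\gamma = A+B+C-\alpha-\beta$. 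There is no deep idea beyond assembling the bijection, Theorem~\ref{thm:lgv}, and these arithmetic substitutions; the delicacy lies entirely in being consistent about which displacement becomes "horizontal" and in confirming the above-and-to-the-left hypothesis of Theorem~\ref{thm:lgv} holds for every pair $(A_i, E_j)$.
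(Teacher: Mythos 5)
Your approach matches the paper's proof essentially line-for-line: bijection to non-intersecting lattice paths, Theorem~\ref{thm:lgv}, and a binomial-coefficient path count verified by substituting the explicit coordinates of $A_i$ and $E_j$ and invoking the identities $c = A+B+M$ and $\gamma = A+B+C-\alpha-\beta$ from Remark~\ref{rem:bijection}. Two small corrections you would discover when filling in details: the hypothesis that each $A_i$ is above and to the left of every $E_j$ does not actually hold in general (for example, $A_C$ lies strictly to the right of $E_1$ whenever $A < C-1$), but this is harmless since the signed Lindstr\"om identity of Theorem~\ref{thm:lgv} holds regardless, the offending path counts simply being zero; and the total step count for $1 \leq i \leq C$ should read $(A+j-i) + (B+M+i-1) - (j-1) = A+B+M = c$, the vertical contribution being the starting $y$-coordinate minus the ending one, not the reverse as in the expression you wrote, though your stated conclusion $\binom{c}{A+j-i}$ is correct.
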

\begin{proof}
    Notice that the number of lattice paths from $(u,v)$ to $(x,y)$, where $u \leq x$ and $v \geq y$, is given by $\binom{x-u+v-y}{x-u}$ as there are
    $x-u + v-y$ steps and $x-u$ must be horizontal steps (equivalently, $v-y$ must be vertical steps).  Thus the claim follows immediately from the steps above.
\end{proof}

However, we need not consider all $(C+M)!$ permutations $\lambda \in \PS_{C+M}$ as the vast majority will always have $P^+_\lambda(A \rightarrow E) = 0$.
Given our choice of $A_i$ and $E_j$ the only possible choices of $\lambda$ are given by
\[
    \lambda_k = \left( 
        \begin{array}{cccc|cccc|cccc}
            1 & 2 & \cdots & k      & k+1   & k+2   & \cdots & C        & C+1 & C+2 & \cdots & C+M \\
            1 & 2 & \cdots & k      & M+k+1 & M+k+2 & \cdots & C+M      & k+1 & k+2 & \cdots & M+k 
        \end{array}
    \right),
\]
where $0 \leq k \leq C$ and $k$ corresponds to the number of lattice paths that go below the puncture.  In particular, the three parts
of $\lambda_k$ correspond to the paths going below, above, and starting from the puncture.  We call these permutations the {\em admissible permutations}
of $H_{a,b,c,\alpha,\beta,\gamma}$.

We will use this connection to compute determinants in Section~\ref{sec:det}, but first we look at an alternate combinatorial connection.

~\subsection{Perfect matchings}~

Lozenge tilings of a punctured hexagon can be associated to perfect matchings on a bipartite graph.  This connection
was first used by Kuperberg in~\cite{Ku} to study symmetries on plane partitions.
An example of a lozenge tiling and its associated perfect matching of edges is given in Figure~\ref{fig:hex-bip}.

\begin{figure}[!ht]
    \begin{minipage}[b]{0.49\linewidth}
        \centering
        \includegraphics[scale=0.45]{tile}\\
        {\em (i) Hexagon tiling by lozenges}
    \end{minipage}
    \begin{minipage}[b]{0.49\linewidth}
        \centering
        \includegraphics[scale=0.45]{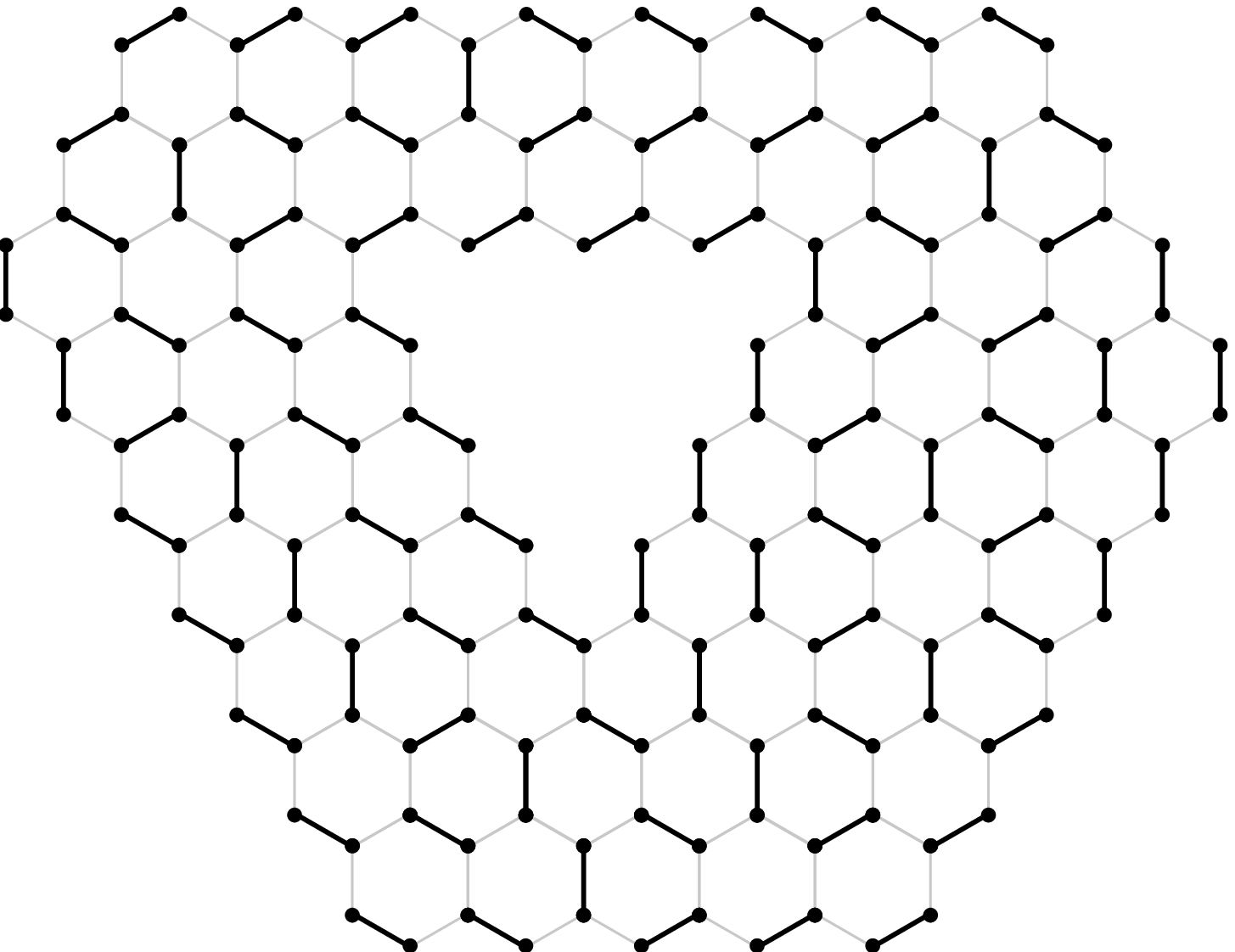}\\
        {\em (ii) Perfect matching of edges}
    \end{minipage}
    \caption{Example of a lozenge tiling and its associated perfect matching of edges}
    \label{fig:hex-bip}
\end{figure}

In order to transform a lozenge tiling of a punctured hexagon $H_{a,b,c,\alpha,\beta,\gamma}$ into a perfect matching of edges,
we follow three simple steps (see Figure~\ref{fig:hex-to-bip}):
\begin{enumerate}
    \item Put a vertex at the center of each triangle.
    \item Connect the vertices whose triangles are adjacent.
    \item Select the edges which the lozenges cover--this set is the perfect matching.
\end{enumerate}

\begin{figure}[!ht]
    \begin{minipage}[b]{0.49\linewidth}
        \centering
        \includegraphics[scale=0.45]{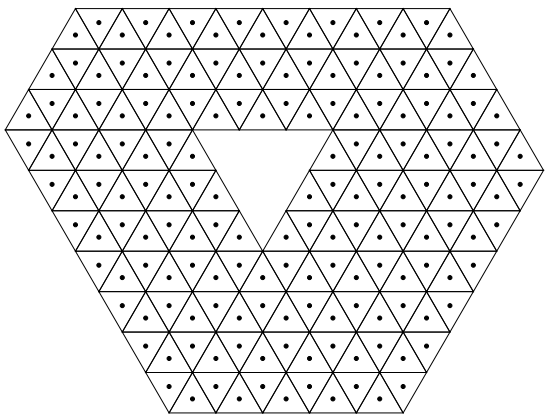}\\
        {\em (i) Put vertices in triangle centers}
    \end{minipage}
    \begin{minipage}[b]{0.49\linewidth}
        \centering
        \includegraphics[scale=0.45]{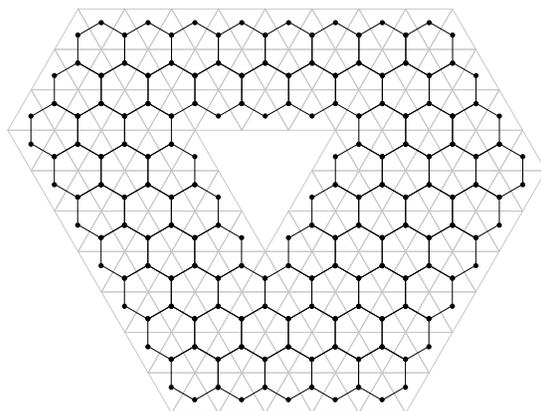}\\
        {\em (ii) Connect vertices of adjacent triangles}
    \end{minipage} \\[0.5em]

    \begin{minipage}[b]{0.49\linewidth}
        \centering
        \includegraphics[scale=0.45]{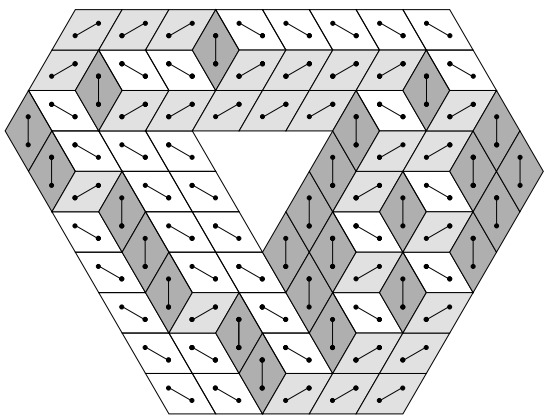}\\
        {\em (iii) Select edges covered by lozenges}
    \end{minipage}
    \begin{minipage}[b]{0.49\linewidth}
        \centering
        \includegraphics[scale=0.45]{bip-fin}\\
        {\em The perfect matching by itself}
    \end{minipage}
    \caption{Example of converting lozenge tilings to perfect matchings of edges}
    \label{fig:hex-to-bip}
\end{figure}

Notice that the graph associated to the punctured hexagon $H_{a,b,c,\alpha,\beta,\gamma}$ is a bipartite graph with
colour classes given by monomials in $[R/I]_{s}$ and $[R/I]_{s+1}$.  Thus we can represent this bipartite graph by a
bi-adjacency matrix with rows enumerated by the monomials in $[R/I]_{s}$ and columns enumerated by the monomials in
$[R/I]_{s+1}$.  We fix the order on the monomials to be the lexicographic order.  Clearly then the matrix 
$Z_{a,b,c,\alpha,\beta,\gamma}$ from Proposition~\ref{pro:wlp-zero-one} is the bi-adjacency matrix described here.

Consider the permanent of $Z = Z_{a,b,c,\alpha,\beta,\gamma}$, that is, 
\[
    \per{Z} = \sum_{\pi \in \PS_{h_s}} \prod_{i=1}^{h_s} (Z)_{i,\pi(i)}.
\]
As $Z$ has entries which are either zero or one, we see that all summands in $\per{Z}$ are either zero or one.  Moreover,
each non-zero summand corresponds to a perfect matching, as it corresponds to an isomorphism between the two colours classes 
of the bipartite graph, namely, the monomials in $[R/I]_{s}$ and $[R/I]_{s+1}$.  Thus, $\per{Z}$ enumerates the perfect matchings
of the bipartite graph associated to $H_{a,b,c,\alpha,\beta,\gamma}$, and hence $\per{Z}$ also enumerates the lozenge tilings
of $H_{a,b,c,\alpha,\beta,\gamma}$.

\begin{proposition} \label{pro:bip-matrix}
    The number of lozenge tilings of $H_{a,b,c,\alpha,\beta,\gamma}$ is $\per{Z_{a,b,c,\alpha,\beta,\gamma}}.$
\end{proposition}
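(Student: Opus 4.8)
The plan is to establish the bijective chain ``lozenge tilings of $H_{a,b,c,\alpha,\beta,\gamma}$'' $\leftrightarrow$ ``perfect matchings of the associated bipartite graph $G$'' $\leftrightarrow$ ``non-zero summands in $\per{Z_{a,b,c,\alpha,\beta,\gamma}}$'', and then observe that every non-zero summand contributes exactly $1$ to the permanent. Most of the work has in fact already been done in the discussion preceding the statement, so the proof will mainly consist of assembling those observations carefully and checking that the correspondences are genuinely bijective.

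First I would recall the three-step construction (put a vertex at the center of each unit triangle of $H_{a,b,c,\alpha,\beta,\gamma}$; join vertices of adjacent triangles; select the edges covered by lozenges) and verify that it does yield a \emph{perfect} matching: each lozenge is a union of exactly two adjacent triangles, so it determines exactly one edge of $G$, and in a tiling every triangle is covered by exactly one lozenge, so every vertex of $G$ is met by exactly one selected edge. Conversely, given a perfect matching $\mu$ of $G$, each edge of $\mu$ spans two adjacent triangles whose union is a lozenge, and since $\mu$ is a perfect matching these lozenges partition all the triangles of the region; hence they form a lozenge tiling. One checks these two maps are mutually inverse, so tilings of $H_{a,b,c,\alpha,\beta,\gamma}$ are in bijection with perfect matchings of $G$.

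Next I would identify $Z = Z_{a,b,c,\alpha,\beta,\gamma}$ with the bi-adjacency matrix of $G$. By Theorem~\ref{thm:interlace-amaci} the triangles of $H_{a,b,c,\alpha,\beta,\gamma}$ are the monomials of $[R/I]_s$ (downward-pointing, say) and $[R/I]_{s+1}$ (upward-pointing), and two triangles are adjacent precisely when one of the corresponding monomials divides the other; this is exactly the rule defining the entries of $Z$ in Proposition~\ref{pro:wlp-zero-one}(iii). So $G$ is bipartite with colour classes indexed (in lexicographic order) by the bases $\{m_i\}$ of $[R/I]_s$ and $\{n_j\}$ of $[R/I]_{s+1}$, and $(Z)_{i,j}$ is the $(i,j)$ bi-adjacency entry. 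Since $h_s = h_{s+1}$ by Lemma~\ref{lem:twin-peaks}, $Z$ is square, and a perfect matching of $G$ is the same datum as a permutation $\pi \in \PS_{h_s}$ with $(Z)_{i,\pi(i)} = 1$ for all $i$. Finally, expanding $\per{Z} = \sum_{\pi \in \PS_{h_s}} \prod_{i=1}^{h_s} (Z)_{i,\pi(i)}$, each summand is a product of zeros and ones, hence equals $1$ if $\pi$ encodes a perfect matching and $0$ otherwise; summing over $\pi$ therefore counts perfect matchings, which by the bijection above equals the number of lozenge tilings of $H_{a,b,c,\alpha,\beta,\gamma}$.

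I expect no serious obstacle here — the statement is essentially a bookkeeping corollary of the preceding paragraphs. The one point deserving genuine care is the verification that the triangle-adjacency graph really does coincide with the divisibility graph on monomials, i.e.\ that passing from the interlaced basis region to $G$ does not introduce or lose edges at the boundary of the hexagon or around the puncture; this is where Theorem~\ref{thm:interlace-amaci}, which pins down exactly which triangles are present, is doing the real work, and I would cite it explicitly rather than re-deriving it.
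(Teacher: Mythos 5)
Your proposal is correct and follows essentially the same route as the paper: the paper likewise identifies $Z_{a,b,c,\alpha,\beta,\gamma}$ as the bi-adjacency matrix of the bipartite graph with colour classes $[R/I]_s$ and $[R/I]_{s+1}$, invokes the Kuperberg-style bijection between lozenge tilings and perfect matchings, and then observes that each non-zero summand of $\per Z$ contributes exactly $1$ and corresponds to a perfect matching. Your added care about boundary/puncture triangles and checking the two maps are inverse is reasonable bookkeeping that the paper leaves implicit.
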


Since each perfect matching is an isomorphism between the two colour classes, it can be seen as a permutation $\pi \in \PS_{h_s}$.
As with Definition~\ref{def:nilp-sign}, it is thus natural to assign a sign to a given perfect matching.

\begin{definition} \label{def:pm-sign}
    Let $P$ be a perfect matching of the bipartite graph associated to $H_{a,b,c,\alpha,\beta,\gamma}$, and let $\pi \in \PS_{h_s}$
    be the associated permutation (as described above).  We define th {\em sign} of $P$ to be the signature of the permutation
    $\pi$.  That is, $\sgn{P} := \sgn{\pi}$.
\end{definition}

Since the sign is the sign that is used in computing the determinant of the matrix $Z_{a,b,c,\alpha,\beta,\gamma}$, we get an
enumeration of the signed lozenge tilings of a punctured hexagon with signs given by the perfect matchings.

\begin{theorem} \label{thm:bip-matrix}
    The enumeration of signed perfect matchings of the bipartite graph associated to $H_{a,b,c,\alpha,\beta,\gamma}$, with signs given by the
    signs of the related perfect matchings, is given by $\det Z_{a,b,c,\alpha,\beta,\gamma}$, where the matrix 
    $Z_{a,b,c,\alpha,\beta,\gamma}$ is defined in Proposition~\ref{pro:wlp-zero-one}.
\end{theorem}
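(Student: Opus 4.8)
The plan is to simply unwind the Leibniz expansion of $\det Z$, where $Z = Z_{a,b,c,\alpha,\beta,\gamma}$, and compare it term-by-term with the definition of the signed enumeration. Recall from Proposition~\ref{pro:wlp-zero-one} that $Z$ is the square zero-one matrix of size $h_s$ whose rows are indexed (in lexicographic order) by the monomial basis $\{m_1, \ldots, m_{h_s}\}$ of $[R/I]_s$ and whose columns are indexed by the monomial basis $\{n_1, \ldots, n_{h_s}\}$ of $[R/I]_{s+1}$, with $(Z)_{i,j} = 1$ exactly when $n_j$ is a multiple of $m_i$; by the discussion preceding Proposition~\ref{pro:bip-matrix} this $Z$ is precisely the bi-adjacency matrix of the bipartite graph attached to $H_{a,b,c,\alpha,\beta,\gamma}$, with the same fixed lexicographic orderings of the two colour classes.

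Next I would write
\[
    \det Z = \sum_{\pi \in \PS_{h_s}} \sgn(\pi) \prod_{i=1}^{h_s} (Z)_{i,\pi(i)}
\]
and observe that, since every entry of $Z$ is $0$ or $1$, each summand is either $0$ or $\pm 1$, and the summand indexed by $\pi$ is nonzero if and only if $(Z)_{i,\pi(i)} = 1$ for every $i$, that is, if and only if $n_{\pi(i)}$ is a multiple of $m_i$ for all $i$. As already noted for the permanent in the paragraph preceding Proposition~\ref{pro:bip-matrix}, such a $\pi$ corresponds exactly to a perfect matching $P$ of the bipartite graph (matching the triangle labelled $m_i$ to the triangle labelled $n_{\pi(i)}$), and conversely every perfect matching arises this way from a unique permutation. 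Under this correspondence the permutation $\pi \in \PS_{h_s}$ associated to $P$ in Definition~\ref{def:pm-sign} is literally the indexing permutation of the term, so $\sgn(P) = \sgn(\pi)$.

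Finally I would conclude that
\[
    \det Z = \sum_{P} \sgn(P),
\]
the sum being over all perfect matchings $P$ of the bipartite graph associated to $H_{a,b,c,\alpha,\beta,\gamma}$, which is exactly the claimed signed enumeration (and, via the lozenge-tiling/perfect-matching correspondence of Proposition~\ref{pro:bip-matrix}, the signed enumeration of lozenge tilings of $H_{a,b,c,\alpha,\beta,\gamma}$ weighted by $\sgn P$). There is essentially no obstacle here: the statement is a direct bookkeeping consequence of the Leibniz formula together with the identifications already made in Proposition~\ref{pro:wlp-zero-one}, the paragraph establishing Proposition~\ref{pro:bip-matrix}, and Definition~\ref{def:pm-sign}. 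The only point deserving a sentence of care is that the permutation singled out in Definition~\ref{def:pm-sign} is exactly the one appearing in the determinant expansion, i.e.\ that the fixed lexicographic orderings of $[R/I]_s$ and $[R/I]_{s+1}$ are used consistently on both sides, after which the signs agree on the nose.
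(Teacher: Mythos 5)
Your argument is correct and is exactly the reasoning the paper uses: the theorem is stated immediately after the observation that the sign in Definition~\ref{def:pm-sign} is the very sign used in the Leibniz expansion of $\det Z$, with the identification of nonzero terms with perfect matchings already established in the paragraph leading to Proposition~\ref{pro:bip-matrix}. You have simply made explicit what the paper treats as an immediate bookkeeping consequence, so there is no difference in approach.
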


\begin{remark} \label{rem:dimers}
    Kasteleyn~\cite{Ka} provided, in 1967, a general method for computing the number of perfect matchings of a planar graph as a determinant.
    Moreover, he provided a classical review of methods and applications of enumerating perfect matchings.  Planar graphs, such as the
    ``honeycomb graphs'' described here, are studied for their connections to physics; in particular, honeycomb graphs model the bonds
    in dimers (polymers with only two structural units) and perfect matchings correspond to so-called {\em dimer coverings}.  Kenyon~\cite{Ke}
    gives a modern recount of explorations on dimer models, including random dimer coverings and their limiting shapes.
\end{remark}

\begin{remark} \label{rem:wlp-p}
    Recall that Proposition~\ref{pro:wlp-p} provides a numerical constraint that determines some of the prime divisors of the determinants
    of the matrices $Z_{a,b,c,\alpha,\beta,\gamma}$ and $N_{a,b,c,\alpha,\beta,\gamma}$ by means of some algebra deciding the weak Lefschetz 
    property for the algebra $R/I_{a,b,c,\alpha,\beta,\gamma}$.  Hence, by Theorems~\ref{thm:nilp-matrix} and~\ref{thm:bip-matrix}, we see that
    information from algebra can indeed be used to determine some of the prime divisors of the enumerations of signed lozenge tilings and of signed
    perfect matchings.
\end{remark}

Finally, we note that in~\cite{Pr}, Propp gives a history of the connections between lozenge tilings (of non-punctured hexagons), 
perfect matchings, plane partitions, non-intersecting lattice paths.

% -- Section
\section{Interlude of signs} \label{sec:signs}
In the preceding section we discussed three related combinatorial structures from which we can extract the primes $p$ for which
the algebras $R/I_{a,b,c,\alpha,\beta,\gamma}$ fail to have the weak Lefschetz property.  Therein we discussed two different ways
to assign a sign to a lozenge tiling:  by the associated family of non-intersecting lattice paths (Definition~\ref{def:nilp-sign})
and by the associated perfect matching (Definition~\ref{def:pm-sign}).  We now show that the two signs indeed agree.

Fix a hexagonal region $H = H_{a,b,c,\alpha,\beta,\gamma}$, and fix a lozenge tiling $T$ of $H$.  As discussed in Section~\ref{sec:ph},
we associate to the tiling $T$ a family of non-intersecting lattice paths $L_T$ and a perfect matching $P_T$.  Moreover, 
we introduced a permutation $\lambda_T \in \PS_{C+M}$ associated to $L_T$ (see Definition~\ref{def:nilp-sign}) and a permutation $\pi_T \in \PS_{h_s}$
associated to $P_T$ via $Z_{a,b,c,\alpha,\beta,\gamma}$ (see Definition~\ref{def:pm-sign}).

We first notice that ``rotating'' particular lozenge groups of $T$ do not change the permutation associated to the non-intersecting
lattice paths.  
\begin{lemma} \label{lem:perm-rotate}
    Let $T$ be a lozenge tiling of $H_{a,b,c,\alpha,\beta,\gamma}$.  Pick any triplet of lozenges in $T$ which is either an {\em up}
    or a {\em down} grouping, as in Figure~\ref{fig:rotatable},
    \begin{figure}[!ht]
        \includegraphics[scale=0.667]{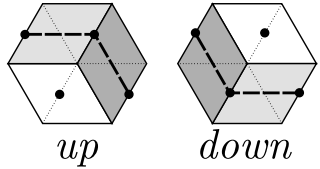}
        \caption{{\em up} and {\em down} lozenge groups with lattice path pieces superimposed}
        \label{fig:rotatable}
    \end{figure}
    and let $U$ be $T$ with the triplet exchanged for the other possibility (i.e., rotated $180^\circ$).  Then $U$ is a lozenge tiling
    of $H_{a,b,c,\alpha,\beta,\gamma}$ and $\lambda_T = \lambda_U$.  Moreover, $\pi_U = \tau \pi_T$, for some three-cycle $\pi \in \PS_{h_s}$.
\end{lemma}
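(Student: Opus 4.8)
The plan is to analyze the effect of a single lozenge rotation locally, both on the lattice-path side and on the perfect-matching side, and to observe that the global objects change in controlled ways. First I would set up the picture: an \emph{up} grouping consists of three lozenges meeting around a downward-pointing triangle, and a \emph{down} grouping of three lozenges around an upward-pointing triangle; rotating the triplet by $180^\circ$ swaps these two configurations while leaving untouched every lozenge outside the small hexagon they jointly cover. Since the boundary of that small hexagon is matched identically before and after, $U$ is again a lozenge tiling of $H_{a,b,c,\alpha,\beta,\gamma}$, with the same set of upward and downward unit triangles.

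Next I would examine the non-intersecting lattice paths. Under the correspondence of Section~\ref{sec:ph}, only lozenges of two of the three orientations carry path segments (the ones meeting edges parallel to the sides of length $C$ and $C+M$), and the lattice-path pieces superimposed in Figure~\ref{fig:rotatable} show exactly how the local path configuration behaves. The key observation is that in both the \emph{up} and \emph{down} cases the two path strands enter the small hexagon at the same two boundary midpoints and leave at the same two boundary midpoints, and the rotation merely exchanges which of the two ``routings'' through the interior is used, without swapping endpoints. Hence each $A_i$ is still connected to the same $E_j$ after the rotation, so $\lambda_T = \lambda_U$ as claimed.

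Finally I would treat the perfect matchings. Put vertices at the centers of the unit triangles; the three lozenges of the grouping correspond to three matched edges among the seven triangles of the small hexagon (the central triangle plus its six neighbors), and a $180^\circ$ rotation replaces those three edges by the three ``opposite'' edges on the same seven vertices. On the level of the permutation $\pi_T \in \PS_{h_s}$ associated via $Z_{a,b,c,\alpha,\beta,\gamma}$, only the images of the three triangles of one color class that lie in the grouping are affected, and one checks directly from the two configurations that the new assignment differs from the old by cyclically permuting those three images; that is, $\pi_U = \tau\,\pi_T$ for a suitable $3$-cycle $\tau$. The main obstacle is purely bookkeeping: one must verify carefully, from the explicit coordinates of $A_i$, $E_j$ and from the lexicographic labeling underlying $Z$, that the three triangles involved are indeed permuted \emph{cyclically} (and in particular that the permutation is a single $3$-cycle rather than a product of transpositions), in each of the two orientations of the grouping and for every position of the grouping relative to the puncture. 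Once this local case analysis is done, the statement follows immediately.
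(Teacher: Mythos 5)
Your argument follows the same route as the paper's: unaffected tiles give that $U$ is still a tiling; the rotation only reroutes one lattice path locally (down-then-right vs.\ right-then-down) without changing its endpoints, hence $\lambda_T = \lambda_U$; and the matching permutation changes only on the three up-triangles of the small hexagon, by a $3$-cycle. One small clarification: the cyclic nature of the change in $\pi$ is a purely local geometric fact about the unit hexagon (the three up-triangles and three down-triangles alternate, and the two tilings pair them in the two cyclic directions), so it requires no bookkeeping with the coordinates of $A_i, E_j$ or the lexicographic labeling -- the labeling only determines which indices of $\PS_{h_s}$ the $3$-cycle is supported on, not that it is a $3$-cycle. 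Also, your phrase ``two path strands'' is misleading -- only one path passes through the small hexagon, entering and exiting at two fixed boundary midpoints -- but your conclusion from it is correct.
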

\begin{proof}
    First, we note that if $T$ is a lozenge tiling of $H_{a,b,c,\alpha,\beta,\gamma}$ then clearly so is $U$ as the change does not
    modify any tiles besides the three in the triplet.

    Next, notice that exchanging the triplet in $T$ for its rotation only modifies the associated family of non-intersecting lattice
    paths in one path.  Moreover, it does not change the starting or ending points of the path, merely the order in which it gets there,
    that is, either right then down or down then right.  Thus, $\lambda_T = \lambda_U$.

    Last, suppose, without loss of generality, that our chosen triplet is an {\em up} lozenge group.  Label the three upward pointing 
    triangles in the triplet $i, j, k$ as in Figure~\ref{fig:rotatable-labeled}.  
    \begin{figure}[!ht]
        \includegraphics[scale=1]{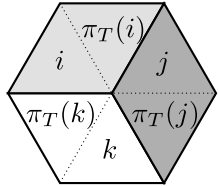}
        \caption{An {\em up} lozenge group with labeling}
        \label{fig:rotatable-labeled}
    \end{figure}
    Thus we see that $\pi_U(i) = \pi_T(k)$, $\pi_U(j) = \pi_T(i)$, $\pi_U(k) = \pi_T(j)$, and $\pi_U(m) = \pi_T(m)$ for $m$ not $i, j,$ or $k$.
    Hence $\pi_U = \tau \pi_T$ where $\tau$ is the three-cycle $(\pi_T(k), \pi_T(j),\pi_T(i))$.
\end{proof}

It follows that two lozenge tilings that have the same $\lambda$ permutation have $\pi$ permutations with the same sign.  

\begin{proposition} \label{pro:sign-agree}
    For each $H_{a,b,c,\alpha,\beta,\gamma}$ there exists a constant $i \in \{1, -1\}$ such that for all lozenge 
    tilings $T$ of $H_{a,b,c,\alpha,\beta,\gamma}$ the expression $\sgn{L_T} = i \cdot \sgn{P_T}$ holds, where $L_T$ is the family of 
    non-intersecting lattice paths associated to $T$ and $P_T$ is the family of perfect matchings associated to $T$.
\end{proposition}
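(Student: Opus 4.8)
The plan is to leverage Lemma~\ref{lem:perm-rotate} together with the fact that the rotation moves (``up''/``down'' flips) connect all lozenge tilings of a fixed region. The key observation is that every lozenge tiling of $H_{a,b,c,\alpha,\beta,\gamma}$ can be transformed into every other lozenge tiling by a finite sequence of such elementary flips; this is a classical fact (the flip graph on lozenge tilings of a simply-connected region is connected, via the correspondence with plane partitions / monotone height functions, or one can invoke it directly for punctured hexagons using the lattice-path description). Granting this, fix a reference tiling $T_0$ and set $i := \sgn{L_{T_0}} \cdot \sgn{P_{T_0}} \in \{1,-1\}$.

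First I would observe that along any single flip $T \rightsquigarrow U$, Lemma~\ref{lem:perm-rotate} gives $\lambda_T = \lambda_U$, hence $\sgn{L_T} = \sgn{L_U}$, and simultaneously $\pi_U = \tau\pi_T$ for a three-cycle $\tau$, hence $\sgn{P_U} = \sgn(\tau)\sgn{P_T} = \sgn{P_T}$ since a three-cycle is an even permutation. Therefore the quantity $\sgn{L_T} \cdot \sgn{P_T}$ is invariant under a single flip. Next I would argue that the flip graph is connected: given any tiling $T$, by the preceding paragraph's fact there is a sequence of flips from $T$ to $T_0$, and along this entire sequence the product $\sgn{L_T}\cdot\sgn{P_T}$ stays constant. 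Thus $\sgn{L_T}\cdot\sgn{P_T} = \sgn{L_{T_0}}\cdot\sgn{P_{T_0}}$ for every $T$, which (since all signs are $\pm 1$) rearranges to $\sgn{L_T} = i \cdot \sgn{P_T}$ with $i$ as defined above. This $i$ depends only on $H_{a,b,c,\alpha,\beta,\gamma}$, not on $T$, which is exactly the claim.

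The main obstacle is establishing connectivity of the flip graph for these \emph{punctured} hexagonal regions, since the standard statement is usually quoted for non-punctured hexagons (or simply-connected regions). I would handle this by noting that $H_{a,b,c,\alpha,\beta,\gamma}$, while having a triangular puncture, is still a region whose tilings correspond bijectively under the lattice-path construction of Section~\ref{sec:ph} to families of non-intersecting lattice paths indexed by the admissible permutations $\lambda_k$; an ``up'' or ``down'' flip corresponds precisely to a local move on one lattice path (swapping a ``right-then-down'' corner for a ``down-then-right'' corner or vice versa), and any two families of non-intersecting paths with the same endpoints are connected by such local moves. Alternatively, one can appeal to the general result that the space of dimer coverings (equivalently, via Section~\ref{sec:ph}'s bijection, lozenge tilings) of any planar bipartite graph region is connected under local rotations, together with the fact that the only local rotations available here are exactly the ``up''/``down'' triplet flips of Lemma~\ref{lem:perm-rotate}. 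Either route makes the connectivity step routine, after which the sign-tracking argument above completes the proof.
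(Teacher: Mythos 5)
Your argument correctly handles the part of the problem covered by the paper's Step 1, but the connectivity claim on which the rest of your proof hinges is false for these punctured regions, and this is precisely the difficulty the paper's Step 2 exists to address.

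Here is the gap. Lemma~\ref{lem:perm-rotate} shows that a single flip leaves $\lambda_T$ unchanged. Therefore flips can never change the admissible permutation $\lambda_k$, and two tilings $T$, $U$ with $\lambda_T \neq \lambda_U$ lie in \emph{different} connected components of the flip graph. This is not a quirk to be argued around: a punctured hexagon with $C>0$ and $M>0$ is not simply connected, and the number $C-k$ of paths going above the puncture is a topological invariant (a winding/flux parameter around the hole) preserved by every local move. The ``general result'' you invoke---that dimer coverings of a planar bipartite graph are connected under local rotations---holds for simply-connected regions but fails in the presence of holes, for exactly this reason. Your alternative phrasing fails as well: two path families corresponding to different $\lambda_k$ do \emph{not} have the same endpoints (the path from $A_i$ ends at $E_{\lambda_T(i)}$ versus $E_{\lambda_U(i)}$), so the local-move connectivity of non-intersecting paths with fixed endpoints does not apply.

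What your argument actually proves is that $\sgn L_T \cdot \sgn P_T$ is constant on each flip-component, i.e., among all tilings sharing the same admissible permutation $\lambda_k$. This is the content of the paper's Step 1. The missing ingredient is a comparison \emph{across} components: one must show that passing from a tiling with permutation $\lambda_k$ to one with $\lambda_{k+1}$ changes $\sgn\lambda$ and $\sgn\pi$ by the same factor. The paper does this in Step 2 by choosing extremal representatives in adjacent components, trimming away their common paths, and identifying the region of difference as a single cycle of computable length, so that the change in $\sgn\pi$ can be matched against the known change $\sgn\lambda_k = (-1)^M \sgn\lambda_{k+1}$. Without some argument of this kind, the proposal is incomplete.
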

\begin{proof}
    \noindent {\em Step 1}:

    Let $T$ and $U$ be two lozenge tilings of $H_{a,b,c,\alpha,\beta,\gamma}$ with $\lambda_T = \lambda_U$.
    As $\lambda_T = \lambda_U$, then the families of non-intersecting lattice paths associated to $T$ and $U$ start and end at the same
    places.  Hence $T$ can be modified by a series of, say $n$, rotations, as in Lemma~\ref{lem:perm-rotate}, to $U$.  Thus 
    \[
        \pi_U = \tau_n \tau_{n-1} \cdots \tau_1 \pi_T,
    \]
    where $\tau_1, \ldots, \tau_n \in \PS_{h_s}$ are three cycles by Lemma~\ref{lem:perm-rotate}.  As $\sgn\tau_i = 1$ for $1 \leq i \leq n$,
    and $\sgn$ is a group homomorphism, we see that $\sgn \pi_T = \sgn \pi_U$.

    \vspace*{0.5\baselineskip}
    \noindent {\em Step 2}:
    
    By the comments following Theorem~\ref{thm:nilp-matrix} we only need to consider the admissible permutations $\lambda_0, \ldots, \lambda_C$.
    Moreover, $\sgn \lambda_k = (-1)^{M(C-k)}$ so $\sgn \lambda_k = (-1)^M\sgn \lambda_{k+1}$.

    Let $T$ and $U$ be two lozenge tilings of $H = H_{a,b,c,\alpha,\beta,\gamma}$ with $\lambda_T = \lambda_k$ and $\lambda_U = \lambda_{k+1}$. 
    That is, $\sgn \lambda_T = (-1)^M \sgn \lambda_U$.
    First, $\alpha \geq C-k$ by the existence of $T$ as $C-k$ paths go above the puncture and so must go through a gap of size
    $\alpha$, and similarly $\beta \geq k+1$ by the existence of $U$.

    By {\em Step 1}, we may pick $T$ and $U$ however we wish, as long as $\lambda_T = \lambda_k$ and $\lambda_U = \lambda_{k+1}$.  In particular, let
    $T$, and similarly $U$, be defined as follows (see Figure~\ref{fig:maxmin-tiling}):
    \begin{figure}[!ht]
        \begin{minipage}[b]{0.45\linewidth}
            \centering
            \includegraphics[scale=0.75]{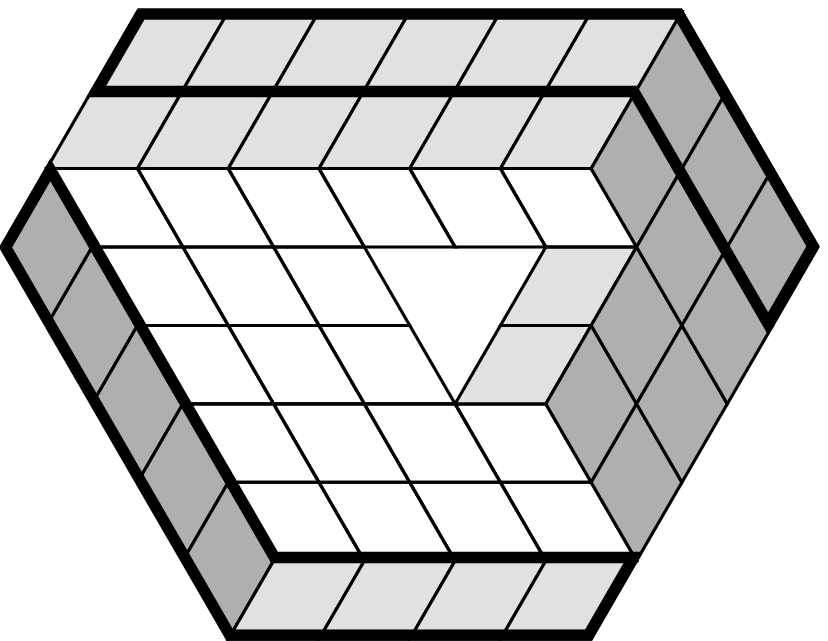}\\
            {\em (i) The tiling $T$}
        \end{minipage}
        \begin{minipage}[b]{0.45\linewidth}
            \centering
            \includegraphics[scale=0.75]{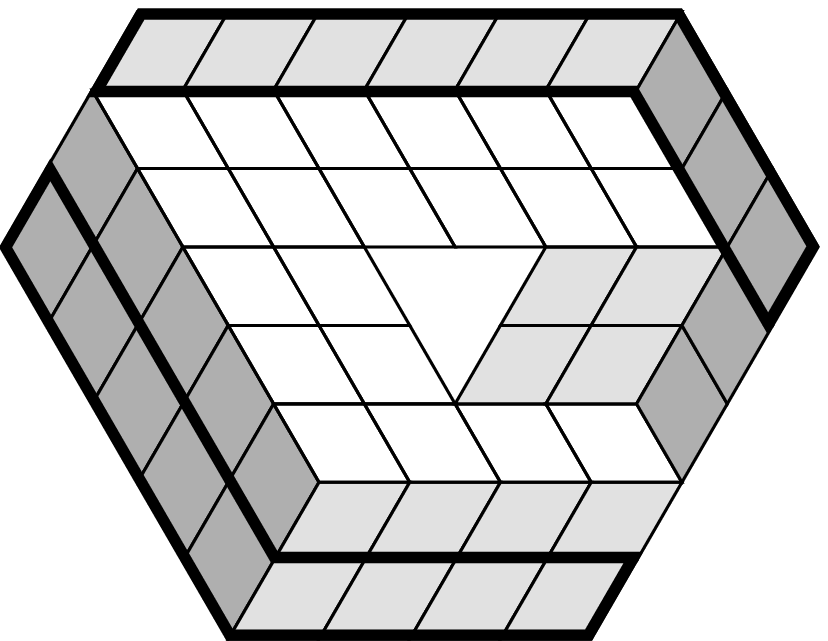}\\
            {\em (ii) The tiling $U$}
        \end{minipage}
        \caption{An example of tilings $T$ and $U$ of $H_{9,8,9,4,3,3}$, for $k = 1$, which are ``minimal'' below the puncture and ``maximal'' everywhere else;
                both tilings have the regions of similarity highlighted.}
        \label{fig:maxmin-tiling}
    \end{figure}
    \begin{enumerate}
        \item The $C-k$ paths above the puncture ($C-k-1$ for $U$) always move right before moving down.
        \item The $M$ paths from the puncture always move right before moving down.
        \item The $k$ paths below the puncture ($k+1$ for $U$) always move down before moving right.
    \end{enumerate}
    With the idea of {\em up} and {\em down} triplets from Lemma~\ref{lem:perm-rotate}, we can say a path is ``minimal'' if it contains no {\em up} triplets 
    and a path is ``maximal'' if it contains no {\em down} triplets.  Thus, $T$ and $U$ are ``minimal'' below the puncture and ``maximal'' everywhere
    else.
    
    Given this choice, $T$ and $U$ have exactly the same paths for the top $C-k-1$ paths above the puncture and the bottom $k$ paths below the puncture.
    Hence we can trim off these paths to make two new tilings, $T'$ and $U'$, of $H' = H_{B+M+1, A+M+1, c, \alpha - (C-k-1), \beta - k, \gamma}$.  
    Notice that $H$ and $H'$ have the same $A, B, M,$ and $\gamma$, only $C, \alpha,$ and $\beta$ have changed; in particular, $C' = 1$.  
    See Figure~\ref{fig:T-U-difference} parts (i) and (ii) for an example of the tilings $T'$ and $U'$ with their region-of-difference highlighted in bold.

    \begin{figure}[!ht]
        \begin{minipage}[b]{0.45\linewidth}
            \centering
            \includegraphics[scale=0.5]{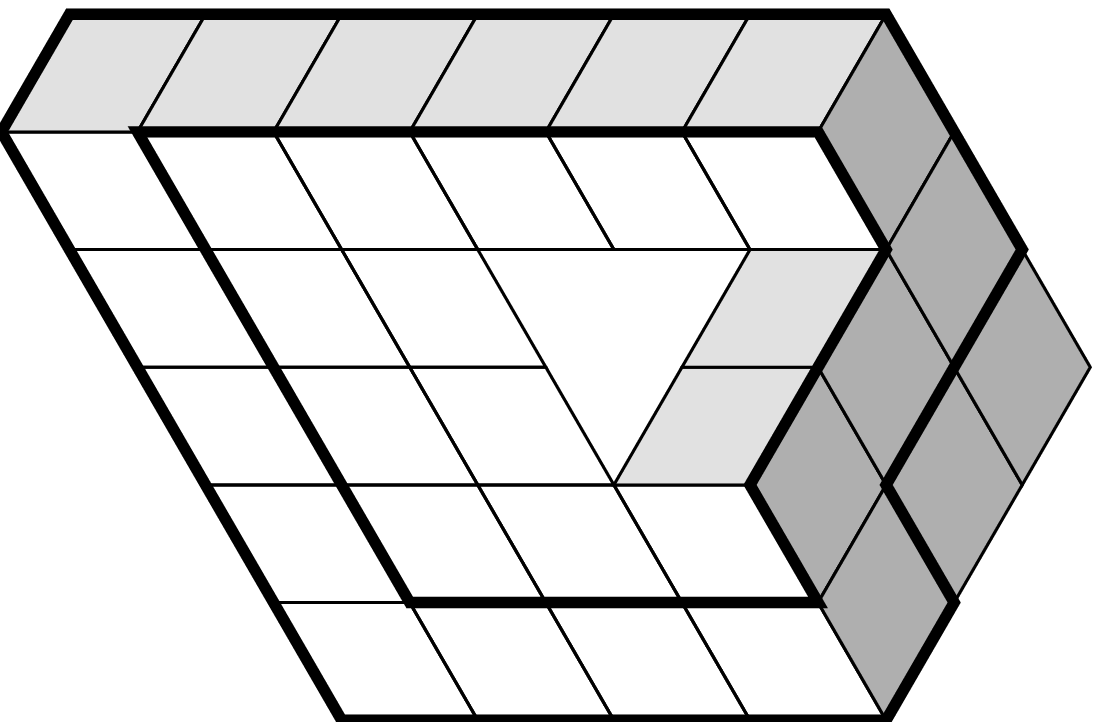}\\
            {\em (i) The tiling $T'$}
        \end{minipage}
        \begin{minipage}[b]{0.45\linewidth}
            \centering
            \includegraphics[scale=0.5]{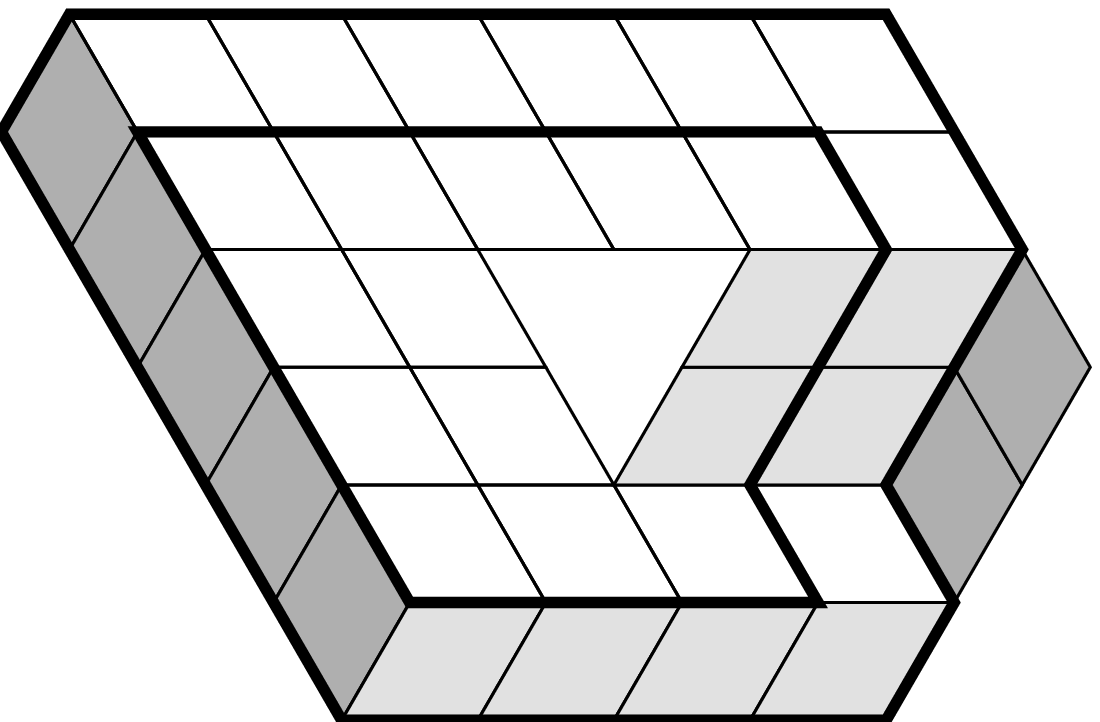}\\
            {\em (ii) The tiling $U'$}
        \end{minipage}
        \caption{The punctured hexagon $H_{7,6,9,3,2,3}$; both tilings have the region-of-difference highlighted.}
        \label{fig:T-U-difference}
    \end{figure}

    Clearly then $T'$ and $U'$ differ in four ways: (i) the upper path in $T'$ except the small overlap near the end, (ii) the lower path in $U'$, 
    (iii) the position of the bend in the puncture-paths, and (iv) the part past the bend of the bottom puncture-path in $T'$.  The difference 
    between $T'$ and $U'$ is exactly $2(A+B+M)+M-1$ tiles; moreover the region-of-difference forms a cycle so that there exists a 
    $(2(A+B+M)+M-1)$-cycle, $\sigma$, such that $\pi_{T'} = \sigma \pi_{U'}$.  We then have 
    \[
        \sgn \pi_{T'} = (-1)^{2(A+B+M)+M-1-1}\sgn \pi_{U'} = (-1)^M\sgn \pi_{U'}.
    \]
    That is, $\sgn \pi_{T} = (-1)^M\sgn \pi_{U}$.  Since $\sgn \lambda_T = (-1)^M \sgn \lambda_U$, the claim follows.
\end{proof}

We conclude that $Z_{a,b,c,\alpha,\beta,\gamma}$ and $N_{a,b,c,\alpha,\beta,\gamma}$ have the same determinant, up to sign.
\begin{theorem} \label{thm:det-Z-N}
    Consider the punctured hexagon $H_{a,b,c,\alpha,\beta,\gamma}$.  Then 
    \[
        |\det{Z_{a,b,c,\alpha,\beta,\gamma}}| = |\det{N_{a,b,c,\alpha,\beta,\gamma}}|.
    \]
\end{theorem}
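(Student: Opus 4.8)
The plan is to show that the two enumerations $\det Z$ and $\det N$ count the same objects—signed lozenge tilings of $H_{a,b,c,\alpha,\beta,\gamma}$—and differ only by a global sign. By Theorem~\ref{thm:nilp-matrix}, $\det N_{a,b,c,\alpha,\beta,\gamma}$ equals the signed enumeration of lozenge tilings of $H_{a,b,c,\alpha,\beta,\gamma}$ where each tiling $T$ is weighted by $\sgn L_T$, the sign of its associated family of non-intersecting lattice paths (Definition~\ref{def:nilp-sign}). By Theorem~\ref{thm:bip-matrix}, $\det Z_{a,b,c,\alpha,\beta,\gamma}$ equals the signed enumeration of the same set of lozenge tilings, but where each tiling $T$ is weighted by $\sgn P_T$, the sign of its associated perfect matching (Definition~\ref{def:pm-sign}). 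These are sums over the identical index set—the set of all lozenge tilings of $H$—so the only discrepancy is in the per-tiling sign.

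The key input is Proposition~\ref{pro:sign-agree}, which supplies a constant $i \in \{1,-1\}$, depending only on the region $H_{a,b,c,\alpha,\beta,\gamma}$ and not on the individual tiling, such that $\sgn L_T = i \cdot \sgn P_T$ for every lozenge tiling $T$ of $H$. First I would write
\[
    \det N_{a,b,c,\alpha,\beta,\gamma} = \sum_{T} \sgn L_T = \sum_{T} i \cdot \sgn P_T = i \sum_{T} \sgn P_T = i \cdot \det Z_{a,b,c,\alpha,\beta,\gamma},
\]
where both sums range over all lozenge tilings $T$ of $H_{a,b,c,\alpha,\beta,\gamma}$. Taking absolute values and using $|i| = 1$ yields $|\det N_{a,b,c,\alpha,\beta,\gamma}| = |\det Z_{a,b,c,\alpha,\beta,\gamma}|$, which is the claim.

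Since Proposition~\ref{pro:sign-agree} is already established, there is no genuine obstacle remaining; the proof is a one-line consequence of the two matrix interpretations together with that proposition. The only subtlety worth a sentence of care is that the two sums are genuinely indexed by the same set—namely, the transformations described in Section~\ref{sec:ph} (lozenge tiling $\mapsto$ non-intersecting lattice paths, and lozenge tiling $\mapsto$ perfect matching) are both bijections on the set of lozenge tilings of $H$, so term-by-term comparison is legitimate. Everything else is bookkeeping.
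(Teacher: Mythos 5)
Your proof is correct and follows exactly the paper's own approach: combine the two determinantal interpretations from Theorems~\ref{thm:nilp-matrix} and~\ref{thm:bip-matrix} with the uniform sign relation from Proposition~\ref{pro:sign-agree}, then take absolute values. You have simply spelled out the bookkeeping that the paper's one-line proof leaves implicit.
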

\begin{proof}
    Combine Theorems~\ref{thm:nilp-matrix} and~\ref{thm:bip-matrix} via Proposition~\ref{pro:sign-agree}.
\end{proof}

Moreover, when the puncture is of even length, the determinant and permanent of $Z_{a,b,c,\alpha,\beta,\gamma}$ are the same.
\begin{corollary} \label{cor:det-Z-per-Z}
    Consider the punctured hexagon $H_{a,b,c,\alpha,\beta,\gamma}$.  If $M$ is even, then
    \[
        \per{Z_{a,b,c,\alpha,\beta,\gamma}} = |\det{Z_{a,b,c,\alpha,\beta,\gamma}}|.
    \]
\end{corollary}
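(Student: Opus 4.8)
The plan is to combine the sign-analysis of Proposition~\ref{pro:sign-agree} with the observation that when $M$ is even, all admissible permutations $\lambda_0, \ldots, \lambda_C$ have the \emph{same} sign, so the signed enumeration counted by $\det Z_{a,b,c,\alpha,\beta,\gamma}$ is in fact an unsigned enumeration up to a global sign. Concretely, recall from Step~2 of the proof of Proposition~\ref{pro:sign-agree} that $\sgn \lambda_k = (-1)^{M(C-k)}$; when $M$ is even this is $1$ for every $k$, so $\sgn L_T$ is the same for every lozenge tiling $T$ of $H_{a,b,c,\alpha,\beta,\gamma}$. By Proposition~\ref{pro:sign-agree} there is a fixed $i \in \{1,-1\}$ with $\sgn P_T = i \cdot \sgn L_T$ for all $T$, hence $\sgn P_T$ is also constant over all tilings $T$, say equal to a fixed value $\varepsilon \in \{1,-1\}$.

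With that in hand, I would write out the two enumerations explicitly. On the one side, $\det Z_{a,b,c,\alpha,\beta,\gamma} = \sum_{T} \sgn P_T = \varepsilon \cdot \#\{\text{lozenge tilings of } H_{a,b,c,\alpha,\beta,\gamma}\}$, where the sum ranges over all lozenge tilings (equivalently, over all perfect matchings of the associated bipartite graph, which correspond to the nonzero terms in the expansion of $\det Z$). On the other side, $\per Z_{a,b,c,\alpha,\beta,\gamma}$ is exactly that same count of lozenge tilings by Proposition~\ref{pro:bip-matrix}. Therefore $\det Z_{a,b,c,\alpha,\beta,\gamma} = \varepsilon \cdot \per Z_{a,b,c,\alpha,\beta,\gamma}$, and since $\varepsilon \in \{1,-1\}$, taking absolute values gives $|\det Z_{a,b,c,\alpha,\beta,\gamma}| = \per Z_{a,b,c,\alpha,\beta,\gamma}$ (the permanent of a zero-one matrix is already nonnegative).

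The one point needing a little care is the bookkeeping of which permutations in $\PS_{h_s}$ actually contribute to $\det Z$ versus $\per Z$: the nonzero terms of both expansions are indexed by the same set of permutations $\pi$ (those arising as $\pi_T$ for some tiling $T$, i.e. those whose support lies in the bipartite adjacency pattern), so the two sums run over identical index sets and differ only in that $\det Z$ weights the term for $\pi_T$ by $\sgn \pi_T = \sgn P_T$ while $\per Z$ weights it by $+1$. The constancy of $\sgn P_T$ established above is precisely what collapses the former to $\varepsilon$ times the latter. I expect the main (and really only) obstacle to be making sure the identity $\sgn \lambda_k = (-1)^{M(C-k)}$ from Step~2 of Proposition~\ref{pro:sign-agree} is invoked correctly and that every tiling's lattice-path permutation is one of the $\lambda_k$ — but that is exactly the content of the ``admissible permutations'' discussion following Theorem~\ref{thm:nilp-matrix}, so nothing new is required.
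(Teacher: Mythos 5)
Your argument is correct and follows essentially the same route as the paper: the paper's proof invokes "a simple analysis of the proof of Proposition~\ref{pro:sign-agree}" to conclude that $\sgn \pi_T$ is constant over all tilings when $M$ is even, and then combines Proposition~\ref{pro:bip-matrix} with Theorem~\ref{thm:bip-matrix}; you have simply spelled out that analysis explicitly via $\sgn\lambda_k = (-1)^{M(C-k)} = 1$ and the constant $i$ from Proposition~\ref{pro:sign-agree}. Nothing is missing, and the bookkeeping of which permutations contribute to $\det Z$ versus $\per Z$ is handled correctly.
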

\begin{proof}
    A simple analysis of the proof of Proposition~\ref{pro:sign-agree} implies that when $M$ is even then $\sgn{\pi_T} = \sgn{\pi_U}$
    for all tilings $T$ and $U$ of $H_{a,b,c,\alpha,\beta,\gamma}$.  Thus, the enumeration of signed lozenge tilings of 
    $H_{a,b,c,\alpha,\beta,\gamma}$ is, up to sign, the enumeration of (unsigned) lozenge tilings of $H_{a,b,c,\alpha,\beta,\gamma}$.  
    Thus, the claim follows from Proposition~\ref{pro:bip-matrix} and Theorem~\ref{thm:bip-matrix}.
\end{proof}

\begin{remark} \label{rem:det-Z-per-Z}
    We make a pair of remarks regarding the preceding corollary.
    \begin{enumerate}
        \item The corollary can be viewed as a special case of Kasteleyn's theorem on enumerating perfect matchings~\cite{Ka}.  To see this, notice that
            when $M$ is even, then all ``faces'' of the bipartite graph have size congruent to $2 \pmod{4}$. 
        \item The corollary extends~\cite[Theorem~1.2]{CGJL}, where punctured hexagons with trivial puncture (i.e., $M = 0$) are considered.
            We further note that~\cite[Section~3.4]{Ke} provides, independently, essentially the same proof as~\cite{CGJL}, and the proof of 
            Lemma~\ref{lem:perm-rotate} builds on this technique.
    \end{enumerate}
\end{remark}

We conclude this section with some observations on the signs introduced here.

Let $T$ be a lozenge tiling of $H_{a,b,c,\alpha,\beta,\gamma}$, and let $L_T$ and $P_T$ be the associated family of non-intersecting lattice
paths and perfect matching, respectively.  By Proposition~\ref{pro:sign-agree}, we may assume that $\sgn L_T = \sgn P_T$.  Thus we may assign
to $T$ the sign $\sgn T = \sgn L_T$.

Recall that there are $C$ admissible permutations $\lambda_0, \ldots, \lambda_C$ (see the discussion after Theorem~\ref{thm:nilp-matrix}) associated
to $H_{a,b,c,\alpha,\beta,\gamma}$.  Further, $\sgn{\lambda_k} = (-1)^{M(C-k)}$ and so if $M$ is even then $\sgn{\lambda_k} = 1$ for all $k$.
Hence, we need only consider $M$ odd.  In this case, $\sgn{\lambda_k} = 1$ if and only if $C-k$ is even.  Thus, the sign of $T$ is $(-1)^{C-k}$.

\begin{figure}[!ht]
    \begin{minipage}[b]{0.32\linewidth}
        \centering
        \includegraphics[scale=0.33]{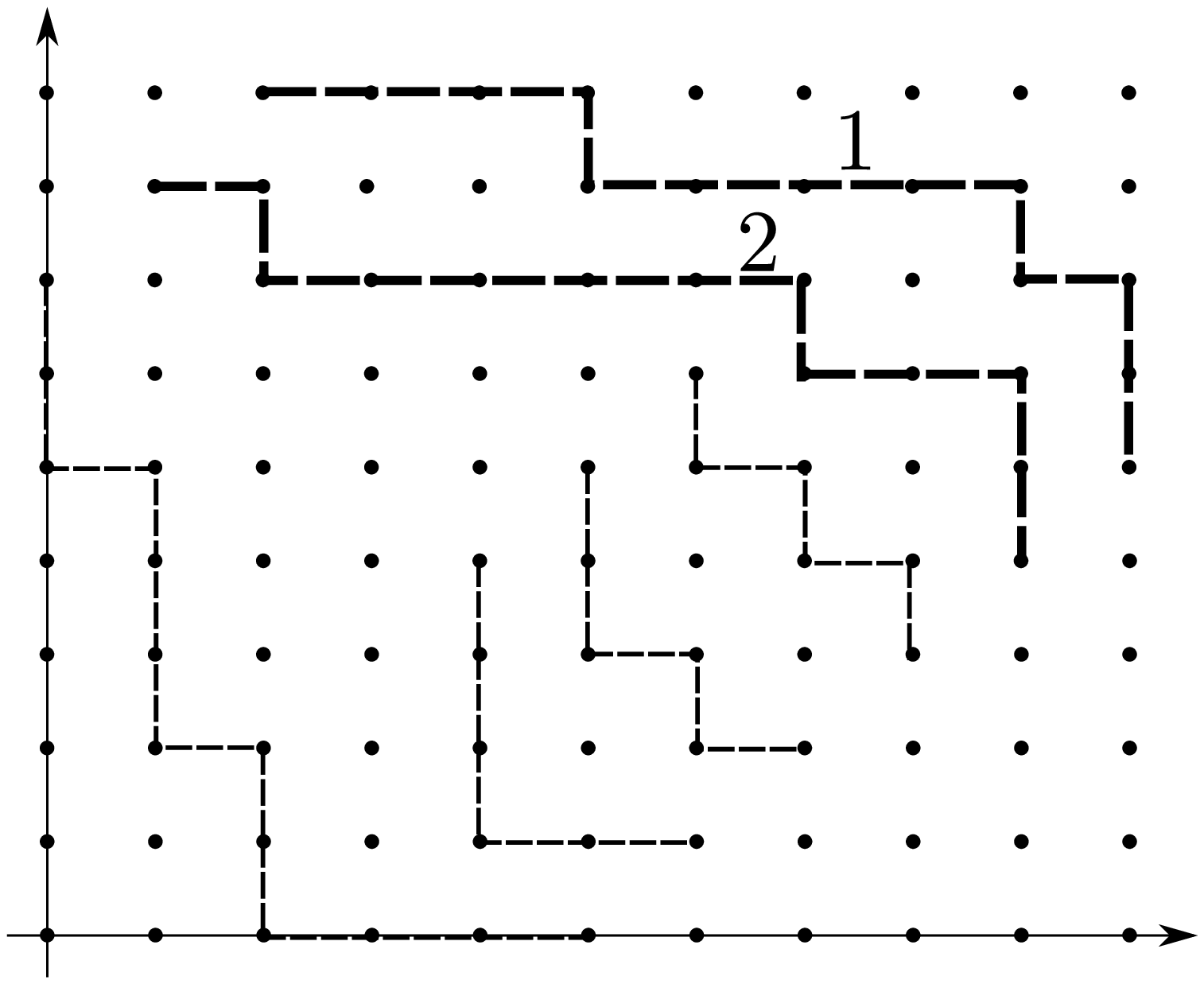}\\
        {\em (i) The sign of a family of non-intersecting lattice paths}
    \end{minipage}
    \begin{minipage}[b]{0.32\linewidth}
        \centering
        \includegraphics[scale=0.33]{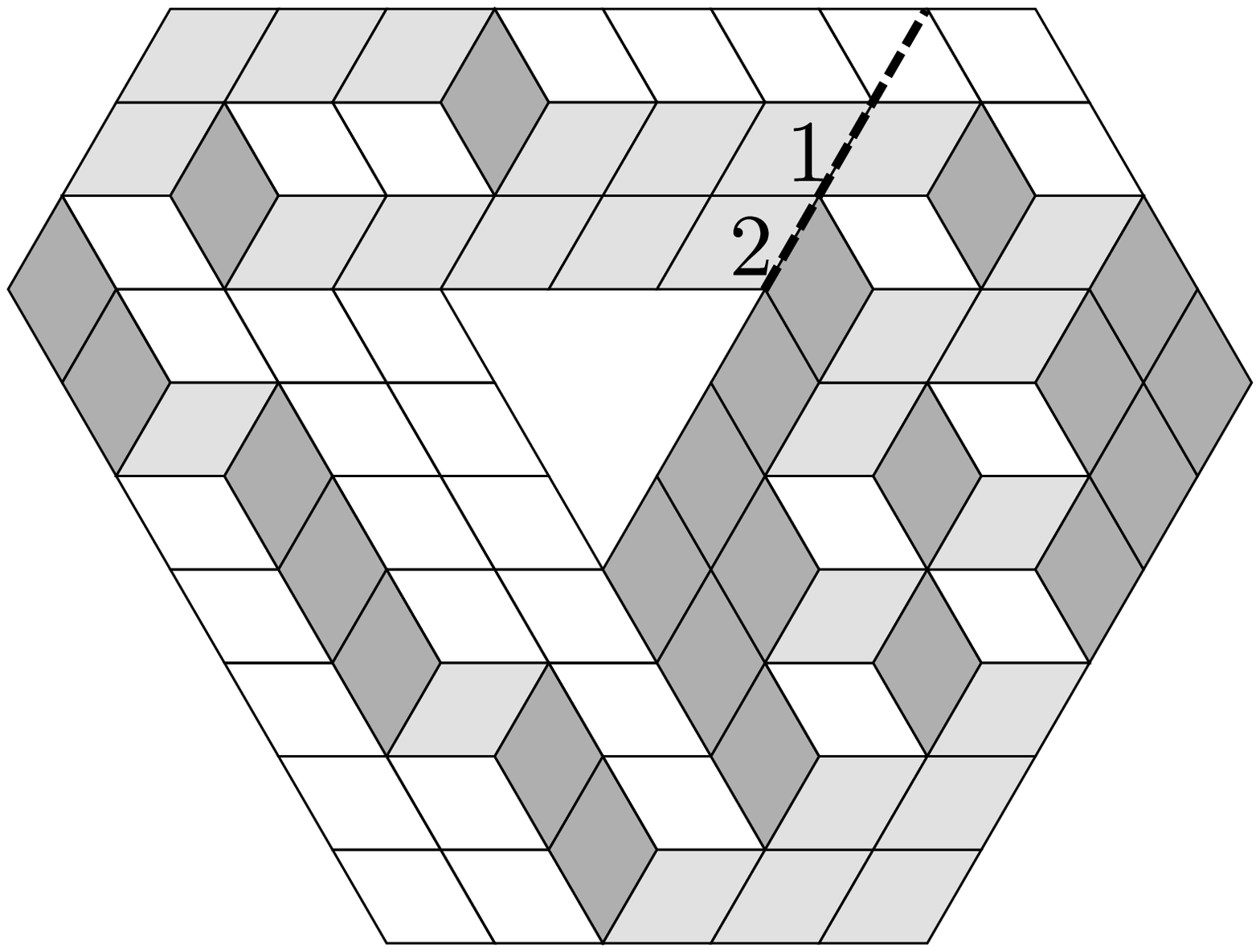}\\
        {\em (ii) The sign of a lozenge tiling}
    \end{minipage}
    \begin{minipage}[b]{0.32\linewidth}
        \centering
        \includegraphics[scale=0.33]{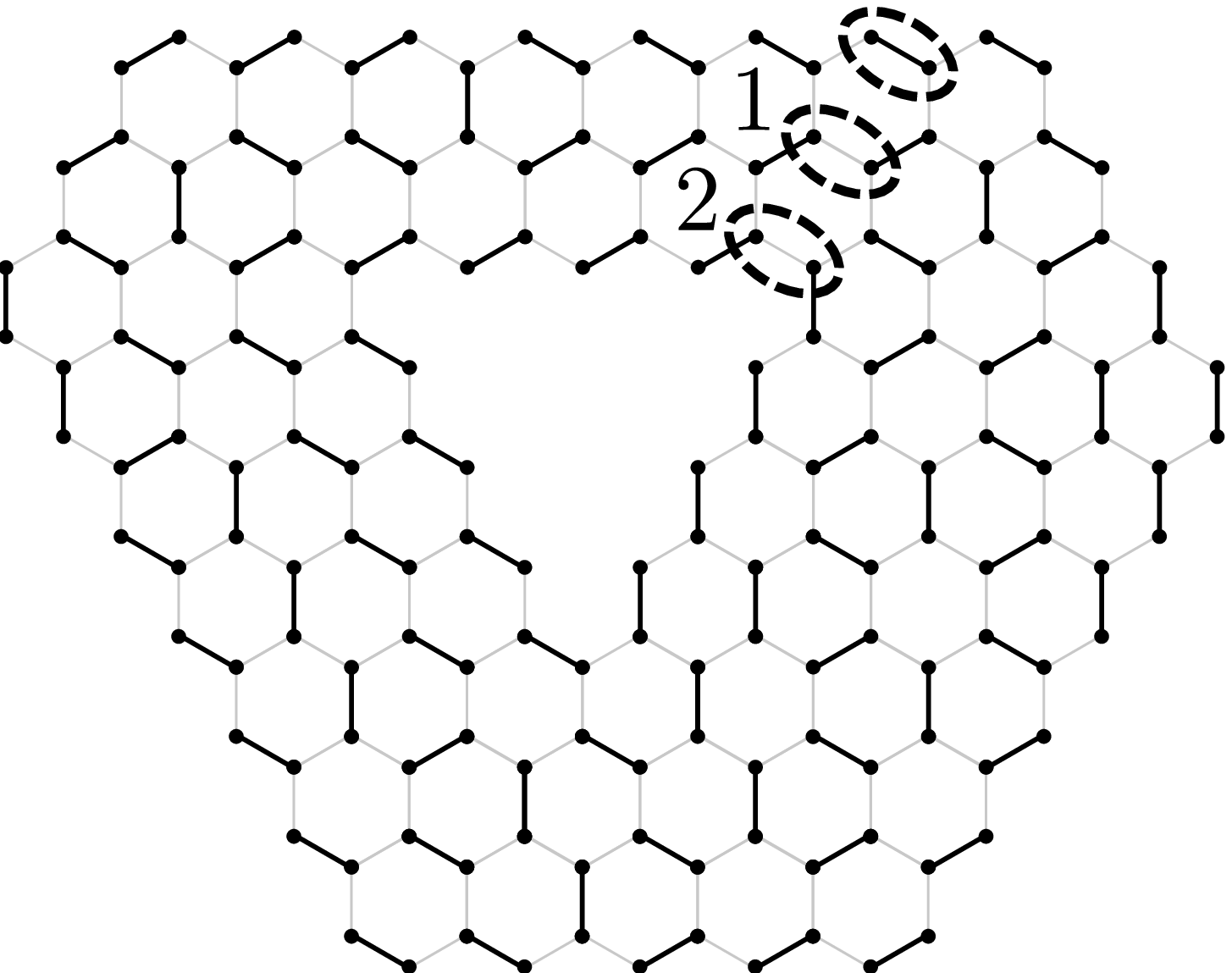}\\
        {\em (iii) The sign of a perfect matching}
    \end{minipage}
    \caption{Example of interpreting the sign}
    \label{fig:interp-signs}
\end{figure}

By definition of $\lambda_k$, $C-k$ is the number of lattice paths in the family that go above the puncture; see Figure~\ref{fig:interp-signs}(i).
For the lozenge tiling $T$, $C-k$ is the number of edges of lozenges of $T$ that touch the line formed by extending the edge of the puncture 
parallel to the side of length $C$ to the side of length $A+M$; see Figure~\ref{fig:interp-signs}(ii).  Note that this interpretation is in line with
the definition of the statistic $n(\cdot)$ in~\cite[Section~2]{CEKZ}.  Last, for the perfect matching, $C-k$ is the number of {\em non}-selected
edges that correspond to those on the edge described for lozenge tilings; see Figure~\ref{fig:interp-signs}(iii).

% -- Section
\section{Determinants} \label{sec:det}

We continue to use the notation introduced in Proposition~\ref{pro:semistable} and Theorem~\ref{thm:interlace-amaci}.
Throughout this section we assume that $A, B, C,$ and $M$ meet conditions (i)-(iv) of Proposition~\ref{pro:semistable} and 
$a+b+c+\alpha+\beta+\gamma \equiv 0 \pmod{3}$. 

We will discuss properties of the determinant of the matrix $N_{a,b,c,\alpha,\beta,\gamma}$ given in Proposition~\ref{pro:wlp-binom} using 
Theorem~\ref{thm:nilp-matrix}.  In particular, we are chiefly interested in whether the determinant is zero and if we can compute an upper
bound on the prime divisors.  In some cases we can explicitly compute the determinant.

~\subsection{A few properties}~

First, a brief remark about the polynomial nature of the determinants.
\begin{remark} \label{rem:polynomial}
    The argument in~\cite[Section~6]{CEKZ} demonstrates that for fixed $A, B,$ and $C$ and $\alpha, \beta,$ and $\gamma$ satisfying certain restraints,
    then the determinant of $N_{a,b,c,\alpha,\beta,\gamma}$ is polynomial in $M$, the side-length of the puncture of $H_{a,b,c,\alpha,\beta,\gamma}$, for
    $M$ of a fixed parity.  This argument centers around an alternate bijection between the lozenge tilings and non-intersecting lattice paths.  
    
    We note that the argument is completely independent of the restrictions on $\alpha, \beta,$ and $\gamma$.  Thus, their argument can be easily seen to
    generalise to show that, for fixed $A,B,C,\alpha,\beta,$ and $\gamma$, the determinant of $N_{a,b,c,\alpha,\beta,\gamma}$ is polynomial in $M$, for $M$
    of a fixed parity.
\end{remark}

We demonstrate that every punctured hexagonal region $H_{a,b,c,\alpha,\beta,\gamma}$ has at least one tiling.
\begin{lemma} \label{lem:tilings-exist}
    Every region $H_{a,b,c,\alpha,\beta,\gamma}$ has at least one lozenge tiling.
\end{lemma}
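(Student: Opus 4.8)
The plan is to exhibit an explicit lozenge tiling of $H_{a,b,c,\alpha,\beta,\gamma}$ directly, or equivalently (by Theorem~\ref{thm:nilp-matrix} and the discussion of admissible permutations after it) to exhibit one family of non-intersecting lattice paths from $\{A_1,\dots,A_{C+M}\}$ to $\{E_1,\dots,E_{C+M}\}$. The cleanest route is via the lattice-path model. First I would recall that the only candidate connection patterns are the admissible permutations $\lambda_0,\dots,\lambda_C$, where $\lambda_k$ routes $C-k$ of the ``hexagon-side'' starting points above the puncture, $k$ of them below, and the $M$ puncture starting points rightward. So it suffices to produce a single value of $k$ with $0 \le k \le C$ for which the corresponding non-intersecting family actually exists. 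A natural choice is to take the ``maximal'' family in the sense of Lemma~\ref{lem:perm-rotate}: every path moves as far right as possible before moving down (no \emph{down} triplets). For such a rigid family, non-intersection reduces to a handful of inequalities among $A,B,C,M,\alpha,\beta,\gamma$.

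The key steps, in order, would be: (1) pick $k$; a safe canonical choice is to send \emph{all} $C$ hexagon-side paths above the puncture, i.e.\ $k = 0$ (this requires the gap above the puncture, of width $\alpha$, to accommodate $C$ paths, i.e.\ $\alpha \ge C$); if that inequality can fail, instead choose $k = \max\{0, C - \alpha\}$ so that exactly $C-k = \min\{C,\alpha\}$ paths go above and $k$ below, and then check $\beta \ge k$ for the paths below. (2) With $k$ fixed, define each of the $C+M$ paths explicitly by the ``right-then-down'' (maximal) rule using the coordinate formulas for $A_i$ and $E_j$ given before Theorem~\ref{thm:lgv}; the image permutation is exactly $\lambda_k$, matching $A_i$ to $E_{\lambda_k(i)}$. (3) Verify the three groups of paths (below the puncture, above the puncture, from the puncture) are pairwise non-intersecting: within each group the maximal paths are nested and automatically disjoint; between groups one checks that the ``below'' block stays strictly south of the puncture region and the ``above'' plus ``from-puncture'' blocks stay strictly north/east of it. These are inequalities that follow from conditions (i)--(iv) of Proposition~\ref{pro:semistable}, exactly as those conditions were used in Theorem~\ref{thm:interlace-amaci} to guarantee the sub-triangles have non-negative side-lengths and do not overlap. (4) Translate the non-intersecting family back to a lozenge tiling of $H_{a,b,c,\alpha,\beta,\gamma}$ by reversing the three-step construction (orthogonalise back, reconnect lozenges), concluding that at least one tiling exists.

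An alternative, possibly slicker, argument avoids coordinates entirely: observe that $H_{a,b,c,\alpha,\beta,\gamma}$ is obtained from the (honest) hexagon with side-lengths $(A, B+M, C, A+M, B, C+M)$ by removing a triangular puncture of side $M$ that sits a lattice distance $\alpha,\beta,\gamma$ from three of the sides. Because the puncture abuts no side (its distances $\alpha,\beta,\gamma$ are $\ge 0$ and the containing triangles do not overlap by Proposition~\ref{pro:semistable}), one can first tile a ``frame'' of three trapezoidal strips around the puncture with the obvious brick-wall pattern of lozenges, reducing to tiling three honest sub-hexagons; every honest hexagon with non-negative side-lengths has the standard ``staircase'' tiling. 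Whichever presentation is chosen, the substance is the same: assemble a canonical tiling from standard tilings of hexagonal sub-pieces.

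The main obstacle is the bookkeeping in step (3)/the frame decomposition: one must make sure the chosen $k$ (equivalently, the chosen routing of paths around the puncture) is compatible with \emph{all} of $\alpha, \beta, \gamma$ simultaneously and that the explicit paths genuinely do not cross where the ``above'' block meets the ``from-the-puncture'' block near the top-right of the puncture. I expect this to come down to re-deriving, in path language, precisely the non-overlap inequalities already extracted in the proof of Theorem~\ref{thm:interlace-amaci}, so the difficulty is organisational rather than conceptual; the existence of \emph{some} admissible $k$ is forced because $0 \le C$ and $\alpha + \beta \ge C$ (which is condition (iv) of Proposition~\ref{pro:semistable}) together guarantee that $k = \max\{0, C-\alpha\}$ satisfies $0 \le k \le C$ and $k \le C - \alpha + \alpha \le \beta$ is not automatic, so a short case check on which of $\alpha,\beta$ is the binding constraint finishes the argument.
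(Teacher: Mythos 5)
Your approach is essentially the same as the paper's: both recast the problem in the language of non-intersecting lattice paths, reduce to choosing a routing $\lambda_k$ (how many paths go below, above, and out of the puncture), and appeal to the existence of the extremal "staircase'' family once a valid routing is in hand. However, your specific choice of $k$ has a genuine gap. You account only for the gap widths $\alpha$ (above) and $\beta$ (below), but each routing is constrained on \emph{both} sides of the puncture: the number of paths below is at most $\min\{C,\beta,B+C-\alpha\}$ and the number above is at most $\min\{C,\alpha,A+C-\beta\}$ (these are exactly the bounds the paper records). Your candidate $k=\max\{0,C-\alpha\}$ ignores the constraint $C-k\le A+C-\beta$, i.e., $k\ge\beta-A$, and can fail. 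Concretely, take $A=1$, $B=3$, $C=2$, $M=1$, $\alpha=3$, $\beta=2$, $\gamma=1$: all of conditions (i)--(iv) of Proposition~\ref{pro:semistable} hold, your $k=\max\{0,C-\alpha\}=0$ would route both remaining paths above the puncture, but $C-k=2>1=A+C-\beta$, so the maximal family is not realizable; one must instead take $k\ge 1$.

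The paper sidesteps the need to name a specific $k$ by a cleaner feasibility count: it shows
\[
\min\{C,\alpha,A+C-\beta\}+\min\{C,\beta,B+C-\alpha\}\ \ge\ C,
\]
which is a direct consequence of $A,B\ge 0$, $C\le\alpha+\beta$, and (implicitly) $\gamma\ge 0$ via $A+B+C=\alpha+\beta+\gamma$. This guarantees that the interval of admissible $k$ is non-empty without ever exhibiting one. If you want to keep your explicit construction, the fix is to set $k=\max\{0,\,C-\alpha,\,\beta-A\}$ and check it does not exceed $\min\{C,\beta,B+C-\alpha\}$; that case analysis is precisely what the paper's one-line inequality packages. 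Your alternative "frame decomposition'' into trapezoidal strips around the puncture is a genuinely different and plausible route, but it would also require the same non-overlap inequalities to show the strips and residual sub-hexagons have non-negative dimensions, so it is not obviously shorter.
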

\begin{proof}
    In this case, it is easier to show there exists a family $L$ of non-intersecting lattice paths.  In particular, it is sufficient to show that the sum 
    of the maximum number of paths that can go above and below the puncture is at least $C$.  By analysis of $H_{a,b,c,\alpha,\beta,\gamma}$, we see that
    at most $\min\{C, \beta, B+C-\alpha\}$ paths can go below the puncture and at most $\min\{C, \alpha, A+C-\beta\}$ paths can go above the puncture.
    However, as $0 \leq A,B,C$ and $C \leq \alpha + \beta$, then $\min\{C, \beta, B+C-\alpha\} + \min\{C, \alpha, A+C-\beta\} \geq C$.
\end{proof}

Thus when $M$ is even, the determinant is always positive.
\begin{theorem} \label{thm:M-even}
    If $a+b+c$ is even, then $M$ is even and $\det{N_{a,b,c,\alpha,\beta,\gamma}} > 0$.  Thus 
    \[
        I_{a,b,c,\alpha,\beta,\gamma} = (x^a, y^b, z^c, x^\alpha y^\beta z^\gamma)
    \]
    has the weak Lefschetz property in characteristic zero and when the characteristic is sufficiently large.
\end{theorem}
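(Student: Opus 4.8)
The plan is to separate the statement into its parity claim, its positivity claim, and its weak Lefschetz consequence, and to derive each from material already in place.

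\emph{Parity of $M$.} First I would check that $a+b+c$ even forces $M$ even. The standing hypothesis $a+b+c+\alpha+\beta+\gamma \equiv 0 \pmod{3}$ makes $s$ an integer, and $3(s+2) = a+b+c+\alpha+\beta+\gamma$. Combining this with $M = (s+2) - (\alpha+\beta+\gamma)$ from Remark~\ref{rem:sabcm} gives $M = (a+b+c) - 2(s+2)$, so $M \equiv a+b+c \pmod{2}$; in particular $M$ is even.

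\emph{Positivity of $\det N$.} By Theorem~\ref{thm:nilp-matrix}, $\det N_{a,b,c,\alpha,\beta,\gamma}$ is the enumeration of signed lozenge tilings of $H_{a,b,c,\alpha,\beta,\gamma}$, and by Theorem~\ref{thm:lgv} the only permutations contributing to this sum are the admissible ones $\lambda_0, \ldots, \lambda_C$. Since $\sgn \lambda_k = (-1)^{M(C-k)}$ and $M$ is even, every $\lambda_k$ has sign $+1$; hence $\det N_{a,b,c,\alpha,\beta,\gamma} = \sum_{k=0}^{C} P^+_{\lambda_k}(A \rightarrow E)$ is a sum of non-negative integers, i.e.\ it is the genuine (unsigned) number of lozenge tilings of $H_{a,b,c,\alpha,\beta,\gamma}$ (consistent with Corollary~\ref{cor:det-Z-per-Z} read through Theorem~\ref{thm:det-Z-N}). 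By Lemma~\ref{lem:tilings-exist} that number is at least $1$, so $\det N_{a,b,c,\alpha,\beta,\gamma} \geq 1 > 0$.

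\emph{Weak Lefschetz consequence.} Since $\det N_{a,b,c,\alpha,\beta,\gamma} \neq 0$, Proposition~\ref{pro:wlp-binom}(ii) yields the weak Lefschetz property in characteristic zero, and Lemma~\ref{lem:wlp-ae} then propagates it to all sufficiently large characteristics.

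The argument has no genuinely hard step; the only point needing care is the sign bookkeeping in the second part --- verifying that an even puncture side-length forces every admissible permutation, and hence every lozenge tiling, to carry sign $+1$, so that the a priori \emph{signed} enumeration computed by $\det N$ is actually a non-negative count. Once that collapse is observed, nonvanishing follows from the mere existence of a tiling (Lemma~\ref{lem:tilings-exist}), and the Lefschetz statements are immediate from the earlier propositions.
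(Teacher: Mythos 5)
Your argument is correct and follows essentially the same route as the paper: identify the admissible permutations, note that $M$ even forces $\sgn \lambda_k = (-1)^{M(C-k)} = 1$ for all $k$, conclude that $\det N$ is the honest count of lozenge tilings, and invoke Lemma~\ref{lem:tilings-exist} for nonvanishing. The paper's proof is terser (it omits the $M \equiv a+b+c \pmod 2$ computation and the appeal to Proposition~\ref{pro:wlp-binom} and Lemma~\ref{lem:wlp-ae} for the Lefschetz consequences, which are implicit), but you supply exactly the right details to fill those in.
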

\begin{proof}
    Recall the definition of the admissible partitions $\lambda_k$, for $0 \leq k \leq C$ (see the discussion following Theorem~\ref{thm:nilp-matrix}).
    Since $M$ is even, then $\sgn{\lambda_k} = 1$ for $0 \leq k \leq C$ and hence $\det{N_{a,b,c,\alpha,\beta,\gamma}}$ is the number of tilings of 
    $H_{a,b,c,\alpha,\beta,\gamma}$.  Thus, by Lemma~\ref{lem:tilings-exist}, $\det{N_{a,b,c,\alpha,\beta,\gamma}} > 0$.
\end{proof}~

\subsection{Mahonian determinants}\label{ss:macmahon}~

MacMahon computed the number of plane partitions (finite two-dimensional arrays that weakly decrease in all columns and rows) in an $A \times B \times C$ box
as (see, e.g., \cite[Page 261]{Pr})
\[
    \Mac(A,B,C) := \frac{\HF(A) \HF(B) \HF(C) \HF(A+B+C)}{\HF(A+B) \HF(A+C) \HF(B+C)},
\]
where $A,B,$ and $C$ are non-negative integers and $\HF(n) := \prod_{i=0}^{n-1}i!$ is the {\em hyperfactorial} of $n$.  David and Tomei proved in~\cite{DT} 
that plane partitions in an $A \times B \times C$ box are in bijection with lozenge tilings in an non-punctured hexagon of side-lengths $(A,B,C,A,B,C)$. 
We note that Propp states on~\cite[Page 258]{Pr} that Klarner was likely the first to have observed this.
See Figure~\ref{fig:pp-tile} for an illustration of the connection.

\begin{figure}[!ht]
    \begin{minipage}[b]{0.49\linewidth}
        \centering
        \includegraphics[scale=0.6]{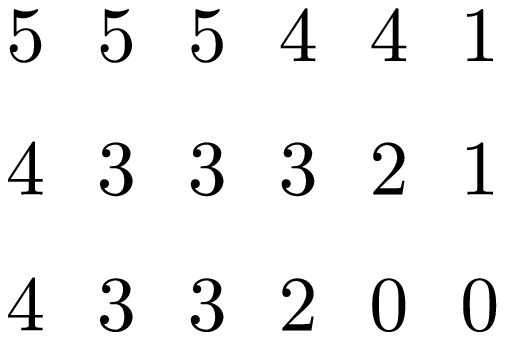}\\[1\baselineskip]~
    \end{minipage}
    \begin{minipage}[b]{0.49\linewidth}
        \centering
        \includegraphics[scale=0.3]{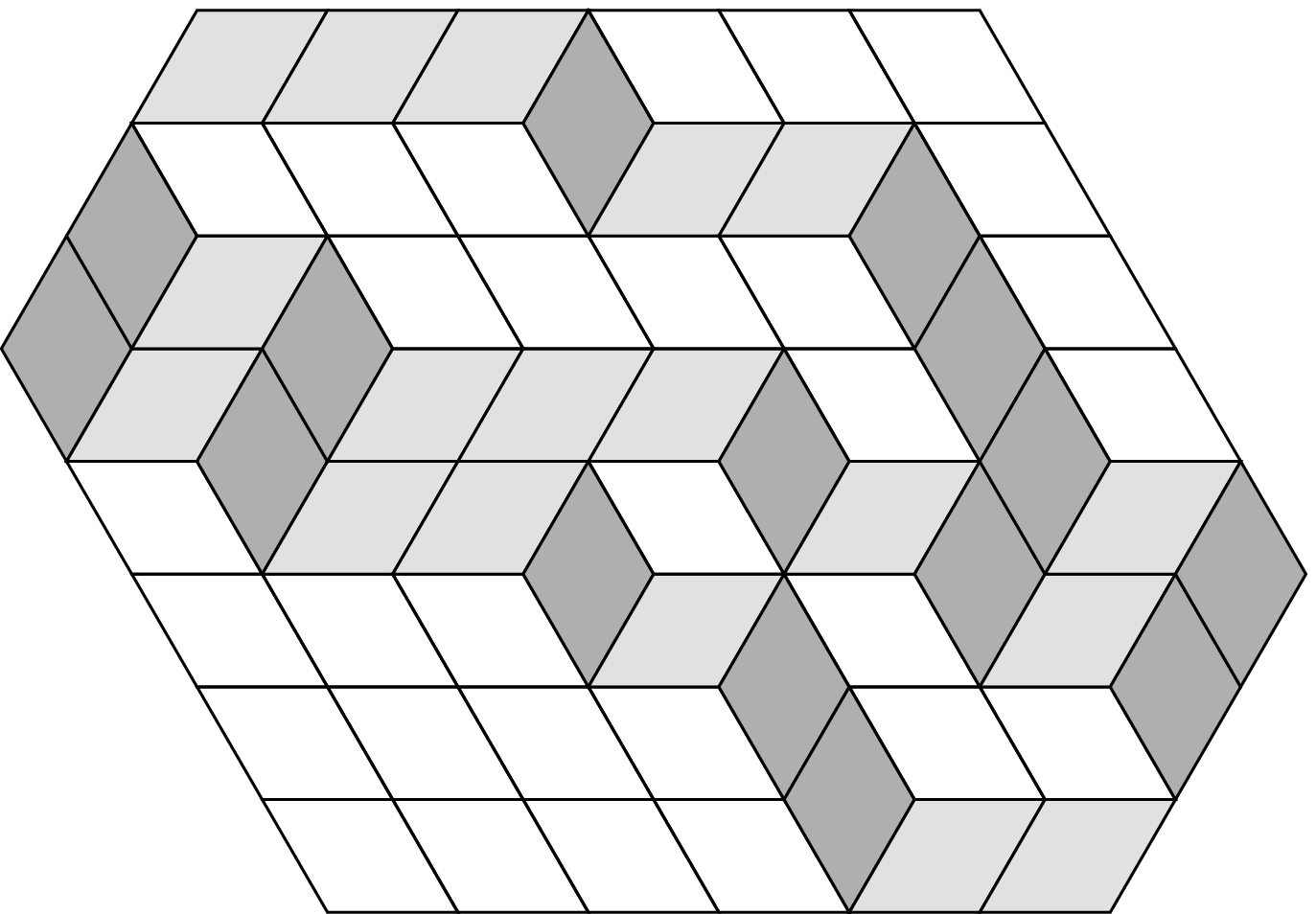}\\
    \end{minipage}
    \caption{An example of a $3 \times 6 \times 5$ plane partition and its associated lozenge tiling (with light grey as the top faces of the boxes)}
    \label{fig:pp-tile}
\end{figure}

We can use MacMahon's formula to compute the determinant of $N_{a,b,c,\alpha,\beta,\gamma}$ in many cases.  Also, note that the prime divisors 
of $\Mac(A,B,C)$ are sharply bounded above by $A+B+C-1$.
A first case is when the puncture is trivial.  This extends~\cite[Theorem~4.5]{CN} where the level algebras of this family are considered.
\begin{proposition} \label{pro:M-zero}
    If $a+b+c=2(\alpha+\beta+\gamma)$, then $M = 0$ and $\det{N_{a,b,c,\alpha,\beta,\gamma}}$ is \[\Mac(A,B,C).\]  Thus, $I_{a,b,c,\alpha,\beta,\gamma}$
    has the weak Lefschetz property if the characteristic of $K$ is zero or at least $A+B+C=\alpha+\beta+\gamma = \frac{1}{2}(a+b+c)$.
\end{proposition}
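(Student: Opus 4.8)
The plan is to observe that the hypothesis $a+b+c = 2(\alpha+\beta+\gamma)$ forces $M = 0$, so that $H_{a,b,c,\alpha,\beta,\gamma}$ is an ordinary non-punctured hexagon, and then to identify $\det N_{a,b,c,\alpha,\beta,\gamma}$ with the classical MacMahon count.

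First I would check the value of $M$. Since $s + 2 = \frac{1}{3}(a+b+c+\alpha+\beta+\gamma)$ and $a+b+c = 2(\alpha+\beta+\gamma)$, we get $s+2 = \alpha+\beta+\gamma$, whence $M = s+2-(\alpha+\beta+\gamma) = 0$. By Theorem~\ref{thm:interlace-amaci} the region $H_{a,b,c,\alpha,\beta,\gamma}$ then has side-lengths $(A,B,C,A,B,C)$ and the puncture, being an equilateral triangle of side-length $0$, is empty; in particular its location plays no role. Note also that in this case $A+B+C = s+2 = \alpha+\beta+\gamma = \frac{1}{2}(a+b+c)$ by Remark~\ref{rem:sabcm}.

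Next I would compute $\det N_{a,b,c,\alpha,\beta,\gamma}$. By Proposition~\ref{pro:wlp-binom} the matrix $N_{a,b,c,\alpha,\beta,\gamma}$ is of size $C+M = C$, and by Theorem~\ref{thm:nilp-matrix} (an application of the Lindstr\"om--Gessel--Viennot theorem, Theorem~\ref{thm:lgv}) its determinant is the signed enumeration of lozenge tilings of $H_{a,b,c,\alpha,\beta,\gamma}$. When $M = 0$ the admissible permutations $\lambda_0, \ldots, \lambda_C$ all coincide with the identity of $\PS_C$, with sign $+1$; equivalently, since the points $A_1, \ldots, A_C$ and $E_1, \ldots, E_C$ are ordered, any non-intersecting family must join $A_i$ to $E_i$ for every $i$. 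Hence every tiling is counted with sign $+1$, and $\det N_{a,b,c,\alpha,\beta,\gamma}$ is exactly the number of lozenge tilings of the hexagon with side-lengths $(A,B,C,A,B,C)$. (Alternatively one may cite Corollary~\ref{cor:det-Z-per-Z}, since $M=0$ is even, together with Proposition~\ref{pro:bip-matrix}.) Invoking the David--Tomei bijection between such tilings and plane partitions in an $A \times B \times C$ box, together with MacMahon's formula, this number equals $\Mac(A,B,C)$, a positive integer (positivity also follows from Lemma~\ref{lem:tilings-exist}).

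Finally, since $\det N_{a,b,c,\alpha,\beta,\gamma} = \Mac(A,B,C) \neq 0$, Proposition~\ref{pro:wlp-binom} gives that $R/I_{a,b,c,\alpha,\beta,\gamma}$ has the weak Lefschetz property when $\charf K = 0$. For the positive-characteristic statement I would note that the largest factorial occurring in the numerator $\HF(A)\HF(B)\HF(C)\HF(A+B+C)$ of $\Mac(A,B,C)$ is $(A+B+C-1)!$, so every prime divisor of $\Mac(A,B,C)$ is at most $A+B+C-1$; since $A+B+C = \alpha+\beta+\gamma = \frac{1}{2}(a+b+c)$, it follows that $\charf K$ does not divide $\det N_{a,b,c,\alpha,\beta,\gamma}$ whenever $\charf K \geq A+B+C$, and Proposition~\ref{pro:wlp-binom} again yields the weak Lefschetz property. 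The only points needing genuine care are the sign bookkeeping (that all tilings receive sign $+1$ when $M=0$) and the elementary bound on the prime divisors of $\Mac(A,B,C)$; the heart of the argument, the reduction to the non-punctured hexagon, is immediate once $M = 0$ is established.
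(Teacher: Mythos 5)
Your proof is correct and follows essentially the same route as the paper: reduce to $M = 0$, identify $H_{a,b,c,\alpha,\beta,\gamma}$ as the ordinary hexagon with side-lengths $(A,B,C,A,B,C)$, and apply MacMahon's formula. The paper's proof is terser (it does not spell out the sign bookkeeping or the prime-divisor bound inside the proof, though the latter is noted in the surrounding text), whereas you make the check that all admissible permutations $\lambda_k$ are the identity explicit; both yield the same conclusion.
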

\begin{figure}[!ht]
    \includegraphics[scale=0.667]{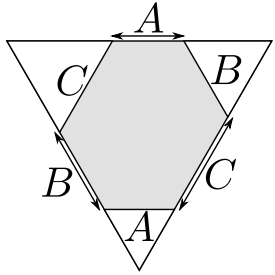}
    \caption{When the puncture has side-length zero, the region is a simple hexagon.}
    \label{fig:hex-no-puncture}
\end{figure}
\begin{proof}
    When $M = 0$ then there is no puncture in the region $H_{a,b,c,\alpha,\beta,\gamma}$.  Hence the region is a simple hexagon with side-lengths 
    $(A,B,C,A,B,C)$, exactly the region to which MacMahon's formula applies.
\end{proof}

This result allows us to recover some earlier results about complete intersections.  
\begin{corollary} \label{cor:ci}
    If $a+b+c$ is even, then the complete intersection $J = (x^a, y^b, z^c)$ has the weak Lefschetz property if and only if the characteristic of $K$ is
    not a prime divisor of $\Mac(A,B,C)$.  That is, the algebra $R/J$ has the weak Lefschetz property if the characteristic of $K$ is zero or at 
    least $A+B+C=\alpha+\beta+\gamma = \frac{1}{2}(a+b+c)$.
\end{corollary}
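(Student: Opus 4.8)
The plan is to run, for the complete intersection $J=(x^a,y^b,z^c)$, an abbreviated form of the chain of reductions that leads to Proposition~\ref{pro:M-zero}. Set $s:=\frac{1}{2}(a+b+c)-2$ and $A:=\frac{1}{2}(b+c-a)$, $B:=\frac{1}{2}(a+c-b)$, $C:=\frac{1}{2}(a+b-c)$, so that $A+B+C=s+2$; since $\Mac(A,B,C)$ appears in the statement we assume $a,b,$ and $c$ satisfy the triangle inequalities, so that $A,B,C\ge 0$ (if instead, say, $a>b+c$, then near degree $s$ the monomial constraint coming from $x^a$ is never active, and one checks that the peak map below is represented, after reordering the monomial basis, by a triangular matrix with unit diagonal, whence $R/J$ has the weak Lefschetz property in every characteristic). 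The first step is to reduce, as in Corollary~\ref{cor:one-map}, to a single map: $R/J$ is Gorenstein with socle degree $a+b+c-3$, so its $h$-vector is symmetric about $\frac{1}{2}(a+b+c)-\frac{3}{2}=s+\frac{1}{2}$, and since $s\in\ZZ$ this forces $h_s=h_{s+1}$. Combining Propositions~\ref{pro:surj}, \ref{pro:inj}, and~\ref{pro:mono} (using $s+1\le a+b+c-3$) exactly as in Corollary~\ref{cor:one-map}, but with Gorenstein symmetry standing in for Lemma~\ref{lem:twin-peaks}, gives that $R/J$ has the weak Lefschetz property if and only if $\times(x+y+z)\colon[R/J]_s\to[R/J]_{s+1}$ is bijective.

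The second step is to produce the matrix, following the proof of Proposition~\ref{pro:wlp-binom} with the fourth generator deleted. One has $R/(J,x+y+z)\cong S/(x^a,y^b,(x+y)^c)$ with $S=K[x,y]$, and the exact sequence
\[
    [R/J]_s\xrightarrow{\times(x+y+z)}[R/J]_{s+1}\longrightarrow[S/(x^a,y^b,(x+y)^c)]_{s+1}\longrightarrow 0
\]
together with $h_s=h_{s+1}$ shows that the peak map is bijective precisely when $[S/(x^a,y^b,(x+y)^c)]_{s+1}=0$. Among the $s+2$ monomials $x^iy^j$ of degree $s+1$, the $A$ with $i\ge a$ and the $B$ with $j\ge b$ lie trivially in the ideal, leaving $C=(s+2)-A-B$ of them; reducing these modulo the monomial generators and expressing them through the $C$ scalings of $(x+y)^c$ to degree $s+1$ produces a square integer matrix of size $C$ with $(i,j)$-entry $\binom{c}{A+j-i}$. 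This is exactly the matrix $N_{a,b,c,\alpha,\beta,\gamma}$ of Proposition~\ref{pro:wlp-binom} in the degenerate case $M=0$ (the second block of rows is empty, and the first is governed by the same $A$ and $c$), so $R/J$ has the weak Lefschetz property if and only if $\det N\not\equiv0\pmod{\charf K}$.

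The third step is combinatorial: with $M=0$ the associated region is the unpunctured hexagon of side-lengths $(A,B,C,A,B,C)$, whose unique admissible permutation $\lambda_0$ is the identity, of sign $+1$; hence by Theorem~\ref{thm:nilp-matrix}---equivalently, by the argument of Proposition~\ref{pro:M-zero}---$\det N$ equals the number of lozenge tilings of that hexagon, namely $\Mac(A,B,C)$, a positive integer. This gives the asserted equivalence, and the final sentence follows because $\Mac(A,B,C)\neq0$ forces the weak Lefschetz property when $\charf K=0$, while every prime divisor of $\Mac(A,B,C)$ is at most $A+B+C-1$, so the property also holds once $\charf K\ge A+B+C=\frac{1}{2}(a+b+c)$. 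I expect Step~1 to be the only genuinely new point: complete intersections are excluded from Lemma~\ref{lem:twin-peaks}, so the twin-peak structure at degrees $s$ and $s+1$ must be recovered from Gorenstein duality before the ``one map decides the weak Lefschetz property'' mechanism of Section~\ref{sec:aci} can be invoked; everything after that is a specialization to $M=0$ of tools already in place.
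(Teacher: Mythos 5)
Your proof is correct, and it takes a genuinely different route from the paper's. The paper's proof is a one-step shortcut: it defines the auxiliary almost complete intersection $I = (x^a, y^b, z^c, x^\alpha y^\beta z^\gamma)$ with $\alpha = \frac{1}{2}(-a+b+c)$, $\beta = \frac{1}{2}(a-b+c)$, $\gamma = \frac{1}{2}(a+b-c)$, observes that the extra generator has degree $s+2$ so that $[R/I]_i \cong [R/J]_i$ for $i \leq s+1$, and then simply applies Proposition~\ref{pro:M-zero} to $I$, transporting the conclusion back to $J$ through this identification in the peak degrees. Your argument instead rebuilds the whole ``one map decides the weak Lefschetz property'' pipeline directly for the complete intersection: you supply the twin-peaks fact $h_s = h_{s+1}$ from Gorenstein symmetry of $R/J$ rather than importing it from Lemma~\ref{lem:twin-peaks} via the auxiliary $I$; you re-derive the matrix from the exact sequence for $R/(J, x+y+z) \cong S/(x^a, y^b, (x+y)^c)$ and identify it as $N_{a,b,c,\alpha,\beta,\gamma}$ with $M = 0$; and you re-run the lattice-path count. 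Both arguments are sound. The paper's is more economical because it reuses the ACI machinery wholesale in a single stroke; yours is more self-contained and makes explicit what the paper leaves implicit, namely that Gorenstein duality alone already yields the twin-peaks structure at degrees $s$ and $s+1$ for complete intersections, so that the reduction to a single peak map does not in fact require viewing $J$ as a truncation of an ACI. Your parenthetical observation about the degenerate case $a > b+c$ (the peak map becoming unitriangular after suitable ordering of the monomial basis, so that WLP holds in every characteristic) is also correct, though it sits outside the scope of the stated corollary since $\Mac(A,B,C)$ presupposes $A, B, C \geq 0$.
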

\begin{proof}
    Set $\alpha = \frac{1}{2}(-a+b+c),$ $\beta = \frac{1}{2}(a-b+c),$ $\gamma = \frac{1}{2}(a+b-c),$ and consider $I = (x^a, y^b, z^c, x^\alpha y^\beta z^\gamma)$.
    Then Proposition~\ref{pro:M-zero} applies to $I$ and the mixed term, $x^\alpha y^\beta z^\gamma$, has total degree $s+2$.  Thus we have that 
    $[R/I]_i \cong [R/J]_i$ for $i \leq s+1$.  That is, the twin peaks of $R/I$ are isomorphic to the twin peaks of the complete intersection $R/J$.
    Hence $R/J$ has the weak Lefschetz property if and only if $R/I$ has the weak Lefschetz property, and Proposition~\ref{pro:M-zero} gives the claim.
\end{proof}

In particular, the corollary recovers~\cite[Theorem~3.2(1)]{LZ} when combined with Proposition~\ref{pro:wlp-binom} and~\cite[Theorem~1.2]{CGJL} 
when combined with Corollary~\ref{cor:det-Z-per-Z}.  Further, the special case in~\cite[Theorem~4.2]{LZ} can be recovered if we set 
$a = \beta + \gamma, b = \alpha + \gamma,$ and $c = \alpha + \beta$. 

MacMahon's formula can be used again in another special case, when $C = 0$.  (Notice if $A$ or $B$ is zero, then we can simply relabel the sides to 
ensure $C$ is zero.) We notice this extends~\cite[Theorem~4.3]{CN} where the level algebras of this family are considered.
\begin{proposition} \label{pro:C-zero}   
    If $c = \frac{1}{2}(a+b+\alpha+\beta+\gamma)$, then $C = 0$ and $\det{N_{a,b,c,\alpha,\beta,\gamma}}$ is \[\Mac(M, A-\beta, B-\alpha).\]  Thus,
    $I_{a,b,c,\alpha,\beta,\gamma}$ has the weak Lefschetz property if the characteristic of $K$ is zero or at least $A+B+M-\alpha-\beta=c-\alpha-\beta$.
\end{proposition}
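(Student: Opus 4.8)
The plan is to reduce $\det N_{a,b,c,\alpha,\beta,\gamma}$ to a classical binomial determinant evaluated by MacMahon's formula. First I would check that the hypothesis $c = \tfrac12(a+b+\alpha+\beta+\gamma)$ is exactly the condition $C = 0$: since $s+2 = \tfrac13(a+b+c+\alpha+\beta+\gamma)$, one has $C = s+2-c = \tfrac13(a+b+\alpha+\beta+\gamma - 2c)$, which vanishes precisely under the stated hypothesis. So from now on $C = 0$.

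Next I would write down $N = N_{a,b,c,\alpha,\beta,\gamma}$ explicitly. By Proposition~\ref{pro:wlp-binom} it is the $M \times M$ matrix all of whose rows are of the second type, namely $(N)_{i,j} = \binom{\gamma}{A-\beta+j-i}$. From Remark~\ref{rem:sabcm} we have $A+B+C = \alpha+\beta+\gamma$, so with $C = 0$ this reads $\gamma = (A-\beta)+(B-\alpha)$; and conditions (ii) and (iii) of Proposition~\ref{pro:semistable} force $A-\beta \ge 0$ and $B-\alpha \ge 0$ (for instance $A \le \beta+\gamma = \beta + (A-\beta)+(B-\alpha)$ gives $B \ge \alpha$, and symmetrically $A \ge \beta$). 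Hence $N = \left(\binom{(A-\beta)+(B-\alpha)}{(A-\beta)+j-i}\right)_{1 \le i,j \le M}$, which is precisely the Lindstr\"om--Gessel--Viennot matrix counting non-intersecting lattice path families for a non-punctured hexagon of side-lengths $(M, A-\beta, B-\alpha, M, A-\beta, B-\alpha)$ --- equivalently, by David--Tomei, the matrix for plane partitions in an $M \times (A-\beta) \times (B-\alpha)$ box. Its determinant is therefore $\Mac(M, A-\beta, B-\alpha)$ by MacMahon's formula. I would also note the self-contained alternative: with $C = 0$ the only admissible permutation is $\lambda_0 = \mathrm{id}$, so by Theorem~\ref{thm:nilp-matrix} $\det N$ is simply the number of non-intersecting lattice path families joining $A_i = (\beta+i-1, B-\alpha+i-1)$ to $E_i = (A+i-1, i-1)$ for $1 \le i \le M$, a $(1,1)$-staggered configuration whose families are counted by the same $\Mac(M, A-\beta, B-\alpha)$.

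Finally, for the weak Lefschetz statement I would invoke Proposition~\ref{pro:wlp-binom}: $R/I$ has the property in characteristic $p$ if and only if $p \nmid \det N$. Since the prime divisors of $\Mac(M, A-\beta, B-\alpha)$ are at most $M + (A-\beta)+(B-\alpha) - 1 = (A+B+M) - \alpha - \beta - 1 = c - \alpha - \beta - 1$ (using $c = A+B+M$ from Remark~\ref{rem:bijection}), the algebra has the weak Lefschetz property when $\charf K = 0$ or $\charf K \ge A+B+M-\alpha-\beta$. The only point requiring any care is the bookkeeping that identifies $N$ with the classical matrix and certifies that $A-\beta$, $B-\alpha$, and $M$ are non-negative; the determinant evaluation itself is then just MacMahon's formula, and nothing deeper is needed.
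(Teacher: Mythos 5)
Your proof is correct, and your primary route is genuinely different from the paper's. The paper works on the tiling side: with $C=0$ the only admissible permutation is $\lambda_0$, the paths are forced in a large region (the lightly shaded area in Figure~\ref{fig:hex-C-zero}), and what remains is a non-punctured hexagon of side-lengths $(M, A-\beta, B-\alpha, M, A-\beta, B-\alpha)$, whose tilings are counted by MacMahon. You instead work on the matrix side: with $C=0$ every row of $N$ is of the ``mixed term'' type $\binom{\gamma}{A-\beta+j-i}$, and since $\gamma = (A-\beta)+(B-\alpha)$ (a consequence of $A+B+C = \alpha+\beta+\gamma$) this is precisely the classical binomial matrix whose determinant is $\Mac(M, B-\alpha, A-\beta) = \Mac(M, A-\beta, B-\alpha)$; the paper itself records this special case in Remark~\ref{rem:split-binom-det} (the $r=0$ case of Lemma~\ref{lem:split-binom-det}). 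The matrix route is slightly more self-contained --- it bypasses the geometric argument that tiles are forced in the light region --- at the cost of invoking the classical determinant evaluation. Your verification that $A-\beta, B-\alpha \ge 0$ from conditions (ii)--(iii) of Proposition~\ref{pro:semistable} is a clean detail the paper's proof leaves implicit, and your sketch of the ``self-contained alternative'' is essentially the paper's own argument. The bound on prime divisors and the final WLP conclusion are handled correctly.
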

\begin{figure}[!ht]
    \includegraphics[scale=0.667]{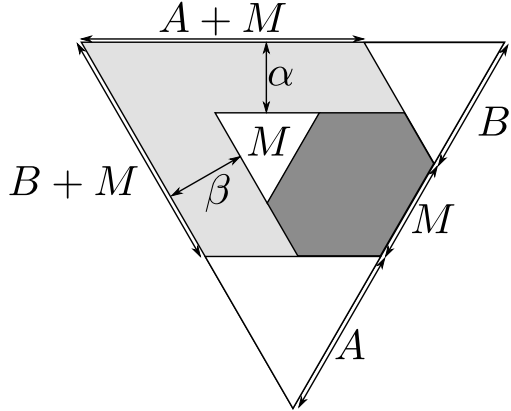}
    \caption{When $C$ is zero, the lightly shaded region has tiles that are fixed, leaving the only variation in the darkly shaded region.}
    \label{fig:hex-C-zero}
\end{figure}
\begin{proof}
    In this case, it is easier to consider families of non-intersecting lattice paths.  In particular, since $C = 0$, then the only
    starting points, the $A_i$, are those on the puncture.  Further, since lattice paths must move only right and down, then we can focus
    on the isolated region between the puncture and the bottom-right edge.  If we convert this region back into a punctured hexagon, then
    it is just a hexagon without a puncture and with side-lengths $(M, A+C-\beta, B+C-\alpha, M, A+C-\beta, B+C-\alpha)$.
\end{proof}

\begin{remark} \label{rem:C-zero}
    Notice that in the preceding proof, we show that the only possible lattice paths come from the puncture to the opposite
    edge.  Converting this back to the language of lozenge tilings, we see this means that a large region of the figure has fixed tiles leaving
    only a small region in which variation can occur.  See Figure~\ref{fig:hex-C-zero} for an illustration of this.

    Further, given the condition in Proposition~\ref{pro:C-zero}, we see that the pure power of $z$, $z^c$, has total degree $c = s+2$.  Thus,
    if we let $I = I_{a,b,c,\alpha,\beta,\gamma}$, then we have that $[R/I]_i \cong [R/J]_i$ for $i \leq s+1$, where $J = (x^a, y^b, x^\alpha y^\beta z^\gamma)$.  
    Thus, the twin peaks of $R/I$ are isomorphic to the twin peaks of the non-artinian algebra $R/J$.
\end{remark}

\begin{corollary} \label{cor:injective-pure-z-missing}
    Let $J = (x^a, y^b, x^\alpha y^\beta z^\gamma)$ and $c = \frac{1}{2}(a+b+\alpha+\beta+\gamma)$, with parameters still suitably restricted.  Then the map
    \[
        [R/J]_i \stackrel{\times (x+y+z)}{\longrightarrow} [R/J]_{i+1}
    \]
    is injective for $i \leq c$.
\end{corollary}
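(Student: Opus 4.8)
The plan is to deduce this from the weak Lefschetz property of the artinian algebra $R/I$, where $I := I_{a,b,c,\alpha,\beta,\gamma}$. The standing hypothesis $c = \frac{1}{2}(a+b+\alpha+\beta+\gamma)$ is precisely the condition $C = 0$, equivalently $s = c-2$, so Proposition~\ref{pro:C-zero} applies and gives $\det N_{a,b,c,\alpha,\beta,\gamma} = \Mac(M, A-\beta, B-\alpha)$, a nonzero integer. By Proposition~\ref{pro:wlp-binom} this means $R/I$ has the weak Lefschetz property in characteristic zero (more generally, whenever $\charf K$ is not a prime divisor of $\Mac(M, A-\beta, B-\alpha)$), and by Proposition~\ref{pro:mono} a Lefschetz element is $x+y+z$. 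Hence $\times(x+y+z)\colon [R/I]_d \to [R/I]_{d+1}$ has maximal rank for every $d$.

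Next I assemble the two ingredients needed to pass from $R/I$ to $R/J$. By Lemma~\ref{lem:twin-peaks}, the Hilbert function of $R/I$ weakly increases up to degree $s+1$ and $R/I$ has twin peaks in degrees $s$ and $s+1$; since $s = c-2$, maximal rank together with a weakly increasing Hilbert function forces $\times(x+y+z)\colon [R/I]_d \to [R/I]_{d+1}$ to be injective for every $d \le s$, and in fact bijective for $d = s$. Separately, $I$ and $J$ differ only in the generator $z^c$, which has degree $c = s+2$, so $I$ and $J$ agree in every degree at most $c-1 = s+1$; thus $[R/J]_i = [R/I]_i$ for $i \le s+1$, and the multiplication maps $\times(x+y+z)$ on $[R/J]_i \to [R/J]_{i+1}$ and on $[R/I]_i \to [R/I]_{i+1}$ coincide whenever $i \le s$. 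Combining, $\times(x+y+z)\colon [R/J]_i \to [R/J]_{i+1}$ is injective for all $i \le c-2$.

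The two remaining degrees $i = c-1$ and $i = c$ are the delicate part, because there the identification with $R/I$ breaks down: as $K$-vector spaces $[R/J]_c = [R/I]_c \oplus K z^c$, the new summand being spanned by $z^c$. For $i = c-1$ I would use the reduction map $q\colon [R/J]_c \twoheadrightarrow [R/I]_c$ killing $z^c$, together with the commuting square in which $q$ intertwines the $R/J$-multiplication $[R/J]_{c-1} \to [R/J]_c$ with the $R/I$-multiplication $[R/I]_{c-1} \to [R/I]_c$ under the identification $[R/J]_{c-1} = [R/I]_{c-1}$; the kernel of the $R/J$-map then lies inside the kernel of the $R/I$-map, and is cut down further by the requirement that the $z^c$-coordinate of $(x+y+z)v$ vanish. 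For $i = c$ one notes $(x+y+z)\, z^c = x z^c + y z^c + z^{c+1}$ with $z^{c+1} \notin J$, which keeps the extra basis vector $z^c$ outside the kernel, the remaining directions being handled as before.

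I expect this boundary bookkeeping to be the main obstacle: injectivity at degrees $c-1$ and $c$ requires that the single extra dimension $K z^c$ exactly offset the drop of the Hilbert function of $R/I$ immediately past its twin peak, so the crux is the estimate $h_{s+1}(R/I) - h_{s+2}(R/I) \le 1$. I would establish this by computing $h_{s+2}(R/I)$ directly from the minimal free resolution in Proposition~\ref{pro:amaci-props}(iii) — when $C = 0$ only a short list of shifts ($0$, $a$, $b$, $c$, $\alpha+\beta+\gamma$, and, under further inequalities among the parameters, $a+\beta+\gamma$ and $\alpha+b+\gamma$) can contribute in that degree — and it is here that the full force of the ``suitable restriction'' on $a,b,c,\alpha,\beta,\gamma$ must be brought to bear. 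With that estimate in hand, the dimension-and-kernel chase sketched above closes the argument; the kernel functional check (that the $z^c$-coordinate of $(x+y+z)v$ is nonzero for a nonzero kernel vector $v$ of the $R/I$-map) is the one further point to verify carefully.
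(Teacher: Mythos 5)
Your argument for the range $i \leq c-2$ is correct and is exactly the paper's implicit argument: Remark~\ref{rem:C-zero} identifies $[R/J]_i = [R/I]_i$ for $i \leq s+1 = c-1$ (since the only generator of $I$ missing from $J$ is $z^c$, of degree $c = s+2$), and the weak Lefschetz property of $R/I$ (via Propositions~\ref{pro:C-zero} and~\ref{pro:wlp-binom}, together with the twin peaks at $s$ and $s+1$ from Lemma~\ref{lem:twin-peaks}) forces $\times(x+y+z)\colon [R/I]_i \to [R/I]_{i+1}$ to be injective for $i \leq s$; under the identification this is the map for $R/J$ whenever $i \leq s = c-2$.

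Your route to the remaining degrees $i = c-1$ and $i = c$, however, cannot succeed, because the estimate you correctly isolate as the crux --- $h_{s+1}(R/I) - h_{s+2}(R/I) \leq 1$ --- is false in general. Take $I_{2,2,3,0,1,1} = (x^2, y^2, z^3, yz)$, the minimal non-level type-$2$ ideal appearing in Example~\ref{exa:minimal-multiplicity}(iv): here $s = 1$, $c = 3$, $C = 0$, and $h(R/I) = (1,3,3,1)$, so the drop past the twin peak is $h_{s+1} - h_{s+2} = 3 - 1 = 2$. In the same example the conclusion of the corollary already fails at $i = c-1 = 2$: with $J = (x^2, y^2, yz)$ one has $(x+y+z)\cdot xy = x^2y + xy^2 + xyz \in (x^2) + (y^2) + (yz) = J$, while $xy \notin J$, so $0 \neq \overline{xy} \in [R/J]_2$ lies in the kernel of $\times(x+y+z)\colon [R/J]_2 \to [R/J]_3$. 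Note that the paper supplies no proof of this corollary, and the only argument extractable from Remark~\ref{rem:C-zero} covers precisely the range $i \leq s = c-2$; the stated upper bound ``$i \leq c$'' is too strong. So the gap you flag is real, but it cannot be closed by any dimension-and-kernel chase because the asserted injectivity at $i = c-1$ is itself false.
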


Further, MacMahon's formula can be used when $C$ is maximal, that is, $C = \alpha + \beta$. 
\begin{proposition} \label{pro:C-maximal}
    If $c = \frac{1}{2}(a+b+\gamma) - \alpha-\beta$, then $C = \alpha + \beta$ and $\det{N_{a,b,c,\alpha,\beta,\gamma}}$ is 
    \[\Mac(A,B,C+M).\]
    Thus, $I_{a,b,c,\alpha,\beta,\gamma}$ has the weak Lefschetz property if the characteristic of $K$ is zero or at least $A+B+C+M=s+2=c+\alpha+\beta$.
\end{proposition}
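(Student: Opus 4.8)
The plan is to reduce to MacMahon's formula via the non-intersecting lattice path model, in the same spirit as the proof of Proposition~\ref{pro:C-zero}. First I record that the hypothesis $c = \frac{1}{2}(a+b+\gamma)-\alpha-\beta$ is equivalent to $C = \alpha+\beta$: since $s+2 = \frac{1}{3}(a+b+c+\alpha+\beta+\gamma)$, one computes $C = s+2-c = \alpha+\beta$, so condition (iv) of Proposition~\ref{pro:semistable} holds with equality; equivalently, by Remark~\ref{rem:bijection}, $\gamma = A+B$, and, as in the proof of Theorem~\ref{thm:interlace-amaci}, the puncture of $H_{a,b,c,\alpha,\beta,\gamma}$ becomes tangent to the upper-left (``side-$C$'') missing triangle, so that a vertex of the puncture lies on the boundary of the hexagon.

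Next I would determine which of the admissible permutations $\lambda_0,\dots,\lambda_C$ (from the discussion following Theorem~\ref{thm:nilp-matrix}) actually contribute to $\det N_{a,b,c,\alpha,\beta,\gamma}$. The capacity analysis in the proof of Lemma~\ref{lem:tilings-exist} shows that, with $C = \alpha+\beta$, at most $\min\{C,\beta,B+C-\alpha\}=\beta$ of the $C$ hexagon-side paths can pass below the puncture and at most $\min\{C,\alpha,A+C-\beta\}=\alpha$ can pass above it; since all $C=\alpha+\beta$ of them must go one way or the other, exactly $\beta$ go below and $\alpha$ above. Hence $\lambda_\beta$ is the unique admissible permutation with $P^+_{\lambda_\beta}\neq 0$, and by Theorems~\ref{thm:lgv} and~\ref{thm:nilp-matrix} we get $\det N_{a,b,c,\alpha,\beta,\gamma} = \sgn(\lambda_\beta)\,P^+_{\lambda_\beta}$, which up to the sign $(-1)^{\alpha M}$ (so $+$ whenever $\alpha M$ is even --- only the magnitude matters below) is the number of lozenge tilings of $H_{a,b,c,\alpha,\beta,\gamma}$.

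It then remains to count those tilings, which is where the main work lies. Because exactly $\alpha$ paths lie above the puncture and $\beta$ below it, and because a vertex of the puncture abuts the upper-left missing triangle, a region of $CM$ lozenges adjacent to the puncture is forced in every tiling; after deleting it, the only remaining freedom is that of an unpunctured hexagon with side-lengths $(A, B, C+M, A, B, C+M)$. This is consistent with the balance count: by the computation in the proof of Lemma~\ref{lem:twin-peaks} one has $h_s = AB+AC+AM+BC+BM+CM$, which drops to $AB+AC+AM+BC+BM$ --- the number of unit upward triangles of the hexagon $(A,B,C+M)$ --- upon removing $CM$ lozenges. This is the analogue, for a vertex-tangent puncture, of the forced-region phenomenon in Remark~\ref{rem:C-zero}, and amounts to ``sliding the triangular hole off the boundary'' as in \cite[Section~6]{CEKZ}; making the resulting bijection between tilings of $H_{a,b,c,\alpha,\beta,\gamma}$ and tilings of the unpunctured hexagon $(A,B,C+M)$ fully explicit --- in particular checking that the trimmed lattice-path endpoints really assemble into that hexagon rather than a skew region, and handling degenerate cases such as $A=0$ or $B=0$ --- is the step I expect to require the most care. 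Granting this, the David--Tomei bijection \cite{DT} together with MacMahon's formula give exactly $\Mac(A,B,C+M)$ tilings, so $\det N_{a,b,c,\alpha,\beta,\gamma} = \pm\Mac(A,B,C+M)$.

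Finally, the weak Lefschetz statement follows as in Propositions~\ref{pro:M-zero} and~\ref{pro:C-zero}: the prime divisors of $\Mac(A,B,C+M)$ are bounded above by $A+B+(C+M)-1 = s+1$, so $\det N_{a,b,c,\alpha,\beta,\gamma}$ is nonzero modulo $\charf K$ whenever $\charf K = 0$ or $\charf K \geq A+B+C+M = s+2 = c+\alpha+\beta$; Proposition~\ref{pro:wlp-binom} then gives the weak Lefschetz property for $R/I_{a,b,c,\alpha,\beta,\gamma}$ in those characteristics.
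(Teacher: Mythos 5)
Your proposal is correct and follows essentially the same route as the paper's proof: recognize that $C=\alpha+\beta$ puts a vertex of the puncture on the side of length $C$, observe that the first $M$ steps of every hexagon-side lattice path are forced (you reach this via a capacity/pigeonhole argument, the paper via the tangency geometry — same content), trim the forced lozenges to leave an unpunctured hexagon of sides $(A,B,C+M)$, and apply MacMahon together with the linear bound on its prime divisors. The extra checks you supply (single admissible permutation $\lambda_\beta$, the $CM$-lozenge count, and the explicit sign $(-1)^{\alpha M}$ — which the paper glosses over) only tighten the argument, and your flagged worry about the trimmed region assembling into a genuine (non-skew) hexagon is treated with the same brevity in the paper's own proof.
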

\begin{proof}
    In this case, it is easier to consider families of non-intersecting lattice paths.  In particular, since $C = \alpha + \beta$, then
    $\gamma = A+B$ and so the puncture has a point touching the side labeled $C$; see Figure~\ref{fig:hex-C-maximal}.
    \begin{figure}[!ht]
        \includegraphics[scale=0.667]{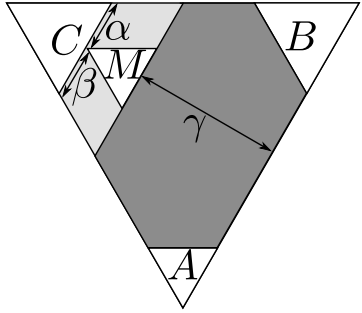}
        \caption{When $C$ is maximal, the lightly shaded region has tiles which are fixed, leaving the only variation in the darkly shaded region.}
        \label{fig:hex-C-maximal}
    \end{figure}
    Thus the lattice paths starting from $A_1, \ldots, A_\beta$ have the first $M$ moves being down and the lattice paths starting from 
    $A_{\beta+1}, \ldots, A_C$ have the first $M$ moves being right.  However, we then see that each $A_i$ ``starts'' on the same line, the line
    running through the lower-right side of the puncture.  If we convert the region-of-interest back into a punctured hexagon, then it is a simple
    hexagon with side-lengths $(A, B, C+M, A, B, C+M)$.
\end{proof}

The next case considered, when the mixed term is in two variables, needs a special determinant calculation which may be of independent interest.
\begin{lemma} \label{lem:split-binom-det}
    Let $T$ be an $n$-by-$n$ matrix defined as follows
    \[
        (T)_{i,j} = \left\{
            \begin{array}{ll}
                \displaystyle \binom{p}{q + j - i}     & \mbox{if } 1     \leq j \leq m, \\[0.8em]
                \displaystyle \binom{p}{q + r + j - i} & \mbox{if } m + 1 \leq j \leq n, \\
            \end{array}
        \right.
    \]
    where $p,q,r,$ and $m$ are non-negative integers and $1 \leq m \leq n$.  Then
    \[
        \det{T} = \Mac(m,q,r) \Mac(n-m, p-q-r, r) \frac{\HF(q+r)\HF(p-q)\HF(n+r)\HF(n+p)}{\HF(n+p-q)\HF(n+q+r)\HF(p)\HF(r)}.
    \]
\end{lemma}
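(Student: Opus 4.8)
The plan is to run the same lattice-path machinery used throughout Section~\ref{sec:ph}. First I would absorb the two column blocks into one: setting $d_j = j$ for $1 \le j \le m$ and $d_j = j + r$ for $m+1 \le j \le n$, every entry takes the uniform shape $(T)_{i,j} = \binom{p}{q + d_j - i}$ with $d_1 < d_2 < \cdots < d_n$. Choosing source points $P_i = (i,\, p+i)$ and sink points $Q_j = (q + d_j,\, q + d_j)$, and counting lattice paths with unit right- and down-steps as in the proof of Theorem~\ref{thm:nilp-matrix}, one checks $(T)_{i,j} = P(P_i \to Q_j)$ exactly (valid once the parameters are in a range making all points admissible; the general identity then follows since, for the other parameters fixed, both sides are polynomials in $p$, in the spirit of Remark~\ref{rem:polynomial}). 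Since the $P_i$ lie on the line $y = x + p$ in increasing order and the $Q_j$ lie on the parallel line $y = x$ in increasing order, a family of non-intersecting paths can only realise the identity permutation, and $\sgn(\mathrm{id}) = 1$. Hence Theorem~\ref{thm:lgv} gives $\det T = P^{+}_{\mathrm{id}}(P \to Q) \ge 0$, the number of families of non-intersecting lattice paths joining $P_i$ to $Q_i$.

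Next I would translate this path count back into a lozenge tiling count via the bijection of Section~\ref{sec:ph}, obtaining a region $\mathcal R$ which is a hexagon with a triangular bite of side-length $r$ removed from one side (the bite corresponding to the $r$ missing sink positions $q+m+1, \ldots, q+m+r$). The key structural claim is that non-intersectivity freezes a slab of lozenges of width $r$, splitting $\mathcal R$ (up to these forced tiles) into two honest sub-hexagons: one carrying the first $m$ paths and congruent to the hexagon of the $m \times q \times r$ box, the other carrying the last $n-m$ paths and congruent to the hexagon of the $(n-m) \times (p-q-r) \times r$ box. MacMahon's formula (recalled before Proposition~\ref{pro:M-zero}) then evaluates the two pieces as $\Mac(m,q,r)$ and $\Mac(n-m,\,p-q-r,\,r)$, and the remaining hyperfactorial ratio in the statement is exactly the multiplicative contribution that appears when the tiling count of $\mathcal R$ is rewritten in terms of these two sub-hexagons and the forced boundary; this bookkeeping is cleanest in the hyperfactorial ``picture-calculus'' of Appendix~\ref{sec:hyper-calculus}.

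The main obstacle is the middle step: pinning down which tiles of $\mathcal R$ are frozen and proving that the two resulting sub-regions are genuine, independently-tileable hexagons of the stated dimensions. I expect the clean argument is to work directly with the path family: show that the paths $P_m \to Q_m$ and $P_{m+1} \to Q_{m+1}$ are squeezed against the indentation so that every path of index $\le m$ stays strictly on one side and every path of index $> m$ strictly on the other, after which the two sub-families vary independently over precisely the families counted by $\Mac(m,q,r)$ and $\Mac(n-m,\,p-q-r,\,r)$. An alternative route avoiding the geometry is the ``identification of factors'' method: for the other parameters fixed each side is a polynomial in $p$ of equal, easily computed degree, so it suffices to match leading coefficients and enough specialisations — most usefully $r = 0$, where the two blocks coalesce and the identity collapses to the single evaluation $\det\!\big(\binom{p}{q+j-i}\big) = \Mac(n,q,p-q)$ underlying Proposition~\ref{pro:M-zero}, together with small values of $m$ — the residual check again reducing to hyperfactorial algebra.
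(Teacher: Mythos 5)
The paper's proof does not pass through the lattice-path/tiling machinery at all for this lemma: it applies a known closed-form evaluation, \cite[Equation~(12.5)]{CEKZ}, namely
\[
    \det_{1\le i,j\le n}\binom{p}{L_j-i}
      \;=\;
    \prod_{1\le i<j\le n}(L_j-L_i)\,\prod_{i=1}^{n}\frac{(p+i-1)!}{(n+p-L_i)!\,(L_i-1)!},
\]
with $L_j=q+j$ for $j\le m$ and $L_j=q+r+j$ for $j>m$, then splits each product across the two blocks and rewrites the result in hyperfactorials. Your proposal is a genuinely different route, but its main branch has a real gap, and the fallback branch is only a sketch.

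The gap in your main argument sits exactly where you flag the ``main obstacle,'' but it is worse than a bookkeeping issue: the region \emph{cannot} decompose, up to forced tiles, into two independently tileable sub-hexagons. If it did, the number of non-intersecting path families would be precisely $\Mac(m,q,r)\cdot\Mac(n-m,p-q-r,r)$ with no further factor, yet the third factor in the lemma is nontrivial --- already at $r=0$ it equals $\Mac(n,q,p-q)$ while both Mahonian factors degenerate to $1$ (cf.\ Remark~\ref{rem:split-binom-det}). Moreover Remark~\ref{rem:gamma-zero}(i), which discusses the geometric application of this very lemma, states explicitly that ``it is not clear (to us) where the third term comes from''; i.e.\ the authors themselves do not know a combinatorial splitting of the region that produces the formula. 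So the claimed freezing of a width-$r$ slab separating two honest sub-hexagons is false, and the statement that the ``remaining hyperfactorial ratio is exactly the multiplicative contribution'' of the frozen boundary is not something you can back up. (Your LGV setup itself is fine: $P_i=(i,p+i)$, $Q_j=(q+d_j,q+d_j)$ gives $(T)_{i,j}=\binom{p}{q+d_j-i}$, the parallel-line positions force the identity permutation, and $\det T$ is a nonnegative count of path families. It is the decomposition step that breaks.)

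Your alternative route --- identification of factors --- is plausible in principle and is closer in spirit to the paper's reliance on \cite{Kr}, but as written it is not a proof: you do not establish that both sides are polynomials in the chosen variable, do not compute degrees or leading coefficients, and do not exhibit enough specialisations. You also mix variables: you fix ``the other parameters'' and vary $p$, yet then want to specialise $r=0$, which requires freezing $p$ and varying $r$ instead. The paper's direct appeal to the Vandermonde-type evaluation of $\det\binom{p}{L_j-i}$ avoids all of this.
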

\begin{proof}
    In this case, we can use~\cite[Equation~(12.5)]{CEKZ} to evaluate $\det{T}$ to be
    \[
        \prod_{1\leq i < j \leq n} (L_j - L_i) \prod_{i=1}^n \frac{(p+i-1)!}{(n+p-L_i)!(L_i-1)!},
    \]
    where $L_j = q+j$ if $1 \leq j \leq m$ and $L_j = q+r+j$ if $m + 1 \leq j \leq n$.  If we split the products in the previously displayed equation
    relative to the split in $L_j$, then we get the following equations:
    \begin{equation*}
        \begin{split}
            \prod_{1\leq i < j \leq n} (L_j - L_i) 
                = & \left(\prod_{1\leq i < j \leq m} (j - i)\right) \left(\prod_{m< i < j \leq n} (j - i)\right) \left(\prod_{1\leq i \leq m < j \leq n} (r+j-i)\right) \\[0.3em]
                = & \left(\HF(m)\right) \left(\HF(n-m)\right) \left(\frac{\HF(n+r) \HF(r)}{\HF(n+r-m) \HF(m+r)}\right)
        \end{split}
    \end{equation*}
    and 
    \begin{equation*}
        \begin{split}
            \prod_{i=1}^n \frac{(p+i-1)!}{(n+p-L_i)!(L_i-1)!}
                = & \left( \prod_{i=1}^{n}(p+i-1)! \right) \left( \prod_{i=1}^m \frac{1}{(n+p-q-i)!(q+i-1)!}\right) \\[0.3em]
                  & \left( \prod_{i=m+1}^{n} \frac{1}{(n+p-q-r-i)!(q+r+i-1)!}\right) \\[0.3em]
                = & \left( \frac{\HF(n+p)}{\HF(p)} \right) \left( \frac{\HF(n+p-m-q)\HF(q)}{\HF(n+p-q)\HF(m+q)}  \right) \\[0.3em]
                  & \left( \frac{\HF(p-q-r)\HF(m+q+r)}{\HF(n+p-m-q-r)\HF(n+q+r)}  \right).
        \end{split}
    \end{equation*}
    Bringing these equations together we have that $\det{T}$ is 
    {\footnotesize \[
        \frac{\HF(m)\HF(q)\HF(r)\HF(m+q+r)}{\HF(m+r)\HF(m+q)} \frac{\HF(n-m)\HF(p-q-r)\HF(n+p-m-q)}{\HF(n+r-m)\HF(n+p-m-q-r)} \frac{\HF(n+r)\HF(n+p)}{\HF(p)\HF(n+p-q)\HF(n+q+r)},
    \] }
    which, after minor manipulation, yields the claimed result.
\end{proof}

\begin{remark} \label{rem:split-binom-det}
    Lemma~\ref{lem:split-binom-det} generalises the result of~\cite[Lemma~2.2]{LZ} where the case $r = 1$ is discussed.  Further, when $r = 0$, 
    then $\det{T} = \Mac(n, p-q, q)$, as expected (see the running example, $\det \binom{a+b}{a-i+j}$, in~\cite{Kr}).
\end{remark}

The case when the mixed term has only two variables follows immediately.
\begin{proposition} \label{pro:gamma-zero}
    If $\gamma = 0$, then $|\det{N_{a,b,c,\alpha,\beta,\gamma}}|$ is 
    \[
        \Mac(\beta-A,A,M) \Mac(\alpha - B, B, M) \frac{\HF(A+M)\HF(B+M)\HF(C+M)\HF(A+B+C+M)}{\HF(a)\HF(b)\HF(c)\HF(M)}. 
    \]
    Thus, the type $2$ ideal 
    \[
        I_{a,b,c,\alpha,\beta,0} = (x^a, y^b, z^c, x^\alpha y^\beta)
    \]
    has the weak Lefschetz property if the characteristic of $K$ is zero or at least $A+B+C+M$.
\end{proposition}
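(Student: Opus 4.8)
The plan is to reduce the claim about $N_{a,b,c,\alpha,\beta,0}$ to the determinant evaluation established in Lemma~\ref{lem:split-binom-det}. When $\gamma = 0$, the matrix $N = N_{a,b,c,\alpha,\beta,0}$ has size $C+M$, and by Proposition~\ref{pro:wlp-binom} its entries are
\[
    (N)_{i,j} = \left\{
        \begin{array}{ll}
            \displaystyle \binom{c}{A + j - i} & \mbox{if } 1 \leq i \leq C, \\[0.8em]
            \displaystyle \binom{0}{A+C-\beta + j - i} & \mbox{if } C + 1 \leq i \leq C+M.
        \end{array}
    \right.
\]
The rows indexed $C+1, \ldots, C+M$ are nearly trivial: $\binom{0}{t}$ is $1$ if $t = 0$ and $0$ otherwise, so each of these rows is a standard basis vector. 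The first step is to use these $M$ rows to perform a Laplace expansion (or, equivalently, delete the corresponding rows and columns), reducing $\det N$ up to sign to the determinant of the $C$-by-$C$ submatrix obtained from the first $C$ rows by striking out the $M$ columns $j$ for which $A + C - \beta + j - (C+1) = 0$ through $A + C - \beta + j - (C+M) = 0$, i.e.\ the columns $j = \beta - A + 1, \ldots, \beta - A + M$. (Here one uses that $0 \le A \le \beta$, which follows from the semistability conditions together with $\gamma = 0$ and the symmetry assumption; I would record this inequality explicitly.)

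The second step is to identify the resulting $C$-by-$C$ matrix with a matrix of the form handled by Lemma~\ref{lem:split-binom-det}. After the deletion, the surviving columns split into a block of columns to the left of the removed window and a block to the right; within each block the entry $(N)_{i,j} = \binom{c}{A+j-i}$ is a binomial coefficient whose lower index shifts linearly in $i$ and $j$, and crossing the window introduces a constant jump of $M$ in the argument. Thus the reduced matrix is exactly a $T$ of Lemma~\ref{lem:split-binom-det} with $n = C$, $p = c$, $r = M$, and appropriate values of $m$ and $q$ read off from where the window sits (concretely $m = \beta - A$ counts the columns before the window, and $q$ is the constant such that the first block has entries $\binom{c}{q+j-i}$, giving $q = A$ after reindexing). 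Substituting these values into the closed formula of Lemma~\ref{lem:split-binom-det} and simplifying the hyperfactorials — using $a = \beta+\gamma\cdot(\text{adjust}) = B+C+M$, $b = A+C+M$, $c = A+B+M$ from Remark~\ref{rem:bijection} with $\gamma = 0$, and $A+B+C = \alpha+\beta$ — yields
\[
    \Mac(\beta-A,A,M)\,\Mac(\alpha-B,B,M)\,\frac{\HF(A+M)\HF(B+M)\HF(C+M)\HF(A+B+C+M)}{\HF(a)\HF(b)\HF(c)\HF(M)}.
\]
By Theorem~\ref{thm:nilp-matrix} this is $|\det N|$ up to sign, and since it is a product of nonnegative Mahonian quantities and hyperfactorials it is genuinely the absolute value. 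The weak Lefschetz conclusion then follows from Proposition~\ref{pro:wlp-binom}: the characteristic must avoid the prime divisors of this determinant, and since the prime divisors of each $\Mac$ factor are bounded by its parameter-sum minus one and those of the hyperfactorial ratio are bounded by $A+B+C+M-1$, any characteristic zero or at least $A+B+C+M$ suffices.

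The main obstacle I anticipate is purely bookkeeping: matching the index conventions of Proposition~\ref{pro:wlp-binom} (where the shift in the top block is $A + j - i$) to those of Lemma~\ref{lem:split-binom-det} (where the top block has shift $q + j - i$ over columns $1 \le j \le m$), correctly tracking the sign picked up in the Laplace expansion over the $M$ trivial rows, and then carrying out the hyperfactorial simplification — expressing $\HF(m+r) = \HF(\beta) $ etc.\ and cancelling against the $\Mac$-factors in Lemma~\ref{lem:split-binom-det}'s formula — so that the final expression collapses to the stated form. None of these steps is conceptually hard, but the column-window location and the resulting identification of $m$ and $q$ must be pinned down carefully, and one should double-check the edge cases $A = \beta$ (empty left block, $m=0$) and $B = \alpha$ (second $\Mac$-factor degenerates) to be sure the formula remains valid there.
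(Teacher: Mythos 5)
Your proposal matches the paper's proof almost exactly: the paper likewise observes that for $\gamma=0$ the bottom $M$ rows of $N$ are standard basis vectors, reduces $|\det N|$ to the $C\times C$ minor $T$ obtained by deleting the window of columns $\beta-A+1,\dots,\beta-A+M$, identifies $T$ with the matrix of Lemma~\ref{lem:split-binom-det} via $p=c$, $q=A$, $r=M$, $m=\beta-A$, $n=C$, and reads off the prime bound. Your extra remarks about pinning down $0\le A\le\beta$ (which is immediate from condition~(ii) of Proposition~\ref{pro:semistable} since $\gamma=0$) and checking the degenerate boundary $m=0$ are reasonable hygiene, though the closed formula in Lemma~\ref{lem:split-binom-det} is easily seen to remain valid there.
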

\begin{figure}[!ht]
    \includegraphics[scale=0.667]{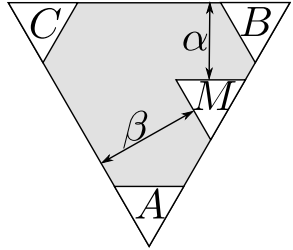}
    \caption{When $\gamma$ is zero, the starting points $A_{C+1}, \ldots, A_{C+M}$ coincide with the $M$ consecutive ending points 
        $E_{A-\beta+1}, \ldots, E_{A-\beta+M}$.}
    \label{fig:hex-gamma-zero}
\end{figure}
\begin{proof}
    As $\gamma = 0$, $N = N_{a,b,c,\alpha,\beta,\gamma}$ has entries given by
    \[
        (N)_{i,j} = \left\{
            \begin{array}{ll}
                \binom{c}{A + j - i} & \mbox{if } 1 \leq i \leq C, \\[0.3em]
                \left\{
                    \begin{array}{ll}
                        1 & \mbox{if } j = i + \beta - A - C \\
                        0 & \mbox{if } j \neq i + \beta -A - C\\
                    \end{array}
                \right\} & \mbox{if } C + 1 \leq i \leq C+M.
                \end{array}
            \right..
    \]
    Further, if we define the matrix $T$ by
    \[
        (T)_{i,j} = \left\{
            \begin{array}{ll}
                \binom{c}{A + j - i}     & \mbox{if } 1             \leq j \leq \beta - A, \\[0.3em]
                \binom{c}{A + M + j - i} & \mbox{if } \beta - A + 1 \leq j \leq C \\
            \end{array}
        \right.,
    \]
    then $|\det{N}| = |\det{T}|$ due to the structure of the lower-part of $N$.  Thus, if we let $p = c, q = A, r = M, m = \beta - A,$ and $n = C$, then by 
    Lemma~\ref{lem:split-binom-det} we have the desired determinant evaluation.

    Moreover, $\alpha + M$ and $\beta + M$ are smaller than $A+B+C+M$, so the prime divisors of $\det{N}$ are strictly bounded above by $A+B+C+M$.
\end{proof}

\begin{remark} \label{rem:gamma-zero}
    Proposition~\ref{pro:gamma-zero} deserves a pair of comments:
    \begin{enumerate}
        \item The evaluation of the determinant includes two Mahonian terms and a third non-Mahonian term.
            It should be noted that both hexagons associated to the Mahonian terms actually show up in the punctured hexagon.
            \begin{figure}[!ht]
                \includegraphics[scale=0.667]{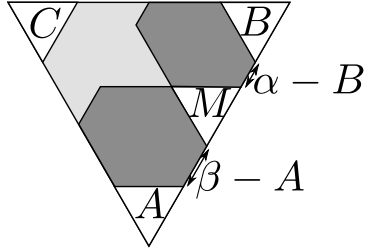}
                \caption{The darkly shaded hexagons correspond to the two Mahonian terms in the determinantal evaluation.}
                \label{fig:hex-gamma-zero-highlight}
            \end{figure}
            See Figure~\ref{fig:hex-gamma-zero-highlight} where the darkly shaded hexagons correspond to the Mahonian terms.  It is not clear (to us)
            where the third term comes from, though it may be of interest to note that if one subtracts $M$ from each hyperfactorial, before the evaluation,
            then what remains is $\Mac(A,B,C)$.  
        \item We notice the proposition also extends~\cite[Lemma~6.6]{MMN} where it was shown that the associated almost complete intersection always
            has the weak Lefschetz property in characteristic zero (i.e., the determinant is non-zero).  That is, all level type $2$ artinian monomial almost
            complete intersections in $R$ have the weak Lefschetz property in characteristic zero.
    \end{enumerate}
\end{remark}~

\subsection{Exploring symmetry}~

When $a = b$ (equivalently, $A = B$) and $\alpha = \beta$, then $H_{a,a,c,\alpha,\alpha,\gamma}$ is symmetric; see Figure~\ref{fig:hex-symmetric}.
In this case, $c$ is even exactly when $M = \frac{1}{3}(2a+c-4\alpha-2\gamma)$ is even; similarly, $\gamma$ is even exactly when
$C = \frac{1}{3}(2a - 2c + 2\alpha + \gamma)$ is even.  Moreover, $\alpha = A + \frac{1}{2}(C-\gamma)$.
\begin{figure}[!ht]
    \includegraphics[scale=0.667]{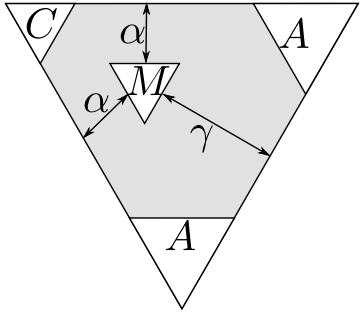}
    \caption{When $a=b$ and $\alpha = \beta$, then $H_{a,a,c,\alpha,\alpha,\gamma}$ is symmetric.}
    \label{fig:hex-symmetric}
\end{figure}

When $C$ and $M$ are odd, we can exploit symmetry to show $\det{N_{a,a,c,\alpha,\alpha,\gamma}}$ is $0$.  This result extends the evaluation 
in~\cite[Corollary~7.4]{MMN} and offers a (more) direct combinatorial proof, rather than one based on linear algebra.
\begin{proposition} \label{pro:symmetry-zero}
    If $c$ and $\gamma$ are odd, $a=b$, and $\alpha = \beta$, then $H_{a,a,c,\alpha,\alpha,\gamma}$ is symmetric with an odd puncture (i.e., $M$ odd;
    see Figure~\ref{fig:hex-symmetric}) and $\det{N_{a,a,c,\alpha,\alpha,\gamma}}$ is 0.  Thus, 
    \[  
        I_{a,a,c,\alpha,\alpha,\gamma} = (x^a, y^a, z^c, x^\alpha y^\alpha z^\gamma)
    \]
    never has the weak Lefschetz property, regardless of characteristic.
\end{proposition}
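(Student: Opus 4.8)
The plan is to prove that the signed tiling count $\det N_{a,a,c,\alpha,\alpha,\gamma}$ vanishes by producing a sign-reversing involution on the set of tilings, exploiting the reflective symmetry that the hypotheses $a=b$ and $\alpha=\beta$ build into the region. Write $H := H_{a,a,c,\alpha,\alpha,\gamma}$ and let $\mathcal{T}$ be its set of lozenge tilings. By Theorem~\ref{thm:nilp-matrix}, $\det N_{a,a,c,\alpha,\alpha,\gamma} = \sum_{T\in\mathcal{T}}\sgn T$, so it is enough to exhibit an involution $\sigma\colon\mathcal{T}\to\mathcal{T}$ with $\sgn(\sigma T) = -\sgn T$ for every $T$; then the sum equals its own negative and is therefore $0$. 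The last sentence of the statement, about the weak Lefschetz property, is then immediate from Proposition~\ref{pro:wlp-binom}.

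First I would construct $\sigma$. Since $a=b$ and $\alpha=\beta$, the ideal $I_{a,a,c,\alpha,\alpha,\gamma}$ is fixed by the automorphism of $R$ interchanging $x$ and $y$, so by the bijection of Remark~\ref{rem:bijection} the region $H$ is carried to itself by the corresponding reflection $\sigma$; concretely $\sigma$ is the reflection whose axis meets the midpoints of the sides of lengths $C$ and $C+M$, it interchanges the two sides of length $A$ and the two sides of length $A+M$, fixes the sides of lengths $C$ and $C+M$, and (because the puncture sits at equal distances $\alpha=\beta$ from the two sides of length $A+M$) carries the puncture to itself. Thus $\sigma$ is a symmetry of $H$ and hence an involution on $\mathcal{T}$. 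I would also note here that the hypotheses force \emph{both} $M$ and $C$ to be odd: as recorded just before the statement, $c$ odd is equivalent to $M$ odd and $\gamma$ odd is equivalent to $C$ odd.

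Next comes the sign bookkeeping. For $T\in\mathcal{T}$ with associated family of non-intersecting lattice paths $L_T$, let $k(T)$ be the number of paths of $L_T$ routed below the puncture, so that $C-k(T)$ are routed above it and the remaining $M$ emanate from the puncture; by Definition~\ref{def:nilp-sign} together with the list of admissible permutations following Theorem~\ref{thm:nilp-matrix} (where $\sgn\lambda_k=(-1)^{M(C-k)}$) one has $\sgn T = (-1)^{M(C-k(T))}$. The crucial point is that $\sigma$ carries $L_T$ to $L_{\sigma T}$ and interchanges ``above'' and ``below'', so that $k(\sigma T)=C-k(T)$, whence
\[
    \sgn(\sigma T) = (-1)^{M(C-k(\sigma T))} = (-1)^{M\,k(T)} = (-1)^{MC}\,(-1)^{M(C-k(T))} = (-1)^{MC}\,\sgn T = -\,\sgn T,
\]
using that $M$ and $C$ are both odd. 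Summing over $\mathcal{T}$ yields $\det N_{a,a,c,\alpha,\alpha,\gamma} = -\det N_{a,a,c,\alpha,\alpha,\gamma}$, so the determinant is $0$ (and, incidentally, $\sigma$ has no fixed tiling).

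The step I expect to require the most care is the ``crucial point'' just invoked: checking precisely that, once transported into the orthogonalised lattice-path model of Section~\ref{sec:ph}, the symmetry $\sigma$ genuinely sends the path family of $T$ to that of $\sigma T$ with the above/below counts exchanged — rather than fixing those counts or relabelling the start- and endpoints in some unintended way. I would handle this directly from the construction in Section~\ref{sec:ph}: $\sigma$ fixes the sides of lengths $C$ and $C+M$, along which the distinguished vertices $A_1,\dots,A_{C+M}$ and $E_1,\dots,E_{C+M}$ lie, merely reversing their orders, and it exchanges motion parallel to the side of length $A$ with motion parallel to the side of length $B$, i.e.\ it is exactly the reflection swapping horizontal and vertical steps; combined with the self-symmetry of the puncture this gives $k(\sigma T)=C-k(T)$. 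Everything else is routine.
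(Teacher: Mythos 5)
Your proof is correct and is essentially the same argument as the paper's: the paper pairs the admissible permutation classes $\lambda_i$ with $\lambda_{C-i}$, observing $P^+_{\lambda_i}(A\rightarrow E)=P^+_{\lambda_{C-i}}(A\rightarrow E)$ ``by symmetry'' (which is exactly your reflection $\sigma$) and $\sgn\lambda_i=-\sgn\lambda_{C-i}$ because $MC$ is odd, so the sum cancels in pairs; you package the same cancellation as a sign-reversing involution on individual tilings, which is an equivalent bookkeeping of the identical idea.
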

\begin{proof}
    Recall the admissible partitions of $H_{a,a,c,\alpha,\alpha,\gamma}$ are $\lambda_0, \ldots, \lambda_C$.  For $0 \leq i \leq \frac{C-1}{2}$
    we see that $P^+_{\lambda_i}(A \rightarrow E) = P^+_{\lambda_{C-i}}(A \rightarrow E)$ by symmetry, and further that 
    $\sgn{\lambda_{i}} = -\sgn{\lambda_{C-i}}$, as $\sgn{\lambda_k} = (-1)^{M(C-k)}$ and $C$ is odd.  Hence, 
    $\det{N_{a,b,c,\alpha,\beta,\gamma}} = \sum_{i = 0}^C{\sgn{\lambda_i} P^+_{\lambda_i}(A \rightarrow E)} = 0.$
\end{proof}

From the preceding proof we see that if we consider $c$ even instead of $c$ odd (i.e., $M$ even instead of $M$ odd), then 
$\det{N_{a,a,c,\alpha,\alpha,\gamma}}$ is even, when $\gamma$ is odd (i.e., $C$ is odd).

Recall the definitions of $A, B, C,$ and $M$ from Proposition~\ref{pro:semistable}, $H_{a,a,c,\alpha,\alpha,\gamma}$ from Theorem~\ref{thm:interlace-amaci},
and $N_{a,b,c,\alpha,\beta,\gamma}$ from Proposition~\ref{pro:wlp-binom}.  If $C$ or $M$ is even, then the region $H_{a,a,c,\alpha,\alpha,\gamma}$ is symmetric
and we offer the following conjecture for a closed form for $\det{N_{a,a,c,\alpha,\alpha,\gamma}}$.  Notice that in this case $\alpha = A + \frac{1}{2}(C-\gamma)$.
\begin{conjecture} \label{con:symmetry}
    Suppose $a=b$ and $\alpha = \beta$ so $H_{a,a,c,\alpha,\alpha,\gamma}$ is symmetric.  If $c$ or $\gamma$ is even, then
    $\det{N_{a,b,c,\alpha,\beta,\gamma}}$ is
    \[
        (-1)^{M\clfr{C}{2}} 
        \times \frac{ \HF(M+C) \HF(M+\gamma) \HF(M+A+\flfr{C}{2}) \HF(M + A + \clfr{C}{2}) \HF(M+2A+C) }
                    { \HF(M+2A) \HF^2(M+A+C) \HF^2(M + \frac{C+\gamma}{2}) }
    \]\vspace{0.02in}
    \[
        \times \frac{ \HF(\flfr{M}{2}) \HF(\flfr{M}{2} + A) \HF(\flfr{M}{2} + \frac{C+\gamma}{2}) \HF(\flfr{M}{2} + A + \frac{C-\gamma}{2}) }
                    { \HF(\flfr{M+C}{2}) \HF(\flfr{M+\gamma}{2}) \HF(\flfr{M+C}{2} + A) \HF(\flfr{M-\gamma}{2} + A) }
    \]\vspace{0.025in}
    \[
        \times \frac{ \HF(\clfr{M}{2}) \HF(\clfr{M}{2} + A) \HF(\clfr{M}{2} + \frac{C+\gamma}{2}) \HF(\clfr{M}{2} + A + \frac{C-\gamma}{2}) }
                    { \HF(\clfr{M+C}{2}) \HF(\clfr{M+\gamma}{2}) \HF(\clfr{M+C}{2} + A) \HF(\clfr{M-\gamma}{2} + A) }
    \]\vspace{0.025in}
    \[
        \times \frac{ \HF(A - \flfr{\gamma}{2}) \HF(\flfr{C}{2}) \HF(\flfr{\gamma}{2}) \HF(A - \clfr{\gamma}{2}) \HF(\clfr{C}{2}) \HF(\clfr{\gamma}{2}) }
                    { \HF(\gamma) \HF^2(A + \frac{C-\gamma}{2}) }.
    \]

    Further, the ideal 
    \[  
        I_{a,a,c,\alpha,\alpha,\gamma} = (x^a, y^a, z^c, x^\alpha y^\alpha z^\gamma)
    \]
    has the weak Lefschetz property when the characteristic of $K$ is zero or at least $2A+C+M$.
\end{conjecture}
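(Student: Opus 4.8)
The plan is to establish the determinant evaluation first; the weak Lefschetz statements then follow at once. By Theorem~\ref{thm:nilp-matrix} and the description of the admissible permutations following it, $\det N_{a,a,c,\alpha,\alpha,\gamma}=\sum_{k=0}^{C}\sgn(\lambda_k)\,P^+_{\lambda_k}(A\to E)$ with $\sgn(\lambda_k)=(-1)^{M(C-k)}$. The hypotheses $a=b$ and $\alpha=\beta$ make $H=H_{a,a,c,\alpha,\alpha,\gamma}$ symmetric about the vertical axis through the puncture, so reflection is an involution on families of non-intersecting lattice paths which interchanges a family having $k$ paths below the puncture with one having $C-k$ such paths; hence $P^+_{\lambda_k}(A\to E)=P^+_{\lambda_{C-k}}(A\to E)$. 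Since $\sgn(\lambda_k)=(-1)^{MC}\sgn(\lambda_{C-k})$, when $c$ or $\gamma$ is even, i.e. $M$ or $C$ even, one has $\sgn(\lambda_k)=\sgn(\lambda_{C-k})$, so --- unlike in Proposition~\ref{pro:symmetry-zero} --- the reflection pairs equal terms of equal sign rather than opposite sign, and no forced cancellation occurs. This reduces the conjecture to a symmetric signed tiling enumeration.

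The main step I would pursue is to factor this symmetric enumeration as a product of two tiling counts of ``half-regions'' by a factorization theorem for regions possessing a reflective symmetry, in the spirit of the methods of Ciucu underlying~\cite{CEKZ}. Cutting $H$ along its axis and resolving the lozenges forced by the puncture, which lies on the axis, should split the enumeration as $\det N=\pm(\text{tilings of }H^{+})\cdot(\text{tilings of }H^{-})$, where $H^{+}$ and $H^{-}$ are half-hexagons carrying a triangular dent. Each of $H^{+}$ and $H^{-}$ is of a type counted by a ratio of hyperfactorials of MacMahon--Proctor type; indeed, the two ``mirror'' triples of hyperfactorial ratios in the conjectured formula should be exactly the contributions of the two halves, the floors going with one half and the ceilings with the other. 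One then multiplies the two evaluations and simplifies using the hyperfactorial identities of Appendix~\ref{sec:hyper-calculus} (Proposition~\ref{pro:hyper-f} and Corollary~\ref{cor:hyper-eo-f}); tracking the signs $\sgn(\lambda_k)$ across the cut should produce the prefactor $(-1)^{M\lceil C/2\rceil}$.

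An alternative, more computational route exploits polynomiality. For fixed $A$, $C$, and $\gamma$ --- so that $\alpha=A+(C-\gamma)/2$ is determined --- the determinant is, by Remark~\ref{rem:polynomial}, polynomial in $M$ for $M$ of a fixed parity, and the picture-calculus of Appendix~\ref{sec:hyper-calculus} shows the conjectured right-hand side is likewise polynomial in $M$ of the matching parity. Bounding the two degrees and checking agreement at sufficiently many values of $M$ would then finish: one has the base case $M=0$, where Proposition~\ref{pro:M-zero} gives $\Mac(A,A,C)$, together with small-$C$ evaluations and, where the parameters allow, Propositions~\ref{pro:C-maximal} and~\ref{pro:gamma-zero}; one could also try to interpolate in $\gamma$ starting from the axis-central case (Corollary~\ref{cor:axis-central-det-Z}), although the parities forced by the arguments $(C\pm\gamma)/2$ make this delicate.

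Granting the closed form, the weak Lefschetz statement is immediate. The right-hand side is a signed ratio of products of hyperfactorials whose largest argument is $M+2A+C$, coming from $\HF(M+2A+C)$ --- all other arguments are at most this, using $\alpha\ge 0$, i.e. $\gamma\le 2A+C$ --- and, being equal to the integer $\det N_{a,a,c,\alpha,\alpha,\gamma}$, it is a nonzero integer all of whose prime divisors are at most $M+2A+C-1$. By Proposition~\ref{pro:wlp-binom} this yields the weak Lefschetz property in characteristic zero, and in every characteristic $p\ge 2A+C+M$ as well, since then $p\nmid\det N$. The principal obstacle is the determinant evaluation itself: making a factorization theorem apply verbatim to $H$ with the puncture-on-axis boundary data and then carrying out the nontrivial hyperfactorial simplification --- precisely the sort of ``very involved computation'' needed in the central case of~\cite{CEKZ}, which is why the statement is only conjectured here.
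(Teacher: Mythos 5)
This statement is labelled a conjecture in the paper, and no proof is offered there; Remark~\ref{rem:symmetry} and Example~\ref{exa:symmetry} discuss its consistency with the known special cases (Propositions~\ref{pro:C-zero}, \ref{pro:C-maximal}, \ref{pro:gamma-zero}, Corollary~\ref{cor:axis-central-det-Z}) and its consequences, but the determinant evaluation itself is left open. So there is no ``paper's own proof'' to compare against, and your proposal should be judged on its own.

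Your preliminary reductions are sound and consistent with the paper's setup: the expansion of $\det N$ over the admissible permutations $\lambda_0,\dots,\lambda_C$, the reflection symmetry giving $P^+_{\lambda_k}=P^+_{\lambda_{C-k}}$, and the parity analysis of $\sgn\lambda_k=(-1)^{M(C-k)}$ showing that forced cancellation (as in Proposition~\ref{pro:symmetry-zero}) is avoided precisely when $M$ or $C$ is even are all correct. Your final paragraph, deriving the weak Lefschetz claim from the closed form via Proposition~\ref{pro:wlp-binom} and the observation that $\HF(M+2A+C)$ is the largest hyperfactorial appearing (so primes $\geq 2A+C+M$ cannot divide the integer $\det N$), is also correct.

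The genuine gap is exactly the one you identify yourself: neither of your two proposed routes to the determinant evaluation is carried out. The Ciucu-type factorization route is especially delicate here because the quantity to be evaluated is a \emph{signed} enumeration, while the standard Matching Factorization Theorem applies to unsigned counts of matchings of symmetric graphs; one would need a signed analogue that is compatible with the puncture sitting on the symmetry axis, and then a substantial hyperfactorial simplification to match the conjectured product (the paper itself flags this as the sort of ``very involved computation'' needed in the axis-central case of \cite{CEKZ}). The interpolation route has a concrete obstruction you only gesture at: one must pin down degree bounds in $M$ for both sides (and, for the $\gamma$-interpolation, account for the parity constraints imposed by the arguments $(C\pm\gamma)/2$), and the number of base cases supplied by Propositions~\ref{pro:M-zero}, \ref{pro:C-maximal}, \ref{pro:gamma-zero} and Corollary~\ref{cor:axis-central-det-Z} is nowhere near sufficient without such a bound. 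Until one of these steps is actually executed, the statement remains a conjecture, as the paper says.
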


\begin{remark} \label{rem:symmetry}
    The above symmetry conjecture deserves a few remarks.
    \begin{enumerate}
        \item Note that by Remark~\ref{rem:polynomial}, $\det{N_{a,b,c,\alpha,\beta,\gamma}}$ is polynomial in $M$.  Further, the conjectured form of
            the determinant would imply that the polynomial factors completely into linear terms and has degree $AC + \left\lfloor \frac{\gamma}{2}(C-\frac{\gamma}{2}) \right\rfloor$.
        \item If Conjecture~\ref{con:symmetry} were shown to hold, then it would
            complete the $(-1)$-enumeration of symmetric punctured hexagons when combined with Proposition~\ref{pro:symmetry-zero},
        \item As expected, the conjecture corresponds to Proposition~\ref{pro:C-zero} when $C = 0$, to Proposition~\ref{pro:C-maximal} when 
            $A = \frac{1}{2}\gamma$ (this implies $\alpha = \frac{1}{2}C$ and so $C = 2\alpha$, which is maximal), and to Proposition~\ref{pro:gamma-zero}
            when $\gamma = 0$.  Moreover, when $A=C=\gamma$, then $H_{a,a,c,\alpha,\alpha,\gamma}$ has an {\em axis-central} puncture (see 
            Section~\ref{sub:axis}) and the conjecture corresponds to Corollary~\ref{cor:axis-central-det-Z}.
        \item When $C$ is even and $M$ is odd, then using the $f_{a,b}(c)$ from Proposition~\ref{pro:hyper-f} and $f^e_{a,b}(c)$ and $f^o_{a,b}(c)$ from
            Corollary~\ref{cor:hyper-eo-f}, we can rewrite the monic (as a polynomial in $M$) part of the conjecture as
            \begin{center}$
                    \frac{f^o_{\frac{C+\gamma}{2}, \frac{C+\gamma}{2}}(M) 
                    \cdot f^e_{\frac{|C-\gamma|}{2},\frac{|C-\gamma|}{2}}(M+\min(C,\gamma))
                    \cdot f^e_{\min(C, \gamma), \max(C, \gamma)}(M+2A-\gamma)
                    \cdot f_{|A-\gamma|, |A-\gamma|}(M+C-\gamma + 2\min(A, \gamma))}
                    {f_{\left|\frac{1}{2}C+\gamma-A\right|, \left|\frac{1}{2}C+\gamma-A\right|}(M+\min(2A-\gamma, C+\gamma))}.
            $\end{center}
            (Carefully note that the input parameter in each of the polynomials $f$ above is odd as $M$ is odd.)
    \end{enumerate}
\end{remark}

We give an example of using the symmetry conjecture.
\begin{example} \label{exa:symmetry}
    Consider $A = B = 8$, $C = 6$, $\gamma = 2$, and $M$ even.  Then $\alpha = \beta = 10$, $a = b = 14 + M$, and $c = 16 + M$.  Moreover,
    the region $H_{14+M, 14+M, 16+M, 10, 10, 2}$ is symmetric and does not fall into the case of Remark~\ref{rem:symmetry}(iii).

    Supposing Conjecture~\ref{con:symmetry} holds, then $H_{14+M, 14+M, 16+M, 10, 10, 2}$ has a $(-1)$-enumeration of
    \[
        \frac{1}{-2^{34} 3^{16} 5^6 7^6} \times (M+1)(M+3)^3(M+4)^2(M+5)^3(M+7)
    \]
    \[
           \times (M+12)^2(M+13)^4(M+14)^6(M+15)^5(M+16)^6(M+17)^3(M+18)^4(M+19)(M+20)^2.
    \]
    Thus, $I_{14+M, 14+M, 16+M, 10, 10, 2} = (x^{14+M}, y^{14+M}, z^{16+M}, x^{10} y^{10} z^2)$ has the weak Lefschetz property when the 
    characteristic of the ground field is $0$ or at least $M+21$.
\end{example}

So far, in every case where we can bound the prime divisors of $\det{N_{a,b,c,\alpha,\beta,\gamma}}$ from above, we can do so linearly in the 
parameters (actually, always by at most $s+2$).  This may, however, not always be the case.  We provide the following example to demonstrate
that this is true, but also as a contrast to the symmetry conjecture, where some restrictions lead to a (conjectured) closed form.

\begin{example} \label{exa:non-linear-bound}
    Consider the level and type $3$ algebra given by $R/I$, where
    \[
        I_{1+t, 4+t, 7+t, 1,4,7} = (x^{1+t}, y^{4+t}, z^{7+t}, x y^4 z^7)
    \]
    and $t \geq 4$.  By Remark~\ref{rem:polynomial}, we have that $\det{N} = \det{N_{1+t, 4+t, 7+t, 1,4,7}}$ is a polynomial
    in $t$.  Hence we can use interpolation to determine the polynomial in terms of $t$; in particular, $\det{N_{1+t, 4+t, 7+t, 1,4,7}}$ is 
    \[
        \frac{4}{\HF(7)} \cdot \left\{ \begin{array}{ll}
            (t-3) (t-2) (t-1)^3 t^3 (t+1)^2 (t+2) (t+4) (t+6) (t^2 + 6t - 1) & \mbox{ if $t$ is odd;} \\[0.3em]
            (t-2)^2 (t-1)^2 t^4 (t+1)^2 (t+2) (t+5) (t+7) (t^2 + 2t - 9) & \mbox{ if $t$ is even.}
        \end{array} \right.
    \]

    In 1857, Bouniakowsky conjectured that for every irreducible polynomial $f \in \ZZ[t]$ of degree at least 2 with common divisor
    $d = \gcd\{f(i) \st i \in \ZZ\}$, there exists infinitely many integers $t$ such that $\frac{1}{d}f(t)$ is prime.  We note that the 
    weaker Fifth Hardy-Littlewood conjecture, which states that $t^2 + 1$ is prime for infinitely many positive integers $t$, is a special
    case of the Bouniakowsky conjecture.

    When $t$ is odd, the determinant has the quadratic factor $t^2 + 6t - 1$.  If we let $t = 2k+1$, then this factor becomes $2(2k^2 + 8k +3)$, which
    is an irreducible polynomial over $\ZZ[k]$ with common divisor $2$ (when $k = 4$ then the polynomial evaluates to $134 = 2\cdot 67$).  Hence the
    quadratic factor of the determinant is prime for infinitely many odd integers $t$, assuming the Bouniakowsky conjecture.  Similarly the quadratic
    factor of the determinant for $t$ even is prime for infinitely many even integers $t$, again assuming the Bouniakowsky conjecture.

    Hence, assuming the Bouniakowsky conjecture, for large enough $t$, the upper bound on the prime divisors of the determinant grows quadratically in $t$.
\end{example}

The above example falls in to the case of Proposition~\ref{pro:level-wlp}(ii)(a) or the second open case immediately 
following the proposition, depending on the parity of $t$.

% -- Section
\section{Centralising the puncture} \label{sec:central}
In this section we consider two subtlety different ways to centralise the puncture of a punctured hexagon.  The first, {\em axis-central}, forces
the puncture to be centered along each axis, individually.  The second, {\em gravity-central}, forces the puncture to be the same distance, 
simultaneously, from the three sides of the hexagon that are parallel to the puncture-sides.

Throughout this section we assume, in addition to the conditions in Proposition~\ref{pro:semistable} and $a+b+c+\alpha+\beta+\gamma \equiv 0 \pmod{3}$,
that $I_{a,b,c,\alpha,\beta,\gamma}$ has type $3$, that is, $\alpha$, $\beta$, and $\gamma$ are non-zero. 

~\subsection{Axis-central}\label{sub:axis}~

We define a punctured hexagon $H_{a,b,c,\alpha,\beta,\gamma}$ to have an {\em axis-central} puncture if the puncture is ``central'' as defined 
in~\cite[Section~1]{CEKZ}.  Specifically, for each side of the puncture, the puncture-side should be the same distance from the parallel hexagon-side
as the puncture-vertex opposite the puncture-side is from the other parallel hexagon-side; see Figure~\ref{fig:axis-central}(i).  However, when $c$ has
a different parity than both $a$ and $b$, then an adjustment has to be made; in particular, translate the puncture parallel to the hexagon-side of length $C$
one-half unit toward the side of length $A$; see Figure~\ref{fig:axis-central}(b).
\begin{figure}[!ht]
    \begin{minipage}[b]{0.45\linewidth}
        \centering
        \includegraphics[scale=0.667]{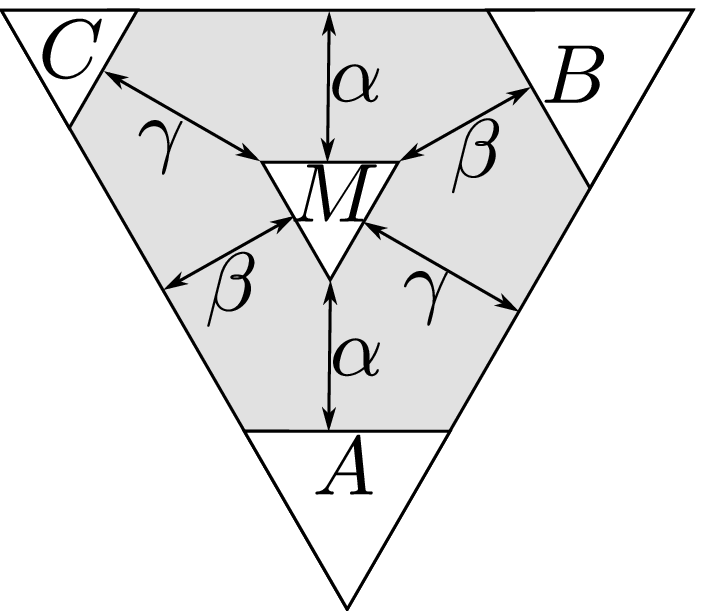}\\
        {\em (i) The parity of $c$ agrees with $a$ and $b$.}
    \end{minipage}
    \begin{minipage}[b]{0.48\linewidth}
        \centering
        \includegraphics[scale=0.667]{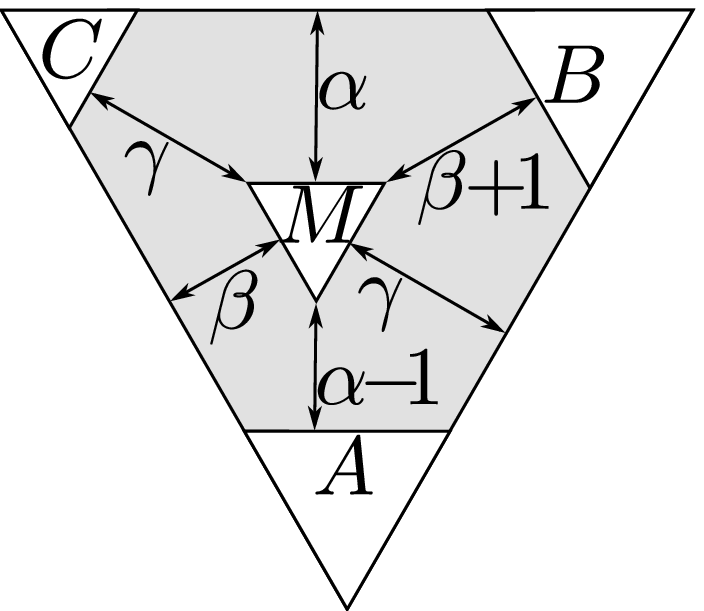}\\
        {\em (ii) The parity of $c$ differs from $a$ and $b$.}
    \end{minipage}
    \caption{A punctured hexagon with an axis-central puncture.}
    \label{fig:axis-central}
\end{figure}

When $H_{a,b,c,\alpha,\beta,\gamma}$ has an axis-central puncture, then the ideal has a nice form.  Suppose first that $a$, $b$, and $c$ have the 
same parity.  Then $\alpha = a - M - \alpha$ so $a = 2\alpha + M$; similarly, $b = 2\beta + M$ and $c = 2\gamma + M$.  Thus, if we set $t = M$, then
\[
    I_{2\alpha+t,2\beta+t, 2\gamma+t, \alpha,\beta,\gamma} = (x^{2\alpha+t}, y^{2\beta+t}, z^{2\gamma+t}, x^\alpha y^\beta z^\gamma).
\]
The conditions in Proposition~\ref{pro:semistable} simplify to $\alpha \leq \beta + \gamma$, $\beta \leq \alpha + \gamma$, $\gamma \leq \alpha + \beta$,
and $t \geq 0$.

Now, suppose the parity of $c$ differs from that of both $a$ and $b$.  Then $\alpha = a - M - \alpha + 1$, $\beta = b - M - \beta - 1$, and $\gamma = c- M - \gamma$,
so $a = 2\alpha + M-1$, $b = 2\beta +M+1$, and $c = 2\gamma +M$.  Thus, if we set $t = M$, then 
\[
    I_{2\alpha+t-1,2\beta+t+1, 2\gamma+t, \alpha,\beta,\gamma} = (x^{2\alpha+t-1}, y^{2\beta+t+1}, z^{2\gamma+t}, x^\alpha y^\beta z^\gamma).
\]
The conditions in Proposition~\ref{pro:semistable} simplify to $\alpha \leq \beta + \gamma+1$, $\beta \leq \alpha + \gamma-1$, $\gamma \leq \alpha + \beta$,
and $t \geq 0$.

Much to our fortune, the determinants of $N_{a,b,c,\alpha,\beta,\gamma}$ have been calculated for punctured hexagons with axis-central punctures.  We recall
the four theorems here, although we forgo the exact statements of the determinant evaluations; the explicit evaluations can be found in~\cite{CEKZ}.
\begin{theorem}{\cite[Theorems 1, 2, 4, \& 5]{CEKZ}} \label{thm:CEKZ-1245}
    Let $A, B, C,$ and $M$ be non-negative integers and let $H$ be the associated hexagon with an axis-central puncture.  Then
    \begin{enumerate}
        \item[(1)] The number of lozenge tilings of $H$ is $\CEKZ_1(A,B,C,M)$ if $A, B,$ and $C$ share a common parity.
        \item[(2)] The number of lozenge tilings of $H$ is $\CEKZ_2(A,B,C,M)$ if $A, B,$ and $C$ do not share a common parity.
        \item[(4)] The number of signed lozenge tilings of $H$ is
        \begin{enumerate}
            \item[(i)] $\CEKZ_4(A,B,C,M)$ if $A, B,$ and $C$ are all even, and
            \item[(ii)] $0$ if $A, B,$ and $C$ are all odd.
        \end{enumerate}
        \item[(5)] The number of signed lozenge tilings of $H$ is $\CEKZ_5(A,B,C,M)$ if $A, B,$ and $C$ do not share a common parity.
    \end{enumerate}

    Moreover, the four functions $\CEKZ_i$ are polynomials in $M$ which factor completely into linear terms.  Further, each can be expressed
    as a quotient of products of hyperfactorials and, in each case, the largest hyperfactorial term is $\HF(A+B+C+M)$.
\end{theorem}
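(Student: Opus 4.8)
The plan is to realise each of the four enumerations as the evaluation of an explicit determinant of binomial coefficients and then to read the asserted structural properties off the resulting closed form. The first step uses the dictionary of Section~\ref{sec:ph}: fixing the axis-central placement, one transforms lozenge tilings of $H$ into families of non-intersecting lattice paths by the three-step procedure preceding Theorem~\ref{thm:nilp-matrix}, and the Lindstr\"om--Gessel--Viennot theorem (Theorem~\ref{thm:lgv}) identifies the \emph{signed} count with $\det N_{a,b,c,\alpha,\beta,\gamma}$, where $N$ is the binomial matrix of Proposition~\ref{pro:wlp-binom}. This delivers items~(4) and~(5) as soon as that determinant has been evaluated; it also delivers items~(1) and~(2) whenever all admissible permutations $\lambda_k$ carry sign $+1$, since then the unsigned and signed counts agree up to sign (Corollary~\ref{cor:det-Z-per-Z}). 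In the complementary parity range the unsigned count is not visible on $\det N$, and there one invokes instead the separate evaluation of \cite{CEKZ}, which treats the tiling count directly.

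The second step exploits axis-centrality. Because the hole is placed symmetrically with respect to the reflections of the hexagon, the expansion $\det N_{a,b,c,\alpha,\beta,\gamma}=\sum_k\sgn(\lambda_k)\,P^+_{\lambda_k}(A\to E)$ admits a pairing of $\lambda_k$ with $\lambda_{C-k}$ under which the path counts are equal, $P^+_{\lambda_k}(A\to E)=P^+_{\lambda_{C-k}}(A\to E)$, while the signs are governed by $\sgn\lambda_k=(-1)^{M(C-k)}$. In the parity class of item~(4)(ii) this pairing is sign-reversing and the argument of Proposition~\ref{pro:symmetry-zero} applies verbatim, so the signed count is $0$. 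In the remaining parity classes the axis-central symmetry is instead what reduces the determinant to one tractable by the known evaluations.

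The third step, the technical core, is the determinant evaluation itself. One performs row and column operations---extracting the common factorials from each row first---to bring the matrix into a shape covered by a standard determinant lemma, either the Cauchy/Vandermonde-quotient evaluations of Krattenthaler's determinant calculus \cite{Kr} or the evaluation \cite[Equation~(12.5)]{CEKZ} already used in Lemma~\ref{lem:split-binom-det}. Reassembling the resulting products of factorials into hyperfactorials $\HF(n)=\prod_{i=0}^{n-1}i!$ exhibits each $\CEKZ_i(A,B,C,M)$ as a quotient of products of hyperfactorials, and tracking the largest argument through the cancellations shows it to be $\HF(A+B+C+M)$. The polynomiality of each $\CEKZ_i$ in $M$ for $M$ of a fixed parity then comes essentially for free: by Remark~\ref{rem:polynomial} the dependence of the entries of $N$ on $M$ is polynomial, and once the hyperfactorial form is in hand every hyperfactorial appearing has an argument linear in $M$, so after cancellation the expression is a product of linear factors in $M$.

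The step I expect to be the genuine obstacle is the determinant evaluation in the third paragraph: choosing the sequence of elementary operations that brings the matrix into a form to which a known evaluation applies, and then verifying the substantial cancellation that collapses the factorial products into the compact hyperfactorial quotients---this is exactly the computation of \cite{CEKZ} referred to above as ``very involved''. By contrast, the lattice-path reduction, the symmetry arguments, and the verification of the remaining ``Moreover'' assertions are routine once the closed forms are available.
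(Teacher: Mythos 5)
This statement is a citation: the paper labels it as \cite[Theorems~1,~2,~4,~\&~5]{CEKZ} and does not prove it---it simply imports the closed-form evaluations that Ciucu, Eisenk\"olbl, Krattenthaler, and Zare obtained ``using very involved computations'' (Section~\ref{sec:central}). There is therefore no proof in this paper to compare your proposal against. Judged on its own terms, your sketch has several genuine gaps.

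First, for items~(1) and~(2) with $M$ odd your reduction gives nothing: the LGV determinant $\det N$ computes the \emph{signed} count, and Corollary~\ref{cor:det-Z-per-Z} only identifies it with the unsigned count when $M$ is even. You acknowledge this and propose to ``invoke instead the separate evaluation of \cite{CEKZ}'' for those cases---but that is exactly what you are supposed to be proving, so the argument is circular there.

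Second, your symmetry argument for item~(4)(ii) does not apply to general axis-central hexagons. The pairing $\lambda_k \leftrightarrow \lambda_{C-k}$ with $P^+_{\lambda_k}(A\to E)=P^+_{\lambda_{C-k}}(A\to E)$ comes from the reflection across the axis through the sides of length $C$ and $C+M$, and that reflection is a symmetry of the region only when $A=B$ (and $\alpha=\beta$), which is the hypothesis of Proposition~\ref{pro:symmetry-zero}. An axis-central puncture centres the hole on each axis individually, but the hexagon itself is generally \emph{not} reflection-symmetric when $A\neq B$, so the asserted equality of path counts fails. CEKZ's vanishing result for $A,B,C$ all odd therefore cannot be derived by this route; it comes out of their determinant evaluations, not from a literal reflection symmetry of $H$.

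Third, the entire content of the theorem---the explicit hyperfactorial quotients, their complete linear factorisation in $M$, and the identification of the dominant term $\HF(A+B+C+M)$---is contained in the determinant evaluations you defer to ``standard lemmas'' in your third paragraph. Those evaluations are precisely the difficult part of \cite{CEKZ}; Lemma~\ref{lem:split-binom-det} and \cite[Eq.~(12.5)]{CEKZ} handle only much more special shapes of the binomial matrix. Without carrying out those evaluations, the ``Moreover'' clauses of the theorem are unverified assertions, not consequences of Remark~\ref{rem:polynomial} (which gives polynomiality in $M$ of fixed parity, but not linear factorisation nor the hyperfactorial form).

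In short: the paper treats this as a black box, and your sketch does not open it. The structural steps you supply (LGV reduction, sign analysis of the admissible permutations) are consistent with how the paper uses this theorem downstream in Corollaries~\ref{cor:axis-central-perm-Z} and~\ref{cor:axis-central-det-Z}, but they do not constitute a proof of the theorem itself, and the two places where you try to give an independent argument (the $M$-odd unsigned count and the all-odd vanishing) are respectively circular and incorrect.
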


Thus, we calculate the permanent of $Z_{a,b,c,\alpha,\beta,\gamma}$.
\begin{corollary} \label{cor:axis-central-perm-Z}
    Let $H_{a,b,c,\alpha,\beta,\gamma}$ be a hexagon with an axis-central puncture.  Then
    \[
        \per{Z_{a,b,c,\alpha,\beta,\gamma}} = \left\{ \begin{array}{ll}
            \CEKZ_1(A,B,C,M) & \mbox{if $a,b,$ and $c$ share a common parity;} \\[0.3em]
            \CEKZ_2(A,B,C,M) & \mbox{otherwise.}
        \end{array} \right.
    \]
\end{corollary}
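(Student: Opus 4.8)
The plan is to reduce the statement directly to the enumeration results already quoted in Theorem~\ref{thm:CEKZ-1245}. First I would invoke Proposition~\ref{pro:bip-matrix}, which identifies $\per{Z_{a,b,c,\alpha,\beta,\gamma}}$ with the number of (unsigned) lozenge tilings of the punctured hexagon $H_{a,b,c,\alpha,\beta,\gamma}$. By hypothesis the puncture is axis-central, so $H_{a,b,c,\alpha,\beta,\gamma}$ is exactly one of the hexagons-with-central-puncture treated in~\cite{CEKZ}; here one should check that our normalisation---in particular the half-unit shift along the side of length $C$ when the parity of $c$ differs from that of $a$ and $b$, as described just before the statement---coincides with the ``central'' configuration used in~\cite{CEKZ}. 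Granting this, Theorem~\ref{thm:CEKZ-1245}(1)--(2) evaluates the number of such tilings as $\CEKZ_1(A,B,C,M)$ when $A,B,C$ share a common parity and as $\CEKZ_2(A,B,C,M)$ otherwise.

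The only remaining point is to reconcile the parity condition in the statement, phrased in terms of $a,b,c$, with the parity condition in Theorem~\ref{thm:CEKZ-1245}, phrased in terms of $A,B,C$. For this I would use the relations $a = B+C+M$, $b = A+C+M$, $c = A+B+M$ from Remark~\ref{rem:bijection}. Reducing modulo $2$ gives $a-b \equiv B-A$, $b-c \equiv C-B$, and $a-c \equiv C-A \pmod 2$, so $a,b,c$ are pairwise congruent modulo $2$ precisely when $A,B,C$ are. Hence ``$a,b,$ and $c$ share a common parity'' is equivalent to ``$A,B,$ and $C$ share a common parity'', and the two cases of the corollary match the two cases of the theorem verbatim.

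The main (and essentially only) obstacle is the verification that the axis-central normalisation adopted in this section is literally the ``central'' puncture of~\cite{CEKZ}, including the parity-dependent half-unit adjustment; once that identification is in hand the corollary follows immediately from Proposition~\ref{pro:bip-matrix} together with Theorem~\ref{thm:CEKZ-1245}, with no further combinatorial input needed.
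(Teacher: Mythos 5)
Your proposal matches the paper's own argument: the paper likewise proves this by combining Proposition~\ref{pro:bip-matrix} (permanent equals number of tilings) with Theorem~\ref{thm:CEKZ-1245}(1)--(2). Your explicit check that the parity condition on $a,b,c$ translates to the same condition on $A,B,C$ via $a=B+C+M$, $b=A+C+M$, $c=A+B+M$ is a correct and worthwhile detail the paper leaves implicit.
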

\begin{proof}
    This follows from Proposition~\ref{pro:bip-matrix} and Theorem~\ref{thm:CEKZ-1245}.
\end{proof}

Moreover, we calculate the determinant of $N_{a,b,c,\alpha,\beta,\gamma}$, and thus can completely classify when the algebra
$R/I_{a,b,c,\alpha,\beta,\gamma}$ has the weak Lefschetz property.
\begin{corollary} \label{cor:axis-central-det-Z}
    Let $H_{a,b,c,\alpha,\beta,\gamma}$ be a hexagon with an axis-central puncture.  If $M$ is even, then
    \[  
        \det{N_{a,b,c,\alpha,\beta,\gamma}} = \left\{ \begin{array}{ll}
            \CEKZ_1(A,B,C,M) & \mbox{if $a,b,$ and $c$ share a common parity;} \\[0.3em]
            \CEKZ_2(A,B,C,M) & \mbox{otherwise.}
        \end{array} \right.
    \]
    If $M$ is odd, then
    \[  
        \det{N_{a,b,c,\alpha,\beta,\gamma}} = \left\{ \begin{array}{ll}
            \CEKZ_4(A,B,C,M) & \mbox{if $a,b,c,$ and $s+2$ share a common parity;} \\[0.3em]
            \raisebox{-0.6em}{0} & \mbox{if $a,b,$ and $c$ share a common parity}  \\[-0.25em] & \mbox{different from the parity of $s+2$;} \\[0.3em]
            \CEKZ_5(A,B,C,M) & \mbox{if $a,b,$ and $c$ do not share a common parity.}
        \end{array} \right.
    \]

    Thus, $R/I_{a,b,c,\alpha,\beta,\gamma}$ always fails to have the weak Lefschetz property if $a,b,c,$ and $M$ are odd, regardless
    of the field characteristic.  Otherwise, $R/I_{a,b,c,\alpha,\beta,\gamma}$ has the weak Lefschetz property if the field characteristic
    is zero or at least $A+B+C+M$.
\end{corollary}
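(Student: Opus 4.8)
The plan is to identify $\det N_{a,b,c,\alpha,\beta,\gamma}$ with the (signed) tiling enumerations of Theorem~\ref{thm:CEKZ-1245} and then read off the weak Lefschetz statements from Proposition~\ref{pro:wlp-binom}(ii). Two preliminary observations set this up. First, by the very definition of an axis-central puncture in Section~\ref{sub:axis}---including the one-half unit translation used when $c$ has the opposite parity to both $a$ and $b$---the region $H_{a,b,c,\alpha,\beta,\gamma}$ is exactly a hexagon with a ``central'' puncture in the sense of~\cite{CEKZ}, so Theorem~\ref{thm:CEKZ-1245} applies directly with parameters $A,B,C,M$. Second, I would record the parity dictionary coming from $A = s+2-a$, $B = s+2-b$, $C = s+2-c$ and $M = s+2-(A+B+C)$ (using $A+B+C=\alpha+\beta+\gamma$, Remark~\ref{rem:sabcm}): reducing modulo $2$, one sees that $a,b,c$ share a common parity if and only if $A,B,C$ do; that $A,B,C$ are all even if and only if $a\equiv b\equiv c\equiv s+2 \pmod 2$; and that, when $M$ is odd, $A,B,C$ are all odd if and only if $a\equiv b\equiv c\not\equiv s+2 \pmod 2$. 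These translate the parity hypotheses of Theorem~\ref{thm:CEKZ-1245}, which are stated in terms of $A,B,C$, into those of the corollary, which are stated in terms of $a,b,c,s+2$.

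Next I would split on the parity of $M$. If $M$ is even, then every admissible permutation $\lambda_k$ has sign $+1$, so (as in the proof of Theorem~\ref{thm:M-even}) $\det N_{a,b,c,\alpha,\beta,\gamma}$ is simply the number of lozenge tilings of $H_{a,b,c,\alpha,\beta,\gamma}$. Applying parts (1) and (2) of Theorem~\ref{thm:CEKZ-1245}, this equals $\CEKZ_1(A,B,C,M)$ when $A,B,C$ share a common parity and $\CEKZ_2(A,B,C,M)$ otherwise; by the parity dictionary this is the dichotomy in the statement, phrased via $a,b,c$. If $M$ is odd, then by Theorem~\ref{thm:nilp-matrix} the determinant is the signed enumeration of lozenge tilings of $H_{a,b,c,\alpha,\beta,\gamma}$, and by the discussion at the end of Section~\ref{sec:signs} the sign attached to a tiling here agrees with the statistic $n(\cdot)$ of~\cite[Section~2]{CEKZ}. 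Hence parts (4) and (5) of Theorem~\ref{thm:CEKZ-1245} apply verbatim: the value is $\CEKZ_4(A,B,C,M)$ if $A,B,C$ are all even, is $0$ if $A,B,C$ are all odd, and is $\CEKZ_5(A,B,C,M)$ if $A,B,C$ do not share a common parity. Feeding in the parity dictionary turns these three cases into exactly the three cases of the corollary ($a\equiv b\equiv c\equiv s+2$; $a\equiv b\equiv c\not\equiv s+2$; $a,b,c$ of mixed parity).

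Finally, the weak Lefschetz assertions follow from Proposition~\ref{pro:wlp-binom}(ii), which says $R/I_{a,b,c,\alpha,\beta,\gamma}$ has the weak Lefschetz property if and only if $\det N_{a,b,c,\alpha,\beta,\gamma}\not\equiv 0\pmod{\charf{K}}$. In the vanishing case ($M$ odd with $A,B,C$ all odd, i.e.\ $a,b,c$ of common parity distinct from that of $s+2$) the determinant is $0$, so the property fails in every characteristic. In every other case $\det N_{a,b,c,\alpha,\beta,\gamma}$ equals one of $\CEKZ_1,\CEKZ_2,\CEKZ_4,\CEKZ_5$, each of which, by the last sentence of Theorem~\ref{thm:CEKZ-1245}, is a quotient of products of hyperfactorials whose largest term is $\HF(A+B+C+M)$; since $\HF(n)=\prod_{i=0}^{n-1}i!$ is a positive integer all of whose prime divisors are strictly less than $n$, the numerator of this quotient---hence $\det N_{a,b,c,\alpha,\beta,\gamma}$ itself---is a nonzero integer all of whose prime divisors are $<A+B+C+M$, and so it is nonzero modulo $\charf{K}$ whenever $\charf{K}=0$ or $\charf{K}\geq A+B+C+M$.

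I expect the only delicate point to be the parity bookkeeping of the third step, together with the sanity checks that CEKZ's notion of a central puncture and CEKZ's sign convention coincide with the ones used in this paper; everything else is assembly of Theorems~\ref{thm:M-even}, \ref{thm:nilp-matrix}, and~\ref{thm:CEKZ-1245} with Proposition~\ref{pro:wlp-binom}.
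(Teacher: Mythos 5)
Your proof is correct and follows essentially the same route as the paper, which gives only the one-line citation ``This follows from Theorem~\ref{thm:nilp-matrix} and Theorem~\ref{thm:CEKZ-1245}''; you simply fill in the details properly: the parity dictionary between $A,B,C$ and $a,b,c,s+2$, the reduction to an unsigned count when $M$ is even via Theorem~\ref{thm:M-even}, the sign-compatibility with the CEKZ statistic, and the hyperfactorial bound on prime divisors for the last clause.

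One thing your careful parity dictionary makes visible, but which you do not flag: the corollary's final ``Thus'' sentence as printed is not implied by the case analysis. The vanishing case is $A,B,C,M$ all odd, equivalently $a\equiv b\equiv c\not\equiv s+2 \pmod 2$ with $M$ odd; the sentence says ``$a,b,c,$ and $M$ are odd'', which is strictly weaker. In the axis-central case with $a,b,c$ of common parity one has $a\equiv M\pmod 2$ and $s+2=\alpha+\beta+\gamma+M$, so $a\equiv s+2$ iff $\alpha+\beta+\gamma$ is even; thus $a,b,c,M$ can all be odd while $A,B,C$ are all even. A concrete instance is $I_{5,5,5,2,2,2}$: here $a=b=c=5$ and $M=1$ are odd, but $A=B=C=2$, so one is in the $\CEKZ_4$ branch, and by Corollary~\ref{cor:a-t} (with $\alpha=2$ even, $t=3$ odd) this algebra does have the weak Lefschetz property in characteristic zero. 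The sentence should read ``$A,B,C,$ and $M$ are odd''. Your argument correctly proves the (corrected) vanishing conclusion; it just does not notice the mismatch with the paper's wording.
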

\begin{proof}
    This follows from Theorem~\ref{thm:nilp-matrix} and Theorem~\ref{thm:CEKZ-1245}.
\end{proof}

As we will see in the following subsection, having a gravity-central puncture is equivalent to the associated algebra being level. 
\begin{question} \label{que:axis-central-algebraically}
    Consider the punctured hexagon $H_{a,b,c,\alpha,\beta,\gamma}$.  Is there an algebraic property $P$ of algebras such that
    $H_{a,b,c,\alpha,\beta,\gamma}$ has an axis-central puncture if and only if $R/I_{a,b,c,\alpha,\beta,\gamma}$ has property $P$?
\end{question}~

\subsection{Gravity-central}~

We define a punctured hexagon $H_{a,b,c,\alpha,\beta,\gamma}$ to have a {\em gravity-central} puncture if the vertices of the puncture
are each the same distance from the perpendicular side of the hexagon; see Figure~\ref{fig:gravity-central}.
\begin{figure}[!ht]
    \includegraphics[scale=0.667]{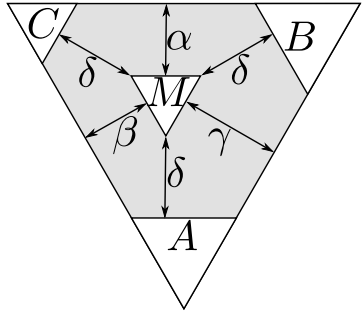}
    \caption{A punctured hexagon with a gravity-central puncture.}
    \label{fig:gravity-central}
\end{figure}
That is, we have that $B+C-\alpha = A+C -\beta = A+B - \gamma$, which simplifies to the relation $a-\alpha = b-\beta = c-\gamma$,
and this is exactly the condition in Proposition~\ref{pro:amaci-props}(ii) for $R/I_{a,b,c,\alpha,\beta,\gamma}$ to be level and type $3$.  Thus,
if we let $t$ be this common difference, then we can rewrite $I_{a,b,c,\alpha,\beta,\gamma}$ as
\[
    I_{\alpha+t,\beta+t, \gamma+t, \alpha,\beta,\gamma} = (x^{\alpha+t}, y^{\beta+t}, z^{\gamma+t}, x^\alpha y^\beta z^\gamma).
\]
Without loss of generality, assume $0 < \alpha \leq \beta \leq \gamma$.  Then the conditions in Proposition~\ref{pro:semistable} simplify to 
$t \geq \frac{1}{3}(\alpha+\beta+\gamma)$ and $\gamma \leq 2(\alpha + \beta)$.

The ideals $I_{\alpha+t,\beta+t,\gamma+t,\alpha,\beta,\gamma}$ are studied extensively in~\cite[Sections~6 \&~7]{MMN}.  In particular, \cite[Conjecture~6.8]{MMN} 
makes a guess as to when $R/I_{\alpha+t,\beta+t,\gamma+t,\alpha,\beta,\gamma}$ has the weak Lefschetz property in characteristic zero.  We recall the conjecture 
here, though we present it in a different but equivalent form.
\begin{conjecture} \label{conj:level-wlp}
    Consider the ideal $I_{\alpha+t,\beta+t, \gamma+t, \alpha,\beta,\gamma}$ in $R$ where $K$ has characteristic zero,
    $0 < \alpha \leq \beta \leq \gamma \leq 2(\alpha+\beta)$, $t \geq \frac{1}{3}(\alpha+\beta+\gamma)$, and $\alpha + \beta + \gamma$ is divisible by three.

    If $(\alpha,\beta,\gamma,t)$ is not $(2,9,13,9)$ or $(3,7,14,9)$, then $R/I_{\alpha+t,\beta+t, \gamma+t, \alpha,\beta,\gamma}$ fails to have the weak 
    Lefschetz property if and only if $t$ is even, $\alpha + \beta + \gamma$ is odd, and $\alpha = \beta$ or $\beta = \gamma$.
\end{conjecture}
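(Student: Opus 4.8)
The plan is to convert the statement into a question about the non-vanishing of a single determinant and then to organise the analysis by the parity of the puncture size $M$. Since a gravity-central puncture is precisely the condition $a-\alpha=b-\beta=c-\gamma$ that makes $R/I$ level of type $3$, we are in the situation of Corollary~\ref{cor:one-map}: in characteristic zero, $R/I_{\alpha+t,\beta+t,\gamma+t,\alpha,\beta,\gamma}$ has the weak Lefschetz property if and only if $\det N_{\alpha+t,\beta+t,\gamma+t,\alpha,\beta,\gamma}\neq 0$ (equivalently $\det Z\neq 0$), by Propositions~\ref{pro:wlp-binom} and~\ref{pro:wlp-zero-one}. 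From $a+b+c=\alpha+\beta+\gamma+3t$ one gets $M=t-\frac{1}{3}(\alpha+\beta+\gamma)$, so $M$ is even exactly when $\alpha+\beta+\gamma+t$ is even. When $M$ is even, $a+b+c$ is even and Theorem~\ref{thm:M-even} gives $\det N>0$, hence the weak Lefschetz property holds; moreover the conjectured failure hypothesis ($t$ even and $\alpha+\beta+\gamma$ odd, so $\alpha+\beta+\gamma+t$ odd) cannot occur here, so the conjecture predicts the weak Lefschetz property, consistent with what we have shown. This disposes of roughly half the cases for free.

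The remaining range is $M$ odd, and here I would first establish the ``failure'' direction. Suppose $t$ is even, $\alpha+\beta+\gamma$ is odd, and $\alpha=\beta$; then $a=b$, the region $H_{\alpha+t,\alpha+t,\gamma+t,\alpha,\alpha,\gamma}$ is symmetric, $\alpha+\beta+\gamma=2\alpha+\gamma$ odd forces $\gamma$ odd, and $t$ even forces $c=\gamma+t$ odd, so Proposition~\ref{pro:symmetry-zero} gives $\det N=0$ and the weak Lefschetz property fails in every characteristic. If instead $\beta=\gamma$, swapping the roles of $x$ and $z$ reduces to the same situation: the relabelled region is symmetric, and $\alpha+\beta+\gamma=\alpha+2\beta$ odd together with $t$ even forces the relabelled ``$c$'' and ``$\gamma$'', namely $a=\alpha+t$ and $\alpha$, to be odd, so the relabelled Proposition~\ref{pro:symmetry-zero} again yields $\det N=0$. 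Thus the ``if'' half of the conjecture follows directly from the symmetric $(-1)$-enumeration already in hand.

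The hard part --- and the reason this remains a conjecture --- is the converse: proving $\det N_{\alpha+t,\beta+t,\gamma+t,\alpha,\beta,\gamma}\neq 0$ in characteristic zero whenever $M$ is odd and $\alpha<\beta<\gamma$ are pairwise distinct, together with the case $t$ odd and $\alpha+\beta+\gamma$ even, apart from the two sporadic tuples $(\alpha,\beta,\gamma,t)=(2,9,13,9)$ and $(3,7,14,9)$ that the conjecture singles out as exceptions to the generic pattern. For this one wants a closed form, or at least control of the integer roots, of $\det N$; by Remark~\ref{rem:polynomial} it is a polynomial in $M$ of fixed parity for fixed $(\alpha,\beta,\gamma)$, so the conjecture says this polynomial has no admissible odd root in the non-degenerate regime. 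Exact evaluations are presently available only in degenerate configurations --- trivial puncture (Proposition~\ref{pro:M-zero}), a vanishing or maximal hexagon side (Propositions~\ref{pro:C-zero} and~\ref{pro:C-maximal}), a two-variable mixed term (Proposition~\ref{pro:gamma-zero}), and conjecturally the symmetric case (Conjecture~\ref{con:symmetry}) --- with no general Lindstr\"om-Gessel-Viennot determinant evaluation in the spirit of~\cite{CEKZ} or Lemma~\ref{lem:split-binom-det} for pairwise-distinct $(\alpha,\beta,\gamma)$. Absent such a formula, the viable routes seem to be either a direct injectivity argument for $\times(x+y+z)$ on the peak monomial basis, or a sign-reversing involution on the admissible lattice-path families of Theorem~\ref{thm:nilp-matrix} showing the signed count cannot cancel to zero; the two exceptional tuples show that no purely structural argument can succeed, so those must be verified by explicit computation. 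Establishing non-vanishing across the entire non-degenerate regime is exactly the obstacle, which is why the paper records only the partial progress of Proposition~\ref{pro:level-wlp}.
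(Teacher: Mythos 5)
The statement you are treating is a conjecture (Conjecture~\ref{conj:level-wlp}, a reformulation of \cite[Conjecture~6.8]{MMN}), and the paper does \emph{not} prove it; it only records partial evidence in Proposition~\ref{pro:level-wlp}. Your write-up correctly recognises this, and the portion you do argue is carried out accurately and essentially mirrors that proposition: your ``$M$ even'' step (noting $M=t-\tfrac{1}{3}(\alpha+\beta+\gamma)$ so $M$ even iff $t$ and $\alpha+\beta+\gamma$ share parity, then invoking Theorem~\ref{thm:M-even}) is Proposition~\ref{pro:level-wlp}(ii)(a), and your two symmetric failure cases (directly when $\alpha=\beta$, and by relabelling $x\leftrightarrow z$ when $\beta=\gamma$, both reducing to Proposition~\ref{pro:symmetry-zero}) are Proposition~\ref{pro:level-wlp}(i); the parity bookkeeping is correct in both.

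The one piece of known progress you omit is Proposition~\ref{pro:level-wlp}(ii)(b): when $t$ is odd and $\alpha=\beta=\gamma$ (necessarily even, since $\alpha+\beta+\gamma$ is then even), the puncture is axis-central with $A=B=C$ even, and Theorem~\ref{thm:CEKZ-1245}(4)(i) --- the Ciucu--Eisenk\"olbl--Krattenthaler--Zare signed enumeration --- gives $\det N\neq0$ and hence the weak Lefschetz property. Thus the genuinely open residue is a bit smaller than you describe: with $M$ odd, the unresolved cases are exactly $t$ even, $\alpha+\beta+\gamma$ odd, $\alpha<\beta<\gamma$, and $t$ odd, $\alpha+\beta+\gamma$ even with $\alpha<\beta$ or $\beta<\gamma$, together with the two sporadic tuples you rightly flag for explicit computation. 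Your overall assessment of why this remains a conjecture --- the absence of a Lindstr\"om--Gessel--Viennot evaluation or a sign-reversing involution controlling the signed count in the non-degenerate regime --- matches the paper's implicit reasoning for stopping at Proposition~\ref{pro:level-wlp}.
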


\begin{remark}
    \cite[Conjecture~6.8]{MMN} is presented in a format that does not elucidate the reasoning behind it.  We present the conjecture differently so it says that 
    the weak Lefschetz property fails in two exceptional cases and also when a pair of parity conditions and a symmetry condition hold.
\end{remark}

We add further support to the conjecture.
\begin{proposition} \label{pro:level-wlp}
    Let $I = I_{\alpha+t,\beta+t, \gamma+t, \alpha,\beta,\gamma}$ be as in Conjecture~\ref{conj:level-wlp}.  Then
    \begin{enumerate}
        \item $R/I$ fails to have the weak Lefschetz property when $t$ is even, $\alpha + \beta + \gamma$ is odd, and $\alpha = \beta$ or $\beta = \gamma$;
        \item $R/I$ has the weak Lefschetz property when 
        \begin{enumerate}
            \item $t$ and $\alpha + \beta + \gamma$ have the same parity, or
            \item $t$ is odd and $\alpha = \beta = \gamma$ is even.
        \end{enumerate}
    \end{enumerate}
\end{proposition}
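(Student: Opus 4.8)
The plan is to translate each hypothesis into the language of the associated punctured hexagon and then appeal to a result already proved. Since $R/I$ is in the gravity-central case we have $a=\alpha+t$, $b=\beta+t$, $c=\gamma+t$, so $s+2=\frac{1}{3}(a+b+c+\alpha+\beta+\gamma)=t+\frac{2}{3}(\alpha+\beta+\gamma)$ and hence
\[
 A=\frac{2}{3}(\alpha+\beta+\gamma)-\alpha,\quad B=\frac{2}{3}(\alpha+\beta+\gamma)-\beta,\quad C=\frac{2}{3}(\alpha+\beta+\gamma)-\gamma,\quad M=t-\frac{1}{3}(\alpha+\beta+\gamma).
\]
In particular $a+b+c=(\alpha+\beta+\gamma)+3t$, so $a+b+c$ is even exactly when $t$ and $\alpha+\beta+\gamma$ have the same parity. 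With these identities in hand, the three assertions become short citations.

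For part (ii)(a) I would observe that $t\equiv\alpha+\beta+\gamma\pmod 2$ forces $a+b+c$ even, so Theorem~\ref{thm:M-even} applies and gives $\det N_{a,b,c,\alpha,\beta,\gamma}>0$, whence $R/I$ has the weak Lefschetz property. For part (ii)(b), where $t$ is odd and $\alpha=\beta=\gamma$ is even, the identities above give $a=b=c=\alpha+t$ odd, $A=B=C=\alpha$ even, and $M=t-\alpha$ odd; moreover $a=2\alpha+M$, which by the discussion in Subsection~\ref{sub:axis} means the puncture is axis-central. Then Corollary~\ref{cor:axis-central-det-Z} evaluates $\det N_{a,b,c,\alpha,\beta,\gamma}$ in this branch as $\CEKZ_4(\alpha,\alpha,\alpha,M)$, and by Theorem~\ref{thm:CEKZ-1245} this $(-1)$-enumeration of the axis-central region with $A,B,C$ all even is a quotient of products of hyperfactorials of non-negative integers, hence strictly positive; so $R/I$ has the weak Lefschetz property.

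For part (i) I would assume $t$ even, $\alpha+\beta+\gamma$ odd, and treat first the case $\alpha=\beta$. Then $a=b$, and the oddness of $2\alpha+\gamma=\alpha+\beta+\gamma$ forces $\gamma$ odd, hence $c=\gamma+t$ odd; thus $H_{a,a,c,\alpha,\alpha,\gamma}$ satisfies all the hypotheses of Proposition~\ref{pro:symmetry-zero}, which gives $\det N_{a,a,c,\alpha,\alpha,\gamma}=0$. Hence $R/I$ fails the weak Lefschetz property in every characteristic, in particular in characteristic zero. If instead $\beta=\gamma$, relabelling the variables $x\leftrightarrow z$ gives the isomorphic algebra $R/I_{b,b,a,\beta,\beta,\alpha}$, and now the oddness of $\alpha+2\beta=\alpha+\beta+\gamma$ forces $\alpha$ odd and hence $a=\alpha+t$ odd, so Proposition~\ref{pro:symmetry-zero} again yields determinant $0$ and the weak Lefschetz property fails.

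The parameter bookkeeping and parity arithmetic above are routine. The one step that is not self-contained — and that I expect to be the main obstacle — is the non-vanishing used in part (ii)(b), which rests on the evaluation of Ciucu, Eisenk\"olbl, Krattenthaler, and Zare recorded in Theorem~\ref{thm:CEKZ-1245}: its hyperfactorial-product form is what makes the needed positivity transparent. Everything else amounts to organizing parities and quoting Theorem~\ref{thm:M-even}, Corollary~\ref{cor:axis-central-det-Z}, and Proposition~\ref{pro:symmetry-zero}.
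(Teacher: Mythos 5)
Your proposal is correct and follows essentially the same route as the paper: parts (i), (ii)(a), and (ii)(b) are reduced, exactly as in the paper's proof, to Proposition~\ref{pro:symmetry-zero}, Theorem~\ref{thm:M-even}, and Theorem~\ref{thm:CEKZ-1245}(4)(i) (via Corollary~\ref{cor:axis-central-det-Z}), respectively. You simply spell out the parameter translation and parity bookkeeping that the paper treats as routine.
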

\begin{proof}
    Part (i) follows from Proposition~\ref{pro:symmetry-zero} (also by~\cite[Corollary~7.4]{MMN}).  
    Part (ii)(a) implies $M$ is even and so follows by Theorem~\ref{thm:M-even}.
    Part (ii)(b) follows from \cite[Theorem~4]{CEKZ}, which is recalled here in Theorem~\ref{thm:CEKZ-1245}(4)(i).
\end{proof}

We note that Conjecture~\ref{conj:level-wlp} remains open in two cases, both of which are conjectured to have the weak Lefschetz property:
\begin{enumerate}
    \item $t$ even, $\alpha + \beta + \gamma$ odd, and $\alpha < \beta < \gamma$;
    \item $t$ odd, $\alpha + \beta + \gamma$ even, and $\alpha < \beta$ or $\beta < \gamma$.
\end{enumerate}

\begin{remark} \label{rem:level-symmetry}
    Notice that the second open case in the above statement is solved for the cases when $\alpha = \beta$ or $\beta = \gamma$
    if Conjecture~\ref{con:symmetry} is true. 
\end{remark}~

\subsection{Axis- and gravity-central}~

Suppose $a,b,$ and $c$ have the same parity.  Then the punctured hexagons that are both axis- and gravity-central are precisely those
with $a = b = c = \alpha + t$ and $\alpha = \beta = \gamma$.  In this case, we strengthen~\cite[Corollary~7.6]{MMN}.
\begin{corollary} \label{cor:a-t}
    Consider the level, type $3$ algebra $A$ given by
    \[
        R/I_{\alpha + t, \alpha + t, \alpha + t, \alpha, \alpha, \alpha} = R/(x^{\alpha + t}, y^{\alpha + t}, z^{\alpha + t}, x^\alpha y^\alpha z^\alpha),
    \]
    where $t \geq \alpha$.  Then $A$ fails to have the weak Lefschetz property in characteristic zero if and only if $\alpha$ is odd and $t$ is even.
\end{corollary}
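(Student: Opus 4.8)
The plan is to recognise $H_{\alpha+t,\alpha+t,\alpha+t,\alpha,\alpha,\alpha}$ as lying in the intersection of the axis-central and gravity-central families and to read off the answer from Corollary~\ref{cor:axis-central-det-Z}. First I would record the parameters attached to $I = I_{\alpha+t,\alpha+t,\alpha+t,\alpha,\alpha,\alpha}$: from Proposition~\ref{pro:semistable} one gets $s+2 = 2\alpha+t$, hence $A = B = C = \alpha$ and $M = s+2-3\alpha = t-\alpha$. Since $t \geq \alpha$, and $\alpha \geq 1$ because the algebra has type $3$, the four inequalities of Proposition~\ref{pro:semistable} hold, and $a+b+c+\alpha+\beta+\gamma = 6\alpha + 3t \equiv 0 \pmod 3$; therefore Corollary~\ref{cor:one-map}, via Proposition~\ref{pro:wlp-binom}, applies and $A$ has the weak Lefschetz property in characteristic zero if and only if $\det N_{\alpha+t,\alpha+t,\alpha+t,\alpha,\alpha,\alpha} \neq 0$. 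Because $a = b = c$ share a common parity, no parity correction is needed and $H_{\alpha+t,\alpha+t,\alpha+t,\alpha,\alpha,\alpha}$ has an axis-central puncture, so Corollary~\ref{cor:axis-central-det-Z} is available.

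Next I would split according to the parities of $\alpha$ and $t$. If $t \equiv \alpha \pmod 2$, then $M = t-\alpha$ is even, so by Theorem~\ref{thm:M-even} the determinant is positive and the weak Lefschetz property holds. If $\alpha$ is odd and $t$ is even, then $M$ is odd and $A = B = C = \alpha$ is odd; here either Proposition~\ref{pro:symmetry-zero} (note $a = b$, the mixed exponents satisfy $\alpha = \beta$, and $c$ and $\gamma$ are odd) or the vanishing branch of Corollary~\ref{cor:axis-central-det-Z} gives $\det N = 0$, so the weak Lefschetz property fails. Finally, if $\alpha$ is even and $t$ is odd, then $M$ is odd but $A = B = C = \alpha$ is even; since $a - (s+2) = -\alpha$ is even, the numbers $a$, $b$, $c$, and $s+2$ share a common parity, so Corollary~\ref{cor:axis-central-det-Z} gives $\det N = \CEKZ_4(\alpha,\alpha,\alpha,M)$, which is nonzero by Theorem~\ref{thm:CEKZ-1245}(4)(i), and the weak Lefschetz property holds. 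Collecting the three cases, $\det N = 0$ --- equivalently $A$ fails the weak Lefschetz property in characteristic zero --- precisely when $\alpha$ is odd and $t$ is even, which is the claim.

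This argument is largely bookkeeping on top of the machinery already assembled, in particular Corollary~\ref{cor:axis-central-det-Z} and the evaluations of Ciucu, Eisenk\"olbl, Krattenthaler, and Zare~\cite{CEKZ}, so I do not anticipate a serious obstacle. The one point requiring care is routing the parities correctly through the axis-central case distinction: one must verify that when $\alpha$ is even the situation falls in the nonvanishing $\CEKZ_4$ branch rather than the vanishing branch, and that the condition ``$M$ odd and $\alpha$ odd'' produced by the case analysis is rewritten faithfully as ``$\alpha$ odd and $t$ even'' via $M = t-\alpha$.
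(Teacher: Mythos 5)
Your proof is correct and follows essentially the same route the paper has in mind: the corollary is placed in the axis-and-gravity-central subsection precisely so it can be read off from Corollary~\ref{cor:axis-central-det-Z} (equivalently, from Proposition~\ref{pro:level-wlp} specialised to $\alpha=\beta=\gamma$, where parts (i), (ii)(a), (ii)(b) exhaust the four parity cases). Your parameter bookkeeping ($A=B=C=\alpha$, $M=t-\alpha$, $s+2=2\alpha+t$) and the three-way parity split match what the paper's machinery yields; the only point worth flagging is that nonvanishing of $\CEKZ_4(\alpha,\alpha,\alpha,M)$ is not literally asserted in Theorem~\ref{thm:CEKZ-1245} but only in the explicit CEKZ evaluation it cites — the paper leans on the same implicit fact in Proposition~\ref{pro:level-wlp}(ii)(b), so this is consistent with the paper's own level of rigour.
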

In~\cite{Kr-DPP}, Krattenthaler described a bijection between cyclically symmetric lozenge tilings of the punctured hexagon considered in the previous corollary 
and descending plane partitions with specified conditions.

If $c$ has a different parity than $a$ and $b$, then $\alpha - 1 = \beta + 1 = \gamma$.  Thus for $\alpha \geq 3$ and $M$ non-negative we have 
that the ideals of the form
\[
    I_{2\alpha + M, 2\alpha + M-2, 2\alpha + M-1, \alpha, \alpha, \alpha} = (x^{2\alpha + M}, y^{2\alpha + M-2}, z^{2\alpha + M-1}, x^\alpha y^{\alpha-2} z^{\alpha-1}),
\]
are precisely those that are both axis- and gravity-central.

% -- Section
\section{Interesting families and examples} \label{sec:interesting}
In this section, we give several interesting families and examples.  

~\subsection{Large prime divisors}~

Throughout the two preceding sections, when we could bound the prime divisors of $\det{N}$ above, we bounded them above by (at most) $s+2$. 
However, this need not always be the case, as demonstrated in Example~\ref{exa:non-linear-bound}.  We provide here a few exceptional-looking
though surprisingly common cases.

\begin{example} \label{exa:large-primes}
    Recall that $s+2 = \frac{1}{3}(a+b+c+\alpha+\beta+\gamma)$.  In each case, we specify the parameter set by a sextuple $(a,b,c,\alpha,\beta,\gamma)$.
    \begin{enumerate}
        \item Consider the parameter set $(4,6,6,1,1,3)$.  Then $s+2 = 7$ and $\det{N} = 11$.  This is the smallest $s+2$ so that
            $\det{N}$ has a prime divisor greater than $s+2$.
        \item For the parameter set $(20, 20, 20, 3, 8, 13)$, we get $s+2 = 28$ and
            \[
                \det{N}  = 2\cdot 3^{2}\cdot 5^{3}\cdot 7\cdot 11\cdot 17^{2}\cdot 19^{6}\cdot 23^{5}\cdot 20554657.
            \]
            Hence $\det{N}$ is divisible by a prime that is over $700000$ times large than $s+2$.  Moreover, $20554657$ is
            greater than the multiplicity of the associated algebra.
        \item Consider the parameter set $(7,12,13,1,7,2)$.  Then $s+2 = 14$ and $\det{N} = 13\cdot17\cdot23$.  This is the smallest
            $s+2$ so that $\det{N}$ has more than one prime divisor greater than $s+2$.
        \item Last, for the parameter set $(8,12,15,2,8,5)$, we get $s+2 =17$ and $\det{N} = 2\cdot11\cdot13^2\cdot179\cdot197$.
            In this case, notice that $\det{N}$ has two prime divisors both greater than $a+b+c+\alpha+\beta+\gamma$, the sum
            of the generating degrees of $R/I_{a,b,c,\alpha,\beta,\gamma}$.
   \end{enumerate}
\end{example}

Given the previous example and Example~\ref{exa:non-linear-bound}, it seems unlikely that there is a reasonably simple closed formula 
for the determinant of $N_{a,b,c,\alpha,\beta,\gamma}$ in general, as opposed to the case of a symmetric region (see Conjecture~\ref{con:symmetry}).

~\subsection{Fixed determinants}~

For any positive integer $n$, there is an infinite family of punctured hexagons with exactly $n$ tilings.  Note the algebras
are type $2$ if $\beta$ is zero or $ c= n+\beta+1$ and type $3$ otherwise.
\begin{proposition} \label{pro:det-n}
    Let $n$ be a positive integer.  If $\beta \geq 0$ and $c \geq n+\beta+1$, then 
    \[
        \det{N_{c-\beta-1, \beta+2, c, c-n-\beta-1, \beta, n}} = n.
    \]
    Hence the ideal
    \[
        I_{c-\beta-1, \beta+2, c, c-n-\beta-1, \beta, n} = (x^{c-\beta-1}, y^{\beta + 2}, z^c, x^{c-n-\beta-1} y^\beta z^n)
    \]
    has the weak Lefschetz property when the characteristic of $K$ is either zero or not a prime divisor of $n$.
\end{proposition}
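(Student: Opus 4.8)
The plan is to compute the invariants $s, A, B, C, M$ attached to the sextuple $(c-\beta-1,\ \beta+2,\ c,\ c-n-\beta-1,\ \beta,\ n)$ and observe that $C = 0$ and $M = 1$, so that the matrix $N$ of Proposition~\ref{pro:wlp-binom} degenerates to a single entry equal to $\binom{n}{1} = n$.

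First I would record the parameter values. Summing the six generating exponents gives $a+b+c+\alpha+\beta+\gamma = 3c$, so the divisibility-by-three hypothesis holds and $s+2 = c$. Then $A = s+2-a = \beta+1$, $B = s+2-b = c-\beta-2$, $C = s+2-c = 0$, and $M = s+2-(\alpha+\beta+\gamma) = 1$. One then checks conditions (i)--(iv) of Proposition~\ref{pro:semistable}: $M = 1 \geq 0$; $0 \leq A = \beta+1 \leq \beta+n = \beta+\gamma$ since $n \geq 1$; $0 \leq B = c-\beta-2 \leq c-\beta-1 = \alpha+\gamma$ using $c \geq n+\beta+1 \geq \beta+2$; and $0 = C \leq \alpha+\beta = c-n-1$ using $c \geq n+1$. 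The basic constraints $0 \leq \alpha < a$, $0 \leq \beta < b$, $0 \leq \gamma < c$ follow from the same inequalities together with $n \geq 1$.

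With $C = 0$ and $M = 1$, Proposition~\ref{pro:wlp-binom}(i) makes $N = N_{c-\beta-1,\beta+2,c,c-n-\beta-1,\beta,n}$ a $1 \times 1$ integer matrix, and part~(iii), in its second branch with $i = j = 1$, gives the single entry as $\binom{\gamma}{A+C-\beta+j-i} = \binom{n}{(\beta+1)+0-\beta} = \binom{n}{1} = n$. Hence $\det N = n$. The weak Lefschetz assertion is then immediate from Proposition~\ref{pro:wlp-binom}(ii): $R/I$ has the weak Lefschetz property exactly when $n \not\equiv 0 \pmod{\charf K}$, i.e. when $\charf{K}$ is zero or is not a prime divisor of $n$. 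As a consistency check one can alternatively note that $c = \tfrac12(a+b+\alpha+\beta+\gamma)$ here, so Proposition~\ref{pro:C-zero} applies and yields $\det N = \Mac(M, A-\beta, B-\alpha) = \Mac(1,1,n-1)$, which collapses to $n$ after cancelling hyperfactorials.

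There is essentially no genuine obstacle here; the only care needed is the bookkeeping in verifying the hypotheses of Proposition~\ref{pro:semistable}, the correct reading of the index ranges in Proposition~\ref{pro:wlp-binom}(iii), and the remark about type: when $c = n+\beta+1$ one has $\alpha = 0$, and when $\beta = 0$ the mixed generator loses a variable, so the algebra is type~$2$ rather than type~$3$, but none of this affects the determinant computation.
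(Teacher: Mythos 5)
Your proof is correct and follows essentially the same route as the paper: both compute $s=c-2$, $A=\beta+1$, $B=c-\beta-2$, $C=0$, $M=1$, and then evaluate the $1\times 1$ determinant either directly from Proposition~\ref{pro:wlp-binom}(iii) as $\binom{n}{1}=n$ or via Proposition~\ref{pro:C-zero} as $\Mac(1,1,n-1)=n$. The only differences are that you present these two computations in the opposite order and you spell out the verification of the hypotheses of Proposition~\ref{pro:semistable}, which the paper leaves implicit.
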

\begin{proof}
    In this case, $s = c-2$, $A = \beta + 1$, $B = c - \beta - 2$, $C = 0$, and $M = 1$.  
    
    Using Proposition~\ref{pro:C-zero} we have that
    \[
        \det{N_{c-\beta-1, \beta+2, c, c-n-\beta-1, \beta, n}} = \Mac(M, A-\beta, B-\alpha) = \Mac(1, 1, n-1) = n.
    \]
    Alternatively, from Proposition~\ref{pro:wlp-binom} we have that
    \[
        N_{c-\beta-1, \beta+2, c, c-n-\beta-1, \beta, n} = \left( \binom{\gamma}{A+C-\beta} \right) = \left( \binom{n}{1} \right) = \left( n \right).
    \]
    Clearly then the determinant is $n$.
\end{proof}

Thus for any prime $p$, Proposition~\ref{pro:det-n} provides infinitely many monomial almost complete intersections
that fail to have the weak Lefschetz property exactly when the field characteristic is $p$.

A result of Proposition~\ref{pro:det-n} is an infinite (in fact, two dimensional) family whose members have a unique tiling.  Note that
the algebras are type $2$ if $\beta$ is zero or $c = \beta + 2$ and type $3$ otherwise.
\begin{corollary} \label{cor:C-zero-unique}
    If $\beta \geq 0$ and $c\geq \beta+2$, then $\det{N_{c-\beta-1, \beta+2, c, c-\beta-2, \beta, 1}}$ is 1.  That is, 
    \[
        I_{a,b,c,\alpha,\beta,\gamma} = (x^{c-\beta-1}, y^{\beta+2}, z^c, x^{c-\beta-2} y^\beta z)
    \]
    has the weak Lefschetz property independent of the field characteristic.
\end{corollary}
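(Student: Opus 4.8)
The plan is to obtain this statement as the special case $n = 1$ of Proposition~\ref{pro:det-n}. First I would substitute $n = 1$ in that proposition: the hypothesis there, $c \geq n + \beta + 1$, then reads exactly $c \geq \beta + 2$, matching the hypothesis of the corollary, and the parameter sextuple $(c-\beta-1,\, \beta+2,\, c,\, c-n-\beta-1,\, \beta,\, n)$ collapses to $(c-\beta-1,\, \beta+2,\, c,\, c-\beta-2,\, \beta,\, 1)$, which is precisely the ideal in the statement. Proposition~\ref{pro:det-n} then yields $\det N_{c-\beta-1,\beta+2,c,c-\beta-2,\beta,1} = 1$ directly.

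To keep the argument self-contained I would also recompute the determinant from scratch. Here $s = c-2$, $A = \beta+1$, $B = c-\beta-2$, $C = 0$, and $M = 1$, so by Proposition~\ref{pro:wlp-binom}(i) the matrix $N$ is $1 \times 1$. Its single entry falls in the $C+1 \leq i \leq C+M$ case of Proposition~\ref{pro:wlp-binom}(iii) and equals $\binom{\gamma}{A+C-\beta} = \binom{1}{1} = 1$; equivalently, since $C = 0$, Proposition~\ref{pro:C-zero} gives $\det N = \Mac(M, A-\beta, B-\alpha) = \Mac(1,1,0) = 1$. Either way $\det N = 1$.

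To finish, I would invoke Proposition~\ref{pro:wlp-binom}(ii): the running hypotheses of Section~\ref{sec:det} are inherited from Proposition~\ref{pro:det-n} (the degree sum here is $3c \equiv 0 \pmod 3$, and the conditions of Proposition~\ref{pro:semistable} hold for this subfamily by construction), so $R/I$ has the weak Lefschetz property if and only if $\det N \not\equiv 0 \pmod{\charf K}$. Since $\det N = 1$, this non-vanishing holds in characteristic zero and modulo every prime, hence the weak Lefschetz property holds regardless of the field characteristic. I do not anticipate any genuine obstacle: the mathematical content lies entirely in Propositions~\ref{pro:det-n}, \ref{pro:C-zero}, and~\ref{pro:wlp-binom}, and the only care required is the routine bookkeeping that the parameters of this two-parameter subfamily really do satisfy the standing assumptions of the section.
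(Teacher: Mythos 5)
Your proof is correct and takes essentially the same route as the paper: the corollary is simply the $n=1$ instance of Proposition~\ref{pro:det-n}, which is why the paper gives no separate proof. Your "self-contained" recomputation is likewise just the proof of Proposition~\ref{pro:det-n} specialised to $n=1$ (with $C=0$, $M=1$, and $\Mac(1,1,0)=1$), so no new ideas are needed.
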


Another family whose members have a unique tiling comes from Proposition~\ref{pro:gamma-zero}.  Note that it is a three dimensional family but
also that all of the associated algebras are type $2$.
\begin{proposition} \label{pro:gamma-zero-unique}
    If $a = b = \alpha + \beta + c$ and $\gamma = 0$, then $A = B = 0$ (see Figure~\ref{fig:hex-A-B-gamma-zero}) and $\det{N_{a,b,c,\alpha,\beta,\gamma}}$ is 1. 
    That is, 
    \[
        I_{a,b,c,\alpha,\beta,\gamma} = (x^{\alpha + \beta + c}, y^{\alpha + \beta + c}, z^c, x^\alpha y^\beta)
    \] 
    has the weak Lefschetz property independent of the field characteristic.
\end{proposition}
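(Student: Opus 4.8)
The plan is to observe that the two hypotheses $a = b = \alpha+\beta+c$ and $\gamma = 0$ together force the punctured hexagon $H_{a,b,c,\alpha,\beta,\gamma}$ into a highly degenerate shape that has already been treated earlier in this section, and then simply to read off the determinant from the closed formula there.

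First I would translate the hypotheses into the language of Proposition~\ref{pro:semistable}. Since $a+b+c+\alpha+\beta+\gamma = 3(\alpha+\beta+c)$, the congruence $a+b+c+\alpha+\beta+\gamma \equiv 0 \pmod 3$ holds automatically, $s+2 = \alpha+\beta+c$, and hence $A = s+2-a = 0$, $B = s+2-b = 0$, $C = s+2-c = \alpha+\beta$, and $M = s+2-(\alpha+\beta+\gamma) = c$. I would then check that conditions (i)--(iv) of Proposition~\ref{pro:semistable} all hold: $M = c \ge 0$, $0 = A \le \beta+\gamma$, $0 = B \le \alpha+\gamma$, and $C = \alpha+\beta$ meets its upper bound $\alpha+\beta$ with equality. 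So the standing assumptions of this section are in force and $\det N_{a,b,c,\alpha,\beta,\gamma}$ is defined.

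Next comes the key point: because $C$ equals its maximal value $\alpha+\beta$ (equivalently $\gamma = A+B = 0$), the present situation is exactly the one covered by Proposition~\ref{pro:C-maximal}, whose numerical hypothesis $c = \frac{1}{2}(a+b+\gamma)-\alpha-\beta$ collapses, under $a=b=\alpha+\beta+c$ and $\gamma = 0$, to the trivial identity $c=c$. Applying that proposition gives
\[
    \det N_{a,b,c,\alpha,\beta,\gamma} = \Mac(A,B,C+M) = \Mac(0,0,\alpha+\beta+c) = 1,
\]
where the last equality is the evaluation $\Mac(0,0,m) = 1$ coming from $\HF(0) = 1$. (One could equally well invoke Proposition~\ref{pro:gamma-zero}: with $A = B = 0$ the two Mahonian factors become $\Mac(\beta,0,M) = \Mac(\alpha,0,M) = 1$ and the remaining ratio of hyperfactorials collapses because $C+M = a = b$ and $M = c$; that route yields $|\det N| = 1$, with the sign then fixed by the computation above.)

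Finally, since $\det N_{a,b,c,\alpha,\beta,\gamma} = 1$ is a unit of $\ZZ$, it is nonzero modulo every prime, so Proposition~\ref{pro:wlp-binom}(ii) immediately gives that $R/I_{a,b,c,\alpha,\beta,\gamma}$ has the weak Lefschetz property for $K$ of arbitrary characteristic. I do not expect any real obstacle: the mathematical content is already packaged in Propositions~\ref{pro:C-maximal} (or~\ref{pro:gamma-zero}) and~\ref{pro:wlp-binom}, and the only thing needing care is the bookkeeping that verifies this particular parameter family lands in that earlier case. Should a self-contained argument be preferred, the alternative would be to show directly that the degenerate region --- two opposite sides of length $0$ and the puncture flush against the long side --- admits a unique lozenge tiling (equivalently, a unique admissible family of non-intersecting lattice paths), whence $\det N = \pm 1$ by Theorem~\ref{thm:nilp-matrix}, with the sign pinned down as before.
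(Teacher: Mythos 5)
Your argument is correct, and it is essentially a one-line reduction just like the paper's, but you choose a slightly different earlier result. The paper's proof is literally ``This follows from Proposition~\ref{pro:gamma-zero}'': with $\gamma=0$ and $A=B=0$ the two Mahonian factors $\Mac(\beta-A,A,M)$ and $\Mac(\alpha-B,B,M)$ collapse to $1$, and the remaining ratio of hyperfactorials cancels to $1$ once one substitutes $a=b=C+M$ and $c=M$, giving $|\det N|=1$. Your primary route is instead through Proposition~\ref{pro:C-maximal}, after observing that $a=b=\alpha+\beta+c$ and $\gamma=0$ force $A=B=0$, $C=\alpha+\beta$, $M=c$, and that $c=\tfrac12(a+b+\gamma)-\alpha-\beta$ holds tautologically; then $\det N=\Mac(0,0,C+M)=1$ is immediate from $\HF(0)=1$. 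This is a legitimate alternative, and it has a small advantage: Proposition~\ref{pro:C-maximal} determines $\det N$ itself (not just its absolute value), so you get the sign for free rather than pinning it down afterward, whereas the gamma-zero route a priori only yields $|\det N|=1$. Your verification that conditions (i)--(iv) of Proposition~\ref{pro:semistable} hold (noting that $C=\alpha+\beta$ meets its upper bound with equality) is correct and worth including. In short: correct, same spirit as the paper, routed through an adjacent proposition that is if anything slightly cleaner here.
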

\begin{figure}[!ht]
    \includegraphics[scale=0.667]{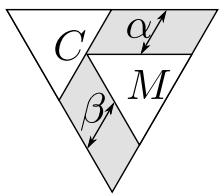}
    \caption{When $A = B = \gamma = 0$, then $H_{a,b,c,\alpha,\beta,\gamma}$ has a unique tiling.}
    \label{fig:hex-A-B-gamma-zero}
\end{figure}
\begin{proof}
    This follows from Proposition~\ref{pro:gamma-zero}.
\end{proof}

Several questions were asked in~\cite{MMN}, two of which we can answer in the affirmative.
\begin{remark} \label{rem:q-and-a}
    Question~8.2(2c) asked if there exist non-level almost complete intersections which never have the weak Lefschetz property.  The almost complete intersection
    \[ R/I_{5,5,3,2,2,1} = R/(x^5, y^5, z^3, x^2y^2z) \] is non-level and never has the weak Lefschetz property, regardless of field characteristic, as 
    $\det{N_{5,5,3,2,2,1}} = 0$ by Proposition~\ref{pro:symmetry-zero}.

    Further, we notice here that Question~8.2(2b) in~\cite{MMN} is answered in the affirmative by the comments following Question~7.12 in~\cite{MMN}.  In particular, 
    $I_{11, 18, 22, 2, 9, 13}$ is a level almost complete intersection which has odd socle degree (39) and never has the weak Lefschetz property, as
    $\det{N_{11,18,22,2,9,13}} = 0$.
\end{remark}~

\subsection{Minimal multiplicity}~

The Huneke-Srinivasan Multiplicity Conjecture, which was proven by Eisen\-bud and Schr\-ey\-er~\cite[Corollary~0.3]{ES}, shows that the multiplicity of a 
Cohen-Macaulay module gives nice bounds on the possible shifts of the Betti numbers.  Moreover, as the algebras $A$ can be viewed as finite dimensional vector 
spaces, then the multiplicity {\em is} the dimension of $A$ as a vector space.  Thus, algebras that have minimal multiplicity while retaining a particular 
property are the smallest, in the above sense, examples one can generate.

\begin{example} \label{exa:minimal-multiplicity}
    Possibly of interest are a few cases of minimal multiplicity with regard to the weak Lefschetz property.

    The following examples never have the weak Lefschetz property, that is, the determinant of their associated matrix $N_{a,b,c,\alpha,\beta,\gamma}$ is 0.
    Note that both examples are type $3$.
    \begin{enumerate}
        \item The unique level ideal with minimal multiplicity is \[ I_{3,3,3,1,1,1} = (x^3, y^3, z^3, xyz).\]  Its Hilbert function is $(1,3,6,6,3)$ and so it 
            has multiplicity $19$.  It is worth noting that this ideal is extensively studied in~\cite[Example~3.1]{BK} and is the basis for an exploration of
            the subtlety of the Lefschetz properties in~\cite{CN-2010}.
        \item The unique non-level ideal with minimal multiplicity is \[ I_{5,5,3,2,2,1} = (x^5, y^5, z^3, x^2y^2z).\]  Its Hilbert function is
            $(1,3,6,9,12,12,9,4,1)$ and so it has multiplicity $57$.  Further, this ideal is the example given in Remark~\ref{rem:q-and-a}.
    \end{enumerate}

    Moreover, the following examples always have the weak Lefschetz property, regardless of the base field characteristic.  That is to say, the determinant of their
    associated matrix $N_{a,b,c,\alpha,\beta,\gamma}$ is 1.
    \begin{enumerate}
        \item The two level ideals with minimal multiplicity are \[ I_{1,2,3,0,1,2} = (x, y^2, z^3, yz^2) \mbox{ and } I_{1,3,3,0,1,1} = (x, y^3, z^3, yz).\]  Both
            ideals have Hilbert function $(1,2,2)$ and thus multiplicity $5$.  However, both ideals are isomorphic to ideals in $K[y,z]$.
        \item The unique level, type $2$ ideal without $x$ as a generator and with minimal multiplicity is \[ I_{2,2,3,1,1,0} = (x^2, y^2, z^3, xy).\]  Its Hilbert
            function is $(1,3,3,2)$ and so it has multiplicity $9$.
        \item The unique level, type $3$ ideal with minimal multiplicity is \[ I_{3,3,6,1,1,4} = (x^3, y^3, z^6, xyz^4).\]  Its Hilbert function is
            $(1,3,6,8,9,9,7,3)$ and so it has multiplicity $46$.
        \item The unique non-level, type $2$ ideal with minimal multiplicity is \[ I_{2,2,3,0,1,1} = (x^2, y^2, z^3, yz). \]  Its Hilbert function is $(1,3,3,1)$ 
            and so it has multiplicity $8$.
        \item The unique non-level, type $3$ ideal with minimal multiplicity is \[ I_{2,2,4,1,1,2} = (x^2, y^2, z^4, xyz^2). \]  Its Hilbert function is
            $(1,3,4,4,2)$ and so it has multiplicity $14$.
    \end{enumerate}

    Notice that example (ii) and (iv) in the above enumeration differ only slightly in the mixed term yet one is level and the other is not.  It should also
    be noted that all of the above examples were found via an exhaustive search in the finite space of possible ideals using Macaulay2~\cite{M2}.
\end{example}

% -- Section
\section{Splitting type and regularity} \label{sec:splitting-type}

Throughout this section we assume $K$ is an algebraically closed field of characteristic zero.

Recall the definition of the ideals given in Section~\ref{sec:aci};  consider
\[
    I = I_{a,b,c,\alpha,\beta,\gamma} = (x^a, y^b, z^c, x^\alpha y^\beta z^\gamma),
\]
where $0 \leq \alpha < a, 0 \leq \beta < b, 0 \leq \gamma < c,$ and at most one of $\alpha, \beta,$ and $\gamma$ is zero.  In this section we consider
the splitting type of the syzygy bundles of the artinian algebras $R/I$, regardless of any extra conditions on the parameters.

Recall, also from Section~\ref{sec:aci}, that the syzygy module $\syz{I}$ of $I$ is defined by the exact sequence
\begin{equation*}
        0
    \longrightarrow
        \syz{I}
    \longrightarrow 
        R(-\alpha-\beta-\gamma) \oplus R(-a) \oplus R(-b) \oplus R(-c)
    \longrightarrow 
        I_{a,b,c,\alpha,\beta,\gamma}
    \longrightarrow 
        0
\end{equation*}
and the syzygy bundle $\widetilde{\syz{I}}$ on $\PP^2$ of $I$ is the sheafification of $\syz{I}$.  Its restriction to the line $H \cong \PP^1$ defined by
$\ell = x+y+z$ splits as $\SO_H(-p) \oplus \SO_H(-q) \oplus \SO_H(-r)$.  The arguments in~\cite[Proposition~2.2]{MMN} (recalled here in
Proposition~\ref{pro:mono}) imply that $(p, q, r)$ is the splitting type of the restriction of $\widetilde{\syz{I}}$ to a general line.  Thus,
we call $(p,q,r)$ the {\em generic splitting type} of $\syz{I}$.

In order to compute the generic splitting type of $\syz{I}$, we use the observation that $R/(I, \ell) \cong S/J$, where $S = K[x,y]$, and 
$J = (x^a, y^b, (x+y)^c, x^\alpha y^\beta (x+y)^\gamma)$.  Define $\syz{J}$ by the exact sequence
\begin{equation} \label{eqn:syz-J}
        0
    \longrightarrow
        \syz{J}
    \longrightarrow 
        S(-\alpha-\beta-\gamma) \oplus S(-a) \oplus S(-b) \oplus S(-c)
    \longrightarrow 
        J
    \longrightarrow 
        0
\end{equation}
using the possibly non-minimal set of generators $\{x^a, y^b, (x+y)^c, x^\alpha y^\beta (x+y)^\gamma\}$ of $J$.  Then 
$\syz{J} \cong S(-p) \oplus S(-q) \oplus S(-r)$.  The Castelnuovo-Mumford regularity of a homogeneous ideal $I$ is denoted
by $\reg{I}$.

\begin{remark} \label{rem:splitting-type}
    For later use, we record the following facts on the generic splitting type $(p,q,r)$ of $\syz{I_{a,b,c,\alpha,\beta,\gamma}}$.
    \begin{enumerate}
        \item As the sequence in~(\ref{eqn:syz-J}) is exact, we see that $p + q + r = a+b+c+\alpha+\beta+\gamma$.
        \item Further, if any of the generators of $J$ are extraneous, then the degree of that generator is one of $p, q,$ and $r$.
        \item As regularity can be read from the Betti numbers of $R/J$, we get that $\reg{J} + 1 = \max\{p,q,r\}$.
    \end{enumerate}
\end{remark}

Before moving on, we prove a useful lemma.
\begin{lemma} \label{lem:reg-2AMACI}
    Let $S = K[x,y]$, where $K$ is a field of characteristic zero, and let $a,b,\alpha,\beta,$ and $\gamma$ be non-negative integers with
    $\alpha + \beta + \gamma < a+b$.  Without loss of generality, assume that $0 < a-\alpha \leq b - \beta$.  Then
    $\reg{(x^a, y^b, x^\alpha y^\beta (x+y)^\gamma)}$ is
    \[
            \left\{ \begin{array}{ll}
                a + \beta + \gamma - 1 & \mbox{ if } \alpha = 0 \mbox{ and } 0 < \gamma \leq b - \beta - a; \\
                \alpha + b - 1 & \mbox{ if } 0 < \alpha, \gamma \leq b - \beta + \alpha - a, \mbox{and } 0 < \beta \mbox{ or } 0 < \gamma; and \\
                \left\lceil \frac{1}{2}(a+b+\alpha+\beta+\gamma)\right\rceil-1 & \mbox{ if } \gamma > b - \beta + \alpha - a.\\
            \end{array} \right.
    \]

    Further still, we always have $\reg{(x^a, y^b, x^\alpha y^\beta (x+y)^\gamma)} \leq \left\lceil \frac{1}{2}(a+b+\alpha+\beta+\gamma)\right\rceil-1$.
\end{lemma}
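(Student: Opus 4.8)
The plan is to reduce the statement to a computation with the Hilbert function of $S/J$, where $J = (x^a, y^b, x^\alpha y^\beta (x+y)^\gamma)$. Since $x^a, y^b \in J$, the quotient $S/J$ is artinian, so $\reg{J} = \reg{(S/J)} + 1$, and over $S = K[x,y]$ an artinian quotient has a minimal free resolution of length two; reading regularity off its Betti numbers, $\reg{(S/J)}$ equals the largest degree $\sigma$ with $[S/J]_\sigma \neq 0$ (the top socle degree), because every minimal generator of $J$ lies in degree at most $\sigma+1$ and every minimal second syzygy in degree at most $\sigma+2$. So everything reduces to finding $\sigma = \max\{\, d : \dim_K[S/J]_d > 0 \,\}$.

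To get at the Hilbert function, write $g = x^\alpha y^\beta(x+y)^\gamma$ and use the exact sequence $0 \to (S/K)(-\alpha-\beta-\gamma) \xrightarrow{\;\cdot g\;} S/(x^a,y^b) \to S/J \to 0$ with $K = (x^a,y^b):g$. Because $(x^a,y^b)$ is a monomial ideal and $\alpha \le a$, $\beta \le b$, one has $x^\alpha y^\beta f \in (x^a,y^b)$ if and only if $f \in (x^{a-\alpha},y^{b-\beta})$ for every polynomial $f$, whence $K = (x^{a-\alpha},y^{b-\beta}):(x+y)^\gamma$. Now $S/(x^{a-\alpha},y^{b-\beta})$ is a codimension-two monomial complete intersection, which in characteristic zero has the strong Lefschetz property with $x+y$ as a strong Lefschetz element; so $\cdot(x+y)^\gamma$ has maximal rank in every degree, giving $\dim_K[S/K]_e = \min\!\big(h'_e,\,h'_{e+\gamma}\big)$ where $h'_e := \dim_K[S/(x^{a-\alpha},y^{b-\beta})]_e$. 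Substituting back yields $\dim_K[S/J]_d = h_d - \min\!\big(h'_{d-\alpha-\beta-\gamma},\,h'_{d-\alpha-\beta}\big)$ with $h_d := \dim_K[S/(x^a,y^b)]_d$. Both $h$ and $h'$ are the familiar unimodal piecewise-linear Hilbert functions of a two-variable monomial complete intersection, with explicit formulas, so the task becomes to determine the largest $d$ at which this difference is positive.

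For two of the three cases there is a shortcut. If $\alpha = 0$ and $0 < \gamma \le b - \beta - a$, a dimension count (the forms $x^a$ and $y^\beta(x+y)^\gamma$ are coprime, so their multiples in degree $b$ meet only in the expected submodule) shows $[(x^a,\,y^\beta(x+y)^\gamma)]_b = [S]_b$, hence $y^b$ is a redundant generator and $J = (x^a, y^\beta(x+y)^\gamma)$ is a complete intersection of type $(a,\beta+\gamma)$, so $\reg{J} = a+\beta+\gamma-1$. If $\gamma = 0$ (inside the second case, where $\alpha > 0$), then $J = (x^a, y^b, x^\alpha y^\beta)$ is a monomial ideal, and the normalisation $a-\alpha \le b-\beta$ makes $x^{\alpha-1}y^{b-1}$ its socle element of largest degree, so $\reg{J} = \alpha+b-1$. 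In the remaining situations --- the second case with $\gamma > 0$, and the third case --- one substitutes the explicit formulas for $h$ and $h'$ into the displayed identity and tracks which linear pieces are active near the crossover; this produces $\reg{J} = \alpha+b-1$ (respectively $\lceil\tfrac12(a+b+\alpha+\beta+\gamma)\rceil-1$), the threshold $\gamma \lessgtr b-\beta+\alpha-a$ being exactly where the maximising degree switches from the lopsided value to the balanced one. The closing inequality is obtained by comparing the three listed values directly, using the parameter inequalities defining each case.

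The step I expect to be the real work is the last one: keeping careful account, near the degree realising $\sigma$, of which of the (up to three) linear segments of each unimodal function $h$ and $h'$ is in force and of the effect of the inner $\min$; this is elementary but fiddly, and it is where the case boundaries $b-\beta-a$ and $b-\beta+\alpha-a$ must be matched against the arithmetic. A smaller point that deserves explicit justification is the use of the strong Lefschetz property of $K[x,y]/(x^p,y^q)$ with the \emph{specific} linear form $x+y$ rather than a merely generic one, since the argument applies $\cdot(x+y)^\gamma$ to that algebra; one cites the classical fact (via the hard Lefschetz theorem, or the $\mathfrak{sl}_2$-representation on $K[x,y]/(x^p,y^q)$) that $x+y$ is a strong Lefschetz element in characteristic zero.
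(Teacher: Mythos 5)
Your proof is correct, and it takes a genuinely different route from the paper's. The paper colons out the \emph{monomial} factor $x^\alpha y^\beta$, obtaining the exact sequence
\[
0 \rightarrow \bigl[S/(x^{a-\alpha}, y^{b-\beta}, (x+y)^\gamma)\bigr](-\alpha-\beta) \rightarrow S/J \rightarrow S/(x^a, y^b, x^\alpha y^\beta) \rightarrow 0,
\]
and then reads $\reg J$ as the maximum of the two resulting (shifted) regularities, with the two base cases ($\gamma = 0$; and $\alpha=\beta=0$ via SLP) handled separately. You instead colon out the \emph{whole} mixed generator $g = x^\alpha y^\beta(x+y)^\gamma$, which relates $S/J$ to $S/(x^a,y^b)$ and $S/\bigl((x^a,y^b):g\bigr)$, and together with $(x^a,y^b):x^\alpha y^\beta=(x^{a-\alpha},y^{b-\beta})$ and SLP this gives a closed-form Hilbert function identity
\[
\dim_K[S/J]_d = h_d - \min\bigl(h'_{d-\alpha-\beta-\gamma},\, h'_{d-\alpha-\beta}\bigr),
\]
from which the regularity is the last degree where this is positive. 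That identity is more information than the paper extracts, and your shortcuts for the first case (redundancy of $y^b$, making $J$ a complete intersection of type $(a,\beta+\gamma)$) and for the monomial case $\gamma=0$ (socle at $x^{\alpha-1}y^{b-1}$) are cleaner than running the general machinery. The trade-off is that you leave the piecewise-linear bookkeeping for the two remaining subcases to the reader, but the paper does exactly the same (``the claims then follows by simple case analysis''), so the level of detail is comparable. Your flagged concern about $x+y$ (not just a generic form) being a strong Lefschetz element of $K[x,y]/(x^p,y^q)$ applies equally to the paper's citation of HMNW Proposition 4.4, and is indeed standard via the $\mathfrak{sl}_2$-action, so both arguments stand on the same footing there.
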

\begin{proof}
    We proceed in three steps.

    First, consider $\gamma = 0$, $0 < \alpha$, and $0 < \beta$.  Then by the form of the minimal free resolution of the quotient algebra 
    $S/(x^a, y^b, x^\alpha y^\beta)$ we have that $\reg{(x^a, y^b, x^\alpha y^\beta)} = \alpha + b - 1$.

    Second, consider $\gamma > 0$ and $\alpha = \beta = 0$.  By ~\cite[Proposition~4.4]{HMNW}, the algebra $S/(x^a, y^b)$ has the strong 
    Lefschetz property in characteristic zero.  Thus the Hilbert function of $S/(x^a, y^b, (x+y)^\gamma)$ is
        \[
            \dim_K{[S/(x^a, y^b, (x+y)^\gamma)]_j} = \max\{0, \dim_K{[S/(x^a, y^b)]_j} - \dim_K{[S/(x^a,y^b)]_{j-\gamma}}\}.
        \]  
    By analysing when the difference becomes non-positive, we get that the regularity is $a + \gamma - 1$ if $\gamma \leq b-a$ and
    $\left\lceil \frac{1}{2}(a+b+\gamma)\right\rceil-1$ if $\gamma > b-a$.

    Third, consider $\gamma > 0$ and $0 < \alpha$ or $0 < \beta$.  Notice that
        \[
            (x^a, y^b, x^\alpha y^\beta (x+y)^\gamma):x^\alpha y^\beta = (x^{a-\alpha}, y^{b-\beta}, (x+y)^\gamma).
        \]
    Considering the short exact sequence
        \[
            0 \rightarrow [S/(x^{a-\alpha}, y^{b-\beta}, (x+y)^\gamma)](-\alpha-\beta) \stackrel{\times x^\alpha y^\beta}{\longrightarrow}
                S/(x^a, y^b, x^\alpha y^\beta (x+y)^\gamma) \rightarrow S/(x^a, y^b, x^\alpha y^\beta) \rightarrow 0,
        \]
    where the first map is multiplication by $x^\alpha y^\beta$, we obtain
        \[
            \reg{(x^a, y^b, x^\alpha y^\beta (x+y)^\gamma)} = \max\{\alpha + \beta + \reg{(x^{a-\alpha}, y^{b-\beta}, (x+y)^\gamma)}, 
            \reg{(x^a, y^b, x^\alpha y^\beta)}\}.
        \]
    The claims then follows by simple case analysis.
\end{proof}

Recall that the semistability of $\syz{I_{a,b,c,\alpha,\beta,\gamma}}$ is completely determined by the parameters
$a,b,c,\alpha,\beta,\gamma$ in Proposition~\ref{pro:semistable}.

~\subsection{Non-semistable syzygy bundle}~

We first consider the case when the syzygy bundle is not semistable.  We distinguish three cases.  It turns out that in two cases, at least one of the
generators of $J$ is extraneous.
\begin{proposition} \label{pro:st-nss}
    Let $K$ be a field of characteristic zero and suppose $I = I_{a,b,c,\alpha,\beta,\gamma}$ is an ideal of $R$.  Let 
    $J = (x^a, y^b, (x+y)^c, x^\alpha y^\beta (x+y)^\gamma)$ be an ideal of $S$.  We assume,
    without loss of generality, that $a \leq b \leq c$ so that $C \leq B \leq A$.  
    \begin{enumerate}
        \item If $M < 0$, then the generator $x^\alpha y^\beta (x+y)^\gamma$ of $J$ is extraneous.  The generic splitting type of $\syz{I}$ is 
            $(a+c, b, \alpha+\beta+\gamma)$ if $c \leq b-a$ and 
            $(\left\lfloor \frac{1}{2}(a+b+c) \right\rfloor, \left\lceil \frac{1}{2}(a+b+c) \right\rceil, \alpha+\beta+\gamma)$ if $c > b-a$.
        \item If $M \geq 0$ and $C < 0$, then the generator $(x+y)^c$ of $J$ is extraneous.  The generic splitting type of $\syz{I}$ is 
            $(a+b+\alpha+\beta+\gamma - r - 1, r+1, c)$, where $r = \reg{(x^a, y^b, x^\alpha y^\beta (x+y)^\gamma)}$ (which is given in 
            Lemma~\ref{lem:reg-2AMACI}).
        \item If $M \geq 0, C \geq 0$, and $A > \beta + \gamma$, then the only destabilising sub-bundle of $\syz{I}$ is 
            $\syz{(x^a, x^\alpha y^\beta z^\gamma)}$ and so the generic splitting type of $\syz{I}$ is
            $(\left\lfloor \frac{1}{2}(\alpha+b+c) \right\rfloor,$ $\left\lceil \frac{1}{2}(\alpha+b+c) \right\rceil, a + \beta + \gamma)$.
    \end{enumerate}
\end{proposition}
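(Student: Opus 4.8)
The plan is to reduce the computation to two variables and read off a minimal graded free resolution. Since $R/(I,\ell)\cong S/J$ with $S=K[x,y]$ and $J=(x^a,y^b,(x+y)^c,x^\alpha y^\beta(x+y)^\gamma)$, and since (as recorded just before Remark~\ref{rem:splitting-type}) the syzygy module $\syz J$ is free with $\syz J\cong S(-p)\oplus S(-q)\oplus S(-r)$ realising the generic splitting type, it is enough in each case to decide which of the four listed generators of $J$ is superfluous and to determine the two minimal first-syzygy degrees of the reduced ideal. I would draw throughout on Remark~\ref{rem:splitting-type} ($p+q+r=a+b+c+\alpha+\beta+\gamma$, the degree of a superfluous generator is among $p,q,r$, and $\max\{p,q,r\}=\reg J+1$ when no generator drops), on the explicit regularities and the uniform bound of Lemma~\ref{lem:reg-2AMACI}, and on the fact that for an artinian quotient of $S$ the two syzygy degrees $\mu_1,\mu_2$ satisfy $\mu_1+\mu_2=$ (sum of the minimal generator degrees) while $\max\{\mu_1,\mu_2\}=\reg+1$. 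The three cases are exactly the three ways a defining inequality of Proposition~\ref{pro:semistable} can be violated once $a\le b\le c$ is imposed.

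In cases (i) and (ii) the destabilisation is caused by a generator that becomes redundant after the substitution $z\mapsto x+y$. In case (i), $M<0$ forces $\alpha+\beta+\gamma=s+2-M>s+2$, so $x^\alpha y^\beta(x+y)^\gamma$ has degree strictly above the top socle degree of $S/(x^a,y^b,(x+y)^c)$ (bounded via Lemma~\ref{lem:reg-2AMACI} with the first two exponents zero), whence it lies in $(x^a,y^b,(x+y)^c)$ and $J=(x^a,y^b,(x+y)^c)$; if $c\le b-a$ this ideal is the complete intersection $(x^a,(x+y)^c)$ (because $y^b$ then falls into its socle), giving Koszul degree $a+c$ and splitting type $(a+c,b,\alpha+\beta+\gamma)$, while if $b-a<c$ it is a genuine almost complete intersection over a field of characteristic zero (via the strong Lefschetz property of $S/(x^a,y^b)$, \cite[Proposition~4.4]{HMNW}, just as in the proof of Lemma~\ref{lem:reg-2AMACI}), so that $\reg=\lceil\tfrac12(a+b+c)\rceil-1$ and, with $\mu_1+\mu_2=a+b+c$, the syzygy degrees are $\lfloor\tfrac12(a+b+c)\rfloor$ and $\lceil\tfrac12(a+b+c)\rceil$. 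In case (ii), $C<0$ means $2c>a+b+\alpha+\beta+\gamma$, which combined with the uniform bound $\reg(x^a,y^b,x^\alpha y^\beta(x+y)^\gamma)\le\lceil\tfrac12(a+b+\alpha+\beta+\gamma)\rceil-1$ of Lemma~\ref{lem:reg-2AMACI} gives $c>\reg(x^a,y^b,x^\alpha y^\beta(x+y)^\gamma)$, so $(x+y)^c$ is superfluous and $J=(x^a,y^b,x^\alpha y^\beta(x+y)^\gamma)$; with $r:=\reg(x^a,y^b,x^\alpha y^\beta(x+y)^\gamma)$ read off from Lemma~\ref{lem:reg-2AMACI}, $\mu_1+\mu_2=a+b+\alpha+\beta+\gamma$ and $\max\{\mu_1,\mu_2\}=r+1$ yield the splitting type $(a+b+\alpha+\beta+\gamma-r-1,\ r+1,\ c)$.

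Case (iii) is different: here $M\ge0$ and $C\ge0$, no generator of $J$ drops out, and $\widetilde{\syz I}$ is destabilised by a sub-bundle. Since $\gcd(x^a,x^\alpha y^\beta z^\gamma)=x^\alpha$, one has $(x^a,x^\alpha y^\beta z^\gamma)=x^\alpha(x^{a-\alpha},y^\beta z^\gamma)$ with $x^{a-\alpha},y^\beta z^\gamma$ a regular sequence, so its first syzygy is the Koszul relation in degree $a-\alpha+\beta+\gamma$ and $\syz(x^a,x^\alpha y^\beta z^\gamma)\cong R(-a-\beta-\gamma)$. This is a syzygy of $I$, so sheafifying gives an inclusion $\SO_{\PP^2}(-a-\beta-\gamma)\hookrightarrow\widetilde{\syz I}$ whose slope $-(a+\beta+\gamma)$ beats the slope $-(s+2)=-\tfrac13(a+b+c+\alpha+\beta+\gamma)$ of $\widetilde{\syz I}$ precisely when $A=s+2-a>\beta+\gamma$, i.e.\ under the hypothesis of case (iii). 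The crux is to show this line bundle is the maximal destabilising sub-bundle, equivalently that the rank-two quotient is semistable; here I would invoke Brenner's combinatorial classification of destabilising subsheaves of syzygy bundles of monomial ideals (the source \cite{Br}, \cite{BK} already used for Proposition~\ref{pro:semistable}), which under the two surviving inequalities $M\ge0$ and $C\ge0$ singles out $(x^a,x^\alpha y^\beta z^\gamma)$ as the unique destabilising sub-ideal. Given that, the Harder--Narasimhan filtration on a general line is $\SO_H(-a-\beta-\gamma)$ followed by a balanced rank-two quotient of total degree $(a+b+c+\alpha+\beta+\gamma)-(a+\beta+\gamma)=\alpha+b+c$, so the remaining splitting degrees are $\lfloor\tfrac12(\alpha+b+c)\rfloor$ and $\lceil\tfrac12(\alpha+b+c)\rceil$, and additivity of the restricted splitting type across the filtration gives the stated triple.

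The main obstacle is the semistability of that rank-two quotient in case (iii): pinning down the destabilising line bundle and its degree is immediate, but ruling out a subsheaf of still larger slope is the step that genuinely needs Brenner's apparatus rather than elementary two-variable algebra. A secondary, purely bookkeeping point is checking minimality of the reduced generating systems and disposing of the boundary parameter ranges (where a second generator also becomes redundant, or where the hypotheses of Lemma~\ref{lem:reg-2AMACI} sit at the edge); these I would handle by a short separate verification so that the displayed formulae cover all admissible sextuples $(a,b,c,\alpha,\beta,\gamma)$.
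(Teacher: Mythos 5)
Your proof follows essentially the same route as the paper's: in cases (i) and (ii) you use Lemma~\ref{lem:reg-2AMACI} to show the relevant generator of $J$ becomes superfluous after substituting $z\mapsto x+y$ and read off the splitting type via Remark~\ref{rem:splitting-type}, and in case (iii) you invoke Brenner's combinatorial criterion to isolate the destabilising line sub-bundle $\syz(x^a,x^\alpha y^\beta z^\gamma)$ and Grauert--M\"ulich to balance the rank-two quotient. One small slip worth flagging: being the \emph{maximal} destabilising sub-bundle is not, by itself, equivalent to the rank-two quotient being semistable (a priori the Harder--Narasimhan filtration could still have a third step), but this is harmless because Brenner's criterion rules out any further destabilising subsheaf coming from a larger subset of the generating monomials, which is precisely the point the paper uses to conclude the quotient is semistable.
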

\begin{proof}
    Assume $M < 0$, then $\frac{1}{2}(a+b+c) < \alpha+\beta+\gamma$ and when $c \geq a+b$ then 
    \[
        a+b-1 \leq \frac{1}{2}(a+b+c) - 1 < \alpha + \beta + \gamma.
    \]
    By Lemma~\ref{lem:reg-2AMACI} the regularity of $(x^a, y^b, (x+y)^c)$ is $a+b-1$ when $c \geq a+b$ and $\lceil \frac{1}{2}(a+b+c)\rceil - 1$ 
    otherwise; hence we have that $x^\alpha y^\beta (x+y)^\gamma$ is contained in $(x^a, y^b, (x+y)^c)$ and the first claim follows.

    Assume $M \geq 0$ and $C < 0$, then $2(\alpha+\beta+\gamma) \leq a+b+c$, $c \geq \frac{1}{2}(a+b+\alpha+\beta+\gamma)$, and 
    when $\alpha+\beta+\gamma \geq a+b$ then $2(\alpha+\beta+\gamma) \leq a+b+c$ implies $c \geq a+b$.  By Lemma~\ref{lem:reg-2AMACI},
    the regularity of $(x^a, y^b, x^\alpha y^\beta (x+y)^\gamma)$ is $a+b-1$ if $\alpha+\beta+\gamma \geq a+b$ and at most 
    $\lceil \frac{1}{2}(a+b+\alpha+\beta+\gamma) \rceil - 1$ otherwise; hence we have that $(x+y)^c$ is contained in 
    $(x^a, y^b, x^\alpha y^\beta (x+y)^\gamma)$ and the second claim follows.

    Last, assume $M \geq 0, C \geq 0$, and $A > \beta + \gamma$.  Note that since $A+B+C = \alpha+\beta+\gamma$ we then have that $B+C < \alpha$ 
    and, in particular, $B < \alpha + \gamma$ and $C < \alpha + \beta$.  Using Brenner's combinatorial criterion for the semi-stability of syzygy
    bundles of monomial ideals (see~\cite[Corollary~6.4]{Br}), we see that that $\mathcal{S} = \syz{(x^a, x^\alpha y^\beta z^\gamma)} \cong R(-r)$,
    where $r = a +\beta+\gamma$, is the only destabilising sub-bundle of $\syz{I}$.  Further, $(\syz{I})/\mathcal{S}$ is a semistable rank two vector
    bundle, so by Grauert-M\"ulich theorem, the quotient has generic splitting type $(p,q)$ where $0 \leq q-p \leq 1$.  Thus, if we consider
    the short exact sequence
    \[
        0 \longrightarrow \mathcal{S} \longrightarrow \syz{I} \longrightarrow (\syz{I})/\mathcal{S} \longrightarrow 0,
    \]
    then the third claim follows after restricting to $\ell$.
\end{proof}

In the third case, when $A > \beta + \gamma$, the associated ideal $J \subset S$ may be minimally generated by four polynomials, unlike in the other two cases.
\begin{example} \label{exa:st-nss-4mingen}
    Consider the ideals
    \[
        I_{4,5,5,3,1,1} = (x^4, y^5, z^5, x^3yz) \mbox{ and } J = (x^4, y^5, (x+y)^5, x^3y(x+y))
    \]
    in $R$ and $S$, respectively.  Notice that in this case, $0 \leq C \leq B \leq A$, $0 \leq M$, and $A > \beta + \gamma$ so
    the syzygy bundle of $R/I_{4,5,5,3,1,1}$ is non-semistable and its generic splitting type is determined in Proposition~\ref{pro:st-nss}(iii).  
    Further, $J$ is minimally generated by the four polynomials  $x^4,$ $y^5,$ $xy^3(2x+y)$, and $x^3y^2.$
\end{example}~

\subsection{Semistable syzygy bundle}~

Order the entries of the generic splitting type $(p,q,r)$ of the semistable syzygy bundle $\widetilde{\syz{I}}$ such that $p \leq q \leq r$.
Then by Grauert-M\"ulich theorem we have that $r - q$ and $q - p$ are both non-negative and at most 1.  Moreover, \cite[Theorem~2.2]{BK} specialises
in our case.
\begin{theorem} \label{thm:wlp-semistable-splitting-type}
    Let $I = I_{a,b,c,\alpha,\beta,\gamma}$.  If $R/I$ has the weak Lefschetz property, then $p = q$ or $q = r$ and $r - p \leq 1$; otherwise 
    $q = p + 1$ and $r = p + 2$.
\end{theorem}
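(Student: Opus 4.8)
The plan is to show that the ordered splitting type satisfies $r - p \leq 1$ precisely when $R/I$ has the weak Lefschetz property; the two assertions of the theorem then follow by inspecting the finitely many shapes a sorted triple can have. By Remark~\ref{rem:splitting-type}(i) we have $p + q + r = \Sigma$, where $\Sigma := a+b+c+\alpha+\beta+\gamma$, and by the Grauert--M\"ulich theorem recalled before the statement both $q - p$ and $r - q$ lie in $\{0,1\}$, so $r - p \in \{0,1,2\}$. If $r - p \leq 1$ then, up to the fixed ordering, $(p,q,r)$ is one of $(p,p,p)$, $(p,p,p+1)$, $(p,p+1,p+1)$, so $p = q$ or $q = r$ and $r - p \leq 1$, matching the conclusion in the Lefschetz case; if $r - p = 2$ then necessarily $q - p = r - q = 1$, so $q = p+1$ and $r = p+2$, matching the conclusion in the non-Lefschetz case. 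Hence it suffices to establish the displayed equivalence.

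First I would dispatch the case $\Sigma \not\equiv 0 \pmod{3}$. Since $r-p=0$ forces $\Sigma = 3p$ and $r-p=2$ forces $\Sigma = 3(p+1)$, the hypothesis $\Sigma \not\equiv 0 \pmod 3$ leaves only $r-p=1$; and in characteristic zero $R/I$ has the weak Lefschetz property by Proposition~\ref{pro:amaci-not-3}, so both sides of the equivalence hold. Now suppose $\Sigma \equiv 0 \pmod{3}$, so that $s = \frac{1}{3}\Sigma - 2$ is an integer; since $\widetilde{\syz{I}}$ is semistable, the parameters satisfy conditions (i)--(iv) of Proposition~\ref{pro:semistable}, so Lemma~\ref{lem:twin-peaks} yields twin peaks in degrees $s$ and $s+1$. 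By Corollary~\ref{cor:one-map} the property is decided by the map $\times(x+y+z)\colon [R/I]_s \to [R/I]_{s+1}$, which (using $R/(I,x+y+z)\cong S/J$ with $S = K[x,y]$ and $J = (x^a, y^b, (x+y)^c, x^\alpha y^\beta (x+y)^\gamma)$) is surjective if and only if $[S/J]_{s+1} = 0$, and since $h_s = h_{s+1}$ it is surjective if and only if bijective; thus $R/I$ has the weak Lefschetz property if and only if $[S/J]_{s+1} = 0$. Since $S/J$ is artinian this holds exactly when $s+1 \geq \reg{J}$, and Remark~\ref{rem:splitting-type}(iii) gives $\reg{J} = \max\{p,q,r\} - 1 = r-1$. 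Therefore $R/I$ has the weak Lefschetz property if and only if $r \leq s+2 = \frac{1}{3}(p+q+r)$. When $r - p = 0$ this reads $r = \frac{1}{3}(p+q+r) = s+2$, an equality, so the property holds; when $r - p = 2$ we get $s+2 = \frac{1}{3}(3p+3) = p+1$ while $r = p+2 = s+3 > s+2$, so the property fails. This is exactly the equivalence sought.

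Finally I would assemble the two halves of the statement. The ``if $R/I$ has the weak Lefschetz property'' clause is the forward direction of the equivalence combined with the sorting observation of the first paragraph. For the ``otherwise'' clause: if $R/I$ fails the weak Lefschetz property then, by the contrapositive of Proposition~\ref{pro:amaci-not-3}, $\Sigma \equiv 0 \pmod 3$, so $r - p \in \{0,2\}$; by the equivalence, $r - p = 0$ would force the property, hence $r-p=2$ and therefore $q = p+1$ and $r = p+2$. I expect no serious obstacle: once Corollary~\ref{cor:one-map}, Lemma~\ref{lem:twin-peaks} and Remark~\ref{rem:splitting-type} are in hand the argument is bookkeeping, the only point needing a little care being the boundary case $r - p = 0$, where $s+1$ equals $\reg{J}$ exactly and one must use that $[S/J]_d$ already vanishes for $d = \reg{J}$.
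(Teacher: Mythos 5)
Your proof is correct, and it takes a genuinely different route from the paper. The paper states this theorem as a specialization of Brenner--Kaid's \cite[Theorem~2.2]{BK} and gives no internal argument, then uses it as a black box in the proofs of Propositions~\ref{pro:st-nmod3} and~\ref{pro:st-mod3}. You instead derive the equivalence directly from the paper's own machinery: the twin-peak analysis (Lemma~\ref{lem:twin-peaks} and Corollary~\ref{cor:one-map}), the numerical facts $p+q+r = a+b+c+\alpha+\beta+\gamma$ and $\reg J + 1 = \max\{p,q,r\}$ from Remark~\ref{rem:splitting-type}, the Grauert--M\"ulich bound, and Proposition~\ref{pro:amaci-not-3} for the case $\Sigma\not\equiv 0\pmod 3$. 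The key observation, that the weak Lefschetz property holds if and only if $[S/J]_{s+1}=0$ if and only if $r = \max\{p,q,r\}\leq s+2$, is exactly the engine hidden inside the paper's proof of Proposition~\ref{pro:st-mod3}, but there it is used only to pin down $\reg J$ after the splitting type has already been read off from the cited theorem; you have noticed that the same regularity computation proves the splitting-type statement itself, making the appeal to \cite{BK} unnecessary. What your version buys is self-containment within the paper's framework; what the paper's version buys is brevity and an explicit link to the general semistable-bundle criterion. Both arguments require characteristic zero (for Grauert--M\"ulich and for Proposition~\ref{pro:amaci-not-3}) and semistability of $\widetilde{\syz{I}}$, and you are right that these are in force implicitly from the section's standing hypotheses; it would do no harm to say so explicitly at the start.
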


When $a+b+c+\alpha+\beta+\gamma \not\equiv 0 \pmod{3}$, then the generic splitting type of $\syz{I}$ and regularity of $J$ can be computed easily.
\begin{proposition} \label{pro:st-nmod3}
    Let $R = K[x,y,z]$ where $K$ is a field of characteristic zero.
    Suppose $I = I_{a,b,c,\alpha,\beta,\gamma}$ is an ideal of $R$ with a semistable syzygy bundle and let $J = (x^a, y^b, (x+y)^c, x^\alpha y^\beta (x+y)^\gamma)$ be an 
    ideal of $S$.  Let $k = \left\lfloor \frac{1}{3}(a+b+c+\alpha+\beta+\gamma) \right\rfloor$.  Then $\reg{J} = k$ and the generic splitting type of
    $\syz{I}$ is 
    \[
        \left\{ \begin{array}{ll}
            (k,k,k+1) & \mbox{ if } a+b+c+\alpha+\beta+\gamma = 3k+1, \mbox{ and} \\[0.3em]
            (k,k+1,k+1) & \mbox{ if } a+b+c+\alpha+\beta+\gamma = 3k+2.
        \end{array} \right.
    \]
\end{proposition}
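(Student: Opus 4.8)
The plan is to combine the numerical facts already recorded in Remark~\ref{rem:splitting-type} with the Grauert--M\"ulich theorem, which is available precisely because $\widetilde{\syz{I}}$ is assumed to be semistable here (the non-semistable situations having been disposed of in Proposition~\ref{pro:st-nss}). Write the generic splitting type of $\syz{I}$ as $(p,q,r)$ with $p \leq q \leq r$. By Remark~\ref{rem:splitting-type}(i) we have $p+q+r = a+b+c+\alpha+\beta+\gamma$, and, as recalled in the paragraph preceding Theorem~\ref{thm:wlp-semistable-splitting-type}, the Grauert--M\"ulich theorem forces $0 \leq q-p \leq 1$ and $0 \leq r-q \leq 1$; that is, the splitting type is as balanced as possible.

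First I would treat the case $a+b+c+\alpha+\beta+\gamma = 3k+1$. Since $p,q,r$ pairwise differ by at most $1$ and their average is $k+\tfrac13$, each of $p,q,r$ must lie in $\{k,k+1\}$, and the only way for them to sum to $3k+1$ is $(p,q,r)=(k,k,k+1)$. The case $a+b+c+\alpha+\beta+\gamma = 3k+2$ is handled identically: the average is $k+\tfrac23$, so again $p,q,r \in \{k,k+1\}$, and the unique solution of $p+q+r = 3k+2$ is $(p,q,r)=(k,k+1,k+1)$.

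In both cases $\max\{p,q,r\} = k+1$, so Remark~\ref{rem:splitting-type}(iii) yields $\reg{J} = \max\{p,q,r\}-1 = k$, which completes the argument.

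There is no genuinely hard step here: the only points that deserve care are verifying that the hypotheses of the Grauert--M\"ulich theorem are met (this is exactly where the semistability assumption on $\syz{I}$ is used, and why the remaining cases were separated out in Proposition~\ref{pro:st-nss}), and confirming that the balanced splitting type is uniquely determined by the sum and gap constraints, which is the elementary case analysis above.
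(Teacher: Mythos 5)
Your argument for the splitting type takes a genuinely different route from the paper. You invoke the Grauert--M\"ulich theorem directly, whereas the paper first applies Proposition~\ref{pro:amaci-not-3} (the weak Lefschetz property holds in characteristic zero when $a+b+c+\alpha+\beta+\gamma\not\equiv 0\pmod 3$) and then Theorem~\ref{thm:wlp-semistable-splitting-type} to constrain the splitting type. Your route is cleaner in the sense that it bypasses the Lefschetz property entirely and relies only on semistability; the sum constraint ends up doing the same job.

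There is, however, a small but real gap in your case analysis. You first correctly record that Grauert--M\"ulich gives $0\leq q-p\leq 1$ and $0\leq r-q\leq 1$, but then slide to ``$p,q,r$ pairwise differ by at most $1$,'' which is strictly stronger: the bound you cite permits $r-p=2$, i.e.\ the splitting type $(p,p+1,p+2)$. On its face that is exactly the splitting type Theorem~\ref{thm:wlp-semistable-splitting-type} attributes to algebras failing the weak Lefschetz property, so it needs to be ruled out, not silently assumed away. Fortunately the sum constraint rules it out immediately: $p+(p+1)+(p+2)=3p+3\equiv 0\pmod 3$, so it cannot equal $3k+1$ or $3k+2$. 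Once $(p,p+1,p+2)$ is excluded, all three entries do lie in $\{k,k+1\}$ and the unique solutions $(k,k,k+1)$ and $(k,k+1,k+1)$ follow as you describe.

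For the regularity you apply Remark~\ref{rem:splitting-type}(iii) as a literal equality. The paper's own proof is noticeably more guarded here: it derives only the bracketing $k-1\leq\reg{J}\leq k$ from that remark and then establishes $\reg{J}>k-1$ separately, using the Hilbert function ($\dim_K{[R/I]_{k-2}}<\dim_K{[R/I]_{k-1}}$ forces $[S/J]_{k-1}\neq 0$, hence $\reg{(S/J)}\geq k-1$). The caution is warranted because the resolution~(\ref{eqn:syz-J}) is built from a possibly non-minimal generating set of $J$, so on its own it only gives an upper bound on $\reg{J}$. To make your shortcut airtight you should add the one-line observation that $\max\{p,q,r\}=k+1$ strictly exceeds every generator degree ($\alpha+\beta+\gamma,a,b,c\leq\lfloor s+2\rfloor=k$ by semistability condition (i) of Proposition~\ref{pro:semistable}), so the top syzygy cannot cancel against a generator in passing to the minimal resolution; alternatively, reproduce the paper's Hilbert function argument for the lower bound.
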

\begin{proof}
    Let $(p,q,r)$ be the generic splitting type of $\syz{I}$, so $a+b+c+\alpha+\beta+\gamma = 3(s+2) = p+q+r$.  By Proposition~\ref{pro:amaci-not-3}, $R/I$ has the 
    weak Lefschetz property so $p = q$, $q = r$, and $r - p \leq 1$.  Clearly if $p = q = r$ then $p+q+r=3p$ is 0 modulo 3 so cannot be
    $a+b+c+\alpha+\beta+\gamma$.  
    
    If $p = q < r$, then $r = p+1$ and $p+q+r = 3p+1$.  This matches the case when $a+b+c+\alpha+\beta+\gamma = 3k+1$, so $p = k$ and the splitting
    type of $\syz{I}$ is $(k,k,k+1)$.  Similarly, if $p < q = r$, then $q = r = p + 1$ and $p+q+r = 3p+2$.  This matches the case when 
    $a+b+c+\alpha+\beta+\gamma = 3k+2$, so $p = k$ and the splitting type of $\syz{I}$ is $(k,k+1,k+1)$.  
    
    In both cases, we have that $k-1 \leq \reg{J} \leq k$ by Remark~\ref{rem:splitting-type}(iii).  However, we see that 
    $\dim_K{[R/I]_{k-2}} < \dim_K{[R/I]_{k-1}}$ so $\dim_K{[R/(I,x+y+z)]_{k-1}} = \dim_K{[S/J]_{k-1}} > 0$ and thus $\reg{J} > k-1$.  Hence
    $\reg{J} = k$.
\end{proof}

The generic splitting type of $I_{a,b,c,\alpha,\beta,\gamma}$, when the ideal is associated to a punctured hexagon, depends on thew ideal having
the weak Lefschetz property.
\begin{proposition} \label{pro:st-mod3}
    Let $R = K[x,y,z]$ where $K$ is a field of characteristic zero.
    Suppose $I = I_{a,b,c,\alpha,\beta,\gamma}$ is an ideal of $R$ with a semistable syzygy bundle (see Proposition~\ref{pro:semistable}) and
    $a+b+c+\alpha+\beta+\gamma \equiv 0 \pmod{3}$.  Let $J = (x^a, y^b, (x+y)^c, x^\alpha y^\beta (x+y)^\gamma)$ be an ideal of $S$ and
    let $s+2 = \frac{1}{3}(a+b+c+\alpha+\beta+\gamma)$.  Then
    \begin{enumerate}
        \item If $R/I$ has the weak Lefschetz property, then the generic splitting type of $\syz{I}$ is $(s+2,s+2,s+2)$ and $\reg{J} = s+1$.
        \item If $R/I$ does not have the weak Lefschetz property, then the generic splitting type of $\syz{I}$ is $(s+1, s+2, s+3)$ and $\reg{J} = s+2$.
    \end{enumerate}
\end{proposition}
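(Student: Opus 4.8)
The plan is to obtain both statements as a formal consequence of the structure theorem for the generic splitting type in the semistable case, namely Theorem~\ref{thm:wlp-semistable-splitting-type}, combined with the two numerical constraints recorded in Remark~\ref{rem:splitting-type}: that the ordered generic splitting type $(p,q,r)$ of $\syz{I}$ satisfies $p + q + r = a+b+c+\alpha+\beta+\gamma$, and that $\reg{J} + 1 = \max\{p,q,r\}$. Since by hypothesis $a+b+c+\alpha+\beta+\gamma = 3(s+2)$, the first of these reads $p+q+r = 3(s+2)$; in particular $p+q+r \equiv 0 \pmod{3}$, and this congruence is the only new ingredient needed to pin down $(p,q,r)$ exactly in each case. (Theorem~\ref{thm:wlp-semistable-splitting-type} applies precisely because $\syz{I}$ is assumed semistable.)

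First I would treat part~(i). If $R/I$ has the weak Lefschetz property, Theorem~\ref{thm:wlp-semistable-splitting-type} leaves only the three ordered possibilities $(p,p,p)$, $(p,p,p+1)$, and $(p,p+1,p+1)$ for $(p,q,r)$. For the second, $p+q+r = 3p+1$; for the third, $p+q+r = 3p+2$; neither is divisible by $3$, so both are incompatible with $p+q+r = 3(s+2)$. Hence $(p,q,r) = (p,p,p)$, and $3p = 3(s+2)$ gives $(p,q,r) = (s+2,s+2,s+2)$. Then $\reg{J} = \max\{p,q,r\} - 1 = s+1$ by Remark~\ref{rem:splitting-type}(iii).

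For part~(ii), Theorem~\ref{thm:wlp-semistable-splitting-type} gives $(p,q,r) = (p, p+1, p+2)$ directly, so $p+q+r = 3(p+1) = 3(s+2)$, forcing $p = s+1$ and $(p,q,r) = (s+1, s+2, s+3)$. Again $\reg{J} = \max\{p,q,r\} - 1 = s+2$ follows from Remark~\ref{rem:splitting-type}(iii).

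I do not expect a genuine obstacle here: the substantive content — that, for a semistable syzygy bundle, the weak Lefschetz property is equivalent to the restriction to a general line being as balanced as the Grauert-M\"ulich theorem permits — is exactly Theorem~\ref{thm:wlp-semistable-splitting-type} (the specialisation of \cite[Theorem~2.2]{BK}), and all that remains is the elementary parity and divisibility bookkeeping above. The one point worth noting is that, unlike in the proof of Proposition~\ref{pro:st-nmod3}, no separate Hilbert-function estimate giving a lower bound on $\reg{J}$ is required, since the splitting type is determined outright and Remark~\ref{rem:splitting-type}(iii) then reads off the regularity immediately.
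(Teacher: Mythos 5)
Your determination of the splitting type in both parts follows the paper's argument verbatim: Theorem~\ref{thm:wlp-semistable-splitting-type} restricts the ordered generic splitting type, the constraint $p+q+r = 3(s+2)$ forces the rest, and the conclusion $\reg J = s+1$ in part~(i) is then read off Remark~\ref{rem:splitting-type}(iii) exactly as the paper does. The one place you diverge is the regularity computation in part~(ii). You explicitly claim that, once the splitting type $(s+1,s+2,s+3)$ is known, Remark~\ref{rem:splitting-type}(iii) ``reads off the regularity immediately'' as $\reg J = s+2$ and that no further Hilbert-function input is needed. The paper is more cautious there: it uses the remark only to give the upper bound $\reg J + 1 \leq s+3$, and obtains the matching lower bound $\reg J \geq s+2$ separately from the twin-peak structure (the failure of the weak Lefschetz property makes $\times(x+y+z):[R/I]_s \to [R/I]_{s+1}$ non-surjective, so $[S/J]_{s+1} \neq 0$ and hence $\reg(S/J) \geq s+1$). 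This caution is not idle: the free resolution of $S/J$ encoded by $(p,q,r)$ is built from a \emph{possibly non-minimal} generating set of $J$, and regularity read from a non-minimal resolution is in general only an upper bound --- the equality in Remark~\ref{rem:splitting-type}(iii) visibly fails in the non-semistable cases of Proposition~\ref{pro:st-nss}, where a generator of $J$ is extraneous. Your shortcut does succeed in part~(ii), but only because the top shift $s+3$ cannot cancel against the first syzygy: under the semistability conditions every generator of $J$ has degree $a,b,c,\alpha+\beta+\gamma \leq s+2 < s+3$, so no unit entry can occur at that shift. You should either add that one-line justification, or adopt the paper's independent lower bound via the twin peaks; as written, the claim that Remark~\ref{rem:splitting-type}(iii) applies ``immediately'' as an equality is slightly too quick.
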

\begin{proof}
    Let $(p,q,r)$ be the generic splitting type of $\syz{I}$, so $a+b+c+\alpha+\beta+\gamma = 3(s+2) = p+q+r$.

    Assume that $R/I$ has the weak Lefschetz property.  Suppose $p \neq q$, then $q = r = p + 1$ and $p+q+r = 3p + 2$, similarly, if $q \neq r$, 
    then $p = q$ and $r = p+1$ so $p +q +r = 3p+1$; neither case is 0 modulo 3, hence cannot be $3(s+2)$.  Thus $p = q = r = s+2$.  Further
    we then see that $\reg{J} = s+1$ by Remark~\ref{rem:splitting-type}(iii).

    Now assume $R/I$ fails to have the weak Lefschetz property.  Then $p + q + r = 3p + 3 = 3(s+2)$ so $p+1 = s+2$ and $p = s+1$.  Thus, the generic
    splitting type of $\syz{I}$ must be $(s+1, s+2, s+3)$.  As $R/I$ has twin-peaks at $s+1$ and $s+2$ by Corollary~\ref{cor:one-map}, we see that 
    $\reg{J} \leq s+1$ if and only if $R/I$ has the weak Lefschetz property; so $\reg{J} \geq s+2$.  However, by Remark~\ref{rem:splitting-type}(iii)
    we have that $\reg{J} + 1 \leq s+3$ so $\reg{J} \leq s+2$, hence $\reg{J} = s+2$.
\end{proof}

This proposition can be combined with the results in the previous sections to compute the generic splitting type of many of syzygy bundles of the
artinian algebras $R/I_{a,b,c,\alpha,\beta,\gamma}$.
\begin{example} \label{exa:syzygy}
    Consider the ideal $I_{7,7,7,3,3,3} = (x^7, y^7, z^7, x^3 y^3 z^3)$ which never has the weak Lefschetz property, by Proposition~\ref{pro:symmetry-zero}.
    The generic splitting type of $\syz{I_{7,7,7,3,3,3}}$ is $(9, 10, 11)$.  Notice that the similar ideal $I_{6,7,8,3,3,3} = (x^6, y^7, z^8, x^3 y^3 z^3)$ 
    has the weak Lefschetz property in characteristic zero as $\det{N_{6,7,8,3,3,3}} = -1764$ and the generic splitting type of $\syz{I_{6,7,8,3,3,3}}$
    is $(10,10,10)$.  
\end{example}

If $I = I_{a,b,c,\alpha,\beta,\gamma}$ is not associated to a punctured hexagon, then we have seen in Proposition~\ref{pro:amaci-not-3} and
Corollary~\ref{cor:wlp-ss} that $R/I$ has the weak Lefschetz property in characteristic zero.  We summarise part of our results by pointing out
that in the case when $I$ is associated to a punctured hexagon then deciding the presence of the weak Lefschetz property is equivalent to determining
other invariants of the algebra.
\begin{theorem} \label{thm:equiv}
    Let $R = K[x,y,z]$ where $K$ is a field of arbitrary characteristic.  Let $I = I_{a,b,c,\alpha,\beta,\gamma}$ be associated to a punctured hexagon;
    in particular, $a+b+c+\alpha+\beta+\gamma \equiv 0 \pmod{3}$  and $\syz{I}$ is semistable (see Proposition~\ref{pro:semistable}).  
    Set $s = \frac{1}{3}(a+b+c+\alpha+\beta+\gamma) - 2$.  
    
    Then the following conditions are equivalent:
    \begin{enumerate}
        \item The algebra $R/I$ has the weak Lefschetz property;
        \item the regularity of $S/J$ is $s$;
        \item the determinant of $N_{a,b,c,\alpha,\beta,\gamma}$ (i.e., the enumeration of signed lozenge tilings of the punctured hexagon
            $H_{a,b,c,\alpha,\beta,\gamma}$) modulo the characteristic of $K$ is non-zero; and
        \item the determinant of $Z_{a,b,c,\alpha,\beta,\gamma}$ (i.e., the enumeration of signed perfect matchings of the bipartite graph
            associated to $H_{a,b,c,\alpha,\beta,\gamma}$) modulo the characteristic of $K$ is non-zero.
    \end{enumerate}
    Moreover, if the characteristic of $K$ is zero, then there is one further equivalent condition:
    \begin{enumerate}
        \item[(v)] The generic splitting type of $\syz{I}$ is $(s+2,s+2,s+2)$.
    \end{enumerate}
\end{theorem}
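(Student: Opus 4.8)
The plan is to show that each of (ii), (iii), (iv), and (in characteristic zero) (v) is equivalent to (i); then the whole list collapses. Most of this is bookkeeping. For (i) $\Leftrightarrow$ (iii): Proposition~\ref{pro:wlp-binom}(ii) says $R/I$ has the weak Lefschetz property exactly when $\det N_{a,b,c,\alpha,\beta,\gamma} \not\equiv 0 \pmod{\charf K}$, and Theorem~\ref{thm:nilp-matrix} identifies this determinant with the signed enumeration of lozenge tilings of $H_{a,b,c,\alpha,\beta,\gamma}$. Symmetrically, Proposition~\ref{pro:wlp-zero-one}(ii) together with Theorem~\ref{thm:bip-matrix} gives (i) $\Leftrightarrow$ (iv) (one may instead deduce (iii) $\Leftrightarrow$ (iv) from Theorem~\ref{thm:det-Z-N}). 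In characteristic zero, (i) $\Leftrightarrow$ (v) is immediate from Proposition~\ref{pro:st-mod3}, since there the generic splitting type of $\syz I$ is $(s+2,s+2,s+2)$ when $R/I$ has the weak Lefschetz property and $(s+1,s+2,s+3)$ when it does not, and these are distinct.

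The one genuinely new argument is (i) $\Leftrightarrow$ (ii), which I would carry out directly so that it holds in arbitrary characteristic (rather than reading it off Proposition~\ref{pro:st-mod3}, whose regularity bound goes through the splitting type). Recall $R/(I,\ell) \cong S/J$ for $\ell = x+y+z$; since this algebra is artinian, $\reg(S/J) = \max\{d : [S/J]_d \neq 0\}$, and the right-exact sequence
\[
    [R/I]_d \xrightarrow{\times\ell} [R/I]_{d+1} \longrightarrow [S/J]_{d+1} \longrightarrow 0
\]
shows $[S/J]_{d+1} = 0$ exactly when $\times\ell$ is surjective in degree $d$. If $R/I$ has the weak Lefschetz property, then by Corollary~\ref{cor:one-map} the map $\times\ell\colon[R/I]_s \to [R/I]_{s+1}$ is surjective, hence by Proposition~\ref{pro:surj} remains surjective, so $[S/J]_d = 0$ for all $d \geq s+1$; on the other hand $\dim_K[S/J]_s \geq h_s - h_{s-1} \geq 1$, the last inequality being the (easy) strengthening of Lemma~\ref{lem:twin-peaks} that the Hilbert function of $R/I$ strictly increases below its peak, so $\reg(S/J) = s$. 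Conversely, if $R/I$ fails the weak Lefschetz property, then by Corollary~\ref{cor:one-map} the map $\times\ell\colon[R/I]_s\to[R/I]_{s+1}$, which is square since $h_s = h_{s+1}$, is not injective and hence not surjective, so $[S/J]_{s+1}\neq 0$ and $\reg(S/J)\geq s+1 \neq s$.

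The hard part is essentially just this last equivalence, and within it the only point requiring care is pinning $\reg(S/J)$ to $s$ on the nose in the weak-Lefschetz case: the bound $\reg(S/J) \leq s$ is formal, but $\reg(S/J) \geq s$ needs $h_{s-1} < h_s$, i.e. the strict-increase refinement of Lemma~\ref{lem:twin-peaks}. This follows from the same $\binom{n+1}{2}-\binom{n}{2}=n$ computation used there, via $h_s - h_{s-1} = \#\{X \in \{A,B,C,M\} : X \geq 1\} - 1$ together with the observation that (in the non-complete-intersection case) at most two of $A,B,C,M$ can vanish. Everything else repackages results already in place, and the characteristic-zero hypothesis enters only for (v), through the semistability of $\syz I$ and Proposition~\ref{pro:st-mod3}.
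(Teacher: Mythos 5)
Your proof is correct and, in outline, matches the paper's: the paper's proof is a one-liner combining Corollary~\ref{cor:one-map}, Propositions~\ref{pro:wlp-zero-one} and~\ref{pro:wlp-binom}, Theorems~\ref{thm:nilp-matrix} and~\ref{thm:bip-matrix}, and Proposition~\ref{pro:st-mod3}. Where you diverge — and, in fact, improve on the paper — is the equivalence (i)~$\Leftrightarrow$~(ii). The theorem asserts this equivalence in arbitrary characteristic, yet the paper's cited source, Proposition~\ref{pro:st-mod3}, is stated only in characteristic zero (its proof routes the regularity claim through the generic splitting type, hence through Grauert--M\"ulich). You correctly observe that the regularity equivalence is really a consequence of Corollary~\ref{cor:one-map} and Proposition~\ref{pro:surj} alone, together with the strict-increase refinement of Lemma~\ref{lem:twin-peaks}, and you carry out that argument directly so that the characteristic-zero hypothesis only enters for (v). Your computation $h_s - h_{s-1} = \#\{X \in \{A,B,C,M\} : X \geq 1\} - 1 \geq 1$ (with at most two of $A,B,C,M$ vanishing in the non-complete-intersection case, since three vanishing would force one of $a,b,c$ or all of $\alpha,\beta,\gamma$ to be zero) is exactly what is needed and is correct. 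In short: same decomposition, but your handling of (ii) is the more careful one and is what the theorem as stated actually requires.
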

\begin{proof}
    Combine Corollary~\ref{cor:one-map}, Propositions~\ref{pro:wlp-zero-one} and~\ref{pro:wlp-binom}, Theorems~\ref{thm:nilp-matrix} 
    and~\ref{thm:bip-matrix}, and Proposition~\ref{pro:st-mod3}.
\end{proof}
This relates the weak Lefschetz property to a number of other problems in algebra, combinatorics, and algebraic geometry.

~\subsection{Jumping lines}~
Recall that a {\em jumping line} is a line, $L = 0$, over which the syzygy bundle splits differently than in the generic case, $x+y+z = 0$.  Since 
$I = I_{a,b,c,\alpha,\beta,\gamma}$ is a monomial ideal it is sufficient to consider the two cases $z = 0$ and $y+z=0$.

\begin{proposition} \label{pro:jumping-lines}
    Let $R = K[x,y,z]$ where $K$ is a field of characteristic zero and let $I = I_{a,b,c,\alpha,\beta,\gamma}$ be an ideal of $R$.
    The splitting type of $\syz{I}$ on the line $z = 0$ is $(c, \alpha + b, a + \beta)$ if $\gamma = 0$
    and $(c, \alpha + \beta + \gamma, a+b)$ if $\gamma > 0$.  And the splitting type of $\syz{I}$ on the line $y+z = 0$ is 
    $(c, a + \beta + \gamma, \alpha + b)$ if $\beta + \gamma < b \leq c$ and $(c, \alpha + \beta + \gamma, a + b)$ if $b \leq \min\{c, \beta + \gamma\}$.
\end{proposition}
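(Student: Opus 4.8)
The plan is to restrict the sheafified presentation of $\syz{I}$ to the given line and then read off the splitting type from an explicit kernel computation over $K[x,y]$, exactly as for the generic line $\ell=x+y+z$. Write $L$ for the line under consideration and $S_L$ for its homogeneous coordinate ring, which I identify with $K[x,y]$ via $z\mapsto 0$ when $L=V(z)$ and via $z\mapsto -y$ when $L=V(y+z)$. Since $R/I$ is artinian, $\widetilde{I}\cong\SO_{\PP^2}$, so the four-term sequence defining $\syz{I}$ sheafifies to the exact sequence
\[
    0\longrightarrow\widetilde{\syz{I}}\longrightarrow F\longrightarrow\SO_{\PP^2}\longrightarrow 0,
\]
where $F=\SO_{\PP^2}(-\alpha-\beta-\gamma)\oplus\SO_{\PP^2}(-a)\oplus\SO_{\PP^2}(-b)\oplus\SO_{\PP^2}(-c)$ and the last map $\varphi$ evaluates the four generators of $I$. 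As all three terms are locally free, restriction to $L$ keeps the sequence exact; and since $S_L$ modulo the image ideal of $I$ is again a finite-length ring, $\widetilde{\syz{I}}|_L$ is precisely the sheafification of the module of syzygies over $K[x,y]$ of that image ideal, taken with respect to the ordered tuple obtained by restricting $x^\alpha y^\beta z^\gamma, x^a, y^b, z^c$ to $S_L$. So the whole statement reduces to computing graded free resolutions over $K[x,y]$.

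On $L=V(z)$ the four generators restrict to $x^\alpha y^\beta$ (when $\gamma=0$) or to $0$ (when $\gamma>0$), to $x^a$, to $y^b$, and to $0$. If $\gamma>0$, two entries of $\varphi_L$ vanish, so $\ker\varphi_L=\SO_L(-\alpha-\beta-\gamma)\oplus\SO_L(-c)\oplus K$, where $K$ is the kernel of $\SO_L(-a)\oplus\SO_L(-b)\xrightarrow{(x^a,y^b)}\SO_L$; and $K\cong\SO_L(-a-b)$ by the Koszul complex of the regular pair $x^a,y^b$, giving the splitting type $(c,\alpha+\beta+\gamma,a+b)$. If $\gamma=0$ (hence $\alpha,\beta\ge 1$), I would start from the minimal free resolution
\[
    0\to S(-a-\beta)\oplus S(-\alpha-b)\to S(-\alpha-\beta)\oplus S(-a)\oplus S(-b)\to S\to S/(x^\alpha y^\beta,x^a,y^b)\to 0
\]
over $S=K[x,y]$ and adjoin the zero generator $z^c\mapsto 0$ of degree $c$, which adds a split summand $\SO_L(-c)$ to the kernel; this yields $(c,\alpha+b,a+\beta)$.

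On $L=V(y+z)$ the four generators restrict to $\pm x^\alpha y^{\beta+\gamma}$, $x^a$, $y^b$, $\pm y^c$, and $\beta+\gamma\ge 1$ since at most one of $\alpha,\beta,\gamma$ vanishes. Assuming $b\le c$, the generator $y^c$ is an $S$-multiple of $y^b$. If moreover $\beta+\gamma\ge b$, then $x^\alpha y^{\beta+\gamma}$ is also a multiple of $y^b$, so the image ideal is $(x^a,y^b)$ and I would start from its Koszul resolution; if $\beta+\gamma<b$, the image ideal is $(x^\alpha y^{\beta+\gamma},x^a,y^b)$ and I would start from the resolution displayed above with $\beta$ replaced by $\beta+\gamma$. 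In either situation the remaining redundant generators are adjoined one at a time, each $g_j=s_j\cdot(\text{earlier generator})$ contributing exactly the trivial syzygy $g_j-s_j\cdot(\text{earlier generator})$ and hence exactly one split summand $\SO_L(-\deg g_j)$; collecting these gives $(c,a+\beta+\gamma,\alpha+b)$ in the first case and $(c,\alpha+\beta+\gamma,a+b)$ in the second.

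The one piece of real content beyond bookkeeping is the assertion I use throughout: extending a graded free resolution of $S/J$ by a generator that is either $0$ or an explicit $S$-multiple of an existing generator adjoins a split free summand, of the expected degree, to the first syzygy module without altering the rest of the resolution. This is elementary but should be isolated as a lemma and applied uniformly. The remaining care is then purely in the boundary parameter cases---for instance $\alpha=0$ on $V(y+z)$, which makes $y^b$ redundant as well---and in each the same counting returns the triple claimed. As a final check one notes that in all four cases the three exponents sum to $a+b+c+\alpha+\beta+\gamma$, matching the total degree of $\widetilde{\syz{I}}$.
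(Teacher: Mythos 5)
Your proof is correct, and it pursues the same basic strategy as the paper---restrict the four-term presentation to the line, identify $R/(I,L)$ with a monomial quotient $S/J$ over $K[x,y]$, and read off the splitting type from the syzygies of the ordered 4-tuple of restricted generators---but the tooling is genuinely different. The paper's one-line proof invokes Lemma~\ref{lem:reg-2AMACI} to compute the regularity of the restricted monomial ideal, and then relies on the bookkeeping of Remark~\ref{rem:splitting-type}: the extraneous generator degrees give some of $p,q,r$, the regularity gives $\max\{p,q,r\}$, and the sum $p+q+r = a+b+c+\alpha+\beta+\gamma$ gives the rest. You instead write down the graded first-syzygy module of the 4-tuple directly, via the Koszul relation for $(x^a,y^b)$, the two-syzygy resolution of $(x^\alpha y^\delta, x^a, y^b)$, and your isolated change-of-basis lemma about adjoining a zero or redundant generator. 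What your route buys is self-containment and robustness: you never need to know which of $p,q,r$ is the max (the regularity route can be delicate here, since for a non-minimal presentation the max degree in the resolution only bounds the regularity from above), and you get all three summands at once from the explicit kernel. What the paper's route buys is brevity and reuse of Lemma~\ref{lem:reg-2AMACI}, which also powers Proposition~\ref{pro:st-nss}. One point you gesture at but should spell out: when $\alpha=0$ on $V(y+z)$, the displayed resolution with $\beta$ replaced by $\beta+\gamma$ is still exact but no longer minimal (there is an $S(-b)$ cancellation); this does not affect the argument because your lemma only needs an exact graded free presentation, not a minimal one, but a sentence making this explicit would tighten the write-up.
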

\begin{proof}
    All four cases follow immediately by analysing the monomial algebra $S/J$ isomorphic to $R/(I, L)$, where $L = 0$ is the splitting line, and using
    Lemma~\ref{lem:reg-2AMACI} to compute the regularities.
\end{proof}

% -- Appendix
\appendix
\section{Hyperfactorial calculus} \label{sec:hyper-calculus}

Throughout this manuscript, the {\em hyperfactorial} function $\HF$ on non-negative integers $n$, defined by
\[
    \HF(n) := \prod_{i=0}^{n-1}i!,
\]
has been a key ingredient in many of the formulae.  In this appendix we will highlight the uses and structure of the hyperfactorial and describe a
useful ``picture-calculus'' approach to working with hyperfactorials.

Notice that, for $n \geq 0$, $\HF(n)$ can also be seen as $\prod_{k=1}^{n-1} k^{n-k}.$  Thus if we place the numbers $1$ to $n-1$ in a right-triangular
grid with legs of length $n-1$ (see Figure~\ref{fig:hyper-tri-6}), then the hyperfactorial of $n$ is the product of all the numbers in the grid.
\begin{figure}[!ht]
    \includegraphics[scale=1.0]{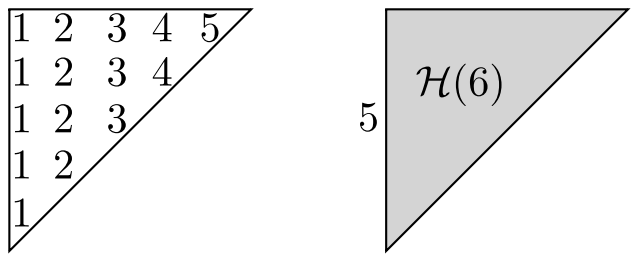}
    \caption{$\HF(6) = 34560$ represented as a triangular grid, both specifically and as a more generic shape}
    \label{fig:hyper-tri-6}
\end{figure}

Using this pictorial representation of the hyperfactorial, various identities involving hyperfactorials become more transparent.
The first identity is simple, but very useful.

\begin{proposition} \label{pro:hyper-f}
    Let $a$ and $b$ be non-negative integers with $a \leq b$.  Then the polynomial $f_{a,b}(c) \in \ZZ[c]$, given by
    \[
         f_{a,b}(c) := \prod_{i=1}^{a}(c+i)^i \prod_{i=1}^{b-a}(c+a+i)^a \prod_{i=1}^{a}(c+b+i)^{a-i},
    \]
    is equal to 
    \[
        \frac{\HF(a+b+c) \HF(c)}{\HF(a+c) \HF(b+c)},
    \]
    for positive integers $c$.
\end{proposition}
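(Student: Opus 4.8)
The plan is to prove the identity one factor $k$ at a time, using the product shape $\HF(n)=\prod_{k=1}^{n-1}k^{\,n-k}=\prod_{k\ge 1}k^{\max(n-k,\,0)}$ recorded just before the statement. Writing each side of the claimed equality as $\prod_{k\ge 1}k^{\,\epsilon(k)}$ (a product with only finitely many exponents nonzero), it suffices to check that the exponent of every integer $k\ge 1$ agrees on the two sides; no sign issues arise since all quantities in sight are positive.

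First I would compute the exponent contributed by the hyperfactorial quotient. By the displayed product shape, the exponent of $k$ in $\dfrac{\HF(a+b+c)\,\HF(c)}{\HF(a+c)\,\HF(b+c)}$ is
\[
    \epsilon(k) = \max(a+b+c-k,\,0) + \max(c-k,\,0) - \max(a+c-k,\,0) - \max(b+c-k,\,0),
\]
a continuous, piecewise-linear function of $k$ whose only breakpoints are $k=c,\ c+a,\ c+b,\ c+a+b$. The hypothesis $0\le a\le b$ (with $c\ge 1$) linearly orders these breakpoints, so a routine evaluation on the five resulting intervals gives $\epsilon(k)=0$ for $k\le c$ and for $k>c+a+b$, $\epsilon(k)=k-c$ for $c<k\le c+a$, $\epsilon(k)=a$ for $c+a<k\le c+b$, and $\epsilon(k)=a+b+c-k$ for $c+b<k\le c+a+b$.

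Next I would read off the exponents on the $f_{a,b}(c)$ side. Substituting $k=c+i$ in the first product, $k=c+a+i$ in the second, and $k=c+b+i$ in the third shows $f_{a,b}(c)=\prod_{k\ge 1}k^{\,\delta(k)}$, where $\delta(k)=i=k-c$ on $\{c+1,\dots,c+a\}$, $\delta(k)=a$ on $\{c+a+1,\dots,c+b\}$, $\delta(k)=a-i=a+b+c-k$ on $\{c+b+1,\dots,c+a+b\}$, and $\delta(k)=0$ otherwise. This is exactly the function $\epsilon$ found above, which proves the identity. Pictorially this is inclusion--exclusion on the triangular grids of Figure~\ref{fig:hyper-tri-6}: from the size-$(a+b+c)$ triangle one deletes the two corner subtriangles of sizes $a+c$ and $b+c$ and restores their size-$c$ overlap, leaving a ``band'' whose three diagonal strips of columns are precisely the three products defining $f_{a,b}(c)$.

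There is no real obstacle; the only thing to watch is the bookkeeping in the case analysis: confirm the five intervals are exhaustive, keep the ordering $c\le c+a\le c+b\le c+a+b$ (this is where $a\le b$ enters), and verify the degenerate cases $a=0$ (both sides equal $1$) and $a=b$ (the middle product and middle interval are empty) are consistent with the formulas.
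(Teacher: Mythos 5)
Your proof is correct, and it is essentially the paper's picture-calculus argument made algebraically rigorous: where the paper builds a parallelogram out of four triangular grids (Figure~\ref{fig:hyper-f}) and reads off the three diagonal strips, you instead compare the exponent $\epsilon(k)=\max(a+b+c-k,0)+\max(c-k,0)-\max(a+c-k,0)-\max(b+c-k,0)$ with the exponent $\delta(k)$ visible in $f_{a,b}(c)$, and the five intervals in your case analysis are precisely the five regions of that picture (outside-left, left triangular strip, rectangular middle, right triangular strip, outside-right). Both the interval endpoints and the degenerate checks ($a=0$, $a=b$) are handled correctly, and the ordering $c\le c+a\le c+b\le c+a+b$ does indeed use $a\le b$ in exactly the way the paper's alignment of the triangles does.
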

\begin{proof}
    We proceed with a proof by picture-calculus; note that in each case we choose the $-1$ inherent to the hyperfactorial to go with the 
    term which contains $c$ and that we represent the numbers that are present by a grey shaded region.  

    In Figure~\ref{fig:hyper-f}(i), we consider $\HF(a+b+c)$ divided by $\HF(b+c)$; note that we align the triangles at their bottom points.
    We then multiply by $\HF(c)$, seen in Figure~\ref{fig:hyper-f}(ii); note that we align the top edge of the new triangle with the top
    edge of the triangle associated to $\HF(b+c)$.  Last, we divide by $\HF(a+c)$ creating a parallelogram, seen in Figure~\ref{fig:hyper-f}(iii).
    \begin{figure}[!ht]
        \begin{minipage}[b]{0.32\linewidth}
            \centering
            \includegraphics[scale=0.5]{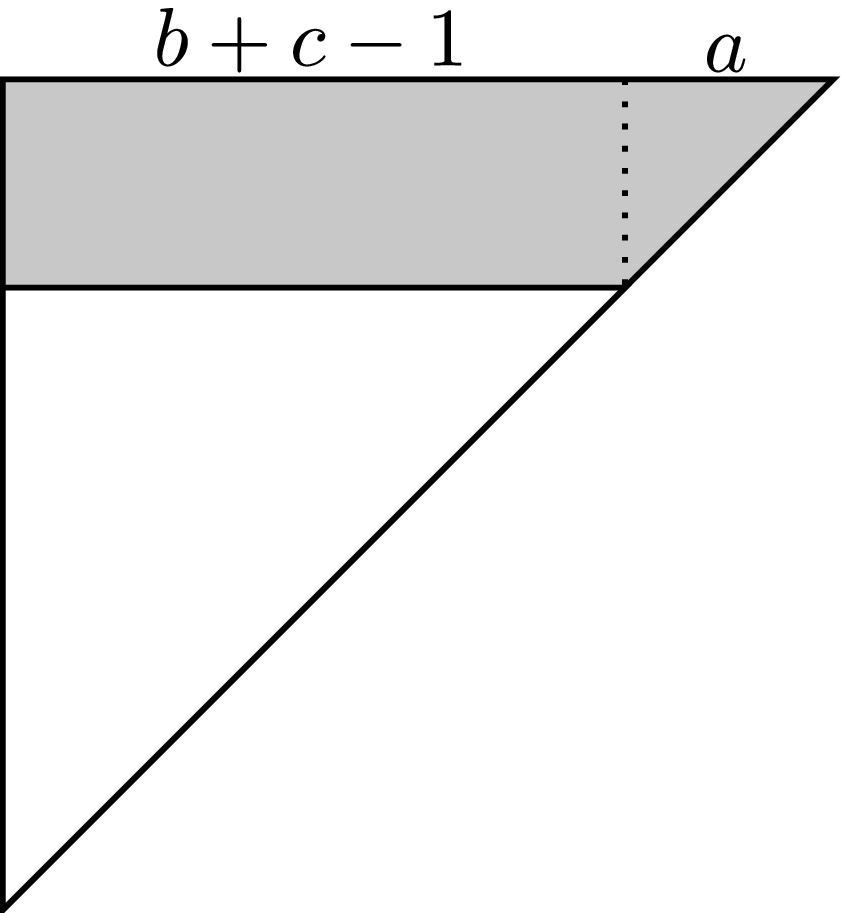}\\
            {\em (i) $\HF(a+b+c) / \HF(b+c)$}
        \end{minipage}
        \begin{minipage}[b]{0.32\linewidth}
            \centering
            \includegraphics[scale=0.5]{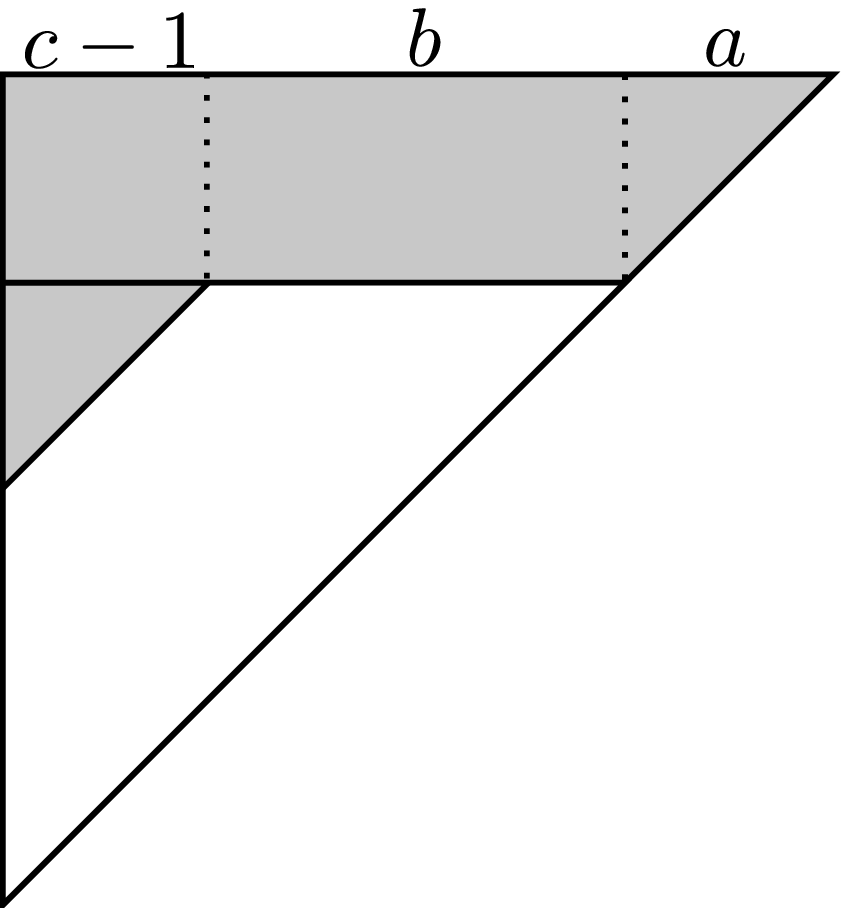}\\
            {\em (ii) Multiply by $\HF(c)$}
        \end{minipage}
        \begin{minipage}[b]{0.32\linewidth}
            \centering
            \includegraphics[scale=0.5]{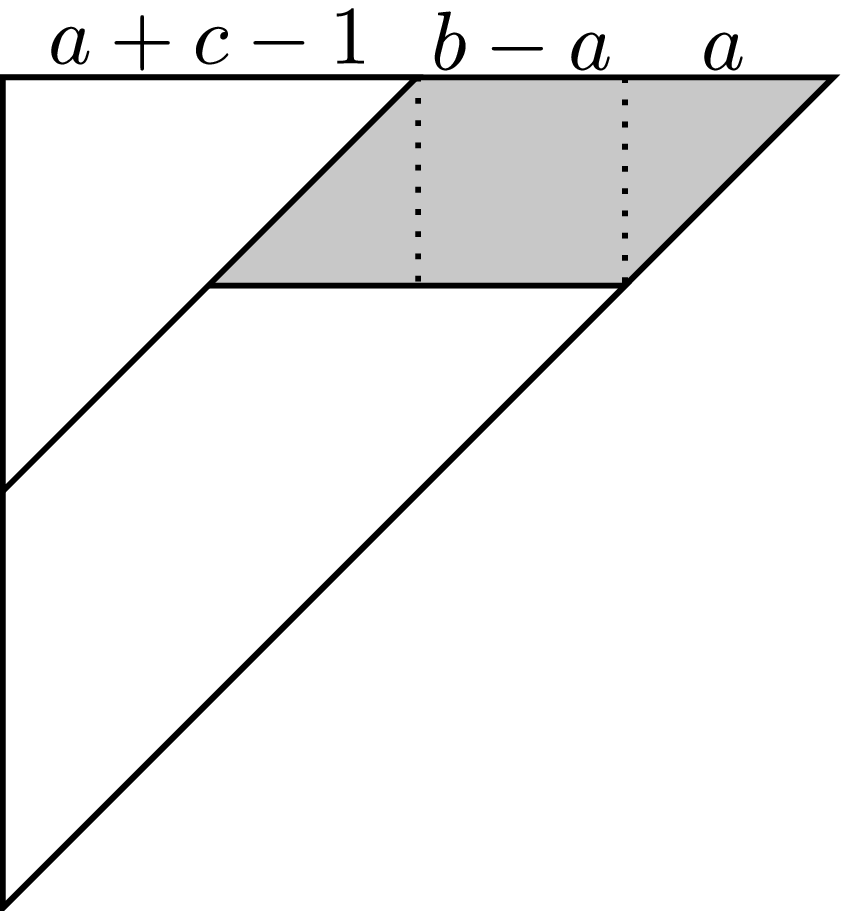}\\
            {\em (iii) Divide by $\HF(a+c)$}
        \end{minipage}
        \caption{A picture-calculus proof that $f_{a,b}(c) = \frac{\HF(a+b+c) \HF(c)}{\HF(a+c) \HF(b+c)}$}
        \label{fig:hyper-f}
    \end{figure}
    
    Notice that the parallelogram is $a$ units tall, $b$ units long, and is shifted to be $c$ units from the left edge.  Further,
    \begin{enumerate}
        \item The left grey triangular region corresponds to $\prod_{i=1}^{a}(c+i)^i$;
        \item The central grey rectangular region corresponds to $\prod_{i=1}^{b-a}(c+a+i)^a$; and
        \item The right grey triangular region corresponds to $\prod_{i=1}^{a}(c+b+i)^{a-i}$.
    \end{enumerate}
    Thus, this region is exactly the polynomial $f_{a,b}(c)$ evaluated at the integer $c$.
\end{proof}

\begin{example} \label{exa:hyper-f}
    For example, notice that
    \[
        f_{3,3}(c) = (c+1)(c+2)^2(c+3)^3(c+4)^2(c+5)
    \]
    and
    \[
        f_{3,5}(c) = (c+1)(c+2)^2(c+3)^3(c+4)^3(c+5)^3(c+6)^2(c+7).
    \]
\end{example}

A key example of using Proposition~\ref{pro:hyper-f} is with MacMahon's formula (see, e.g., \cite[Equation~(1.1)]{CEKZ}).  MacMahon's
formula for the number of lozenge tilings of a hexagon with side-lengths $(a,b,c,a,b,c)$, where $a, b,$ and $c$ are positive integers,
is given by
\[
    \Mac(a,b,c) = \frac{\HF(a) \HF(b) \HF(c) \HF(a+b+c)}{\HF(a+b) \HF(a+c) \HF(b+c)},
\]
Thus, for fixed $a$ and $b$, MacMahon's formula is a polynomial in $c$.
\begin{corollary} \label{cor:mac-poly}
    Let $a$ and $b$ be non-negative integers with $a \leq b$.  Then $\Mac(a,b,c)$ is equal to a polynomial in $c$, when evaluated at positive 
    integers; in particular,
    \[
        \Mac(a,b,c) = \frac{\HF(a) \HF(b)}{\HF(a+b)} f_{a,b}(c)
    \]
    for positive integers $c$.
\end{corollary}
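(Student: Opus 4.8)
The plan is to obtain the statement as an immediate substitution into MacMahon's formula. First I would write out
\[
    \Mac(a,b,c) = \frac{\HF(a)\,\HF(b)\,\HF(c)\,\HF(a+b+c)}{\HF(a+b)\,\HF(a+c)\,\HF(b+c)}
\]
and regroup the seven hyperfactorial factors as the product of $\dfrac{\HF(a)\,\HF(b)}{\HF(a+b)}$ with $\dfrac{\HF(a+b+c)\,\HF(c)}{\HF(a+c)\,\HF(b+c)}$. The first group depends only on $a$ and $b$, hence is a constant once $a$ and $b$ are fixed; the second group is exactly the quantity that Proposition~\ref{pro:hyper-f} identifies with the polynomial $f_{a,b}(c)$ for every positive integer $c$ (here the hypothesis $a\le b$ is what guarantees $f_{a,b}$ is defined). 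Replacing the second group by $f_{a,b}(c)$ then yields the asserted identity
\[
    \Mac(a,b,c) = \frac{\HF(a)\,\HF(b)}{\HF(a+b)}\, f_{a,b}(c).
\]

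To justify that this exhibits $\Mac(a,b,c)$ as (the restriction to positive integers of) a polynomial in $c$, I would simply note that $f_{a,b}(c)$ is by its very definition a product of powers of the linear forms $c+i$ with nonnegative integer exponents, hence a bona fide element of $\ZZ[c]$; multiplying it by the fixed rational number $\dfrac{\HF(a)\,\HF(b)}{\HF(a+b)}$ produces a polynomial-valued function of $c$ agreeing with $\Mac(a,b,c)$ at all positive integers. (One could add the remark that the prefactor equals $\Mac(a,b,0)$ and is therefore a positive integer, but this is not needed for the statement.)

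Since essentially all of the work is already contained in Proposition~\ref{pro:hyper-f}, there is no genuine obstacle here; the only point requiring a little care is the bookkeeping of which hyperfactorials get absorbed into $f_{a,b}(c)$ and which into the constant prefactor, together with the observation that the "$-1$" convention used in the picture-calculus proof of Proposition~\ref{pro:hyper-f} — attaching the shift inherent in the hyperfactorial to the $c$-dependent triangle — is precisely what makes the four hyperfactorials of MacMahon's formula split cleanly into the two groups above.
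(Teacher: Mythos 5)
Your argument is exactly the paper's: group the seven hyperfactorials so that $\HF(a)\HF(b)/\HF(a+b)$ is the $c$-independent prefactor and the remaining $\HF(a+b+c)\HF(c)/(\HF(a+c)\HF(b+c))$ is identified with $f_{a,b}(c)$ via Proposition~\ref{pro:hyper-f}. This matches the paper's one-line proof, just spelled out in more detail.
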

\begin{proof}
    This follows immediately from Proposition~\ref{pro:hyper-f} after noticing that $\frac{\HF(a) \HF(b)}{\HF(a+b)}$ is independent of $c$.
\end{proof}

%Another simple, though useful, identity is a particular product of the polynomials in Proposition~\ref{pro:hyper-f}.
%\begin{corollary} \label{cor:hyper-g}
%    Let $a$ and $b$ be non-negative integers with $a \leq b$.  Set $\underline{a} = \left\lfloor \frac{1}{2}a \right\rfloor$ and $\overline{a} = \left\lceil \frac{1}{2}a \right\rceil$.
%    Then the polynomial ZZ[c]$, given by
%    \[
%         g_{a,b}(c) := \prod_{i=1}^{\underline{a}}(c+i)^{2i} \prod_{i=1}^{b-\underline{a}}(c+\underline{a}+i)^a \prod_{i=1}^{a}(c+b+i)^{a-i},
%    \]
%    is equal to
%    \[
%        f_{a,b}(c) f_{\underline{a}, \overline{a}}(c)
%    \]
%    and hence, for positive integers $c$, to 
%    \[
%        \frac{\HF(a+b+c) \HF^2(c)}{\HF(\underline{a}+c) \HF(\overline{a}+c) \HF(b+c)}.
%    \]
%\end{corollary}
%\begin{proof}
%    In Figure~\ref{fig:hyper-g}, the light grey region corresponds to $f_{a,b}(c)$ and the dark grey region corresponds to $f_{\underline{a}, \overline{a}}(c)$.
%    \begin{figure}[!ht]
%        \includegraphics[scale=0.8]{hyper-g}
%        \caption{A picture-calculus proof that $g_{a,b}(c) = f_{a,b}(c) f_{\underline{a}, \overline{a}}(c)$}
%        \label{fig:hyper-g}
%    \end{figure}
%\end{proof}
%
%\begin{example} \label{exa:hyper-g}
%    For example, notice that
%    \[
%        g_{4,5}(c) = (c+1)^2(c+2)^4(c+3)^4(c+4)^4(c+5)^4(c+6)^3(c+7)^2(c+8).
%    \]
%\end{example}

When considering polynomials such as $f_{a,b}(c)$, we may want only the terms where the factors are all of the form $(c+i)$, where $i$ has
a fixed parity.  To do this, we define the {\em even part of the hyperfactorial} of $n$, a positive integer, to be the even terms in the product $\HF(n)$, that is
\[
    \HF_e(n) := \prod_{i=0}^{n-1} \prod_{j=1}^{\left \lfloor \frac{1}{2}i \right \rfloor} 2j,
\]
and we define the {\em odd part of the hyperfactorial} of $n$ to be 
\[
    \HF_o(n) := \frac{\HF(n)}{\HF_e(n)}.
\]

We notice though, that $\HF_e(n)$ can be written in terms of hyperfactorials, after an appropriate scaling.
\begin{lemma} \label{lem:hyper-e}
    For positive integers $n$, the even part of the hyperfactorial of $n$, $\HF_e(n)$, is
    \[ 
        2^{\binom{\left \lfloor \frac{1}{2}n \right \rfloor}{2} + \binom{\left \lceil \frac{1}{2}n \right \rceil}{2}} 
              \HF\left(\left \lfloor \frac{1}{2}n \right \rfloor\right)  \HF\left(\left \lceil \frac{1}{2}n \right \rceil \right).
    \]
\end{lemma}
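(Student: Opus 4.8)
The plan is to reduce the statement to a direct computation by first simplifying the inner product in the definition of $\HF_e$. For each $i$ one has $\prod_{j=1}^{\lfloor i/2\rfloor} 2j = 2^{\lfloor i/2\rfloor}\,\lfloor i/2\rfloor!$, so that
\[
    \HF_e(n) = \prod_{i=0}^{n-1} 2^{\lfloor i/2\rfloor}\,\lfloor i/2\rfloor! = 2^{\,\sum_{i=0}^{n-1}\lfloor i/2\rfloor}\ \prod_{i=0}^{n-1}\lfloor i/2\rfloor!.
\]
Thus it suffices to evaluate separately the exponent $\sum_{i=0}^{n-1}\lfloor i/2\rfloor$ and the product $\prod_{i=0}^{n-1}\lfloor i/2\rfloor!$ and to match each against the corresponding factor on the right-hand side.

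Next I would split the range $0\le i\le n-1$ according to the parity of $i$: writing $i=2k$ contributes $\lfloor i/2\rfloor=k$, and $i=2k+1$ also contributes $k$. When $n=2m$ each value $k=0,\dots,m-1$ occurs exactly twice, so the exponent is $2\sum_{k=0}^{m-1}k = 2\binom{m}{2}$ and the product of factorials is $\big(\prod_{k=0}^{m-1}k!\big)^2 = \HF(m)^2$; since $\lfloor n/2\rfloor=\lceil n/2\rceil=m$, this is exactly $2^{\binom{\lfloor n/2\rfloor}{2}+\binom{\lceil n/2\rceil}{2}}\HF(\lfloor n/2\rfloor)\HF(\lceil n/2\rceil)$. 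When $n=2m+1$ the even indices give $k=0,\dots,m$ while the odd indices give $k=0,\dots,m-1$, so the exponent is $\binom{m+1}{2}+\binom{m}{2}$ and the product of factorials is $\HF(m+1)\HF(m)$; as $\lceil n/2\rceil=m+1$ and $\lfloor n/2\rfloor=m$, this again matches the claimed formula. Here I use only $\binom{k}{2}=\sum_{j=0}^{k-1}j$ and the definition $\HF(k)=\prod_{j=0}^{k-1}j!$.

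There is no serious obstacle here; the only care needed is the bookkeeping of the two parity cases and keeping the floor and ceiling matched to the even and odd halves of the index range. As an alternative one could argue by the picture-calculus of the appendix: the triangular grid representing $\HF(n)$ splits along its rows into the even rows and the odd rows, each of which, after dividing out the factors of $2$ counted by the exponent, reassembles into the triangular grids for $\HF(\lfloor n/2\rfloor)$ and $\HF(\lceil n/2\rceil)$; but the direct computation above is shortest. I would also note in passing that the companion identity $\HF_o(n)=\HF(n)/\HF_e(n)$ then follows immediately, which is presumably how the lemma is used downstream.
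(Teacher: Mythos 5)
Your proof is correct, and it is a genuinely different route from the paper's. You compute directly: pull out $\prod_{j=1}^{\lfloor i/2\rfloor}2j = 2^{\lfloor i/2\rfloor}\lfloor i/2\rfloor!$ from each row, then evaluate the exponent $\sum_{i=0}^{n-1}\lfloor i/2\rfloor$ and the product $\prod_{i=0}^{n-1}\lfloor i/2\rfloor!$ by splitting the index $i$ by parity, handling $n$ even and $n$ odd separately. The paper instead argues by its ``picture-calculus'': it identifies $\HF_e(n)$ with the product over the \emph{even columns} of the triangular grid for $\HF(n)$ (i.e.\ the columns $j=2,4,\dots$, each contributing $(2k)^{n-2k}$), and then shows that after dividing every entry by $2$ — a rescaling costing $2^{\binom{\lfloor n/2\rfloor}{2}+\binom{\lceil n/2\rceil}{2}}$ since that is the total count of entries — those columns reassemble into the two triangles for $\HF(\lfloor n/2\rfloor)$ and $\HF(\lceil n/2\rceil)$. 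Your approach is more elementary and self-contained: it relies only on the product identity $\prod_{j\le m}2j=2^m m!$ and routine bookkeeping, and it is easily checkable line by line. The paper's approach fits the thematic purpose of the appendix, giving a visual decomposition consistent with the other hyperfactorial identities there, at the cost of requiring the reader to trust the picture. One small note: your passing sketch of the picture-calculus alternative says the grid ``splits along its rows into the even rows and the odd rows,'' but that is not what the paper does — the paper's decomposition is by \emph{columns} (which is the decomposition naturally matching the definition of $\HF_e$), whereas the split by parity of the row index $i$ is really a feature of your algebraic computation, not of the paper's picture. This does not affect the validity of your main argument.
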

\begin{proof}
    By definition, $\HF_e(n)$ is the product of the even columns of the pictorial representation of $\HF(n)$.  In Figure~\ref{fig:hyper-e}(i), we see the product
    of $\HF\left(\left \lfloor \frac{1}{2}n \right \rfloor\right)$ and $\HF\left(\left \lceil \frac{1}{2}n \right \rceil \right)$ and in part (ii) we see this 
    same product after simplification, that is, condensing the columns to be contiguous.  In Figure~\ref{fig:hyper-e}(iii), we multiply each element of the triangular
    representation by $2$; since there are $\binom{\left \lfloor \frac{1}{2}n \right \rfloor}{2} + \binom{\left \lceil \frac{1}{2}n \right \rceil}{2}$ terms, then we
    are simply scaling $\HF\left(\left \lfloor \frac{1}{2}n \right \rfloor\right)  \HF\left(\left \lceil \frac{1}{2}n \right \rceil \right)$
    by $2^{\binom{\left \lfloor \frac{1}{2}n \right \rfloor}{2} + \binom{\left \lceil \frac{1}{2}n \right \rceil}{2}}$.  This is exactly the even 
    columns of $\HF(n)$.
    \begin{figure}[!ht]
        \begin{minipage}[b]{0.32\linewidth}
            \centering
            \includegraphics[scale=0.5]{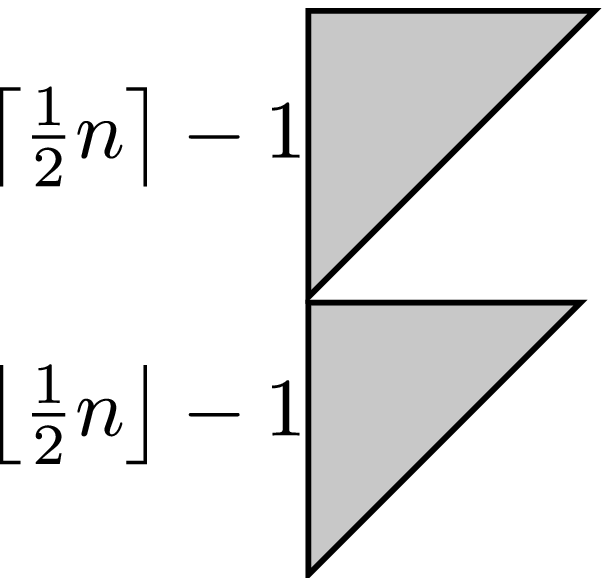}\\
            {\em (i) $\HF\left(\left \lfloor \frac{1}{2}n \right \rfloor\right)  \HF\left(\left \lceil \frac{1}{2}n \right \rceil \right)$}
        \end{minipage}
        \begin{minipage}[b]{0.32\linewidth}
            \centering
            \includegraphics[scale=0.5]{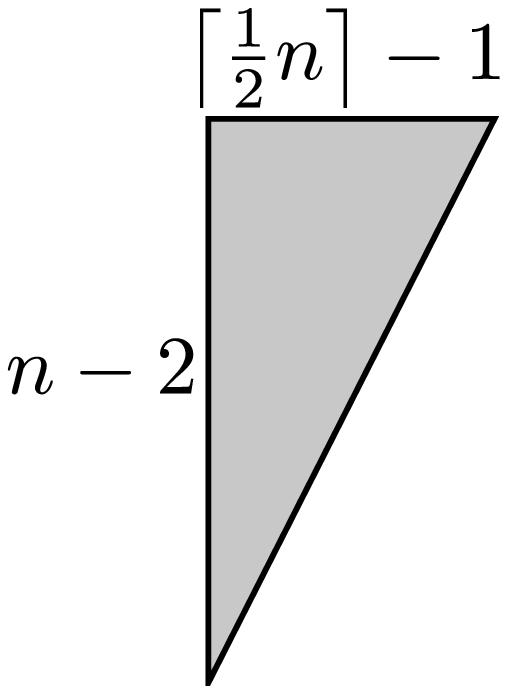}\\
            {\em (ii) After simplification}
        \end{minipage}
        \begin{minipage}[b]{0.32\linewidth}
            \centering
            \includegraphics[scale=0.5]{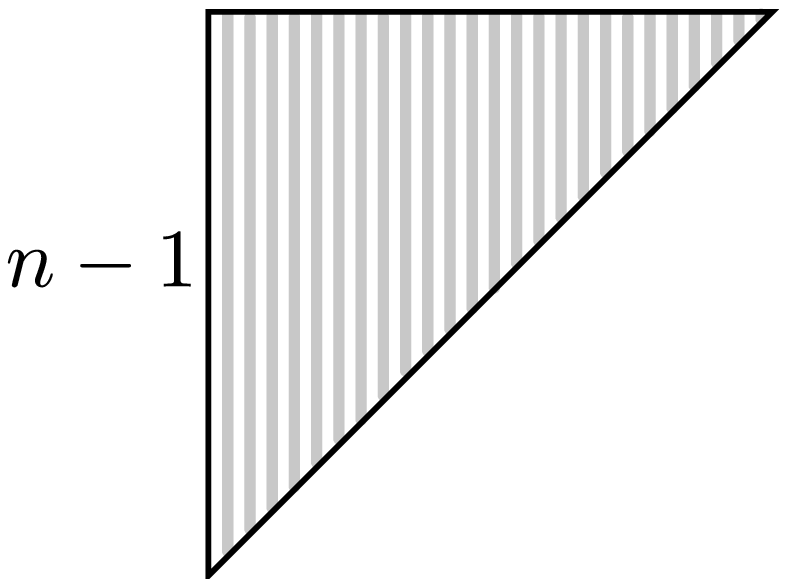}\\
            {\em (iii) After scaling}
        \end{minipage}
        \caption{A picture-calculus proof of an identity of $\HF_e(n)$}
        \label{fig:hyper-e}
    \end{figure}
\end{proof}

Now we can find the polynomials which represent $f_{a,b}(c)$ with only factors of the form $(c+i)$, where $i$ has a fixed parity, present.
\begin{corollary} \label{cor:hyper-eo-f}
    Let $a$ and $b$ be non-negative integers with $a \leq b$.  Set $\underline{a} = \left\lfloor \frac{1}{2}a \right\rfloor$ and $\underline{b} = \left\lfloor \frac{1}{2}b \right\rfloor$.
    Then the polynomial $f^e_{a,b}(c) \in \ZZ[c]$, given by
    \[
         f^e_{a,b}(c) := \prod_{i=1}^{\underline{a}}(c+2i)^{2i}
                         \prod_{i=1}^{\underline{b}-\underline{a}} (c+2\underline{a}+2i)^a 
                         \prod_{i=1}^{\underline{a}}(c+2\underline{b}+2i)^{b-2\underline{b}+a-2i},
    \]
    is equal to 
    \[
        \left\{ \begin{array}{ll}
            \displaystyle \frac{\HF_e(a+b+c) \HF_e(c)}{\HF_e(a+c) \HF_e(b+c)} & \mbox{ if $c$ is even and} \\[0.8em]
            \displaystyle \frac{\HF_o(a+b+c) \HF_o(c)}{\HF_o(a+c) \HF_o(b+c)} & \mbox{ if $c$ is odd},
        \end{array} \right.
    \]
    for positive integers $c$.

    Further, the polynomial $f^o_{a,b}(c) \in \ZZ[c]$, given by 
    \[
        f^o_{a,b}(c) := \frac{f_{a,b}(c)}{f^e_{a,b}(c)}
    \]
    is equal to 
    \[
        \left\{ \begin{array}{ll}
            \displaystyle \frac{\HF_o(a+b+c) \HF_o(c)}{\HF_o(a+c) \HF_o(b+c)} & \mbox{ if $c$ is even and} \\[0.8em]
            \displaystyle \frac{\HF_e(a+b+c) \HF_e(c)}{\HF_e(a+c) \HF_e(b+c)} & \mbox{ if $c$ is odd},
        \end{array} \right.
    \]
    for positive integers $c$.
\end{corollary}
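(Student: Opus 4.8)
The plan is to reduce everything to counting column multiplicities in the triangular grids that encode the hyperfactorials, following the picture-calculus used to prove Proposition~\ref{pro:hyper-f}, and then to separate the columns by parity. Write $\HF(n) = \prod_{k\ge 1} k^{\max\{0,\,n-k\}}$; by its definition the even part $\HF_e(n)$ is obtained by keeping only the even values of $k$ in this product and $\HF_o(n)$ by keeping only the odd ones, so that $\HF(n) = \HF_e(n)\HF_o(n)$.

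First I would identify the common shape of $f_{a,b}(c)$. Combining the three factors of its definition, the exponent of $(c+j)$ equals $j$ for $1\le j\le a$, equals $a$ for $a\le j\le b$, and equals $a+b-j$ for $b\le j\le a+b$, so
\[
    f_{a,b}(c) = \prod_{j=1}^{a+b}(c+j)^{h(j)}, \qquad h(j):=\min\{j,\;a,\;a+b-j\}.
\]
On the other hand Proposition~\ref{pro:hyper-f} gives $f_{a,b}(c) = \HF(a+b+c)\HF(c)/(\HF(a+c)\HF(b+c)) = \prod_{k\ge 1}k^{\,m(k)}$, where $m(k) = \max\{0,a+b+c-k\} + \max\{0,c-k\} - \max\{0,a+c-k\} - \max\{0,b+c-k\}$. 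A short case check over the ranges $k\le c$, $c<k\le a+c$, $a+c<k\le b+c$, $b+c<k\le a+b+c$ shows $m(k)=0$ for $k\le c$ and $m(k)=h(k-c)$ otherwise; thus the column $k=c+j$ of the combined grid carries multiplicity $h(j)$, which is exactly the parallelogram of Figure~\ref{fig:hyper-f}. Restricting $\prod_k k^{\,m(k)}$ to the even (resp.\ odd) values of $k$ therefore produces $\HF_e(a+b+c)\HF_e(c)/(\HF_e(a+c)\HF_e(b+c))$ (resp.\ the same expression with $\HF_o$).

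Next I would split by parity. Since $k=c+j$ is even precisely when $j\equiv c\pmod 2$, the even-$k$ part of the grid equals $\prod_{j\text{ even}}(c+j)^{h(j)}$ when $c$ is even, and equals $\prod_{j\text{ odd}}(c+j)^{h(j)}$ --- i.e.\ the odd-$k$ part --- when $c$ is odd. It then remains to check that $\prod_{j\text{ even}}(c+j)^{h(j)}$ is the polynomial $f^e_{a,b}(c)$: grouping the even $j$ into the ranges $1\le j\le a$, $a<j\le b$, $b<j\le a+b$ and substituting $j=2i$, $j=2\underline{a}+2i$, $j=2\underline{b}+2i$ respectively (using that the largest even integer $\le a$ is $2\underline{a}$ and the smallest even integer $>a$ is $2\underline{a}+2$, and likewise for $b$) matches the three products defining $f^e_{a,b}(c)$ term by term, the top term of exponent $0$ being harmless. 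This yields the asserted value of $f^e_{a,b}(c)$; and since $f^o_{a,b}(c) := f_{a,b}(c)/f^e_{a,b}(c)$ simply removes the even (resp.\ odd) columns from $f_{a,b}(c) = \HF(a+b+c)\HF(c)/(\HF(a+c)\HF(b+c))$, it equals the complementary product $\prod_{j\text{ odd}}(c+j)^{h(j)}$ of odd (resp.\ even) columns, which is the claimed ratio and in particular lies in $\ZZ[c]$.

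The main obstacle is purely one of bookkeeping: verifying that the floor-indexed ranges appearing in the definition of $f^e_{a,b}$ genuinely exhaust the even values of $j$ when $a$ or $b$ is odd, and that the exponents agree at the boundaries $j=a$, $j=b$ and at the vanishing top term $j=a+b$. This is a finite parity case analysis rather than a real difficulty; alternatively it can be carried out pictorially, by collapsing the even columns of the parallelogram of Figure~\ref{fig:hyper-f} into a contiguous region exactly as in the proof of Lemma~\ref{lem:hyper-e}.
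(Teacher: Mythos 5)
Your proof is correct and follows the same route as the paper's (very terse) argument: observe that $f^e_{a,b}(c)$ collects exactly the factors $(c+j)$ of $f_{a,b}(c)$ with $j$ even, that such a factor is an even integer precisely when $c$ is even, and that restricting the triangular-grid representation of $\HF(a+b+c)\HF(c)/(\HF(a+c)\HF(b+c))$ to the even (resp.\ odd) columns yields the corresponding $\HF_e$ (resp.\ $\HF_o$) quotient. The paper states this in two sentences and leaves all the bookkeeping implicit; you carry out the verification explicitly via the exponent function $h(j)=\min\{j,a,a+b-j\}$, which is a welcome elaboration but not a different method.
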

\begin{proof}
    Notice first that $f^e_{a,b}(c)$ is defined to be the factors of $f_{a,b}(c)$ of the form $(c+i)$, where $i$ is even.  The hyperfactorial representation then 
    follows when considering $(c+i)$ would then be even exactly when $c$ is even.  

    The second claim follows similarly as the first.
\end{proof}

\begin{example} \label{exa:hyper-feo}
    For example, notice that
    \[
        f^e_{3,5}(c) = (c+2)^2(c+4)^3(c+6)^2
    \]
    and
    \[
        f^o_{3,5}(c) = (c+1)(c+3)^3(c+5)^3(c+7).
    \]
\end{example}

% -- Acknowledgement
\begin{acknowledgement}
    The authors would like to acknowledge the invaluable nature of the computer algebra system Macaulay2~\cite{M2}.
    Macaulay2 was used extensively throughout both the original research and the writing of this manuscript.
\end{acknowledgement}

%--------------------
% -- The Bibliography

%---------------------------
% -- Happy, Happy, Joy, Joy!
\end{document}